\numberwithin{equation}{theorem}
\newcommand{\kay}{\mathcal{k}}
\newcommand{\el}{\mathcal{l}}
\newcommand{\F}{\mathbb{F}}
\renewcommand{\:}{\colon}
\newcommand{\eg}{{\itshape e.g.} }
\renewcommand{\m}{\mathfrak{m}}
\newcommand{\p}{\mathfrak{p}}
\newcommand{\q}{\mathfrak{q}}
\DeclareMathOperator{\Ass}{Ass}
\newcommand{\Gal}{\textnormal{Gal}}
\DeclareMathOperator{\Cl}{Cl}
\theoremstyle{theorem}
\renewcommand{\sB}{\mathcal{B}}
\renewcommand{\sF}{\mathcal{F}}
\renewcommand{\sK}{\mathcal{K}}
\renewcommand{\sO}{\mathcal{O}}
\renewcommand{\sT}{\mathcal{T}}
\renewcommand{\sU}{\mathcal{U}}
\newcommand{\unp}{{1/{p}}}
\newcommand{\fe}{F^e}
\newcommand{\fstare}{\fe_*}
\newcommand{\fstar}{F_*}
\newcommand{\rphi}{(R,\phi)}
\newcommand{\spsi}{(S,\psi)}
\newcommand{\phin}{{\phi^n}}
\newcommand{\transpose}{^\frT}
\newcommand{\transp}{\transpose}
\newcommand{\sphit}{(S,\phi\transp)}
\newcommand{\phit}{\phi\transp}
\newcommand{\pt}{\p\transp}
\newcommand{\pS}{\p S}
\newcommand{\thetat}{(\theta, \frT)}
\newcommand{\inv}{^{-1}}
\begin{document}
\title{On tame ramification and centers of $F$-purity}
\author[J.~Carvajal-Rojas]{Javier Carvajal-Rojas}
\address{Centro de Investigaci\'on en Matem\'aticas, A.C., Callej\'on Jalisco s/n, 36024 Col. Valenciana, Guanajuato, Gto, M\'exico}
\email{\href{mailto:javier.carvajal@cimat.mx}{javier.carvajal@cimat.mx}}

\author[A.~Fayolle]{Anne Fayolle}
\address{Department of Mathematics\\ University of Utah\\ Salt Lake City\\ UT 84112\\USA}
\email{\href{mailto:fayolle@math.utah.edu}{fayolle@math.utah.edu}}

\keywords{Tame ramification, center of $F$-purity, compatibly split variety, $F$-singularity.}
\thanks{Carvajal-Rojas was partially supported by the grants ERC-STG \#804334, FWO \#G079218N, and CONAHCYT \#CBF2023-2024-224. Fayolle was partially supported by an NSERC doctoral fellowship.
}
\subjclass[2020]{13A35, 14G17, 14B05}

\begin{abstract}
We introduce a notion of tame ramification for general finite covers. When specialized to the separable case, it extends to higher dimensions the classical notion of tame ramification for Dedekind domains and curves and sits nicely in between other notions of tame ramification in arithmetic geometry. However, when applied to the Frobenius map, it naturally yields the notion of center of $F$-purity (aka compatibly $F$-split subvariety). As an application, we describe the behavior of centers of $F$-purity under finite covers---it all comes down to a transitivity property for tame ramification in towers.
\end{abstract}
\maketitle

\section{Introduction} \label{sec.Introduction}
The Cohen--Seidenberg theorems \cite{CohenSeidenberg} on the behavior of prime ideals in an integral extension are among the most fundamental results in commutative algebra. For instance, both dimension theory in algebraic geometry and ramification theory in algebraic number theory rely on them. On the other hand, Schwede's centers of $F$-purity \cite{SchwedeCentersOfFPurity} (aka compatibly $F$-split subvarieties in the literature \cite{BrionKumarFrobeniusSplitting}) are arguably the most basic objects in the theory of $F$-singularities. They are to $F$-singularity theory what log canonical centers are to the theory of singularities in the Minimal Model Program. However, do Cohen--Seidenberg-type theorems hold for centers of $F$-purity? Schwede and Tucker had warned us in \cite{SchwedeTuckerExplicitlyExtending} to be cautious with wild ramification. But what else is blocking the way? We uncover it here.

To do so, we establish a bridge between the two seemingly independent notions of tame ramification in arithmetic geometry and $F$-singularities in commutative algebra. 
The core of this work is the development of an unifying notion of tame ramification under which these are nothing but two sides of the same coin; see \autoref{section tame ramification}, \autoref{Section CFPs and Cartier Core map}. With this in place, the relationship between tame ramification and good behavior of $F$-singularities in finite covers (\eg Cohen--Seidenberg-type theorems for centers of $F$-purity) becomes transparent; see \autoref{sec.FiberedTranspositions}. As an appetizer, let us glimpse at our main application on curves:

\begin{mainthm*}[{Teaser version, \cf \autoref{cor.MainTheorem}, \autoref{cor.GaloisCase}}]
Let $R$ be an $F$-finite Dedekind domain with field of fractions $K$ and $\phi \: F_* R \to R$ be a Frobenius splitting. Let $L/K$ be a finite Galois extension and $S$ be the normalization of $R$ in $L$. Suppose that $\phi$ extends to $\psi \: F_* S \to S$. Then, the following statements are equivalent:
\begin{enumerate}
    \item $S/R$ is tamely ramified over the centers of $F$-purity of $(R,\phi)$.
    \item The centers of $F$-purity of $(S,\psi)$ are exactly those prime ideals of $S$ lying over centers of $F$-purity of $(R,\phi)$.
    \item $\Tr_{L/K}(S)=R$, \ie $S/R$ is everywhere tamely ramified.
\end{enumerate}
In that case, Cohen--Seidenberg-type theorems apply to centers of $F$-purity.
\end{mainthm*}

Our actual main theorem is much more general but, to properly formulate it beyond curves, we need to introduce a few notions. Additionally, we would like to do justice to its history by summarizing the ideas behind it present in earlier works. Indeed, we took great inspiration from the works of Polstra, Schwede, Speyer, St\"abler, and Tucker.

\begin{convention}
All rings are noetherian. We assume $0 \in \bN$ and $\bN_{+} \coloneqq \bN \setminus \{0\}$. Further, $p$ is a prime number and $q \coloneqq p^e$, $q'\coloneqq p^{e'}$, $q_0 \coloneqq p^{e_0}$, etc. We let $\bF_q$ be the finite field with $q$ elements. Given an $\bF_p$-algebra $R$, we denote the $e$-th iteration of its Frobenius endomorphism by $F^e = F^e_R \: R \to R$, \ie $F^e \: r \mapsto r^q$. If $F\:R \to R$ is finite one says that $R$ is $F$-finite, \ie the $R$-module $F_*^e R=\{F^e_* r \mid r \in R\}$ obtained by restriction of scalars along $F^e$ is finitely generated. So $r'F^e_*r=F^e_*r'^qr$ and $F^e_* r + F^e_* r'=F^e_*(r+r')$ for all $r',r \in R$. 
\end{convention}

Let us fix a normal integral $F$-finite $\bF_p$-algebra $R$ with field of fractions $K$ as well as a surjective map $\phi \in \Hom_R(F^e_* R,R)$, \eg an \emph{$F$-splitting}. According to Schwede \cite{SchwedeFAdjunction}, this is tantamount to the choice of an effective $\bZ_{(p)}$-divisor $\Delta_{\phi}$ on $X \coloneqq \Spec R$ such that $K_X + \Delta_{\phi} \sim_{\bZ_{(p)}} 0$, where $K_X$ is a canonical divisor on $X$. We refer to any such divisor as a \emph{tame boundary} on $X$. Since $\phi$ is surjective, $(X,\Delta_{\phi})$ is $F$-pure and so we may think of it as a \emph{tame} log Calabi--Yau pair. This is part of why $F$-splittings matter in birational geometry. 

\subsection{A bit of history and background} In \cite{SchwedeTuckerTestIdealFiniteMaps}, Schwede and Tucker studied the following fundamental question. Given a finite extension $R \subset S$ with $S$ a normal integral domain with field of fractions $L$, when does $\phi$ lift to a map $\psi \: F^e_* S \to S$? As it turns out, $L/K$ must be separable; see \autoref{cor.FirstCriterionSeparability}, \cite[Proposition 5.2]{SchwedeTuckerTestIdealFiniteMaps}. In that case, the trace map $0\neq \Tr \: L \to K$ restricts to an $R$-linear map $\Tr \: S \to R$ and the corresponding finite cover $f\: Y\coloneqq \Spec S \to X$ has a ramification divisor $\Ram_f \sim K_Y-f^*K_X$. They established that $\psi$ lifting $\phi$ is the exact same thing as having a commutative diagram
\[
\xymatrixrowsep{1.8pc}\xymatrix{ 
F^e_* S \ar[r]^-{\psi} \ar[d]_-{F_*^e\Tr_{}} & S \ar[d]^-{\Tr_{}} \\
F^e_* R  \ar[r]^-{\phi} & R,
}
\]
and therefore to an equality $\Delta_{\psi}=f^*\Delta_{\phi}-\Ram_f$. In particular, $K_Y+\Delta_{\psi} \sim_{\bZ_{(p)}} f^*(K_X+\Delta_{\phi})$. By choosing $K_Y \coloneqq \Ram_f + f^*K_X$, we can write $K_Y+\Delta_{\psi} = f^*(K_X+\Delta_{\phi})$, which is to say that $f\:(Y,\Delta_{\psi}) \to (X,\Delta_{\phi})$ is a \emph{(log) crepant} finite cover between log pairs.

Given an extension of pairs $(R,\phi) \subset (S,\psi)$ as above, it is natural and fruitful to wonder about the relationships between the ($F$-)singularities of $(R,\phi)$ and those of $(S,\psi)$. A neat way to investigate them are transformation rules for the invariants measuring $F$-singularities. The case of \emph{test ideals} was originally treated in \cite{SchwedeTuckerTestIdealFiniteMaps} whereas local invariants such as \emph{$F$-splitting ratios} and \emph{$F$-splitting primes} were treated in \cite{CarvajalStablerFsignaturefinitemorphisms}; after \cite{CarvajalSchwedeTuckerEtaleFundFsignature,CarvajalFiniteTorsors}. For example, for test ideals, $\Tr \big( \uptau(S,\psi) \big) = \uptau(R,\phi)$. Likewise, if $(R,\fram) \subset (S,\fran)$ is further a local extension and $\Tr \: S \to R$ is surjective, $\upbeta(S,\psi)  \cap R = \upbeta(R,\phi)$, where $\upbeta$ is the $F$-splitting prime of Aberbach and Enescu \cite{AberbachEnescuStructureOfFPure}. Moreover, for the $F$-splitting ratios, we have:
\[
\big[\kappa(\fran):\kappa(\fram)]r(S,\psi) = \big[\kappa\big(\upbeta(S,\psi)\big):\kappa\big( \upbeta(R,\phi)\big)\big] r(R,\phi),
\]
where $\kappa(\p)$ denotes the residue field at a prime $\p$. Importantly, the surjectivity of the trace is a notion of tame ramification known as cohomological tame ramification \cite{KerzSchmidtOnDifferentNotionsOfTameness,ChinburgErezPappasTaylorTameActions}. 

These transformation rules have been used, for instance, in the study of the topology of singularities; see \cite{CarvajalSchwedeTuckerEtaleFundFsignature,CarvajalRojasStaeblerTameFundamentalGroupsPurePairs,BhattCarvajalRojasGrafSchwedeTucker}, \cf \cite{JeffriesSmirnovTransformationRuleForNaturalMultiplicities,CaiLeeMaSchwedeTuckerPerfectoidSignature}. Likewise, they have also been key tools in other works such as \cite{PolstraSimpsonFPurityDeformsQGorenstein,LeePandeFSignatureAmpleCone,LiedtkeMartinMatsumotoTorsorsRationalsDoublePoints,JeffriesNakajimaSmirnovWatanabeYoshidaLowerBoundsHKMULTIPLICITIES,TaylorInversionAdjunctionFSignature}.

The test ideal $\uptau(R,\phi)$ and the $F$-splitting prime $\upbeta(R,\phi)$ (when $R$ is local) are the most prominent examples of \emph{$\phi$-ideals} (aka $\phi$-compatible ideals). An ideal $\fra \subset R$ is referred to as a $\phi$-ideal if $\phi(F^e_*\fra) \subset \fra$. More geometrically, their closed subschemes have also been studied by many under the name of compatibly ($F$-)split; see \cite{BrionKumarFrobeniusSplitting}.

We denote the set of $\phi$-ideals by $I(R,\phi)$, which is a sublattice of the bounded lattice of ideals of $R$ (containing $(0)$ and $(1)$). Since $\phi$ is surjective, $I(R,\phi)$ is a finite lattice of radical ideals (\cite{KumarMehtaFiniteness,SchwedeFAdjunction,SchwedeTuckerNumberOfFSplit}) whose least nonzero element is $\uptau(R,\phi)$ and, if $R$ is further local, whose greatest proper element is $\upbeta(R,\phi)$. Summing up, what we have up to date is a good understanding of how the least nonzero and greatest proper elements of $I(R,\phi)$ and $I(S,\psi)$ compare to one another. However, until now, the whole relationship between $I(R,\phi)$ and $I(S,\psi)$ remained unknown. 

\subsection{Overview of the paper}
Classically, tame ramification is a notion that pertains to finite extensions of Dedekind domains. Defining a suitable notion in higher dimensions, however, has been rather tricky. Quoting Kerz and Schmidt \cite{KerzSchmidtOnDifferentNotionsOfTameness}, ``The notion of a tamely ramified covering is canonical only for curves.'' Nevertheless, there have been several different notions of tame ramification in higher dimension in arithmetic geometry, which are all well-dissected by Kerz and Schmidt in \cite{KerzSchmidtOnDifferentNotionsOfTameness}. Unfortunately, none of them are well suited for our job at hand nor; the authors believe, for other commutative algebra settings. 

In \autoref{section tame ramification}; the heart of this paper, we fill this gap. We provide a notion of tame ramification in higher dimensions extending the one for Dedekind domains (\autoref{prop.TameRamificationParticularCases}) that is purely trace-theoretic and tailored to our work. In \autoref{subsection.TameRamificationSeparableCase}, we also detail how nicely it fits in between the notions of tame ramification discussed in  \cite{KerzSchmidtOnDifferentNotionsOfTameness}. In \autoref{setting workout example}, we test our notions of tame ramification against a hybrid between the separable case and Frobenius, \eg cyclic covers by torsion divisor classes. 

Unlike in this introduction, our definition and theory do not require our rings to be normal or of any particular characteristic. Moreover, our notion is notably 
elementary and is solely based on the most basic commutative algebra concepts. Indeed, our definition of tame ramification is as follows: let $\theta\colon A\to B$ be a finite homomorphism of rings such that $\ker \theta \subset \sqrt{(0)}$, $\frT\in \Hom_A(B,A)$, and $\p\in \Spec A$. Then, we let $\pt\coloneqq\{b\in B\mid \frT(bB)\subset \p\}$ be the $\frT$-\emph{transposition} of $\p$ (\autoref{def.TranspositionOfIdeals}). We say that $B/A$ is \emph{tamely $\frT$ ramified} over $\p$ if $\sqrt{\p B}=\pt$, \ie $\sqrt{\p B}$ is the largest $B$-ideal $\frab$ such that $\frT(\frab) \subset \p$; see \autoref{definition tamely T ramified}.
If $\frT=\Tr$ in the separable normal integral case, we say that $B/A$ is tamely ramified over $\p$.

Many experts will find our definition familiar for particular cases of it lurk in the literature already. For example, it shows up in \cite[Theorem 5.12]{CarvajalStablerFsignaturefinitemorphisms} as a key hypothesis. Moreover, it may be familiar to those who know the definition of $F$-splitting primes/numbers. In fact, the primes $\p \in \Spec R$ over which $F^e\: R \to R$ is tamely $\phi$-ramified are precisely the centers of $F$-purity of $(R,\phi)$, which are the prime ideals in $I(R,\phi)$ (see \autoref{definition phi ideals}). 

To prove our main theorem, we focus our attention on these primes. We treat them as their very own spectrum (rather than isolated integral closed subschemes), which we denote by $\Schpec(R,\phi) \subset \Spec R$ and endow with the subspace Zariski topology. As a set, $\Schpec(R,\phi)$ tells us where the $F$-singularities of $(R,\phi)$ are the most severe. Its topology, however, captures the basic shape of the $F$-singularities of $(R,\phi)$. For instance, $\dim \Schpec(R,\phi)=0$ if and only if $(R,\phi)$ is $F$-regular. 

In order to study $\Schpec(R,\phi)$, we use (a slightly different version of) the \emph{Cartier core map} $\upbeta$ considered by Badilla-C\'espedes and Brosowsky in \cite{BadillaCespedesFInvariantsSRRings,BrosowskyCartierCoreMap}. Namely, we let
$\upbeta = \upbeta(R,\phi) \: \Spec R \to \Schpec(R,\phi)$ be the map $\p \mapsto \upbeta_{\p}=\upbeta_{\p}(R,\phi) \coloneqq \bigcap_{n \in \bN} \p^{\phi^n}$ which is a \emph{continuous} retraction of the subspace inclusion $\Schpec(R,\phi) \subset \Spec R$; see \autoref{proposition beta as a map}. Thus, to study how centers of $F$-purity transform under finite covers one should ask how $\upbeta$ transforms. That is, how functorial is $\upbeta$? To answer these questions, it will be insightful to consider the coarsest topology on $\Spec R$ that makes $\upbeta \: \Spec R \to \Schpec(R,\phi)$ continuous. This is the topology whose set of closed subsets is $\{V(\fra) \mid \fra \in I(R,\phi)\}$; see \autoref{definition CZpec}. We refer to this topology as the \emph{Cartier--Zariski topology}. We denote by $\CZpec(R,\phi)$ the topological space given by the set $\Spec R$ endowed with this topology. 

 Let us see why such an odd-looking topology matters. Recall that $\Spec \: \mathsf{Rings} \to \mathsf{Top}$ is a functor from rings to topological spaces, so we may consider the continuous map $f \coloneqq \Spec \theta \: \Spec S \to \Spec R$. This relies on two things. Namely, the contraction of a prime $S$-ideal is a prime $R$-ideal and $f$ is continuous as $f^{-1}\big(V(\fra)\big)=V\big(\sqrt{\fra S}\big)$ for all ideals $\fra \subset R$. As $\psi$ restricts to $\phi$ on $R$, a $\psi$-ideal contracts to a $\phi$-ideal, so $f$ restricts to a continuous map $\Schpec \theta \: \Schpec(S,\psi) \to \Schpec(R,\phi)$. However, it is not true that $\sqrt{\fra S} \in I(S,\psi)$ if $\fra \in I(R,\phi)$; see \autoref{ex.NonContinuity}. Hence, $f=\CZpec \theta \: \CZpec(S,\psi) \to \CZpec(R,\phi)$ is not necessarily continuous. Nonetheless:

\begin{theoremA*}[{\autoref{thm.FibrationsHomos}}]
With notation as above, the following statements are equivalent:
\begin{enumerate}
    \item The map $\CZpec \theta  \:\CZpec(S,\psi) \to \CZpec(R,\phi)$ is continuous.
    \item The following diagram is commutative 
    \[
\xymatrixcolsep{4.5pc}\xymatrix{
\CZpec(R,\phi)\ar[d]_-{\upbeta(R,\phi)} & \CZpec(S,\psi) \ar[l]_-{\CZpec \theta} \ar[d]^-{\upbeta(S,\psi)} \\
\Schpec(R,\phi) & \Schpec(S,\psi) \ar[l]_-{\Schpec \theta}
}
    \]
    \item The equality $\sqrt{\upbeta_{\p} S} = \upbeta_{\sqrt{\p S}} \coloneqq \bigcap_{\q \in \Ass_S S/\sqrt{\p S}} \ \upbeta_\q$ of $S$-ideals holds for all $\p\in \Spec R$. 
    \item The equality $f^{-1}\{\upbeta_{\p}\} = \upbeta \big(f^{-1}\{\p\}\big)$ of subsets of $\Spec S$ holds
    for all $\p \in \Spec R$.
    \item The equality $
\Schpec(S,\psi) = f^{-1} \big(\Schpec(R,\phi)\big)$ of subsets of $\Spec S$ holds. 
\end{enumerate}
\end{theoremA*}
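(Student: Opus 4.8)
The plan is to funnel all five conditions through the set-theoretic identity
\[
(\ast)\qquad \upbeta_{\q}(S,\psi)\cap R \;=\; \upbeta_{\q\cap R}(R,\phi)\quad\text{for every }\q\in\Spec S,
\]
which is exactly the commutativity of the square in (b) read at the level of underlying functions; I will prove (a)$\Leftrightarrow$(e) first, then (e)$\Leftrightarrow(\ast)$, and finally that $(\ast)$ is equivalent to each of (b), (c), and (d). The recurring tools are: the closed subsets of $\CZpec(R,\phi)$ are precisely the $V(\fra)$ with $\fra\in I(R,\phi)$ (and likewise over $S$), so (a) says nothing but that $\sqrt{\fra S}\in I(S,\psi)$ for every $\fra\in I(R,\phi)$; the Cartier core $\upbeta_\p$ is the \emph{largest} $\phi$-compatible ideal contained in $\p$ (\autoref{proposition beta as a map}), whence for any radical ideal $J\subseteq S$ the ideal $\bigcap_{\q\in\Min(S/J)}\upbeta_\q(S,\psi)$ is the largest $\psi$-compatible ideal contained in $J$; $I(R,\phi)$ is closed under finite intersection and the minimal primes of a $\phi$-compatible ideal are again $\phi$-compatible; and Cohen--Seidenberg for the finite, hence integral, extension $R\subseteq S$---incomparability gives $\Min(S/\p S)=f^{-1}\{\p\}$ and $\sqrt{\p S}\cap R=\p$, and going-up lets one extend chains of primes upstairs.

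The crux is (a)$\Leftrightarrow$(e). The inclusion $\Schpec(S,\psi)\subseteq f^{-1}\big(\Schpec(R,\phi)\big)$ is automatic, since contractions of $\psi$-compatible ideals are $\phi$-compatible, so (e) amounts to the reverse inclusion. For (e)$\Rightarrow$(a): write $\fra\in I(R,\phi)$ as $\fra=\bigcap_i\p_i$ with the $\p_i$ its finitely many minimal primes, all centers; from $\prod_i\p_i\subseteq\fra$ one gets $\sqrt{\fra S}=\bigcap_i\sqrt{\p_i S}$, and each $\sqrt{\p_i S}=\bigcap_{\q\in f^{-1}\{\p_i\}}\q$ is an intersection of $\psi$-compatible primes by (e), hence lies in $I(S,\psi)$; therefore $\sqrt{\fra S}\in I(S,\psi)$, which is (a). Conversely, if (a) holds and $\p\coloneqq\q\cap R\in\Schpec(R,\phi)$, then $\sqrt{\p S}\in I(S,\psi)$; incomparability forces $\q$ to be a minimal prime of $\sqrt{\p S}$, and the minimal primes of a $\psi$-compatible ideal are $\psi$-compatible, so $\q\in\Schpec(S,\psi)$.

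Granting (a)$\Leftrightarrow$(e)---so both entail $\sqrt{\fra S}\in I(S,\psi)$ for $\fra\in I(R,\phi)$---the rest is incomparability bookkeeping. For (e)$\Leftrightarrow(\ast)$: given (e) and $\q$ over $\p$, the inclusion $\upbeta_\q(S,\psi)\cap R\subseteq\upbeta_\p(R,\phi)$ is automatic, while $\sqrt{\upbeta_\p(R,\phi)S}$ is $\psi$-compatible and contained in $\q$, hence in $\upbeta_\q(S,\psi)$, which gives the reverse; conversely $(\ast)$ forces $\upbeta_\q(S,\psi)=\q$ whenever $\upbeta_{\q\cap R}(R,\phi)=\q\cap R$, because $\upbeta_\q(S,\psi)\subseteq\q$ are primes with equal contraction. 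For $(\ast)\Leftrightarrow$(b): $(\ast)$ is exactly set-level commutativity of the square, the maps $\upbeta(R,\phi),\upbeta(S,\psi)$ (out of $\CZpec$) and $\Schpec\theta$ being continuous by \autoref{proposition beta as a map} and the preceding discussion, and $\CZpec\theta$ being continuous precisely under (a), which is equivalent to $(\ast)$; so $(\ast)$ promotes the set-level square to a commutative square of topological spaces, and conversely. For $(\ast)\Leftrightarrow$(d): evaluating (d) on $\q\in f^{-1}\{\p\}$ gives $(\ast)$ at once, and for the converse the inclusion ``$\supseteq$'' in (d) is $(\ast)$ itself, while for ``$\subseteq$'' one takes $\mathfrak{r}$ over $\upbeta_\p(R,\phi)$---which is $\psi$-compatible by (e)---uses going-up to find $\q\supseteq\mathfrak{r}$ over $\p$, and then $\mathfrak{r}\subseteq\upbeta_\q(S,\psi)$ with matching contraction $\upbeta_\p(R,\phi)$ forces $\mathfrak{r}=\upbeta_\q(S,\psi)$ by incomparability.

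It remains to weave in (c), where the one real subtlety sits. For (c)$\Rightarrow$(a) I reuse the $\fra=\bigcap_i\p_i$ argument: applied to each center $\p_i$, (c) gives $\sqrt{\p_i S}=\bigcap_{\q\in\Ass_S(S/\sqrt{\p_i S})}\upbeta_\q(S,\psi)\in I(S,\psi)$, so $\sqrt{\fra S}=\bigcap_i\sqrt{\p_i S}\in I(S,\psi)$. For (a)$\Rightarrow$(c) one must resist the tempting but false shortcut of contracting $\upbeta_{\sqrt{\p S}}$ to $R$ and re-extending (the inclusion $J\subseteq\sqrt{(J\cap R)S}$ simply fails in general); instead one identifies $\Ass_S(S/\sqrt{\p S})=f^{-1}\{\p\}$, rewrites the right-hand side of (c) as $\bigcap_{\q\in f^{-1}\{\p\}}\upbeta_\q(S,\psi)$, and uses $(\ast)$ (valid since (a)$\Rightarrow(\ast)$) together with incomparability and going-up to see that $\{\upbeta_\q(S,\psi):\q\in f^{-1}\{\p\}\}$ is precisely the set $f^{-1}\{\upbeta_\p(R,\phi)\}$ of primes of $S$ lying over $\upbeta_\p(R,\phi)$; then that intersection equals $\bigcap_{\mathfrak{r}\in f^{-1}\{\upbeta_\p(R,\phi)\}}\mathfrak{r}=\sqrt{\upbeta_\p(R,\phi)S}$, the left-hand side of (c). The main obstacle is thus conceptual rather than computational: getting the dictionary between the ``global'' condition (a) and the ``fibrewise'' conditions (b)--(e) exactly right, which means bootstrapping through (e)/$(\ast)$ and pinning the primes $\upbeta_\q(S,\psi)$ down by Cohen--Seidenberg incomparability rather than by a contract-then-extend manoeuvre that does not exist.
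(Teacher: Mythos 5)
Your proof is correct, but it is organized quite differently from the paper's. The paper proves the general statement \autoref{thm.FibrationsHomos} for an arbitrary homomorphism of Cartier pairs: it first gets (a)$\Leftrightarrow$(b) abstractly, by identifying $\upbeta$ with the universal quotient $\upalpha$ collapsing points with the same Cartier--Zariski closure (\autoref{pro.TopologicalDescriptionOFbETA}) and invoking \autoref{defprop quotient continuous map} and \autoref{lem.NaturalityImpliesContinuity}; it then chains (b)$\Rightarrow$(c)$\Rightarrow$(d)$\Rightarrow$(e)$\Rightarrow$(a) ideal-theoretically, upgrades the inclusions to equalities under Cohen--Seidenberg using going-down, and only afterwards tacks on the $\Schpec$-condition (e) via \autoref{defprop.Fibered}. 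You instead exploit the $F$-purity built into Theorem~A's hypotheses ($\phi$, hence $\psi$, surjective): every compatible ideal is radical with compatible minimal primes, $\CSpec=\Schpec$, and each $\upbeta_{\p}$ is prime. This lets you prove (a)$\Leftrightarrow$(e) first by decomposing $\fra=\bigcap_i\p_i$ and writing $\sqrt{\p_i S}$ as an intersection of primes lying over $\p_i$, and then reduce (b), (c), (d) to the fibrewise identity $\upbeta_{\q}\cap R=\upbeta_{\q\cap R}$ by incomparability and going-up alone; in particular you replace the quotient-topology lemma by the bare observation that continuity of $\CZpec\theta$ means exactly ``$\sqrt{\fra S}$ is compatible for compatible $\fra$,'' and you avoid going-down entirely by using (e) to know upfront that a prime $\mathfrak{r}$ over $\upbeta_{\p}$ is itself compatible. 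What the paper's route buys is generality (no $F$-purity assumed, $\upbeta$ versus $\upkappa$ and the non-$F$-pure locus handled separately) and the reusable naturality formalism of \autoref{sec.CategoricalFormulation}; what yours buys is a shorter, more elementary argument tailored to the normal $F$-pure case actually stated in Theorem~A, including your correctly flagged warning that (a)$\Rightarrow$(c) cannot be done by contracting $\upbeta_{\sqrt{\p S}}$ and re-extending.
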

We shall say that $(R,\phi) \subset (S,\psi)$ is a \emph{fibered} extension if any of the equivalent conditions in Theorem A hold. In such case, the Cohen--Seidenberg theorems hold for centers of $F$-purity due to (e). In particular, $\dim \Schpec(S,\psi)=\dim \Schpec(R,\phi)$ and so $(S,\psi)$ is $F$-regular if and only if so is $(R,\phi)$ (akin to \cite[Proposition 5.7]{AtiyahMacdonald}). Note that condition (b) resembles one of a natural transformation. We make it precise in \autoref{sec.CategoricalFormulation} by introducing a suitable category of fibered crepant extensions. 

It is worth noting that Speyer studied the extent to which condition (a) in Theorem A holds in arithmetic families; see \cite{SpeyerFrobeniusSplit}. His ideas and techniques have been of great value in our work.

We may then rephrase \cite[Theorem 5.12]{CarvajalStablerFsignaturefinitemorphisms} by St\"abler and C-R as follows. If $S/R$ is tamely ramified over $\p \in \Schpec(R,\phi)$ then every $\q$ lying over $\p$ belongs to $\Schpec(S,\psi)$ and $\Tr(\uptau_{\q}(S,\psi))=\uptau_{\p}(R,\phi)$. Here, $\uptau_{\p}(R,\phi)$ denotes the \emph{test ideal along a center of $F$-purity} as treated in \cite{SmolkinSubadditivity,TakagiAdjointIdealsAndACorrespondence}. Using the transitivity property of tame ramification (\autoref{pro.TransitivityTameRamificationGeneralCase}), our main result provides a converse as well as a further generalization.

\begin{mainthm*}[{\autoref{cor.MainTheorem}, \autoref{cor.GaloisCase}, \autoref{cor.MainResult}}] \label{thm.MainTheorem Intro}
With notation as above, the following statements are equivalent:
\begin{enumerate}
    \item $S/R$ is tamely ramified over every $\p \in \Schpec(R,\phi)$.
    \item $(R,\phi) \subset (S,\psi)$ is fibered and $\Tr(\uptau_{\q}(S,\psi))=\uptau_{\q \cap R}(R,\phi)$ for all $\q \in \Schpec(S,\psi)$.
    \item $(R,\phi) \subset (S,\psi)$ is fibered and $\Tr(\uptau_{\q}(S,\psi))=\uptau_{\q \cap R}(R,\phi)$ for all $\q \in \Schpec(S,\psi)$ of height $\geq 2$.
\end{enumerate}
These conditions imply that $\Tr \: S \to R$ is surjective and the converse holds if $L/K$ is Galois.
\end{mainthm*}

In \autoref{section contracting compatible stuff}, we provide as an application another way to show that compatible ideals are trace ideals on $\bQ$-Gorenstein rings from the Gorenstein case, which was done recently in \cite{PolstraSchwedeCompatibleIdealsGorensteinRings} by Polstra and Schwede.

Everything we have discussed so far is local in nature, so there is no harm nor loss of generality in working in the ring-theoretic setting. However, 
it all readily extends to general finite covers $f\: Y \to X$ between schemes by gluing on and restricting to affine charts.

\subsection{Acknowledgements} 
 The authors are extremely grateful to W\'agner~Badilla-C\'espedes, Anna~Brosowsky, Karl~Schwede, Karen~Smith, and Axel~St\"abler for very enlightening discussions and help in the preparation of this paper. We are also thankful to the anonymous referees for their helpful suggestions and corrections.

\section{Criteria for Separability} \label{sec.CriteriaForSeparability}

In this preliminary section, we isolate two useful yet rather non-standard characterizations of separability of field extensions. An arbitrary extension of fields $L/K$ is separable if it is formally smooth as a homomorphism of rings. If the characteristic is $p>0$, then $L/K$ is separable if and only if the $e$-th relative Frobenius homomorphism $F^e_{L/K} \: L \otimes_K K^{1/q} \to L^{1/q}$ is injective for some (equivalently all) $e\in \bN_+$. See \cite[\href{https://stacks.math.columbia.edu/tag/031U}{Tag 031U}]{stacks-project} for more on separability. 

\begin{lemma} \label{lem.LemmaForSeparability} Let $L/K$ and $K'/K$ be field extensions inside a common field $L'$ such that $L/K$ and $L'/K'$ are finite. Let $\xi \:L\otimes_K K' \to L'$ be the canonical homomorphism. Then, $\xi$ is injective if and only if there is a $0\neq \lambda \in \Hom_K(L,K)$ that extends to a map $\lambda'\in \Hom_{K'}(L',K')$.  
\end{lemma}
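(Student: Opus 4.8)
The statement is an "if and only if" about injectivity of the multiplication map $\xi \colon L \otimes_K K' \to L'$ versus extendability of a nonzero $K$-linear functional. I would prove the two directions separately, and the conceptual engine on both sides is the identification of $\Hom_K(L, K)$ with the $K$-dual of $L$ together with the fact that for finite extensions, $L'/K'$ and $L/K$ are each equipped with a nondegenerate or at least nonzero pairing whenever the extension is separable — but here we are not assuming separability a priori, so I must be careful to phrase things using only Hom-modules. The cleanest approach is to base-change: apply $\Hom_{K'}(-, K')$ and $-\otimes_K K'$ to the relevant finite-dimensional vector spaces and track what $\xi$ does.

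**The direction ($\xi$ injective $\Rightarrow$ extension exists).** Suppose $\xi \colon L \otimes_K K' \hookrightarrow L'$ is injective. Since $L$ is a finite-dimensional $K$-vector space, $L \otimes_K K'$ is a finite-dimensional $K'$-vector space, and $\xi$ exhibits it as a $K'$-subspace of the finite-dimensional $K'$-vector space $L'$. Pick any nonzero $\mu \in \Hom_K(L, K)$ at all (the dual of $L$ is nonzero since $L \neq 0$). Then $\mu \otimes_K K' \colon L \otimes_K K' \to K'$ is a nonzero $K'$-linear functional on the subspace $\xi(L\otimes_K K') \subseteq L'$; extend it arbitrarily to a $K'$-linear functional $\lambda' \colon L' \to K'$ (possible since we are extending a linear functional from a subspace of a finite-dimensional vector space). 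I claim $\lambda'$ restricts to $\mu$ on $L$: indeed $L \to L\otimes_K K' \to L'$ sends $x \mapsto \xi(x\otimes 1) = x$, and $(\mu\otimes K')(x \otimes 1) = \mu(x)$. Since $\mu(x) \in K$, this $\lambda' \in \Hom_{K'}(L',K')$ does extend $\mu =: \lambda$, and $\lambda \neq 0$. So this direction is essentially immediate once one notices that extending a functional off a subspace of a finite-dimensional space is free.

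**The direction (extension exists $\Rightarrow$ $\xi$ injective).** This is the substantive direction, and I expect it to be the main obstacle. Suppose $0 \neq \lambda \in \Hom_K(L,K)$ extends to $\lambda' \in \Hom_{K'}(L', K')$, meaning $\lambda'|_L = \lambda$ (viewing $L \subseteq L'$). I want to show $\ker \xi = 0$. The idea: for $z \in L \otimes_K K'$, consider the $K'$-bilinear form on $L\otimes_K K'$ given by $(a,b) \mapsto (\lambda \otimes K')(ab)$ — wait, $L \otimes_K K'$ need not be a ring unless I use the $K'$-algebra structure, which it has: $L\otimes_K K'$ is a finite $K'$-algebra and $\xi$ is a $K'$-algebra map to $L'$. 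The functional $\Lambda := \lambda \otimes_K K' \colon L \otimes_K K' \to K'$ satisfies $\Lambda = \lambda' \circ \xi$ on the image, more precisely: for $x\otimes c \in L\otimes_K K'$ we have $\xi(x\otimes c) = cx \in L'$ and $\lambda'(cx) = c\lambda'(x) = c\lambda(x) = \Lambda(x\otimes c)$, so indeed $\Lambda = \lambda' \circ \xi$. Now if $z \in \ker\xi$, then for every $w \in L\otimes_K K'$ we get $\Lambda(zw) = \lambda'(\xi(z)\xi(w)) = \lambda'(0) = 0$. So $z$ lies in the radical of the $K'$-bilinear pairing $\langle a,b\rangle := \Lambda(ab)$ on $L\otimes_K K'$. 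The crux is therefore: \textbf{this pairing is nondegenerate}, i.e. the radical is zero. And that follows from the corresponding fact downstairs: the pairing $\langle x,y\rangle_K := \lambda(xy)$ on $L$ over $K$ is nondegenerate — because its radical $\{x : \lambda(xy)=0 \ \forall y\} = \{x : \lambda(xL) = 0\}$ is an ideal of the field $L$, hence is $0$ or $L$; it is not all of $L$ since $\lambda \neq 0$; so it is $0$. Nondegeneracy of a bilinear form over $K$ is preserved by the flat base change $-\otimes_K K'$ (the Gram matrix, in any $K$-basis of $L$, has nonzero determinant, and this persists over $K'$). Hence the radical of $\langle-,-\rangle$ on $L\otimes_K K'$ is zero, forcing $z = 0$. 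Therefore $\xi$ is injective.

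**Where the difficulty concentrates.** The only place requiring genuine care is recognizing that the "trace-like" pairing $\lambda(xy)$ on $L$ is automatically nondegenerate purely because $L$ is a field and $\lambda \neq 0$ — no separability needed at this stage — and that base change along $K \to K'$ preserves nondegeneracy; the rest is bookkeeping with tensor products and the factorization $\Lambda = \lambda' \circ \xi$. I would also remark (perhaps in passing) why this lemma is exactly what is needed for the separability criteria: taking $K' = K^{1/q}$ and $L' = L^{1/q}$, the map $\xi$ becomes the relative Frobenius $F^e_{L/K}$, so its injectivity is separability of $L/K$, and the extendable-functional condition is precisely the hypothesis about extending an $F$-splitting-type map along Frobenius. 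No further machinery from the paper is invoked beyond the elementary linear algebra just described.
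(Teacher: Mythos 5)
Your proof is correct, but it takes a genuinely different route from the paper's, most notably in the hard direction. The paper runs a single duality argument: $\xi$ is injective if and only if its $K'$-dual $\Xi\colon \Hom_{K'}(L',K')\to \Hom_{K'}(L\otimes_K K',K')$ is surjective; it then identifies the target with $K'\otimes_K\Hom_K(L,K)$ (so that surjectivity of $\Xi$ means exactly that every $\lambda\in\Hom_K(L,K)$ extends) and finally uses that $\Hom_K(L,K)$ is a free $L$-module of rank $1$ generated by any nonzero element, so one nonzero extendable $\lambda$ suffices. You instead split the equivalence: the forward direction by extending a functional off the subspace $\xi(L\otimes_K K')\subseteq L'$ (which is the concrete face of the paper's dual-surjectivity), and the backward direction by showing that $\ker\xi$ lies in the radical of the pairing $(a,b)\mapsto(\lambda\otimes_K K')(ab)$, which is nondegenerate because the pairing $\lambda(xy)$ on the field $L$ is nondegenerate and nondegeneracy (nonvanishing of the Gram determinant) persists under base change to $K'$. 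The two arguments rest on the same underlying fact---your nondegeneracy of $\lambda(x\,\cdot\,)$ for $\lambda\neq 0$ is precisely the paper's rank-one freeness of $\Hom_K(L,K)$ over $L$---but your version avoids invoking the canonical isomorphism $\Hom_{K'}(L\otimes_K K',K')\cong K'\otimes_K\Hom_K(L,K)$ and makes the multiplicative (trace-form) structure explicit, while the paper's version is more uniform, delivering both implications and the ``one $\lambda$ iff all $\lambda$'' equivalence in one stroke. Your closing remark about $K'=K^{1/q}$, $L'=L^{1/q}$ recovering the relative Frobenius is exactly how the lemma is deployed in \autoref{cor.FirstCriterionSeparability}.
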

\begin{proof}
    By assumption, $\xi$ is a $K'$-linear map between finite dimensional $K'$-modules. Hence, $\xi$ is injective if and only if its $K'$-dual $\Xi \coloneqq \Hom_{K'}(\xi,K')$ is surjective. Observe that
     \[
    \Hom_{K'}(L',K') \xrightarrow{\Xi} \Hom_{K'}(L\otimes_K K',K') \xleftarrow{\cong} K' \otimes_K \Hom_{K}(L,K),
    \]
    where the displayed isomorphism is the canonical homomorphism, which is an isomorphism as $L/K$ is finite (see \cite[\href{https://stacks.math.columbia.edu/tag/0A6A}{Tag 0A6A}]{stacks-project}). Therefore, by $K'$-linearity, $\Xi$ is surjective if and only if for all $\lambda \in \Hom_K(L,K)$ there is $\lambda' \in \Hom_{K'}(L',K')$ such that $\Xi(\lambda') = 1 \otimes \lambda(=\lambda \otimes_K K')$. Note that $\Xi(\lambda') = 1 \otimes \lambda$ means that $\lambda'$ is an extension of $\lambda$. 

    Since $L/K$ is finite, the $L$-module $\Hom_K(L,K)$ is free of rank $1$ and is generated by any nonzero element in it. Therefore, every map in $\Hom_K(L,K)$ lifts to a map in $\Hom_{K'}(L',K')$ as long as a nonzero one does. The result then follows.
\end{proof}

\begin{corollary}[First criterion, {\cf \cite[Lemma 3.3, Proposition 5.2]{SchwedeTuckerTestIdealFiniteMaps}}] \label{cor.FirstCriterionSeparability}
    Let $L/K$ be an extension of $F$-finite fields of characteristic $p>0$. Then, $L/K$ is separable if and only if there is a nonzero map $\phi \in \Hom_K(K^{1/q},K)$ that extends to a map $\psi \in \Hom_{L}(L^{1/q},L)$. 
\end{corollary}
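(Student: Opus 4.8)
The plan is to deduce this from \autoref{lem.LemmaForSeparability} together with the relative Frobenius characterization of separability recalled above. Fix $e \in \bN_+$ and set $q = p^e$. Since $L/K$ is separable if and only if $F^e_{L/K}\colon L\otimes_K K^{1/q}\to L^{1/q}$ is injective, it suffices to prove that $F^e_{L/K}$ is injective if and only if there is a nonzero $\phi\in\Hom_K(K^{1/q},K)$ that extends to some $\psi\in\Hom_L(L^{1/q},L)$.

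To this end, I would apply \autoref{lem.LemmaForSeparability} to the two field extensions $K^{1/q}/K$ and $L/K$ inside the common field $L^{1/q}$; concretely, in the notation of that lemma one takes $L\rightsquigarrow K^{1/q}$, $K\rightsquigarrow K$, $K'\rightsquigarrow L$, and $L'\rightsquigarrow L^{1/q}$. The chains $K\subseteq L\subseteq L^{1/q}$ and $K\subseteq K^{1/q}\subseteq L^{1/q}$ make this legitimate, and the two finiteness hypotheses of the lemma hold precisely because $K$ and $L$ are $F$-finite: the first says $K^{1/q}/K$ is finite (i.e. $F^e_K$ is finite) and the second says $L^{1/q}/L$ is finite (i.e. $F^e_L$ is finite). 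The canonical homomorphism of the lemma is then $\xi\colon K^{1/q}\otimes_K L\to L^{1/q}$, $a\otimes b\mapsto ab$, which is identified with $F^e_{L/K}$ via the commutativity isomorphism $K^{1/q}\otimes_K L\cong L\otimes_K K^{1/q}$. Hence $\xi$ is injective if and only if $F^e_{L/K}$ is, and \autoref{lem.LemmaForSeparability} rewrites the injectivity of $\xi$ as the existence of a nonzero $\lambda\in\Hom_K(K^{1/q},K)$ extending to some $\lambda'\in\Hom_L(L^{1/q},L)$, which is exactly the asserted condition. Combined with the separability criterion, this proves the corollary.

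There is no real obstacle here; the only point requiring care is the bookkeeping in applying \autoref{lem.LemmaForSeparability}, namely arranging the symbols so that the base field is $K$, the role of the ``$L$'' in the lemma is played by the purely inseparable extension $K^{1/q}$ (so that the functional one produces genuinely lies in $\Hom_K(K^{1/q},K)$, as the statement demands) and the role of ``$K'$'' is played by $L$, and then checking that the lemma's canonical map coincides with the relative Frobenius $F^e_{L/K}$ after the usual commutativity identification of the two tensor factors. One should also observe that ``extends'' means the same thing in both places: $\psi\in\Hom_L(L^{1/q},L)$ extends $\phi\in\Hom_K(K^{1/q},K)$ exactly when $\psi(K^{1/q})\subseteq K$ and $\psi|_{K^{1/q}}=\phi$, which is the relation occurring in \autoref{lem.LemmaForSeparability}. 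Finally, since the displayed criterion for separability holds for one $e\in\bN_+$ if and only if it holds for all of them, the equivalence just established for a fixed $q=p^e$ yields the statement as phrased. Note that, pleasantly, this argument never requires $L/K$ itself to be finite.
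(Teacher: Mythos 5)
Your proposal is correct and is exactly the intended derivation: the paper states this as an immediate corollary of \autoref{lem.LemmaForSeparability}, applied with the substitutions you describe ($K^{1/q}$ in the role of the lemma's ``$L$'', the field $L$ in the role of ``$K'$'', and $L^{1/q}$ in the role of ``$L'$''), the canonical map $\xi$ being the relative Frobenius $F^e_{L/K}$ up to the swap of tensor factors. Your remarks that the $F$-finiteness of $K$ and $L$ supplies the two finiteness hypotheses of the lemma, and that $L/K$ itself need not be finite, are both accurate.
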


\begin{corollary}[Second criterion]\label{second criterion separability}
    Let $L/K$ be a finite extension of fields of characteristic $p>0$. Then, $L/K$ is separable if and only if there is $0 \neq \frT \in \Hom_{K}(L,K)$ such that $\frT$ commutes with Frobenius (\ie $\frT(l^p)=\frT(l)^p$ for all $l \in L$). In that case, $\Tr_{L/K} \cdot (\bF_{q} \cap L) \subset \Hom_K(L,K)$ is the set of maps commuting with the $e$-th iteration of Frobenius.
\end{corollary}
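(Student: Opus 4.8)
The plan is to reduce this to the First Criterion (\autoref{cor.FirstCriterionSeparability}) together with the characterization of separability via injectivity of the relative Frobenius, which in turn is \autoref{lem.LemmaForSeparability}. First I would address the ``only if'' direction. Suppose $L/K$ is separable. If $L/K$ is in addition $F$-finite, then taking $\phi = \Tr_{K^{1/q}/K}$ (or any nonzero $\phi \in \Hom_K(K^{1/q},K)$, noting $\Hom_K(K^{1/q},K)$ is free of rank one over $K^{1/q}$), \autoref{cor.FirstCriterionSeparability} gives a nonzero $\psi \in \Hom_L(L^{1/q},L)$ extending it. Precomposing with the $e$-th Frobenius isomorphism $L \xrightarrow{\ \cong\ } L^{1/q}$, $l \mapsto l^{1/q}$, turns $\psi$ into a nonzero additive map $\frT \: L \to L$; one checks that $L$-linearity of $\psi$ together with this precomposition yields exactly $\frT(l^p) = \frT(l)^p$, and that $\frT$ lands in $K$ because $\psi$ extends $\phi$ which is $K$-valued. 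For general (not necessarily $F$-finite) separable $L/K$, I would instead argue directly: separability means $F^e_{L/K}\: L \otimes_K K^{1/q} \to L^{1/q}$ is injective, so by \autoref{lem.LemmaForSeparability} (applied with $K' = K^{1/q}$, $L' = L^{1/q}$) there is a nonzero $\lambda \in \Hom_K(L^{1/q}, K)$ — wait, more carefully, \autoref{lem.LemmaForSeparability} produces a nonzero $\lambda \in \Hom_K(L,K)$ extending to $\lambda' \in \Hom_{K^{1/q}}(L^{1/q},K^{1/q})$; transporting $\lambda'$ along the Frobenius isomorphisms on both sides recovers $\lambda$ composed with $F$ on source and target, and the statement ``$\lambda'$ extends $\lambda$'' becomes precisely ``$\lambda(l^p) = \lambda(l)^p$'', so $\frT \coloneqq \lambda$ already commutes with Frobenius.

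For the ``if'' direction, suppose $0 \neq \frT \in \Hom_K(L,K)$ satisfies $\frT(l^p) = \frT(l)^p$. I would iterate to get $\frT(l^q) = \frT(l)^q$ for $q = p^e$, and then unwind this identity into the statement that $\frT$, viewed through the Frobenius isomorphisms $L \cong L^{1/q}$ and $K \cong K^{1/q}$, gives a nonzero map $\lambda' \in \Hom_{K^{1/q}}(L^{1/q}, K^{1/q})$ extending a nonzero $\lambda \in \Hom_K(L,K)$. By \autoref{lem.LemmaForSeparability}, $\xi\: L \otimes_K K^{1/q} \to L^{1/q}$ is injective, which is one of the standard characterizations of separability recalled at the start of the section. (Here $F$-finiteness is not needed, which is why the statement does not assume it.)

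Finally, for the last sentence: given separable $L/K$, I claim the set of $0 \neq \frT \in \Hom_K(L,K)$ commuting with $F^e$ is exactly $\Tr_{L/K}\cdot(\bF_q \cap L)$. One inclusion: $\Tr_{L/K}$ commutes with Frobenius (it is a sum over embeddings, and $p$-th powering is a ring homomorphism permuting the embeddings), and for $c \in \bF_q \cap L$ we have $(c\cdot\Tr_{L/K})(l^q) = c\,\Tr_{L/K}(l)^q = c^q\,\Tr_{L/K}(l)^q = \big(c\,\Tr_{L/K}(l)\big)^q$ using $c^q = c$; hence each such map commutes with $F^e$. Conversely, since $L/K$ is separable, $\Hom_K(L,K)$ is free of rank one over $L$ generated by $\Tr_{L/K}$, so any $\frT$ is of the form $\frT = c\cdot\Tr_{L/K}$ for a unique $c \in L$; imposing $\frT(l^q) = \frT(l)^q$ for all $l$ and using that $\Tr_{L/K}$ already commutes with $F^e$ and is surjective onto $K$, one extracts $c^q = c$, i.e.\ $c \in \bF_q \cap L$. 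The main obstacle I anticipate is purely bookkeeping: keeping straight the several Frobenius isomorphisms $L \cong L^{1/q}$, $K \cong K^{1/q}$ and verifying that ``$L$-linearity of a map $L^{1/q} \to L$'' corresponds, after this transport, precisely to ``commuting with $F^e$'' — the algebra is elementary but the identifications must be made consistently, and one must be careful that \autoref{lem.LemmaForSeparability} is being invoked with $K' = K^{1/q}$ sitting inside $L' = L^{1/q}$, which requires $L/K$ finite (given) but not $F$-finite.
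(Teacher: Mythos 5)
Your ``if'' direction and the final classification are correct and essentially match the paper: transporting $\frT$ along Frobenius gives $\frT^{1/q}\in\Hom_{K^{1/q}}(L^{1/q},K^{1/q})$ extending $\frT$, so \autoref{lem.LemmaForSeparability} applies; and for the last sentence one writes $\frT=\Tr_{L/K}\cdot c$ and extracts $c^q=c$. (Minor point there: what lets you pass from $\Tr_{L/K}(cl^q)=\Tr_{L/K}(c^ql^q)$ for all $l$ to $c=c^q$ is that $L^q$ spans $L$ over $K$ when $L/K$ is separable, not surjectivity of $\Tr_{L/K}$ onto $K$.)

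The ``only if'' direction, as you primarily argue it, has a genuine gap. \autoref{lem.LemmaForSeparability} produces a nonzero $\lambda\in\Hom_K(L,K)$ admitting an extension $\lambda'\in\Hom_{K^{1/q}}(L^{1/q},K^{1/q})$. Transporting $\lambda'$ back along Frobenius yields some $\tilde\lambda\in\Hom_K(L,K)$ with $\lambda'=\tilde\lambda^{1/q}$, and the condition ``$\lambda'$ extends $\lambda$'' translates to $\tilde\lambda(l^q)=\lambda(l)^q$ --- \emph{not} to $\lambda(l^q)=\lambda(l)^q$, because there is no reason for $\tilde\lambda$ to equal $\lambda$. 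Since $\Hom_K(L,K)$ is free of rank one over $L$, all one gets is $\tilde\lambda=\lambda\cdot t$ for some $t\in L^{\times}$, i.e.\ $\lambda(tl^q)=\lambda(l)^q$; commuting with Frobenius is precisely the additional claim that $t$ can be taken to be $1$, which does not follow from the mere existence of an extension. (Your alternative route via \autoref{cor.FirstCriterionSeparability} fails for a related reason: precomposing $\psi\in\Hom_L(L^{1/q},L)$ with $l\mapsto l^{1/q}$ produces a map $L\to L$ satisfying $\frT(a^ql)=a\frT(l)$, which is neither $K$-linear nor $K$-valued on all of $L$.) This is exactly the subtlety the paper's proof isolates, and the paper closes the forward direction by a different, direct argument: for separable $L/K$ the trace itself commutes with Frobenius, being a sum of field embeddings (Speyer's lemma). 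You state and correctly justify this fact in your final paragraph, so the proof is repaired by promoting that observation to be the proof of the forward implication; but the two arguments you actually offer for ``separable implies existence'' do not work as written.
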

\begin{proof}
    By \autoref{lem.LemmaForSeparability}, $L/K$ is separable if and only if for all $\frT \in \Hom_K(L,K)$ there is $\tilde{\frT} \in \Hom_K(L, K)$ such that $\tilde{\frT}^{1/p} \in \Hom_{K^{1/p}}(L^{1/p},K^{1/p})$ is an extension of $\frT$. In such cases, since $F_{L/K}^e$ is an injective $K^{1/q}$-linear map between $K^{1/q}$-modules of dimension $[L:K]<\infty$, this implies that $F_{L/K}$ and its $K^\unp$-dual are isomorphisms and so $\tilde{\frT}$ is uniquely determined by $\frT$. If $\frT \neq 0$, then $\tilde{\frT}=\frT \cdot t$ for a uniquely determined $0 \neq t \in L$. The condition that $\tilde{\frT}^{1/p}=\frT^{1/p} \cdot t^{1/p}$ lifts $\frT$ means that $\frT(l)^p=\frT(t l^p)$  for all $l \in L$.

    Summing up, $L/K$ is separable if and only if there is $0 \neq \frT \in \Hom_{K}(L,K)$ for which there is a (necessarily) unique $0 \neq t \in L$ such that $\frT(l)^p=\frT(t l^p)$ for all $l \in L$. Note that $\frT$ commuting with Frobenius means that we must take $t=1$.

    The above shows that the existence of a  map $\frT\neq 0$ commuting with Frobenius implies separability of $L/K$. For the converse, note that if $L/K$ is separable then the trace map $\Tr_{L/K} \in \Hom_{K}(L,K)$ is a nonzero map that commutes with Frobenius (\cite[Lemma 6]{SpeyerFrobeniusSplit}). 

    For the final statement, observe that every $\frT \in \Hom_K(L,K)$ is of the form $\Tr_{L/K} \cdot l$ for some unique $l \in L$. One readily sees that $\Tr_{L/K} \cdot l$ commutes with the $e$-th iteration of Frobenius if and only if $\Tr_{L/K}(lx^{q})=\Tr_{L/K}(l^{q}x^{q})$ for all $x\in L$. Now, since $F_{L/K}$ is surjective, this further implies that $\Tr_{L/K}(lx)=\Tr_{L/K}(l^{q}x)$ for all $x\in L$. The uniqueness of $l$ then yields $l=l^{q}$ whence $l \in \bF_{q} \cap L$. See also \cite[Proposition 4.4]{SchwedeTuckerTestIdealFiniteMaps}.
\end{proof}

\section{Transpositions of Ideals and Tame Ramification}\label{section tame ramification}

We introduce next a notion of tame ramification along a section of the canonical module of a finite cover. We do this via an operation we call \emph{transposition of ideals}. We develop a general theory of transpositions and tame ramification first. As stated in the introduction, our theory is versatile and this general framework is the key to effortlessly amalgamate the notion of (separable) tame ramification with $F$-singularities. Moreover, having such a broad framework lets us treat other (less severe) purely inseparable covers such as cyclic covers by $p$-torsion divisor classes as in \cite{CarvajalFiniteTorsors}, which we do in \autoref{setting workout example} and \autoref{section contracting compatible stuff}. We hope it will find further applications in commutative algebra, algebraic geometry, and number theory.

\begin{notation} 
We let $I(R)$ denote the bounded lattice of ideals of a ring $R$ and $\mathbf{0},\mathbf{1}\in I(R)$ denote the zero and non-proper ideals; respectively. For $\fra \in I(R)$, we let $ Z_{\fra} \coloneqq \bigcup_{\p \in \Ass_R R/\fra } \p$ denote the set of zerodivisors modulo $\fra$ and $W_{\fra} \coloneqq R \setminus Z_{\fra}$ denote the multiplicative subset of regular elements modulo $\fra$. We let $\sK(R)\coloneqq W_{\bm{0}}^{-1} R$ denote the total ring of fractions of $R$. We let $\kappa(\p)$ denote the residue field at $\p \in \Spec R$. If $R$ is an integral domain and $L/\sK(R)$ is a field extension, we denote by $\bar{R}^L$ the normalization of $R$ in $L$. By abuse of notation, given a ring homomorphism $\theta \: R \to S$, we denote by $\frab \cap R$ the contraction $\theta^{-1}(\frab)$ of an ideal $\frab \in I(S)$. Likewise, we write the extension of an ideal $\fra \in I(R)$ to $S$ by $\fra S$. 
\end{notation}

We start off with the notion of transpositions of ideals and non-degeneracy. Our notion of tame ramification is essentially the good behavior of this transposition operation.

\begin{definition}[Transposition of ideals and degeneracy] \label{def.TranspositionOfIdeals}
    Let $\theta \: R \to S$ be a finite homomorphism of rings such that $\ker \theta \subset \sqrt{\mathbf{0}}$ and $f\: Y\to X$ be the corresponding \emph{finite cover}. Moreover, let $\mathfrak{T}$ be an element of the $S$-module $\omega_{f} \coloneqq \omega_{\theta} \coloneqq  \omega_{S/R}\coloneqq \Hom_R(S,R)$. 
    \begin{enumerate}
    \item For $\frab \in I(S)$, we let $\frab_{\frT} \in I(R)$ denote the image of $\frab$ along $\frT$. That is, $\frab_\frT\coloneqq \frT(\frab)$.
    \item For $\fra\in I(R)$, we define its \emph{$\frT$-transpose ideal $\fra^{\frT} \in I(S)$} as the largest ideal of $S$ whose image under $\frT$ is contained in $\fra$. That is, $\fra^{\frT} \coloneqq  \{s \in S \mid \langle s \rangle_{\frT} = \frT(sS) = (\frT\cdot s)(S) \subset \fra\}$.
    \item We say that $\frT$ is \emph{non-degenerate} along $\fra \in I(R)$ if $\mathbf{1}_{\frT}\not\subset Z_{\fra}$, \ie $V(\mathbf{1}_{\frT}) \cap \Ass_{R} R/\fra = \emptyset$. When $\fra=\mathbf{0}$, we simply say that $\frT$ is non-degenerate.
    \item We say that $\frT$ is \emph{non-singular (resp. strongly non-degenerate)} if the $S$-linear map $\sigma\: S \to \omega_{f}$ given by $1\mapsto \frT$ is an isomorphism (resp. injective).
\end{enumerate}
\end{definition}

\begin{remark}[On non-degeneracy] \label{rem.OnDegeneracy}
    With notation as in \autoref{def.TranspositionOfIdeals}, observe that $\frT$ is non-degenerate along $\fra$ if and only if $W^{-1}_{\fra}\frT$ is surjective. Hence, $\frT$ is non-degenerate if and only if $\sK(R) \otimes_R \frT \: \sK(S) \to \sK(R)$ is surjective.\footnote{Since $\theta$ is integral, $\sK(R) \otimes_R S = \sK(S)$.} In particular, if $\frT$ is non-degenerate then the ideal $\mathbf{0}^{\frT} = \ker \sigma$ is contained in the set of zerodivisors of $S$. The converse holds, for instance, if $R$ is an integral domain (\ie $\sK(R)$ is a field).
   
   On the other hand, $\frT$ defines a symmetric $R$-bilinear form on $S$; namely $\frT(-\cdot-)$. Our notion of strong non-degeneracy coincides with such bilinear form being non-degenerate (\ie $\mathbf{0}^{\frT}=\mathbf{0}$). We shall see below in \autoref{pro.BasicPropertiesTranspositions} that our condition of non-degeneracy means $\mathbf{0}^{\frT}\cap R=\mathbf{0}$. These notions coincide if $S$ and $R$ are both integral domains. Noteworthily, our definition of non-singularity is the same as the one from the corresponding bilinear form.
\end{remark}

We now establish the basic properties of transpositions.

\begin{proposition}[Basic properties] \label{pro.BasicPropertiesTranspositions}
With notation as in \autoref{def.TranspositionOfIdeals}, let $W \subset R$ be a multiplicative subset, $\fra \in I(R)$, $\frab \in I(S)$, and subsets $A\subset I(R)$, $B \subset I(S)$. Then:
\begin{enumerate}
\item \label{pro.TransposesLocalization}  
$\fra^{\frT} W^{-1}S = (\fra W^{-1}R)^{W^{-1}\frT}$.
\item $(\bigcap_{\fra \in A} \fra)^{\frT} = \bigcap_{\fra \in A} \fra^{\frT}$ and $(\sum_{\frab \in B} \frab)_{\frT} = \sum_{\frab \in B} \frab_{\frT}$.
 \item $\frab \subset \fra^{\frT}$ if and only if $\frab_{\frT} \subset \fra$.
 \item Consequently: $V(\frab_{\frT}) = \big\{\p \in X \mid \frab \subset \p^{\frT}\big\}$, $\fra^{\frT}\neq \mathbf{1}$  if and only if $\mathbf{1}_{\frT} \not\subset \fra$, and the locus of degeneracy for $\frT$ is $V(\mathbf{1}_{\frT}) = \big\{\p \in X \mid \p^{\frT} = \mathbf{1}\big\}$.
 \item \label{key.Item} $
\fra \subset \fra^{\frT} \cap R = \fra : \mathbf{1}_{\frT}$ and the inclusion is an equality if and only if $\frT$ is non-degenerate along $\fra$.
 \end{enumerate}   
 \end{proposition}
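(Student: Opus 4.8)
The plan is to dispatch the five parts in order, since (4) is a formal consequence of (1)--(3) and (5) reduces to a standard fact about associated primes. For (1), I would realize $\fra^{\frT}$ as a kernel: it is $\ker\bigl(\sigma_{\fra}\colon S\to\Hom_R(S,R/\fra)\bigr)$, where $\sigma_{\fra}(s)$ is the composite $(\frT\cdot s)\bmod\fra$. Because $\theta$ is finite and all rings are noetherian, $S$ is a finitely presented $R$-module, so $\Hom_R(S,-)$ commutes with the flat localization $R\to W^{-1}R$; since forming kernels is also compatible with flat base change and $W^{-1}(R/\fra)=W^{-1}R/\fra W^{-1}R$, one gets $W^{-1}(\fra^{\frT})=\ker(W^{-1}\sigma_{\fra})=(\fra W^{-1}R)^{W^{-1}\frT}$. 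Part (2) is immediate from the set-theoretic descriptions: $s\in(\bigcap_{\fra\in A}\fra)^{\frT}$ iff $\frT(sS)\subseteq\fra$ for all $\fra\in A$, and $\frT$ being additive gives $\frT(\sum_{\frab\in B}\frab)=\sum_{\frab\in B}\frT(\frab)$.

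Part (3) is the Galois-connection statement and is the linchpin for (4). If $\frab\subseteq\fra^{\frT}$, then $\frab_{\frT}=\frT(\frab)\subseteq\frT(\fra^{\frT})\subseteq\fra$, the last inclusion because every $s\in\fra^{\frT}$ satisfies $\frT(s)=\frT(s\cdot 1)\in\fra$. Conversely, if $\frab_{\frT}\subseteq\fra$, then for each $b\in\frab$ we have $\frT(bS)\subseteq\frT(\frab)=\frab_{\frT}\subseteq\fra$ since $\frab$ is an ideal, so $b\in\fra^{\frT}$. From (3), part (4) follows directly: $V(\frab_{\frT})=\{\p\mid\frab_{\frT}\subseteq\p\}=\{\p\mid\frab\subseteq\p^{\frT}\}$; and $\fra^{\frT}=\mathbf{1}$ iff $1\in\fra^{\frT}$ iff $\mathbf{1}_{\frT}=\frT(S)\subseteq\fra$, which both characterizes $\fra^{\frT}\neq\mathbf{1}$ and, applied to $\fra=\p$, identifies $V(\mathbf{1}_{\frT})$ with the degeneracy locus.

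For (5), the inclusion and the colon identity come from one $R$-linearity computation: for $r\in R$, $\frT(\theta(r)S)=r\,\frT(S)=r\,\mathbf{1}_{\frT}$, so $r\in\theta^{-1}(\fra^{\frT})$ iff $r\,\mathbf{1}_{\frT}\subseteq\fra$ iff $r\in(\fra:\mathbf{1}_{\frT})$; in particular $\fra\subseteq(\fra:\mathbf{1}_{\frT})=\fra^{\frT}\cap R$. It remains to decide when $(\fra:\mathbf{1}_{\frT})=\fra$. Setting $M\coloneqq R/\fra$, one has $(\fra:\mathbf{1}_{\frT})/\fra=(0:_M\mathbf{1}_{\frT})$, and this submodule vanishes iff $\mathbf{1}_{\frT}$ contains a non-zerodivisor on $M$, i.e.\ iff $\mathbf{1}_{\frT}\not\subseteq Z_{\fra}=\bigcup_{\p\in\Ass_R M}\p$: if $\mathbf{1}_{\frT}$ lies in this finite union of primes it lies in one of them, say $\p=\Ann(y)$ with $0\neq y\in M$, forcing $y\in(0:_M\mathbf{1}_{\frT})$; conversely, a non-zerodivisor in $\mathbf{1}_{\frT}$ annihilates $(0:_M\mathbf{1}_{\frT})$. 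By definition, $\mathbf{1}_{\frT}\not\subseteq Z_{\fra}$ is exactly non-degeneracy of $\frT$ along $\fra$, which finishes (5).

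I do not anticipate a serious obstacle; the only points demanding care are the finite-presentation input in (1), needed to commute $\Hom_R(S,-)$ past localization, and, in (5), correctly invoking the associated-primes criterion for triviality of the colon ideal $(\fra:\mathbf{1}_{\frT})$. One should also stay consistent with the notational conventions, reading $\fra^{\frT}\cap R$ as $\theta^{-1}(\fra^{\frT})$ and the inclusion $\fra\subseteq\fra^{\frT}$ through the map $\theta$.
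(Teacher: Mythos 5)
Your proof is correct and, for the only part the paper actually argues (part (e)), it follows the same route: the identity $\fra^{\frT}\cap R=\fra:\mathbf{1}_{\frT}$ from $R$-linearity of $\frT$, and the equality criterion via prime avoidance and the associated-prime description of $Z_{\fra}$. The remaining parts are dismissed in the paper as immediate from the definitions; your elaborations (including the $\Hom$-and-flat-base-change argument for the localization statement, which is more machinery than strictly needed but perfectly valid) fill these in correctly.
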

 \begin{proof}
     We only explain (e) as the other statements are direct from the definitions. Note that $\frT(as)=a\frT(s)\in \fra $ for all $a\in \fra$. Hence, $\fra \subset \fra^{\frT} \cap R$ and $\fra^{\frT} \cap R = \fra : \mathbf{1}_{\frT}$.
     In general, $\fra \subset \fra : \frab$ is an equality if and only if $\frab \not\subset Z_{\fra}$. Indeed, if $\frab$ contains an element $w \in W_{\fra}$ then $rw \in \fra$ and so $r \in \fra$ for all $r \in \fra : \frab$. Conversely, suppose that $\frab \subset Z_{\fra}$ so that $\frab$ is contained in an associated prime of $\fra$, say $\fra: x$. Then $x\not\in \fra$ but $x \in \fra:\frab$. 
 \end{proof}

The property of transitivity of transpositions in towers, which we now state,
will be the key to study the behavior of centers of $F$-purity in finite covers (see  \autoref{pro.betaCommutesWithTransposeAndApplications}). 

\begin{proposition}[Transitivity] \label{pro.TransitivityTransposition}
    Let $R \to S \to T$ be a tower of finite covers and let $\frT_{S/R} \in \omega_{S/R}$, $\frT_{T/S}\in \omega_{T/S}$. Set $\frT_{T/R} \coloneqq \frT_{S/R} \circ \frT_{T/S} \in \omega_{T/R}$. Then, $\fra^{\frT_{T/R}} = (\fra^{\frT_{S/R}})^{\frT_{T/S}}$ for all $\fra \in I(R)$.
\end{proposition}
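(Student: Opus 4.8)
The statement is a purely formal consequence of the adjunction recorded in \autoref{pro.BasicPropertiesTranspositions}(c). The plan is to use that, for any finite cover $A\to B$ and any $\frT\in\omega_{B/A}$, the transpose $\fra^{\frT}$ is characterized as the \emph{unique} ideal $X\in I(B)$ satisfying
\[
\frab\subseteq X\iff\frT(\frab)\subseteq\fra\qquad\text{for all }\frab\in I(B),\ \fra\in I(A)
\]
(uniqueness because if $X,Y$ both satisfy this then $X\subseteq Y$ and $Y\subseteq X$). So it suffices to check that $(\fra^{\frT_{S/R}})^{\frT_{T/S}}$ satisfies this characterizing property for the cover $R\to T$ and the map $\frT_{T/R}$; then it must coincide with $\fra^{\frT_{T/R}}$, which satisfies it by \autoref{pro.BasicPropertiesTranspositions}(c).

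First I would note that the setup is legitimate: $\frT_{T/R}=\frT_{S/R}\circ\frT_{T/S}$ is $R$-linear (as asserted already in the statement), $R\to T$ is again a finite cover — it is module-finite as a composite of module-finite maps, and its kernel is the preimage under $R\to S$ of $\ker(S\to T)\subseteq\sqrt{\mathbf{0}}$, hence contained in $\sqrt{\mathbf{0}}$ — so $\fra^{\frT_{T/R}}$ is defined. Then, for an arbitrary $\frc\in I(T)$, the argument runs
\begin{align*}
\frc\subseteq(\fra^{\frT_{S/R}})^{\frT_{T/S}}
&\Longleftrightarrow \frT_{T/S}(\frc)\subseteq\fra^{\frT_{S/R}}\\
&\Longleftrightarrow \frT_{S/R}\bigl(\frT_{T/S}(\frc)\bigr)\subseteq\fra\\
&\Longleftrightarrow \frT_{T/R}(\frc)\subseteq\fra\\
&\Longleftrightarrow \frc\subseteq\fra^{\frT_{T/R}},
\end{align*}
where the first equivalence is \autoref{pro.BasicPropertiesTranspositions}(c) for the cover $T/S$ and the map $\frT_{T/S}$ (with the ideal $\fra^{\frT_{S/R}}\in I(S)$ playing the role of the base ideal), the second is \autoref{pro.BasicPropertiesTranspositions}(c) for the cover $S/R$ and the map $\frT_{S/R}$ (with the ideal $\frT_{T/S}(\frc)\in I(S)$), the third is the definition of $\frT_{T/R}$ together with $\frT_{S/R}(\frT_{T/S}(\frc))=\frT_{T/R}(\frc)$, and the last is \autoref{pro.BasicPropertiesTranspositions}(c) for the cover $T/R$. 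Taking $\frc$ equal to each side in turn gives $(\fra^{\frT_{S/R}})^{\frT_{T/S}}=\fra^{\frT_{T/R}}$.

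I do not expect any genuine obstacle here; the statement is soft. The only points meriting a word of care are that $\frT_{T/S}(\frc)$ is an honest ideal of $S$ — it is the image of an $S$-submodule of $T$ under an $S$-linear map, hence an $S$-submodule of $S$ — so that \autoref{pro.BasicPropertiesTranspositions}(c) is applicable at the second step, and the functoriality of images under composition $\frT_{S/R}(\frT_{T/S}(\frc))=\frT_{T/R}(\frc)$; both are immediate. Alternatively, one could avoid the lattice-theoretic phrasing and argue directly from the defining formula $\fra^{\frT}=\{s\mid\frT(sS)\subseteq\fra\}$, but the version above is cleaner and makes transparent that this is just transitivity of an adjunction composed of two adjunctions.
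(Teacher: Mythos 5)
Your proof is correct and follows essentially the same route as the paper's: both arguments reduce the equality to the adjunction $\frab\subset\fra^{\frT}\Leftrightarrow\frab_{\frT}\subset\fra$ of \autoref{pro.BasicPropertiesTranspositions} applied to each of the three covers, together with functoriality of images under composition. Your phrasing via a universal test ideal $\frc$ packages the two inclusions the paper checks separately into a single chain of equivalences, but the content is identical.
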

\begin{proof}
 By \autoref{pro.BasicPropertiesTranspositions}, the inclusion ``$\subset$'' is equivalent to $\frT_{T/S}\big(\fra^{\frT_{T/R}} \big) \subset \fra^{\frT_{S/R}}$ and further to $\frT_{S/R}\big(\frT_{T/S}\big(\fra^{\frT_{T/R}} \big)  \big)  \subset \fra$,  which follows from $\frT_{T/R} = \frT_{S/R} \circ \frT_{T/S} $. Conversely, letting $\frab$ denote the ideal on the right hand side of the equality, the inclusion ``$\supset$'' is equivalent to $\frT_{T/R}(\frab) \subset \fra$, which follows from: $\frT_{T/R}(\frab) = \frT_{S/R}\big( \frT_{T/S} (\frab) \big) \subset  \frT_{S/R}\big( \fra^{\frT_{S/R}} \big) \subset \fra$. 
\end{proof}

\begin{notation} \label{notation.Ranks}
   We introduce next some notation for restricting maps $\frT$ on fibers and residue fields. With notation as in \autoref{def.TranspositionOfIdeals}, let $\p \in X$. We will let $Y_{\p}\coloneqq f^{-1}\{\p\} \subset Y$ denote the fiber at $\p$. By incomparability and going-up, the points of $Y_{\p}$ are the minimal/associated primes of $\pS$. We consider the artinian $\kappa(\p)$-algebras: $S_{\kappa(\p)} \coloneqq S \otimes_R \kappa(\p) = S_{\p}/\p S_{\p}$, $\bar{S}_{\kappa(\p)}\coloneqq (S_{\kappa(\p)})_{\mathrm{red}}=S_{\p}/\sqrt{\p S_{\p}}=\prod_{\q \in Y_{\p}} \kappa(\q)$, and $S_{\kappa(\p)}^{\frT} \coloneqq  S_{\p}/\p^{\frT}S_{\p}$. We let $\mu(\p),\rho(\p),\eta(\p) \in \bN$ denote their respective $\kappa(\p)$-dimensions. Of course, $\mu(\p)\geq \rho(\p),\eta(\p)$ as $\p S$ is a sub-ideal of both $\sqrt{\p S}$ and $\p^{\frT}$. When $\sqrt{\pS} \subset \p^{\frT}$ (\eg $\p^{\frT}$ is radical), $\rho(\p) \geq \eta(\p)$ and $\frT_{\kappa(\p)} \coloneqq \frT \otimes \kappa(\p) \: S_{\kappa(\p)} \to \kappa(\p)$ factors as
 \[
\frT_{\kappa(\p)} \: S_{\kappa(\p)} \xrightarrow{\mathrm{can}} \bar{S}_{\kappa(\p)}  \xrightarrow{\bar{\frT}_{\p}} \kappa(\p),
 \]
 which let us define the following $\kappa(\p)$-linear maps for all $\q \in Y_{\p}$:
 \[
 \bar{\frT}_{\q} \:\kappa(\q) \xrightarrow{\mathrm{can}}  \bar{S}_{\kappa(\p)} \xrightarrow{\bar{\frT}_{\p}} \kappa(\p).
 \]
\end{notation}
\begin{remark} \label{rem.BeingExplictAboutT_q}
     We may describe $\bar{\frT}_{\q}$ explictly as follows. Let $s \in \kappa(\q)$, which we may think of as the class modulo $\q S_{\p}$ of an element $s \in S_{\p}$. By the Chinese reminder theorem, there is $s'\in S_{\p}$ such that $s' \equiv s \bmod \q S_{\p}$ and $s'\equiv 0 \bmod \q'S_{\p}$ for all $\q \neq \q'\in Y_{\p}$. Further, any such $s'$ is unique modulo $\sqrt{\p S_{\p}}$. Hence, $\frT_{\p}(s')$ is well-defined modulo $\p$ and $\bar{\frT}_{\q}(s)$ is such a class. In particular, since ``$(s^n)'=(s')^n$,'' we have that $\bar{\frT}_{\q}(s^n)$ equals the class of $\frT(s'^n)$ in $\kappa(\p)$.
\end{remark}

\begin{proposition}[{\cite[Lemma 3.6]{BlickleSchwedeTuckerFSigPairs1}}] \label{prop.OnterpretationOfLength}
   With notation as in \autoref{notation.Ranks}, $\eta(\p)$ is the largest $n \in \bN$ for which there is an exact sequence of $R_{\p}$-modules $S_{\p}\to R_{\p}^{\oplus n}  \to 0$ such that the corresponding $n$ projections $S_{\p} \to R_{\p}$ are in $\frT_{\p} \cdot S_{\p}$. In particular, if $\frT_{\p}$ is non-singular then $\eta(\p)$ is the free-rank of $S_{\p}$ as an $R_{\p}$-module.
\end{proposition}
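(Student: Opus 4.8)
The plan is to convert the statement into a short linear-algebra computation over the residue field $\kappa(\p)$, using Nakayama's lemma to pass back and forth. First I would localize at $\p$ and invoke that transposition commutes with localization (\autoref{pro.BasicPropertiesTranspositions}) to reduce to the case where $(R,\fram,\kappa)$ is local with $\p=\fram$ and $\kappa=\kappa(\p)$; in this situation $\mu(\p)=\dim_\kappa S/\fram S$, $\eta(\p)=\dim_\kappa S/\fram^{\frT}$, and $\frT_{\kappa(\p)}\colon S_{\kappa(\p)}=S/\fram S\to\kappa$ is the reduction of $\frT$. The decisive elementary observation is this: an $R$-linear map $\varphi\colon S\to R^{\oplus n}$ whose $n$ component maps all lie in $\frT\cdot S$, say $\varphi=(\frT\cdot s_1,\dots,\frT\cdot s_n)$ with $s_i\in S$, is surjective precisely when its reduction $\bar\varphi\colon S_{\kappa(\p)}\to\kappa^{\oplus n}$ is surjective (Nakayama, since $S$ is module-finite over the local ring $R$), and $\bar\varphi$ is surjective precisely when the $n$ functionals $\overline{\frT\cdot s_i}=\bigl(t\mapsto\frT_{\kappa(\p)}(\bar s_i t)\bigr)$ are $\kappa$-linearly independent in the $\kappa$-linear dual of $S_{\kappa(\p)}$.

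It follows that the largest admissible $n$ equals $\dim_\kappa V$, where $V$ is the $\kappa$-span of $\{\,\overline{\frT\cdot s}\mid s\in S\,\}$, equivalently the image of the $\kappa$-linear map $\mu\colon S_{\kappa(\p)}\to\Hom_\kappa(S_{\kappa(\p)},\kappa)$, $\bar s\mapsto\bigl(t\mapsto\frT_{\kappa(\p)}(\bar s t)\bigr)$. I would then identify $\ker\mu$: a class $\bar s$ lies in it iff $\frT_{\kappa(\p)}(\bar s\,S_{\kappa(\p)})=0$, i.e. (for any lift $s\in S$) iff $\frT(sS)\subseteq\fram$, which by the very definition of the transpose says exactly $s\in\fram^{\frT}$; hence $\ker\mu=\fram^{\frT}/\fram S$. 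Since $\fram S\subseteq\fram^{\frT}$, the rank--nullity formula yields $\dim_\kappa V=\mu(\p)-\dim_\kappa(\fram^{\frT}/\fram S)=\mu(\p)-(\mu(\p)-\eta(\p))=\eta(\p)$. To check the maximum is attained, I would pick $\bar t_1,\dots,\bar t_{\eta(\p)}\in S_{\kappa(\p)}$ whose images under $\mu$ form a $\kappa$-basis of $V$, lift them to $t_j\in S$, and observe that $(\frT\cdot t_1,\dots,\frT\cdot t_{\eta(\p)})\colon S\to R^{\oplus\eta(\p)}$ is surjective by Nakayama and has all components in $\frT\cdot S$.

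For the ``in particular'' clause: if $\frT_\p$ is non-singular then $\sigma\colon S_\p\to\omega_{S_\p/R_\p}=\Hom_{R_\p}(S_\p,R_\p)$, $1\mapsto\frT_\p$, is an isomorphism (see \autoref{def.TranspositionOfIdeals}), so $\frT_\p\cdot S_\p$ is all of $\Hom_{R_\p}(S_\p,R_\p)$; the condition that the $n$ projections lie in $\frT_\p\cdot S_\p$ is then automatic, so $\eta(\p)$ is the largest $n$ admitting any surjection $S_\p\twoheadrightarrow R_\p^{\oplus n}$, which equals the free-rank of $S_\p$ over $R_\p$ since such a surjection onto a free module splits. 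I expect no serious obstacle: the one point needing care is the identification $\ker\mu=\fram^{\frT}/\fram S$, which is just unwinding the definition of the transpose modulo $\fram$ together with \autoref{pro.BasicPropertiesTranspositions}; the rest is Nakayama plus rank--nullity. (In fact the reflexivity/$\textbf{S}_2$ hypothesis is not needed for this argument; it is recorded because it is the setting in which $\eta(\p)$ gets used later.)
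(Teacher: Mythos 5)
Your argument is correct. Note first that the paper does not supply its own proof of this proposition---it is quoted from \cite[Lemma 3.6]{BlickleSchwedeTuckerFSigPairs1} with no proof environment---so there is nothing in the text to compare against; judged on its own, your Nakayama-plus-rank--nullity computation is a complete, self-contained verification. The two points that need care both check out: the identification $\ker\mu=\fram^{\frT}/\fram S_\p$ uses only the definition of the transpose together with the containment $\fram S_\p\subset\fram^{\frT}$ from \autoref{pro.BasicPropertiesTranspositions}, which is also what makes the condition independent of the chosen lift of $\bar s$; and the passage from surjectivity of $(\frT\cdot s_1,\dots,\frT\cdot s_n)$ over $R_\p$ to linear independence of the residual functionals $\mu(\bar s_i)$ is exactly Nakayama applied to the finitely generated modules $S_\p$ and $R_\p^{\oplus n}$. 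Since the image of the linear map $\mu$ is already a subspace, the maximal admissible $n$ is its dimension, which rank--nullity computes to be $\eta(\p)$, and the ``in particular'' clause follows as you say because non-singularity makes $\frT_\p\cdot S_\p$ all of $\omega_{S_\p/R_\p}$ and a surjection onto a free module splits. Your closing observation also appears to be right: reflexivity of $S_\p$ is never invoked, because the statement only concerns the cyclic submodule $\frT_\p\cdot S_\p$ of the dual and everything reduces to linear algebra in $\Hom_{\kappa(\p)}(S_{\kappa(\p)},\kappa(\p))$; the hypothesis is presumably carried over from the setting of the cited lemma, where one works with the full module $\Hom_{R_\p}(S_\p,R_\p)$. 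One cosmetic point: you use the symbol $\mu$ both for the numerical invariant $\mu(\p)$ of \autoref{notation.Ranks} and for the $\kappa(\p)$-linear map $S_{\kappa(\p)}\to\Hom_{\kappa(\p)}(S_{\kappa(\p)},\kappa(\p))$; rename one of them.
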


We describe next the associated primes of the transposition of a prime ideal. 

\begin{proposition}[On associated primes] \label{MinimalPrimesTranspostions}
    Working with notation as in \autoref{def.TranspositionOfIdeals} and \autoref{notation.Ranks}, suppose that $\frT$ is non-degenerate along $\p \in X$. Then:
    \begin{enumerate} 
     \item $\Ass_S S/\p^{\frT} \subset Y_{\p}$ and so $\p^{\frT}$ has no embedded primes.
    \item The minimal primes of $\p^{\frT}$ are those $\q \in Y_{\p}$ for which there is $s \in S$ such that $\frT(s\q)\subset \p$ but $\frT(sS)\not\subset \p$.
    \item If $\sqrt{\pS} \subset \p^{\frT}$, the minimal primes of $\p^{\frT}$ are those $\q \in Y_{\p}$ such that $ \bar{\frT}_{\q} \neq 0$.
    \end{enumerate}
\end{proposition}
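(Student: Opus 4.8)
The plan is to prove (a) directly from the definition of the transpose, deduce (b) from (a) by a routine associated-prime argument, and then obtain (c) from (b) by passing to the fibre ring $\bar S_{\kappa(\p)}=\prod_{\q\in Y_\p}\kappa(\q)$, where the transposition becomes completely explicit. Throughout I will use two elementary translations, both immediate from \autoref{def.TranspositionOfIdeals} and \autoref{pro.BasicPropertiesTranspositions}: for $s\in S$ and an ideal $\frab\subseteq S$, one has $\frab\subseteq\p^{\frT}\Longleftrightarrow\frT(\frab)\subseteq\p$, and in particular $s\notin\p^{\frT}\Longleftrightarrow\frT(sS)\not\subseteq\p$. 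Non-degeneracy of $\frT$ along $\p$ is used only to guarantee $\p^{\frT}\neq\mathbf 1$, so that these are not vacuous; note also $\pS\subseteq\p^{\frT}$, since $\frT(\pS)=\p\,\mathbf 1_{\frT}\subseteq\p$. For (a), let $\q\in\Ass_S(S/\p^{\frT})$, say $\q=\p^{\frT}:s$ with $s\in S\setminus\p^{\frT}$, so $\frT(sS)\not\subseteq\p$. From $\q\supseteq\p^{\frT}\supseteq\pS$ we get $\q\cap R\supseteq\p$; and if $r\in\q\cap R$ then $r\,\frT(sS)=\frT(rsS)\subseteq\p$ forces $r\in\p$ because $\p$ is prime and $\frT(sS)\not\subseteq\p$. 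Hence $\q\cap R=\p$, i.e. $\q\in Y_\p$. Finally, any prime $\q''$ with $\p^{\frT}\subseteq\q''\subseteq\q$ satisfies $\q''\supseteq\pS$ and $\q''\subseteq\q$, hence $\q''\cap R=\p=\q\cap R$, so incomparability forces $\q''=\q$; thus $\q$ is minimal over $\p^{\frT}$ and $\p^{\frT}$ has no embedded primes.

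For (b), since in a noetherian ring the minimal primes of an ideal are associated to its quotient, (a) gives $\Min(\p^{\frT})=\Ass_S(S/\p^{\frT})\subseteq Y_\p$. If $\q\in\Min(\p^{\frT})$, writing $\q=\p^{\frT}:s$ with $s\notin\p^{\frT}$ yields $s\q\subseteq\p^{\frT}$ (so $\frT(s\q)\subseteq\p$) and $\frT(sS)\not\subseteq\p$, which is the asserted property. Conversely, if $\q\in Y_\p$ and some $s\in S$ has $\frT(s\q)\subseteq\p$ and $\frT(sS)\not\subseteq\p$, then $\bar s$ is a nonzero element of $S/\p^{\frT}$ annihilated by $\q$, so $\q\subseteq\Ann_S(\bar s)$, whence $\q$ is contained in a minimal prime of $\Ann_S(\bar s)$, which lies in $\Ass_S(S\bar s)\subseteq\Ass_S(S/\p^{\frT})=\Min(\p^{\frT})\subseteq Y_\p$. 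Two primes in $Y_\p$ being incomparable, $\q$ is itself that prime, so $\q\in\Min(\p^{\frT})$.

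For (c), assume in addition $\sqrt{\pS}\subseteq\p^{\frT}$. Localizing at $\p$ is harmless: by (b) every minimal prime of $\p^{\frT}$ lies in $Y_\p$, localization at $\p$ is a bijection on $Y_\p$, and $\p^{\frT}$, $\sqrt{\pS}$, and each $\bar\frT_\q$ are unchanged (\autoref{pro.BasicPropertiesTranspositions}); so I may assume $(R,\p)$ is local with residue field $\kappa=\kappa(\p)$, whence $\bar S_{\kappa(\p)}=S/\sqrt{\pS}=\prod_{\q\in Y_\p}\kappa(\q)$ and, by hypothesis, $\frT_{\kappa(\p)}$ descends to $\bar\frT_\p\colon\bar S_{\kappa(\p)}\to\kappa$ with $\bar\frT_\q$ its restriction to the $\q$-factor. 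Because $\frT(ss')\bmod\p$ depends only on the images of $s,s'$ in $\bar S_{\kappa(\p)}$ (as $\frT(\sqrt{\pS})\subseteq\p$), one gets $s\in\p^{\frT}\Longleftrightarrow\bar\frT_\p(\bar s\,\bar S_{\kappa(\p)})=0$; and since $\bar S_{\kappa(\p)}$ is a finite product of fields, $\bar s\,\bar S_{\kappa(\p)}$ is the span of the factors in the support of $\bar s$, so $s\in\p^{\frT}\Longleftrightarrow\bar\frT_\q=0$ for every $\q$ with $s\notin\q$, i.e. $\Longleftrightarrow s\in\bigcap_{\{\q\,:\,\bar\frT_\q\neq0\}}\q$. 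Therefore $\p^{\frT}=\bigcap_{\{\q\in Y_\p\,:\,\bar\frT_\q\neq0\}}\q$ is radical and $\Min(\p^{\frT})=\{\q\in Y_\p:\bar\frT_\q\neq0\}$, as claimed. (Alternatively one can argue this last point inside $S$ using (b) together with the idempotents of $\bar S_{\kappa(\p)}$, at the cost of a slightly longer coordinate computation.)

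I expect (c) to be the only genuinely delicate point: one must keep careful track of the passages between $S$, $S_\p$, and $\bar S_{\kappa(\p)}$, and of which reductions preserve $\p^{\frT}$ and $\bar\frT_\q$. By contrast, (a) and (b) are formal manipulations with the definition of the transpose and the incomparability theorem applied within the single fibre $Y_\p$.
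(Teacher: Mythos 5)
Your proof is correct and follows essentially the same route as the paper's: identify the associated primes of $\p^{\frT}$ as colon ideals $\p^{\frT}:s$ lying over $\p$, use incomparability within the fibre $Y_{\p}$ to rule out embedded primes, and analyze (c) via the induced maps $\bar{\frT}_{\q}$ on the product of residue fields. The only cosmetic difference is in (c), where you compute $\p^{\frT}S_{\p}$ outright as $\bigcap_{\{\q \,:\, \bar{\frT}_{\q}\neq 0\}}\q$ inside the artinian quotient $\bar{S}_{\kappa(\p)}$, rather than verifying the two implications separately with an explicit Chinese-remainder element as the paper does.
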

\begin{proof}
   Observe that $\p^{\frT} : s = \p^{\frT\cdot s}$. Therefore, $\Ass_S S/\pt = \{\p^{\frT \cdot s} \in I(S) \mid s \in S\} \cap \Spec S$. 
   Note that if $\frT$ is non-degenerate along $\p$ then $\pt$ is proper and so are its associated primes. Since these are of the form $\p^{\frT\cdot s}$, part (a) follows from \autoref{pro.BasicPropertiesTranspositions}.
    \begin{claim}  The ideal
    $\p^{\frT \cdot s}$ is prime if and only if $\mathbf{1}\neq\p^{\frT\cdot s} \supset \q$
    for some $\q\in Y_\p$.
    \end{claim}
    \begin{proof}[Proof of claim]
      By \autoref{pro.BasicPropertiesTranspositions}, $\p^{\frT \cdot s} \cap R$ is a proper ideal if and only so is $\p^{\frT \cdot s}$, in which case it equals $\p$. In other words, $\p^{\frT \cdot s}$ is a proper ideal if and only if it contracts to $\p$. Hence, if $\p^{\frT \cdot s}$ is a prime ideal it lies over $\p$. The forward implication ``$\Longrightarrow$'' then follows. For the converse, suppose that $\p^{\frT \cdot s}$ is a proper ideal and contains a prime $\q$ lying over $\p$. Observe that a minimal prime of $\p^{\frT \cdot s}$ would be of the form $\p^{\frT \cdot ss'}$ for some $s'\in S$ and so it would lie over $\p$. Therefore, it would be equal to $\q$ by incomparability. This means that $\q=\p^{\frT \cdot s}$.
    \end{proof}
    Now, (b) follows after recalling that $\p^{\frT \cdot s}$ is proper exactly when $\frT(sS) \not\subset \p$, and $\q \subset \p^{\frT\cdot s}$ exactly when $\frT(s\q) \subset \p$ (see \autoref{pro.BasicPropertiesTranspositions}).
    
   For (c), take $\q \in Y_{\p}$ such that there is $s \in S$ with $\frT(s\q)\subset \p$ and $\frT(sS)\not\subset \p$. Note that $\frT'\coloneqq \frT \cdot s $ also satisfies that $\sqrt{\p S} \subset \p^{\frT'}$ and so we can also define $\bar{\frT}'_{\p}$ and $\bar{\frT}'_{\q}$. However, since $\frT'(\q) \subset \p$ and $\frT'(S)\not\subset \p$, it follows that $\bar{\frT}'_{\p}$ admits a factorization as follows
\[
\xymatrix{
\kappa(\q) \ar[r]^-{\mathrm{can}} \ar[rd]_-{\id} & \prod_{\q \in Y_{\p}} \kappa(\q) \ar[d]^-{\mathrm{can}} \ar[r]^-{\bar{\frT}'_{\p}} & \kappa(\p)\\
& \kappa(\q) \ar[ru]_-{T\neq 0} &
}.
\]
Further, $T= \bar{\frT}'_{\q}$ so $\bar{\frT}'_{\q} \neq 0$. However, $\bar{\frT}'_{\q} = \bar{\frT}_{\q} \cdot \bar{s}$ where $\bar{s} \in \kappa(\q)$ is the class of $s$, so $\bar{\frT}_{\q} \neq 0$.

Conversely, let us suppose that $\bar{\frT}_{\q} \neq 0$. This means that there is $s \in S$ such that $s \in \q'$ for all $\q'\in Y_{\p} \setminus \{\q\}$ and so that $\frT(s) \notin \p$. In particular, $\frT(sS) \not\subset \p$. Note that $s\q \subset \bigcap_{\q \in Y_{\p}} \q = \sqrt{\pS} $ and so $\frT(s\q) \subset \p$ by hypothesis.
\end{proof}

\begin{corollary}
     With notation as in \autoref{def.TranspositionOfIdeals}, let $\fra \in I(R)$ be a radical ideal. Then, $\frT$ is non-degenerate along $\fra$ if and only if for every minimal prime $\p$ of $\fra$ there is $\q \in Y_{\p}$ such that $\fra^{\frT}\subset \q$. In particular, if $R$ is reduced, $\frT$ is non-degenerate if and only if for every minimal prime of $R$ there is a prime lying over it that contains $\mathbf{0}^{\frT}$.
\end{corollary}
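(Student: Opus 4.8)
The plan is to reduce the non-degeneracy condition to the finitely many minimal primes of $\fra$ and then apply the lying-over theorem. Since $R$ is noetherian and $\fra$ is radical, $\Ass_R R/\fra$ is the finite set $\Min(\fra)$ of minimal primes of $\fra$ and $\fra = \bigcap_{\p \in \Min(\fra)} \p$; hence $Z_{\fra} = \bigcup_{\p \in \Min(\fra)} \p$, and by prime avoidance $\frT$ is non-degenerate along $\fra$ (\ie $\mathbf{1}_{\frT}\not\subset Z_{\fra}$) if and only if $\mathbf{1}_{\frT}\not\subset \p$ for every $\p \in \Min(\fra)$. By \autoref{pro.BasicPropertiesTranspositions}(e) one has $\fra \subset \fra^{\frT}\cap R$ always, with equality precisely when $\frT$ is non-degenerate along $\fra$; and since $\fra = \bigcap_{\p\in\Min(\fra)}\p$ is radical, that equality holds if and only if $\fra^{\frT}\cap R \subset \p$ for every $\p\in\Min(\fra)$. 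So it suffices to establish the following: for a prime $\p$ of $R$, we have $\fra^{\frT}\cap R \subset \p$ if and only if there is $\q\in Y_{\p}$ with $\fra^{\frT}\subset \q$.

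For this equivalence, ``$\Leftarrow$'' is immediate: if $\q\in Y_{\p}$ then $\q\cap R = \p$, so $\fra^{\frT}\cap R \subset \q\cap R = \p$. For ``$\Rightarrow$'', we may assume $\fra^{\frT}$ is a proper ideal of $S$, since otherwise $\fra^{\frT}\cap R = R \not\subset \p$. Then $R/(\fra^{\frT}\cap R) \hookrightarrow S/\fra^{\frT}$ is a finite (hence integral) injective extension of nonzero rings --- here we use that $S$ is a finite $R$-module and that $\fra^{\frT}\cap R$ annihilates $S/\fra^{\frT}$ --- so the lying-over theorem produces a prime of $S/\fra^{\frT}$ contracting to $\p/(\fra^{\frT}\cap R)$; its preimage in $S$ is a prime $\q\supset\fra^{\frT}$ with $\q\cap R = \p$, that is, $\q\in Y_{\p}$. (Equivalently: $f = \Spec\theta$ is a closed map with $f\big(V(\fra^{\frT})\big) = V(\fra^{\frT}\cap R)$.) This proves the equivalence, hence the first assertion of the corollary. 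The ``in particular'' clause follows by taking $\fra = \mathbf{0}$, which is radical exactly when $R$ is reduced, in which case $\Min(\mathbf{0})$ consists of the minimal primes of $R$ and $\fra^{\frT} = \mathbf{0}^{\frT}$.

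I do not expect a genuine obstacle; the only points needing care are that the reduction to minimal primes uses the finiteness of $\Min(\fra)$ (so that prime avoidance applies) together with $\fra$ being its own intersection of minimal primes, and that lying-over must be set up over the quotient rings $R/(\fra^{\frT}\cap R)$ and $S/\fra^{\frT}$ rather than over $R$ and $S$. One could also bypass lying-over via \autoref{MinimalPrimesTranspostions}: if $\frT$ is non-degenerate along $\p\in\Min(\fra)$ then $\p^{\frT}$ is proper with $\Ass_S S/\p^{\frT}\subset Y_{\p}$, so some $\q\in Y_{\p}$ contains $\fra^{\frT}\subset\p^{\frT}$; conversely, $\fra^{\frT}\subset\q\in Y_{\p}$ forces $\fra:\mathbf{1}_{\frT} = \fra^{\frT}\cap R\subset\p$, and --- because $\p$ is minimal over the radical ideal $\fra$, so $\p = \fra : x$ with $x\notin\p$ --- this is incompatible with $\mathbf{1}_{\frT}\subset\p$, giving back non-degeneracy along $\fra$.
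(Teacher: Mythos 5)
Your proof is correct. The paper states this corollary without proof, treating it as an immediate consequence of \autoref{pro.BasicPropertiesTranspositions}\autoref{key.Item} and \autoref{MinimalPrimesTranspostions}; your argument --- reducing non-degeneracy along $\fra$ to the equality $\fra^{\frT}\cap R=\fra$ and then converting the containments $\fra^{\frT}\cap R\subset\p$ into the existence of $\q\in Y_{\p}$ containing $\fra^{\frT}$ via lying-over for $R/(\fra^{\frT}\cap R)\hookrightarrow S/\fra^{\frT}$ --- is a valid filling-in of exactly that gap, and the alternative route you sketch through \autoref{MinimalPrimesTranspostions} is the derivation the paper's placement suggests.
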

\begin{question}
As mentioned in \autoref{rem.OnDegeneracy}, if $\frT$ is non-degenerate then $\mathbf{0}^{\frT}$ is contained in an associated prime of $S$. Is it possible to characterize the non-degeneracy of $\frT$ as a particular containment of $\mathbf{0}^{\frT}$ in associated primes of $S$ when $R$ is not reduced?     \end{question}

\begin{example}[{\cf \cite[Remark 5.14]{CarvajalStablerFsignaturefinitemorphisms}}] \label{ex.TransposesOFDivisorialIdeals}
    With notation as in \autoref{def.TranspositionOfIdeals}, suppose that $R$ and $S$ are normal integral domains. If $\frT$ is non-degenerate (\ie nonzero),
    we may attach to it an effective divisor $\Ram_{\frT}$ on $\Spec S$ linearly equivalent to $K_{S/R}\coloneqq f^*K_S-K_R$ (see \cite[Definition-Proposition 2.2]{CarvajalSchwedeTuckerEtaleFundFsignature}). For instance, $\frT$ being non-singular means $\Ram_{\frT}=0$. Let $D$ be an effective divisor on $\Spec R$. Then, $R(-D)^{\frT}=S(-D^{\frT})$ where $D^{\frT}$ is the effective part of $D^{*}\coloneqq f^*D-\Ram_{\frT}$. Indeed, for every $0\neq s \in S$, we have the following equivalences:
    \begin{align*}
    s \in R(-D)^{\frT}&\Leftrightarrow \frT \cdot s \in \Hom_R\big(S,R(-D)\big)=\Hom_R\big(S(f^*D),R\big)\\
    &\Leftrightarrow \Ram_{\frT \cdot s} = \Ram_{\frT}+\Div s \geq f^*D  \Leftrightarrow \Div s \geq D^* \Leftrightarrow \Div s \geq D^{\frT} \Leftrightarrow s \in S(-D^{\frT})
    \end{align*}
    For example, if $\frT$ is non-singular then $D^{\frT}=f^*D$. In that case, for $f^*D$ to be reduced if $D$ is reduced we need the ramification indexes over the primes supporting $D$ to be $1$.
\end{example}

\begin{example} \label{ex.TracaConmuteWithFrobenius}
    With notation as in \autoref{def.TranspositionOfIdeals}, suppose that $R$ and $S$ are integral, that the corresponding extension of fields of fractions $L/K$ is separable, and that $R$ is normal. Then, the trace map $\Tr_{L/K} \: L \to K$ restricts to an $R$-linear map $\Tr=\Tr_{S/R} \: S \to R$. Indeed, since $S/R$ is integral and $R$ is normal, the minimal polynomial of every $s \in S$ over $K$ has coefficients in $R$ (\cite[Proposition 5.15]{AtiyahMacdonald}). However, $\Tr_{L/K}(s)$ is an integer multiple of those coefficients (\cite[\href{https://stacks.math.columbia.edu/tag/0BIH}{Tag 0BIH}]{stacks-project}). Moreover, $\Tr(s) \in \sqrt{\fra}$ if $s \in \sqrt{\fra S}$ and so $\sqrt{\fra S} \subset \sqrt{\fra}^{\Tr}$ for all $\fra \in I(R)$ (use \cite[Propositions 5.14 and 5.15]{AtiyahMacdonald}). If $\Char K = p$, the above implies that the properties of \autoref{second criterion separability} also hold for $\Tr$, namely 
\[
\{\frT \in \Hom_R(S,R) \mid \frT(s^q)=\frT(s)^q,\forall s \in S\}=\Tr \cdot (\bF_{q}\cap S).
\]
\end{example}
\begin{lemma} \label{lem.CommWithFrobmodP}
With notation as in \autoref{ex.TracaConmuteWithFrobenius}, let $\p \in X$ be such that $\Char \kappa(\p)=p$. Then, $\Tr(s^p) \equiv \Tr(s)^p \bmod \p$ for all $s\in S$.
\end{lemma}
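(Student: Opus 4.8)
The plan is to express $\Tr_{S/R}(s^k)$, for $s\in S$ and $k\geq 1$, as the $k$-th power sum of the ``eigenvalues'' of multiplication by $s$ on $L$ over $K$, to observe that these eigenvalues are integral over $R$ (so that the relevant symmetric functions lie in $R$, using that $R$ is normal), to reduce the desired identity to a universal congruence between symmetric polynomials, and finally to use that $p\in\p$ because $\Char\kappa(\p)=p$.

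In detail: fix $s\in S$ and let $\chi_s(x)=\prod_{i=1}^{n}(x-\alpha_i)=x^n+c_{n-1}x^{n-1}+\cdots+c_0\in K[x]$ be the characteristic polynomial of the $K$-linear endomorphism ``multiplication by $s$'' of $L$, where $n=[L:K]$ and $\alpha_1,\dots,\alpha_n$ lie in a fixed algebraic closure of $K$. As already recorded in \autoref{ex.TracaConmuteWithFrobenius}, the minimal polynomial of $s$ over $K$ has coefficients in $R$, since $R$ is normal and $s$ is integral over $R$; hence each $\alpha_i$ is integral over $R$, and therefore each elementary symmetric function $e_j(\alpha_1,\dots,\alpha_n)=(-1)^j c_{n-j}$ is an element of $K$ that is integral over $R$, hence lies in $R$. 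Two classical facts about multiplication operators in a finite extension of fields now suffice: the characteristic polynomial of multiplication by $s^k$ is $\prod_{i=1}^{n}(x-\alpha_i^k)$, and the trace of a multiplication operator is the sum of the roots of its characteristic polynomial. Together these give $\Tr(s^k)=\Tr_{L/K}(s^k)=\sum_{i=1}^{n}\alpha_i^k=p_k(\alpha_1,\dots,\alpha_n)$, the $k$-th power sum; in particular $\Tr(s)=e_1(\alpha)$ and $\Tr(s^p)=p_p(\alpha)$. (Separability plays no role in this part.)

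Next I would invoke the universal identity. In $\bZ[x_1,\dots,x_n]$ the multinomial theorem gives $e_1^p-p_p=(x_1+\cdots+x_n)^p-(x_1^p+\cdots+x_n^p)\in p\,\bZ[x_1,\dots,x_n]$, and since $e_1^p-p_p$ is symmetric, the fundamental theorem of symmetric polynomials over $\bZ$ provides $H\in\bZ[y_1,\dots,y_n]$ with $e_1^p-p_p=p\cdot H(e_1,\dots,e_n)$. Specializing $x_i\mapsto\alpha_i$ and using the previous paragraph,
\[
\Tr(s)^p-\Tr(s^p)=e_1(\alpha)^p-p_p(\alpha)=p\cdot H\bigl(e_1(\alpha),\dots,e_n(\alpha)\bigr)\in pR,
\]
because each $e_j(\alpha)\in R$. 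Finally, $\Char\kappa(\p)=p$ says precisely that the image of the integer $p$ in $R$ lies in $\p$, so $pR\subset\p$ and $\Tr(s^p)\equiv\Tr(s)^p\bmod\p$, as claimed. I do not anticipate a genuine obstacle here: the only points requiring attention are the containment $\chi_s\in R[x]$ — which is exactly the normality input already used in \autoref{ex.TracaConmuteWithFrobenius} — and the power-sum description of $\Tr(s^k)$ via the eigenvalues of a multiplication operator, which is standard linear algebra over an algebraic closure.
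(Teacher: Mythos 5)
Your proof is correct, but it takes a genuinely different route from the paper's. The paper passes to the Galois closure $M$ of $L/K$, writes $\Tr_{L/K}=\sum_{\sigma\in G/H}\sigma$, computes $\Tr(s^p)=\sum\sigma(s)^p\equiv\bigl(\sum\sigma(s)\bigr)^p\bmod pT$ inside the normalization $T=\bar{R}^M$, and then contracts via $(\p T)\cap R=\p$. You instead stay inside $R$ throughout: you identify $\Tr(s^k)$ with the power sum $p_k(\alpha)$ of the eigenvalues of the multiplication operator, use normality of $R$ to place the elementary symmetric functions $e_j(\alpha)$ in $R$, and invoke the universal congruence $e_1^p-p_p=p\cdot H(e_1,\dots,e_n)$ in $\bZ[x_1,\dots,x_n]$ to conclude $\Tr(s)^p-\Tr(s^p)\in pR\subset\p$. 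Both arguments use the same normality input (the paper to know $\Tr(S)\subset R$ and that $T$ is an integral extension with lying-over, you to know $e_j(\alpha)\in R$), but yours replaces the Galois-theoretic bookkeeping with linear algebra plus the fundamental theorem of symmetric polynomials, and it in fact yields the slightly stronger statement $\Tr(s^p)\equiv\Tr(s)^p\bmod pR$ directly, without needing to introduce $T$ or the contraction $(\p T)\cap R=\p$. The one step worth stating carefully is the identity $\Tr(s^k)=\sum_i\alpha_i^k$: it follows by triangularizing $m_s$ over $\bar{K}$ (so it does not actually need separability, as you note), or, in the separable case at hand, by diagonalizing $m_s\otimes 1$ on $L\otimes_K\bar{K}$.
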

\begin{proof}
    Let $\bar{K}/K$ be a separable closure of $K$ and fix an embedding $L\subset \bar{K}$. Let $M \subset \bar{K}$ be the Galois closure of $L/K$ in $\bar{K}$.\footnote{That is, $M$ is the intersection of all Galois finite extensions of $K$ inside $\bar{K}$ containing $L$.} Then, $\Tr_{M/K} = \sum_{\sigma \in G} \sigma$ where $G \coloneqq \Gal(M/K)$. Letting $H \subset G$ be the stabilizer of $L$, we have $\Tr_{L/K} = \sum_{\sigma \in G/H} \sigma$. Let $T\coloneqq \bar{R}^M$. Then, for $s \in S\subset T$, we have the following in $T$:
    \[
    \Tr(s^p) = \sum_{\sigma \in G/H} \sigma(s^p) = \sum_{\sigma \in G/H} \sigma(s)^p \equiv \Bigg(\sum_{\sigma \in G/H} \sigma(s) \Bigg)^p = \Tr(s)^p \bmod pT.
    \]
    Since $p \in \p$, then $pT \subset \p T $ and so $\Tr(s^p) - \Tr(s)^p \in (\p T) \cap R = \p$.
\end{proof}

Our next task is to compare transpositions with the radical of extensions. Shockingly, the next result establishes opposite inclusions in the separable and purely inseparable cases.

\begin{proposition} \label{pro.SeparableVSpurelyInseparable}
With notation as in \autoref{def.TranspositionOfIdeals}, suppose that $\bF_p \subset R$. Then:
    \begin{enumerate}
 \item Suppose that $\frT$ commutes with $F^e$. If $\fra\in I(R)$ is radical then so is $\fra^{\frT}$ and so $\sqrt{\fra S} \subset \fra^{\frT}$. Moreover, for all $\p \in X$ and $\q \in Y_{\p}$, the induced map $\bar{\frT}_{\q} \: \kappa(\q) \to \kappa(\p)$ commutes with $F^e$. In particular, if $\bar{\frT}_{\q} \neq 0$ then $\kappa(\q)/\kappa(\p)$ is separable and $\bar{\frT}_{\q}=\Tr_{\kappa(\q)/\kappa(\p)} \cdot u$ for some unique $0\neq u\in \bF_q \cap \kappa(\q)$.  
 \item Suppose that $S^q \subset R \subset S$. Then, for all $\fra \in I(R)$ radical, $\fra^{\frT} \subset \sqrt{\fra S}$ if and only if $\frT$ is non-degenerate along $\fra$.
\end{enumerate}
\end{proposition}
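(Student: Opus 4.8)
The plan is to treat the two parts separately: part~(b) is short and essentially formal, while part~(a) carries the real content. For part~(a), the first step is to show that $\fra^{\frT}$ is radical whenever $\fra$ is. Since $\frT$ commutes with $F^e$ one has $\frT(s^q t^q)=\frT\bigl((st)^q\bigr)=\frT(st)^q$ for all $s,t\in S$; hence if $s^q\in\fra^{\frT}$ then $\frT(st)^q=\frT(s^qt^q)\in\fra$ for every $t$, and radicality of $\fra$ forces $\frT(st)\in\fra$, i.e.\ $s\in\fra^{\frT}$. Because $\bF_p\subset R$, the usual descent (given $s^n\in\fra^{\frT}$, pick $q^k\ge n$, note $s^{q^k}=s^{q^k-n}s^n\in\fra^{\frT}$, and apply the previous implication $k$ times) upgrades this to radicality of $\fra^{\frT}$. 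Combined with $\fra S\subset\fra^{\frT}$ (which holds because $\frT(\fra S)\subset\fra$ by $R$-linearity), taking radicals gives $\sqrt{\fra S}\subset\fra^{\frT}$.

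Next, applying the above with $\fra=\p$ prime shows $\sqrt{\p S}\subset\p^{\frT}$, so $\bar{\frT}_{\p}$ and the maps $\bar{\frT}_{\q}$ ($\q\in Y_{\p}$) of \autoref{notation.Ranks} are defined. That $\bar{\frT}_{\q}$ commutes with $F^e$ I would deduce along the factorizations $\frT_{\kappa(\p)}=\bar{\frT}_{\p}\circ\mathrm{can}$ and $\bar{\frT}_{\q}=\bar{\frT}_{\p}\circ\mathrm{can}$ of \autoref{notation.Ranks}: the base change $\frT_{\kappa(\p)}=\frT\otimes_R\kappa(\p)$ commutes with $F^e$ since $\frT$ does and $\kappa(\p)$ has characteristic $p$; the reduction surjection $S_{\kappa(\p)}\twoheadrightarrow\bar{S}_{\kappa(\p)}$ and the map $\mathrm{can}\colon\kappa(\q)\to\prod_{\q'\in Y_{\p}}\kappa(\q')$ commute with $F^e$ (the latter because it acts componentwise), so $\bar{\frT}_{\p}$ and then $\bar{\frT}_{\q}$ do too. (Alternatively one argues directly from \autoref{rem.BeingExplictAboutT_q}: $\bar{\frT}_{\q}(s^q)$ is the class of $\frT\bigl((s')^q\bigr)=\frT(s')^q$, which is $\bar{\frT}_{\q}(s)^q$.) For the last assertion, suppose $\bar{\frT}_{\q}\neq 0$; then $\bar{\frT}_{\q}$ is a nonzero $\kappa(\p)$-linear functional on the finite extension $\kappa(\q)/\kappa(\p)$ commuting with $F^e$, and I would run the argument of \autoref{second criterion separability} with $F^e$ in place of $F$: setting $\lambda'(x)\coloneqq\bar{\frT}_{\q}(x^q)^{1/q}$ for $x\in\kappa(\q)^{1/q}$ defines a $\kappa(\p)^{1/q}$-linear map (additivity is that $(-)^{1/q}$, the inverse of Frobenius, is additive) that extends $\bar{\frT}_{\q}$ (because $\bar{\frT}_{\q}$ commutes with $F^e$), so by \autoref{lem.LemmaForSeparability} the relative Frobenius $F^e_{\kappa(\q)/\kappa(\p)}$ is injective, i.e.\ $\kappa(\q)/\kappa(\p)$ is separable. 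The final clause of \autoref{second criterion separability} then identifies the $\kappa(\p)$-linear maps commuting with $F^e$ as exactly $\Tr_{\kappa(\q)/\kappa(\p)}\cdot(\bF_q\cap\kappa(\q))$; since $\Hom_{\kappa(\p)}(\kappa(\q),\kappa(\p))$ is free of rank one over $\kappa(\q)$ on the (nonzero) trace, $\bar{\frT}_{\q}=\Tr_{\kappa(\q)/\kappa(\p)}\cdot u$ for a unique $u$, with $u\neq 0$ because $\bar{\frT}_{\q}\neq 0$.

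For part~(b) the engine is the elementary observation that, for any $\fra\in I(R)$, $x\in\fra S$ implies $x^q\in\fra$: writing $x=\sum a_iu_i$ with $a_i\in\fra$ and $u_i\in S$, the freshman's dream gives $x^q=\sum a_i^qu_i^q$, and $a_i^q\in\fra$ while $u_i^q\in S^q\subset R$, so each term lies in $\fra$. For ``$\Leftarrow$'': if $\frT$ is non-degenerate along $\fra$ then $\fra^{\frT}\cap R=\fra$ by \autoref{pro.BasicPropertiesTranspositions}(e), and for $s\in\fra^{\frT}$ we get $s^q\in\fra^{\frT}$ and $s^q\in S^q\subset R$, hence $s^q\in\fra^{\frT}\cap R=\fra\subset\fra S$, so $s\in\sqrt{\fra S}$; thus $\fra^{\frT}\subset\sqrt{\fra S}$. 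For ``$\Rightarrow$'': assuming $\fra^{\frT}\subset\sqrt{\fra S}$ and that $\fra$ is radical, any $s\in\fra^{\frT}\cap R$ lies in $\sqrt{\fra S}$, so $s^m\in\fra S$ for some $m$, whence $s^{mq}=(s^m)^q\in\fra$ by the observation; radicality of $\fra$ and $s\in R$ force $s\in\fra$. Hence $\fra^{\frT}\cap R=\fra$, and $\frT$ is non-degenerate along $\fra$ by \autoref{pro.BasicPropertiesTranspositions}(e).

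The part requiring the most care is the separability conclusion in~(a): one must faithfully transcribe the argument behind \autoref{second criterion separability} to the $e$-th iterate of Frobenius, checking that $\lambda'$ is genuinely $\kappa(\p)^{1/q}$-linear, additive, and an extension of $\bar{\frT}_{\q}$, and matching the canonical map $\kappa(\q)\otimes_{\kappa(\p)}\kappa(\p)^{1/q}\to\kappa(\q)^{1/q}$ with $F^e_{\kappa(\q)/\kappa(\p)}$. Everything else --- the radicality juggling, the factorization of $\bar{\frT}_{\q}$, and all of part~(b) --- is routine once the transposition formalism of \autoref{pro.BasicPropertiesTranspositions} is in hand.
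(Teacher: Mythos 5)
Your proof is correct and follows essentially the same route as the paper's: radicality of $\fra^{\frT}$ via $\frT(s^qt^q)=\frT(st)^q$ plus $\fra S\subset\fra^{\frT}$, the explicit description of $\bar{\frT}_{\q}$ from \autoref{rem.BeingExplictAboutT_q} for commutation with $F^e$, \autoref{lem.LemmaForSeparability}/\autoref{second criterion separability} for the separability and trace statements, and the freshman's-dream contraction argument for (b). You spell out a few steps the paper leaves implicit (the descent from $s^n$ to $s^{q^k}$, the $F^e$-version of the second criterion, and $\sqrt{\fra S}\cap R\subset\sqrt{\fra}$), but the substance is identical.
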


\begin{proof}
Part (a) follows from noting that $\sqrt{\fra^{\frT}} \subset \sqrt{\fra}^{\frT}$ for all $\fra \in I(R)$. To see this inclusion, let $s\in \sqrt{\fra^{\frT}}$. Then, there is $n\gg0$ such that $s^{q^n}\in \fra^{\frT}$. In particular, $\frT(s)^{q^n}=\frT(s^{q^n})\in \fra$ and so $\frT(s)\in \sqrt{\fra}$. That is, $\big(\sqrt{\fra^{\frT}}\big)_{\frT} \subset \sqrt{\fra}$. To see why $\bar{\frT}_{\q}$ commutes with $F^e$, use \autoref{rem.BeingExplictAboutT_q}. 

For (b), let $x\in \fra^{\frT}$. Then, $x^q\in \fra^\frT\cap R$ which is contained in $\fra$ if $\frT$ is non-degenerate along $\fra$ by \autoref{pro.BasicPropertiesTranspositions}. This shows ``$\Longleftarrow$.'' Conversely, let $\fra$ be a radical ideal such that $\fra^{\frT} \subset \sqrt{\fra S}$. Contracting to $R$ yields $\fra^{\frT} \cap R \subset \sqrt{\fra}=\fra$, so we are done by \autoref{pro.BasicPropertiesTranspositions}.
\end{proof}

\begin{proposition} \label{pro.RadicalityAndObviousTameRamification}
    With notation as in \autoref{ex.TracaConmuteWithFrobenius}, let $\fra \in I(R)$ be a radical ideal. Then, $\fra^{\Tr}$ is radical. Moreover, for every $\p \in \Ass_R R/\fra$, the following statements hold:
    \begin{enumerate}
        \item If $\Char \kappa(\p) = p$ then $\Bar{\Tr}_{\q} \: \kappa(\q) \to \kappa(\p)$ commutes with Frobenius for all $\q\in Y_{\p}$.  In particular, if $\bar{\Tr}_{\q} \neq 0$ then $\kappa(\q)/\kappa(\p)$ is separable and $\bar{\Tr}_{\q}=\Tr_{\kappa(\q)/\kappa(\p)} \cdot u$ for some unique $0\neq u\in \bF_q \cap \kappa(\q)$.
        \item If $\Char \kappa(\p)$ is either $0$ or $>[L:K]$ then $0\neq \Bar{\Tr}_{\q}$ for all $\q\in Y_{\p}$ and so $\p^{\Tr}=\sqrt{\p S}$.
    \end{enumerate}
    In particular, if $\Char \kappa(\p)$ is either $0$ or $>[L:K]$ for all $\p \in \Ass_R R/\fra$ then $\fra^{\frT}=\sqrt{\fra S}$.
\end{proposition}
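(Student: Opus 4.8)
The plan is to establish in turn that $\fra^{\Tr}$ is radical, then assertion (a), then assertion (b), from which the closing ``in particular'' is immediate. In each case it is harmless to assume $\fra=\p$ is prime: as $\fra$ is radical and $R$ is Noetherian, $\fra=\p_1\cap\dots\cap\p_m$ with the $\p_i$ its (finitely many) minimal primes, so $\fra^{\Tr}=\bigcap_i\p_i^{\Tr}$ by \autoref{pro.BasicPropertiesTranspositions}, while $\prod_i\p_i\subseteq\fra\subseteq\p_j$ gives $\sqrt{\fra S}=\bigcap_i\sqrt{\p_i S}$ after extending to $S$ and taking radicals; an intersection of radical ideals is radical, and (a)--(b) concern the individual primes of $\Ass_R R/\fra=\{\p_1,\dots,\p_m\}$. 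To see $\p^{\Tr}$ is radical, note first that if $\Tr$ is degenerate along $\p$ then $\p^{\Tr}=\mathbf 1$ (\autoref{pro.BasicPropertiesTranspositions}), which is radical; otherwise \autoref{MinimalPrimesTranspostions} gives that $\p^{\Tr}$ has no embedded primes and that its (associated, equivalently minimal) primes all lie in $Y_{\p}$, so it suffices to check that the $\q$-primary component of $\p^{\Tr}$ is $\q$ for each $\q\in\min\p^{\Tr}$. For such a $\q$, the Chinese Remainder Theorem in $\bar S_{\kappa(\p)}=\prod_{\q'\in Y_{\p}}\kappa(\q')$ furnishes $s\in S$ with $s\notin\q$ and $s\in\q'$ for all $\q'\in Y_{\p}\setminus\{\q\}$; then $sx$ maps to $0$ in $\bar S_{\kappa(\p)}$ for every $x\in\q$, \ie $sx\in\sqrt{\p S_{\p}}$, and since $\sqrt{\p S_{\p}}\subseteq\p^{\Tr}S_{\p}$ by \autoref{ex.TracaConmuteWithFrobenius} (as $\p$ is radical) while $s$ is a unit in $S_{\q}$, we get $x\in(\p^{\Tr})_{\q}$; as $\p^{\Tr}\subseteq\q$ this forces $(\p^{\Tr})_{\q}=\q S_{\q}$, hence $\p^{\Tr}=\bigcap_{\q\in\min\p^{\Tr}}\q$ is radical.

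For (a), $\Char\kappa(\p)=p$ and $\sqrt{\p S}\subseteq\p^{\Tr}$ (\autoref{ex.TracaConmuteWithFrobenius}, $\p$ radical), so $\bar{\Tr}_{\q}$ is defined as in \autoref{notation.Ranks}. The explicit description of $\bar{\Tr}_{\q}$ in \autoref{rem.BeingExplictAboutT_q} combined with \autoref{lem.CommWithFrobmodP} yields $\bar{\Tr}_{\q}(x^p)=\bar{\Tr}_{\q}(x)^p$ for all $x\in\kappa(\q)$; that is, $\bar{\Tr}_{\q}$ commutes with Frobenius. If in addition $\bar{\Tr}_{\q}\neq 0$, then \autoref{second criterion separability} applied to $\kappa(\q)/\kappa(\p)$ shows this extension is separable and that $\bar{\Tr}_{\q}=\Tr_{\kappa(\q)/\kappa(\p)}\cdot u$ for some $0\neq u\in\bF_q\cap\kappa(\q)$.

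For (b), the hypothesis on $\Char\kappa(\p)$ says exactly that $n\coloneqq[L:K]$ and $1,\dots,n$ are units in $R_{\p}$. In particular $\Tr(1)=n\notin\p$, so $\Tr$ is non-degenerate along $\p$ and, by \autoref{MinimalPrimesTranspostions}, the minimal primes of $\p^{\Tr}$ are precisely the $\q\in Y_{\p}$ with $\bar{\Tr}_{\q}\neq 0$. Suppose for contradiction $\bar{\Tr}_{\q_0}=0$ for some $\q_0\in Y_{\p}$; then $\q_0$ is not minimal over $\p^{\Tr}$, and since all minimal primes of $\p^{\Tr}$ lie in the antichain $Y_{\p}$ this forces $\q_0\not\supseteq\p^{\Tr}$, so we may pick $s\in\p^{\Tr}\setminus\q_0$. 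As $s^k\in sS$ for all $k\geq 1$, $\Tr(s^k)\in\p$ for all $k\geq 1$; these are the power sums of the roots of the characteristic polynomial $\chi_s(T)$ of multiplication-by-$s$ on $L$, and $\chi_s\in R[T]$ because $R$ is normal and $s$ is integral over $R$. Newton's identities together with the invertibility of $1,\dots,n$ modulo $\p$ then place every non-leading coefficient of $\chi_s$ in $\p$, \ie $\chi_s\equiv T^{n}\bmod\p$; but $\chi_s(s)=0$ in $S$ by Cayley--Hamilton, so reducing modulo $\p S$ gives $\bar{s}^{\,n}=0$ in $S/\p S$, whence $s^{n}\in\p S$ and therefore $s\in\q$ for every $\q\in Y_{\p}$ --- in particular $s\in\q_0$, a contradiction. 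Hence $\bar{\Tr}_{\q}\neq 0$ for all $\q\in Y_{\p}$, so $\min\p^{\Tr}=Y_{\p}$, and with the radicality of $\p^{\Tr}$ this gives $\p^{\Tr}=\bigcap_{\q\in Y_{\p}}\q=\sqrt{\p S}$.

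Intersecting $\p^{\Tr}=\sqrt{\p S}$ over $\p\in\Ass_R R/\fra$ and using the reduction of the first paragraph yields $\fra^{\Tr}=\sqrt{\fra S}$, finishing the proof. The step I expect to carry the real weight is the argument for (b): the delicate point is that $S$ need not be flat over $R$ near $\p$, so one cannot simply base-change a ``trace of an $R_{\p}$-linear endomorphism''; the remedy is to work instead with the characteristic polynomial of $s$ over the field $L$ --- which has degree exactly $n$ and, by normality of $R$, lies in $R[T]$ --- and to exploit that the Cayley--Hamilton relation $\chi_s(s)=0$ already holds in the ring $S$, hence descends to $S/\p S$ with no freeness hypothesis at all. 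By comparison, the Chinese-Remainder bookkeeping in the radicality step is routine, needing only $\q'S_{\p}\cap S=\q'$ for $\q'\in Y_{\p}$ and that radical commutes with localization.
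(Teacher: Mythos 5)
Your proposal is correct, and while part (a) and the reduction to $\fra=\p$ prime match the paper, your treatments of radicality and of part (b) are genuinely different. For radicality the paper argues directly that $\Tr\big(\sqrt{\p^{\Tr}}\big)\subset\p$ when $\Char\kappa(\p)=p$, using $\Tr(s^p)\equiv\Tr(s)^p\bmod\p$ (\autoref{lem.CommWithFrobmodP}) applied to $s^q\in\p^{\Tr}$, and lets the equality $\p^{\Tr}=\sqrt{\p S}$ of part (b) absorb the characteristic-zero case; your argument is instead uniform in the residue characteristic, combining the absence of embedded primes (\autoref{MinimalPrimesTranspostions}), the containment $\sqrt{\p S}\subset\p^{\Tr}$ from \autoref{ex.TracaConmuteWithFrobenius}, and a Chinese-remainder localization to identify each primary component of $\p^{\Tr}$ with its radical. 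For part (b) the paper proves Speyer's claim that $\bar{\Tr}_{\q}(1)=n\in\kappa(\p)$ for some integer $1\leq n\leq[L:K]$ by passing to the Galois closure $T=\bar{R}^M$ and counting the cosets $\sigma\in G/H$ with $\sigma^{-1}(\mathfrak r)\cap S=\q$; you instead show that any $s\in\p^{\Tr}$ has all power sums $\Tr(s^k)$ in $\p$, so Newton's identities (using invertibility of $1,\dots,[L:K]$ modulo $\p$) force $\chi_s\equiv T^{[L:K]}\bmod\p$, and Cayley--Hamilton in $S$ gives $s\in\sqrt{\p S}$. Your route is more elementary in that it avoids the Galois closure entirely in (b) and requires no flatness or base-change of traces, exactly as you note; what it does not recover is the explicit value $\bar{\Tr}_{\q}(1)=n$, which foreshadows the identity $\bar{\Tr}_{\q}=e_{\p}\cdot\Tr_{\kappa(\q)/\kappa(\p)}$ exploited later in \autoref{rem.InertiaDecompositionAutExactSequence} and \autoref{prop.DifferentNOTIONStameRamificationGaloisCase}. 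Both arguments are complete; in particular your handling of the degenerate case ($\p^{\Tr}=\mathbf{1}$) and your use of incomparability to pass from ``$\q_0$ is not minimal over $\p^{\Tr}$'' to ``$\q_0\not\supseteq\p^{\Tr}$'' are sound.
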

\begin{proof}
    We may assume that $\fra = \p$ is a prime ideal. Suppose first that $\Char\kappa(\p)= p$. Proving that $\p^{\Tr}$ is radical boils down to $\Tr\big(\sqrt{\p^{\Tr}}\big) \subset \p$. That is, we must show that $\Tr(s) \equiv 0 \bmod \p$ for all $s \in \sqrt{\p^{\Tr}}$. Take $e \gg 0$ such that $s^q \in \p^{\Tr}$, so that $\Tr(s^q) \equiv 0 \bmod \p$. By \autoref{lem.CommWithFrobmodP}, $\Tr(s)^{q} \equiv 0 \bmod \p$ and so $\Tr(s) \equiv 0 \bmod \p$ (for $\p$ is radical). To see why $\Bar{\Tr}_{\q}$ commutes with $F$, use \autoref{rem.BeingExplictAboutT_q} and\autoref{lem.CommWithFrobmodP}. The rest of part (a) then follows from \autoref{second criterion separability}.

    It remains to show that if $\Char \kappa(\p)$ is either $0$ or $>[L:K]$ then $\p^{\Tr} = \sqrt{\p S}$. This follows from the following claim and \autoref{MinimalPrimesTranspostions}:
    \begin{claim}[\cf{\cite[Proposition 10]{SpeyerFrobeniusSplit}}]
        $\bar{\Tr}_{\q}(1) = n \in \kappa(\p)$ where $n \in \bN_{+}$ is $\leq [L:K]$.
    \end{claim}
    \begin{proof}[Proof of claim]
       Let $s \in S_{\p}$ be $1'$ as in \autoref{rem.BeingExplictAboutT_q}. Let $T$ be as in the proof of \autoref{lem.CommWithFrobmodP} and set $V=\Spec T$. For every $\mathfrak{r} \in V_{\p}$, it follows that:
        \[
        s \equiv \begin{cases}
            1 \bmod \mathfrak{r} & \text{ if } \mathfrak{r} \cap S = \q,\\
            0 \bmod \mathfrak{r} & \text{ if } \mathfrak{r} \cap S \neq \q.
        \end{cases}
        \]    
        Fix $\mathfrak{r} \in V_{\p}$, 
        so that $\{\sigma^{-1}(\mathfrak{r}) \mid \sigma \in G\}=V_{\p}$. Thus, for all $\sigma \in G$:
        \[
        s \equiv \begin{cases}
            1 \bmod  \sigma^{-1}(\mathfrak{r}) & \text{ if } \sigma^{-1}(\mathfrak{r}) \cap S = \q,\\
            0 \bmod  \sigma^{-1}(\mathfrak{r}) & \text{ if } \sigma^{-1}(\mathfrak{r}) \cap S \neq \q,
        \end{cases}
\text{ and so }
        \sigma(s) \equiv \begin{cases}
            1 \bmod  \mathfrak{r} & \text{ if } \sigma^{-1}(\mathfrak{r}) \cap S = \q,\\
            0 \bmod  \mathfrak{r} & \text{ if } \sigma^{-1}(\mathfrak{r}) \cap S \neq \q.
        \end{cases}
        \]    
Let us further assume that $\mathfrak{r}$ lies over $\q$. Then, $\Tr_{\p}(s)= \sum_{\sigma \in G/H}\sigma(s) \equiv n \bmod \mathfrak{r}$, where $n \in \bN$ is the cardinality of $\{\sigma \in G/H \mid \sigma^{-1}(\mathfrak{r}) \cap S = \q\}$. Observe that $n\neq 0$ as $\sigma \in H$ is such that $\sigma^{-1}(\mathfrak r) \cap S = \q$. Moreover, $n \leq [G:H] = [L:K]$. Consequently, $
        \Tr_{\p}(s)-n \in \mathfrak{r} \cap R = \p$.
    \end{proof}
This concludes the proof.
\end{proof}

With the above in place, we are ready to introduce our trace-theoretic notion of tame ramification and its first basic properties.

\begin{definition}[Tame ramification]\label{definition tamely T ramified}
   With notation as in \autoref{def.TranspositionOfIdeals}, $S/R$ is \emph{tamely $\frT$-ramified over $\p$} if $\p^{\frT} = \sqrt{\p S}$. If $\frT=\Tr$ as in \autoref{ex.TracaConmuteWithFrobenius}, we say that $S/R$ is \emph{tamely ramified over $\p$}.
\end{definition}

\begin{corollary}\label{cor.DifferentWaysToThinkOfTameRamification}
 With notation as in \autoref{def.TranspositionOfIdeals} and \autoref{MinimalPrimesTranspostions}, then the following are equivalent:
\begin{enumerate}
        \item $\theta \: R \to S$ is tamely $\frT$-ramified over $\p \in \Spec R$.
        \item $\theta_{\p} \: R_{\p} \to S_{\p}$ is tamely $\frT_{\p}$-ramified over $\p R_{\p}$.
        \item $\p^{\frT}$ is radical and $\bar{\frT}_{\q} \neq 0$ for all $\q \in Y_{\p}$.
        \item $\frT(\sqrt{\pS}) \subset \p$ and $\bar{\frT}_{\q} \neq 0$ for all $\q \in Y_{\p}$.
        \item $\frT(\sqrt{\pS}) \subset \p$ and  for all $\q \in Y_{\p}$ there is $s \in \bigcap_{\q' \in Y_{\p} \setminus \{\q\}} \q'$ such that $\frT(s) \notin \p$.
         \item $\frT(\sqrt{\pS}) \subset \p$ and there is an exact sequence of $R_{\p}$-modules $S_{\p}\to R_{\p}^{\oplus \rho(\p)}  \to 0$ such that the $n$ projections $S_{\p} \to R_{\p}$ belong to $\frT_{\p}\cdot S_{\p}$.
\end{enumerate}
In that case, $\frT(S)\not\subset \p$,  \ie $\frT_{\p} \: S_{\p} \to R_{\p}$ is surjective.
\end{corollary}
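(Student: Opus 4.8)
The plan is to collapse conditions (a) and (b) into a single statement over a local base, then run the chain $(a)\Leftrightarrow(c)\Leftrightarrow(d)\Leftrightarrow(e)$, and finally read off (f) and the closing remark.

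\textbf{Step 1: reduction to $R$ local.} First I would check that (a) and (b) assert the same thing and that (c)--(f) are insensitive to localizing at $\p$, so that one may assume $(R,\p)$ is local. For $(a)\Leftrightarrow(b)$: the transpose commutes with the localization at $W=R\setminus\p$ by \autoref{pro.BasicPropertiesTranspositions}, so $(\p^{\frT})_{\p}=(\p R_{\p})^{\frT_{\p}}$, while trivially $(\sqrt{\p S})_{\p}=\sqrt{\p S_{\p}}$. Hence it suffices to know that the $S$-ideals $\p^{\frT}$ and $\sqrt{\p S}$ are $\p$-saturated as $R$-modules, i.e. recovered from their images in $S_{\p}$; this is so because $S/\sqrt{\p S}$ has associated primes exactly the points of $Y_{\p}$ (\autoref{notation.Ranks}), all lying over $\p$, and $S/\p^{\frT}$ is either $0$ (if $\frT$ is degenerate along $\p$, when $\p^{\frT}=\mathbf 1$) or has all of its associated primes in $Y_{\p}$ by \autoref{MinimalPrimesTranspostions}. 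The same $\p$-saturation of $\p^{\frT}$, together with \autoref{pro.BasicPropertiesTranspositions}, shows that (c)--(f) are unchanged upon replacing $(R,S,\frT,\p)$ by $(R_{\p},S_{\p},\frT_{\p},\p R_{\p})$, since each only involves $\p$, the primes in $Y_{\p}$, their residue fields, the ideal $\p^{\frT}$, and the $R_{\p}$-module $S_{\p}$. So from now on $(R,\p)$ is local; then $S$ is finite and semilocal, $Y_{\p}$ is its set of maximal ideals, and $\sqrt{\p S}=\bigcap_{\q\in Y_{\p}}\q$.

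\textbf{Step 2: the core chain.} If $\frT$ is degenerate along $\p$, then $\p^{\frT}=\mathbf 1$ and every $\bar{\frT}_{\q}$ vanishes, so (a), (c), (d), (e) all fail; hence I may assume $\frT$ is non-degenerate along $\p$ and invoke \autoref{MinimalPrimesTranspostions}. I would show each of the four conditions is equivalent to the conjunction $(\ast)$: ``$\sqrt{\p S}\subseteq\p^{\frT}$ and $\bar{\frT}_{\q}\neq 0$ for all $\q\in Y_{\p}$''. Granting $(\ast)$, \autoref{MinimalPrimesTranspostions} identifies the minimal primes of $\p^{\frT}$ with all of $Y_{\p}$, so $\p^{\frT}\subseteq\sqrt{\p^{\frT}}=\bigcap_{\q\in Y_{\p}}\q=\sqrt{\p S}\subseteq\p^{\frT}$, which gives $\p^{\frT}=\sqrt{\p S}$, i.e. (a) (and in particular $\p^{\frT}$ is radical). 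Conversely (a) yields $(\ast)$ at once: radicality gives $\sqrt{\p S}\subseteq\sqrt{\p^{\frT}}=\p^{\frT}$, and $\Min(\p^{\frT})=\Min(\sqrt{\p S})=Y_{\p}$ forces every $\bar{\frT}_{\q}\neq 0$ by \autoref{MinimalPrimesTranspostions}. It remains to rephrase $(\ast)$: ``$\p^{\frT}$ radical'' is equivalent to ``$\sqrt{\p S}\subseteq\p^{\frT}$'' (one direction from $\p S\subseteq\p^{\frT}$ by \autoref{pro.BasicPropertiesTranspositions}, the other from the squeeze above, using $\bar{\frT}_{\q}\neq 0$), and ``$\sqrt{\p S}\subseteq\p^{\frT}$'' is ``$\frT(\sqrt{\p S})\subseteq\p$'' again by \autoref{pro.BasicPropertiesTranspositions}; this is $(c)\Leftrightarrow(d)$. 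Finally ``$\bar{\frT}_{\q}\neq 0$'' unwinds via \autoref{rem.BeingExplictAboutT_q} and the Chinese remainder theorem --- exactly as in the last lines of the proof of \autoref{MinimalPrimesTranspostions} --- to the existence of $s\in\bigcap_{\q'\in Y_{\p}\setminus\{\q\}}\q'$ with $\frT(s)\notin\p$; since $S_{\p}=S$ now, this is the second clause of (e), giving $(d)\Leftrightarrow(e)$.

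\textbf{Step 3: the reflexive case and the last sentence.} Suppose now that $S_{\p}=S$ is reflexive over $R_{\p}=R$. By \autoref{prop.OnterpretationOfLength}, $\eta(\p)=\dim_{\kappa(\p)}S/\p^{\frT}$ is the largest $n$ admitting a surjection $S\to R^{\oplus n}$ all of whose projections lie in $\frT\cdot S$, so the second clause of (f) means $\eta(\p)\geq\rho(\p)$. Its first clause means $\sqrt{\p S}\subseteq\p^{\frT}$ (\autoref{pro.BasicPropertiesTranspositions}), which makes $S/\p^{\frT}$ a quotient of $S/\sqrt{\p S}$ and hence forces $\eta(\p)\leq\rho(\p)$ (\cf \autoref{notation.Ranks}); so (f) amounts to ``$\sqrt{\p S}\subseteq\p^{\frT}$ and $\eta(\p)=\rho(\p)$'', i.e. to $\p^{\frT}=\sqrt{\p S}$, i.e. to (a). For the closing assertion: when (a) holds, $\p^{\frT}=\sqrt{\p S}\neq\mathbf 1$ because $\p S$ is proper ($Y_{\p}\neq\emptyset$), so $\frT(S)=\mathbf 1_{\frT}\not\subseteq\p$ by \autoref{pro.BasicPropertiesTranspositions}, which over the local ring $R$ means $\frT$, i.e. $\frT_{\p}\:S_{\p}\to R_{\p}$, is surjective.

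\textbf{Anticipated main obstacle.} The one genuinely delicate point is the bookkeeping in Step 1 --- verifying that the non-local equality $\p^{\frT}=\sqrt{\p S}$ can be tested after localizing at $\p$ --- which relies on the control of associated primes from \autoref{MinimalPrimesTranspostions} and \autoref{notation.Ranks}. After the reduction to a local base, the argument is a routine unwinding of the transpose operation combined with \autoref{MinimalPrimesTranspostions}, \autoref{prop.OnterpretationOfLength}, and \autoref{pro.BasicPropertiesTranspositions}.
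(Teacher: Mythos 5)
Your proposal is correct and follows essentially the same route as the paper: the core chain (a)$\Leftrightarrow$(c)$\Leftrightarrow$(d)$\Leftrightarrow$(e) via \autoref{MinimalPrimesTranspostions}, condition (f) via \autoref{prop.OnterpretationOfLength}, and a localization-compatibility argument for (a)$\Leftrightarrow$(b). The only (immaterial) divergence is in that last step, where the paper verifies $(\p^{\frT}S_{\p})\cap S=\p^{\frT}$ by the colon-ideal identity $\p^{\frT}:r=\p^{\frT\cdot r}=\p^{\frT}$ for $r\in R\setminus\p$, whereas you deduce the same saturation from $\Ass_S S/\p^{\frT}\subset Y_{\p}$, which forces the separate (and correctly handled) treatment of the degenerate case.
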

\begin{proof}
The equivalence between (a), (c), (d), and (e) is direct from \autoref{MinimalPrimesTranspostions}. For (f), use \autoref{prop.OnterpretationOfLength}. Since transpositions (and radicals) commute with localizations (see \autoref{pro.BasicPropertiesTranspositions}), the equivalence between (a) and (b) means that the equality $\pt=\sqrt{\p S}$ holds if (and only if) it does after localizing at $\p$. Suppose that $\pt S_{\p}=\sqrt{\p S} S_{\p}$. Since the contraction of $\sqrt{\p S} S_{\p}$ back to $S$ is $\sqrt{\p S}$, it suffices to show that $(\pt S_{\p}) \cap S = \pt$. Note that $(\pt S_{\p}) \cap S = \bigcup_{r \in R\setminus \p} (\pt \colon r) = \bigcup_{r \in R\setminus \p} \p^{\frT \cdot r} = \pt$ as $\p^{\frT \cdot r} = \pt$ for all $r\in R \setminus \p$.
\end{proof}

\begin{remark} \label{rem.TheGaloisCase}
    There is a case in which the surjectivity of $\frT_{\p}$ in \autoref{cor.DifferentWaysToThinkOfTameRamification} implies any (equivalently all) of the conditions in \autoref{cor.DifferentWaysToThinkOfTameRamification}. Namely, for $\frT=\Tr$ as in \autoref{ex.TracaConmuteWithFrobenius} where $S$ is further normal and $L/K$ is Galois. Indeed, since $\Gal(L/K)$ acts transitively on the primes of $S$ lying over $\p$ by $R$-algebra automorphisms of $S$, it follows that $\p^{\Tr}$ is either supported everywhere or nowhere over $\p$. More precisely, either $\p^{\Tr} = S$ or $\p^{\Tr} = \sqrt{\pS}$. Therefore, tame $\Tr$-ramification is equivalent to $\Tr$ being non-degenerate along $\p$. We will elaborate more on this in \autoref{subsection.TameRamificationSeparableCase} below; see for instance \autoref{prop.DifferentNOTIONStameRamificationGaloisCase}.
\end{remark}

Next, we explain how tame ramification works in towers.

\begin{proposition}[Transitivity]\label{pro.TransitivityTameRamificationGeneralCase}
    With notation as in \autoref{pro.TransitivityTransposition}, let $\p \in \Spec R$. Then, $T/R$ is tamely $\frT_{T/R}$-ramified over $\p$ if the following two conditions hold:
    \begin{enumerate}
        \item $S/R$ is tamely $\frT_{S/R}$-ramified over $\p$, and
        \item $T/S$ is tamely $\frT_{T/S}$-ramified over every prime of $S$ lying over $\p$.
    \end{enumerate}
    Conversely, suppose that $T/R$ is tamely $\frT_{T/R}$-ramified over $\p$. Then, the ideal $\p^{\frT_{S/R}}$ is radical if and only if $\frT_{T/S}$ is non-degenerate along it. In case one and so both of them hold, then both conditions (a) and (b) above hold. In particular, if (a) holds then: $T/R$ is tamely $\frT_{T/R}$-ramified over $\p$ if and only if (b) holds.
\end{proposition}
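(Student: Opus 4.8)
The plan is to reduce everything to a computation of $\p^{\frT_{T/R}}$ via the transitivity of transpositions (\autoref{pro.TransitivityTransposition}), which gives $\p^{\frT_{T/R}}=\bigl(\p^{\frT_{S/R}}\bigr)^{\frT_{T/S}}$. Write $\frab\coloneqq\p^{\frT_{S/R}}\in I(S)$; note $\p\subseteq\frab$ always, by \autoref{pro.BasicPropertiesTranspositions}(e). Let $Y_\p\subseteq\Spec S$ be the (finite) set of primes of $S$ lying over $\p$; these are the minimal primes of $\p S$, so $\sqrt{\p S}=\bigcap_{\q\in Y_\p}\q$. For $\q\in Y_\p$, let $Y'_\q\subseteq\Spec T$ be the primes of $T$ over $\q$, so that $\sqrt{\q T}=\bigcap_{\mathfrak r\in Y'_\q}\mathfrak r$; the sets $Y'_\q$ are pairwise disjoint, their union is the set of primes of $T$ over $\p$ (the minimal primes of $\p T$), and hence $\sqrt{\p T}=\bigcap_{\q\in Y_\p}\sqrt{\q T}$. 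Granting (a) and (b), the forward implication is then immediate: by \autoref{pro.TransitivityTransposition} and (a), $\p^{\frT_{T/R}}=\frab^{\frT_{T/S}}=\bigl(\bigcap_{\q\in Y_\p}\q\bigr)^{\frT_{T/S}}$, which by \autoref{pro.BasicPropertiesTranspositions}(b) and (b) equals $\bigcap_{\q\in Y_\p}\q^{\frT_{T/S}}=\bigcap_{\q\in Y_\p}\sqrt{\q T}=\sqrt{\p T}$; so $T/R$ is tamely $\frT_{T/R}$-ramified over $\p$.

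For the converse, assume $\p^{\frT_{T/R}}=\sqrt{\p T}$, i.e.\ $\frab^{\frT_{T/S}}=\sqrt{\p T}$, and first prove the stated equivalence. If $\frT_{T/S}$ is non-degenerate along $\frab$, then \autoref{pro.BasicPropertiesTranspositions}(e), applied to $\frT_{T/S}\in\omega_{T/S}$ and $\frab\in I(S)$, gives $\frab=\frab^{\frT_{T/S}}\cap S=\sqrt{\p T}\cap S$, a contraction of a radical ideal, hence radical. Conversely, if $\frab$ is radical, apply the corollary immediately following \autoref{MinimalPrimesTranspostions} to $\frT_{T/S}$ and $\frab$: for every minimal prime $\q$ of $\frab$ and every prime $\mathfrak r$ of $T$ lying over $\q$ (such $\mathfrak r$ exists by lying-over for $S\to T$) we have $\p\subseteq\frab\subseteq\q\subseteq\mathfrak r$, hence $\frab^{\frT_{T/S}}=\sqrt{\p T}\subseteq\mathfrak r$; this is precisely the non-degeneracy criterion of that corollary, so $\frT_{T/S}$ is non-degenerate along $\frab$.

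Now assume $\frab$ is radical and $\frT_{T/S}$ is non-degenerate along $\frab$ (equivalent, by the above). For (a): by the previous paragraph $\frab=\sqrt{\p T}\cap S$, and contracting the radical ideal $\sqrt{\p T}$ along the integral map $S\to T$ recovers $\sqrt{\p S}$ (by lying-over, the primes of $S$ containing $\p S$ are exactly the contractions of primes of $T$ containing $\p T$), so $\p^{\frT_{S/R}}=\frab=\sqrt{\p S}$, which is (a). For (b): by (a), $\frab=\bigcap_{\q\in Y_\p}\q$, so \autoref{pro.BasicPropertiesTranspositions}(b) gives $\bigcap_{\q\in Y_\p}\q^{\frT_{T/S}}=\frab^{\frT_{T/S}}=\sqrt{\p T}=\bigcap_{\q\in Y_\p}\sqrt{\q T}$. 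Since $\frab$ is radical, $Z_\frab=\bigcup_{\q\in Y_\p}\q$, so non-degeneracy of $\frT_{T/S}$ along $\frab$ gives, by prime avoidance, non-degeneracy along each $\q\in Y_\p$; hence by \autoref{MinimalPrimesTranspostions}(a) every associated prime of $\q^{\frT_{T/S}}$ lies in $Y'_\q$. Now fix $\q_0\in Y_\p$ and localize the identity $\bigcap_{\q\in Y_\p}\q^{\frT_{T/S}}=\bigcap_{\q\in Y_\p}\sqrt{\q T}$ at $W\coloneqq S\setminus\q_0$: for $\q\neq\q_0$ both $\q^{\frT_{T/S}}$ and $\sqrt{\q T}$ contain $\q$, which is not contained in $\q_0$ and hence meets $W$, so these factors become the unit ideal of $W^{-1}T$; what survives is $W^{-1}\q_0^{\frT_{T/S}}=W^{-1}\sqrt{\q_0 T}=\sqrt{\q_0 W^{-1}T}$. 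Finally, all associated primes of $\q_0^{\frT_{T/S}}$ lie in $Y'_{\q_0}$, and the associated primes of $\sqrt{\q_0 T}$ are its minimal primes, which also form $Y'_{\q_0}$; every prime in $Y'_{\q_0}$ lies over $\q_0$ and so is disjoint from $W$, hence $W$ contains no zerodivisor modulo either ideal, so each of these ideals equals its own contraction from $W^{-1}T$. Therefore $\q_0^{\frT_{T/S}}=\sqrt{\q_0 W^{-1}T}\cap T=\sqrt{\q_0 T}$, which is (b). The final assertion is then formal: if (a) holds, $\frab=\sqrt{\p S}$ is radical, so the converse just proved shows that tameness of $T/R$ over $\p$ implies (b), while the forward implication shows that (a) and (b) together imply that tameness.

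The step I expect to be the genuine obstacle is this last part of the proof of (b): deducing the termwise equalities $\q^{\frT_{T/S}}=\sqrt{\q T}$ (for $\q\in Y_\p$) from the single equality of intersections $\bigcap_{\q\in Y_\p}\q^{\frT_{T/S}}=\bigcap_{\q\in Y_\p}\sqrt{\q T}$. This is where the hypotheses are really used — the control over associated primes supplied by non-degeneracy via \autoref{MinimalPrimesTranspostions}, together with the pairwise disjointness of the fibers $Y'_\q$ — whereas everything else reduces to formal manipulation of transpositions, contractions, and lying-over.
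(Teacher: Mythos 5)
Your proof is correct and follows essentially the same route as the paper: both hinge on the transitivity identity $\p^{\frT_{T/R}}=\bigl(\p^{\frT_{S/R}}\bigr)^{\frT_{T/S}}$ from \autoref{pro.TransitivityTransposition}, the formal properties in \autoref{pro.BasicPropertiesTranspositions} for the forward direction, and the observation that $\p S\subset\p^{\frT_{S/R}}\subset\sqrt{\p T}\cap S=\sqrt{\p S}$ forces the equivalence of radicality with non-degeneracy. Your localization-at-$\q_0$ argument deducing the termwise equalities $\q^{\frT_{T/S}}=\sqrt{\q T}$ from the equality of intersections — using that \autoref{MinimalPrimesTranspostions}(a) confines $\Ass_T T/\q^{\frT_{T/S}}$ to the fiber over $\q$ — carefully justifies the step the paper only asserts ("in which case (a) and (b) hold"), and is a welcome addition rather than a deviation.
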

\begin{proof}
    If (a) and (b) hold, then $T/R$ is tamely $\frT_{T/R}$-ramified over $\p$ by \autoref{pro.TransitivityTransposition} and \autoref{pro.BasicPropertiesTranspositions}. Indeed, setting $V \coloneqq \Spec T$: 
    \[
\p^{\frT_{T/R}}=\big(\p^{\frT_{S/R}}\big)^{\frT_{T/S}}= \Bigg(\bigcap_{\q \in Y_{\p}} \q\Bigg)^{\frT_{T/S}} = \bigcap_{\q \in Y_{\p}} \q^{\frT_{T/S}} = \bigcap_{\q \in Y_{\p}} \bigcap_{\mathfrak{r}\in V_{\q}} \mathfrak{r} = \bigcap_{\mathfrak{r}\in V_{\p}} \mathfrak{r}= \sqrt{\p T}.
\]
For the converse, suppose that $\sqrt{\p T}=\p^{\frT_{T/R}}=\big(\p^{\frT_{S/R}}\big)^{\frT_{T/S}} $. Contracting to $S$ yields
\[
\pS \subset \p^{\frT_{S/R}} \subset \p^{\frT_{S/R}} : \mathbf{1}_{\frT_{T/S}} = \sqrt{\p S}.
\]
By \autoref{pro.BasicPropertiesTranspositions}, $\frT_{T/S}$ is non-degenerate along $\p^{\frT_{S/R}}$ if and only if $\p^{\frT_{S/R}}$ is radical, in which case (a) and (b) hold.  
\end{proof}

The above conclude our general remarks on tame ramification. For the rest of this section, we present an instructive example that exhibits how tame ramification behaves with respect to separability or lack thereof (see \autoref{setting workout example}). Then, we explain how in the separable case our notion sits in between other notions of tame ramification in arithmetic geometry (see \autoref{subsection.TameRamificationSeparableCase}).

\subsection{A workout example: diagonalizable quotients}\label{setting workout example}
Fix a base field $\kay$ of characteristic $p>0$. Let $\Gamma$ be a finite abelian group and $S=\bigoplus_{\gamma \in \Gamma} S_{\gamma}$ be a $\Gamma$-graded $\kay$-algebra. Set $R \coloneqq S_0$. This is tantamount to an action of the diagonalizable group-scheme $G\coloneqq D(\Gamma) \coloneqq \Spec \kay[\Gamma]$ on $\Spec S$ where $R=S^G$ is the ring of invariants; see \cite[Example 1.6.7]{MontgomeryHopfAlgebras}. Denote by $\pi_{\gamma} \: S \to S_{\gamma}$ the canonical projection and set $\frT\coloneqq \pi_0 \: S \to R$. Suppose that $S_{\gamma}$ is a finitely generated $R$-module for all $\gamma \in \Gamma$, so that $S/R$ is a finite extension.\footnote{In general, it holds that $S/R$ is integral and so quasi-finite \cite[\S4.2]{MontgomeryHopfAlgebras}. Also see \cite[Remark 2.3]{CarvajalFiniteTorsors}.} Notably, $\frT$ is the trace map $\Tr_{S/R}\:S \to S^G$ constructed in \cite[\S3.1]{CarvajalFiniteTorsors}. Our goal here is to describe $\p^{\frT}$ for all $\p \in R$ and so tame $\frT$-ramification. Note that $\p^{\frT}$ is a proper ideal as $\frT$ is surjective. 

\begin{remark}
Observe the following:
\begin{enumerate}
    \item Suppose that $p\nmid \vert\Gamma \vert$. Then, $\cdot p\colon \Gamma\to \Gamma$ is an isomorphism and $\gamma/p \in \Gamma$ is well-defined for all $\gamma\in \Gamma$. Moreover, $\pi_{\gamma}(s^p)=\pi_{\gamma/p}(s)^p$ for all $\gamma \in \Gamma$. In particular, setting $\gamma=0$ yields $\frT(s^p)=\frT(s)^p$. Therefore, $\p^{\frT}$ is radical and so $\sqrt{\p S} \subset \p^{\frT}$.
    \item If $\Gamma$ is a $p$-group of order $q$, then $S^{q}\subset R\subset S$. Therefore, $\p^{\frT}\subset \sqrt{\p S}$. Note that $\q \coloneqq \sqrt{\p S}$ is the only prime ideal of $S$ lying over $\p$.
\end{enumerate}
\end{remark}

\begin{definition}
   With notation as above, the \emph{homogeneous part} of $\frab \in I(S)$ is the subideal $\frab_{\mathrm{h}} \coloneqq \bigoplus_{\gamma \in \Gamma} \frab \cap S_{\gamma} \subset \frab$. We say that $\frab$ is \emph{homogeneous} if $\frab_{\mathrm{h}} = \frab$, \ie $\pi_{\gamma}(\frab) \subset \frab$ for all $\gamma \in \Gamma$. 
\end{definition}

One readily sees that $\frab \in I(S)$ is homogeneous if and only if it is generated by homogeneous elements and if and only if the quotient $S \to S/\frab$ (or rather its spectrum) is $G$-equivariant. 

\begin{proposition} \label{transpose is equivariant}
    With notation as above, let $\p \in \Spec R$. Then, $\pt$ is homogeneous and $(\sqrt{ \p S})_{\mathrm{h}} \subset \p^{\frT}$. If $\Gamma$ is a $p$-group, then $\p^{\frT} = (\sqrt{ \p S})_{\mathrm{h}}$ and so: $S/R$ is tamely $\frT$-ramified over $\p$ if and only if $\p$ is the contraction of a homogeneous ideal and if and only if $\p^{\frT}$ is radical.
\end{proposition}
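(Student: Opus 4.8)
The plan is to prove the displayed equality $\p^{\frT}=(\sqrt{\p S})_{\mathrm{h}}$ by a sandwich and then read off the three equivalences from it, using that $\p$ has a unique prime of $S$ lying over it. First I would check that $\p^{\frT}$ is homogeneous for an arbitrary finite abelian $\Gamma$. The key point is that $\frT=\pi_{0}$ is ``$G$-invariant'': $\pi_{0}(s_{\gamma}t_{\delta})=0$ whenever $\gamma+\delta\neq 0$, while $\pi_{0}(s_{\gamma}t_{-\gamma})=s_{\gamma}t_{-\gamma}$. So if $s=\sum_{\gamma}s_{\gamma}\in\p^{\frT}$, then for any $\delta$ and homogeneous $t_{\delta}\in S_{\delta}$ we get $\pi_{0}(s_{-\delta}t_{\delta})=\pi_{0}(st_{\delta})\in\p$, whereas $\pi_{0}(s_{-\delta}t')=0\in\p$ for homogeneous $t'$ of degree $\neq\delta$; since $S$ is $R$-spanned by homogeneous elements and $\pi_{0}$ is $R$-linear, $\pi_{0}(s_{-\delta}S)\subseteq\p$, i.e.\ every homogeneous component of $s$ lies in $\p^{\frT}$. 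Next, the inclusion $(\sqrt{\p S})_{\mathrm{h}}\subseteq\p^{\frT}$ holds in general: as $\p^{\frT}$ is the largest ideal of $S$ whose $\pi_{0}$-image lies in $\p$, it suffices that $\pi_{0}\big((\sqrt{\p S})_{\mathrm{h}}\big)=\sqrt{\p S}\cap S_{0}=\sqrt{\p S}\cap R\subseteq\p$, and indeed $\sqrt{\p S}\cap R=\p$ because $\Supp_{R}S=\Spec R$ (here $\ker(R\to S)=0$), so $S\otimes_{R}\kappa(\p)\neq0$ and hence $\p S\cap R=\p$.

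For the opposite inclusion in the $p$-group case, set $q:=|\Gamma|$, a power of $p$; since the exponent of $\Gamma$ divides $q$ and Frobenius is additive in characteristic $p$, one has $S^{q}\subseteq R\subseteq S$ (the observation recorded in the preceding Remark). As $\frT=\pi_{0}$ is surjective it is non-degenerate along the radical ideal $\p$, so \autoref{pro.SeparableVSpurelyInseparable}(b) yields $\p^{\frT}\subseteq\sqrt{\p S}$; combining this with homogeneity of $\p^{\frT}$ gives $\p^{\frT}=(\p^{\frT})_{\mathrm{h}}\subseteq(\sqrt{\p S})_{\mathrm{h}}$, hence the displayed equality. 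To finish, I would use that $S^{q}\subseteq R$ makes $\Spec S\to\Spec R$ a bijection, so $\p$ has a unique prime $\q$ of $S$ over it and $\q=\sqrt{\p S}$; moreover $\Ass_{S}S/\p^{\frT}\subseteq Y_{\p}=\{\q\}$ by \autoref{MinimalPrimesTranspostions}, so $\p^{\frT}$ is $\q$-primary and $\sqrt{\p^{\frT}}=\q$. Therefore $S/R$ is tamely $\frT$-ramified over $\p$ iff $\p^{\frT}=\sqrt{\p S}$ iff $(\sqrt{\p S})_{\mathrm{h}}=\sqrt{\p S}$ iff $\q$ is homogeneous; and, separately, $\p^{\frT}$ is radical iff $\p^{\frT}=\sqrt{\p^{\frT}}=\q$, the same condition. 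Finally ``$\q$ homogeneous'' is matched with ``$\p$ is a contraction of a homogeneous ideal'': one direction takes $\frab=\q$, and for the converse, if $\p=\frab\cap R$ with $\frab$ homogeneous and radical, then writing $\frab=\bigcap_{i}\q_{i}$ as the intersection of its minimal primes and contracting to $R$, primality of $\p$ forces $\q_{i}\cap R=\p$ for some $i$, hence (incomparability in the integral extension, plus uniqueness of $\q$) for all $i$, so $\frab=\q$.

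The delicate point, and the step I would handle most carefully, is precisely this last matching: one really needs the homogeneous ideal witnessing ``$\p$ is a contraction'' to be radical, i.e.\ a $G$-stable prime. Radicals of homogeneous ideals need not be homogeneous once $\Gamma$ has $p$-torsion: grading $\kay[x,y]$ by $\bZ/p$ via $\deg x=1$, $\deg y=0$, the ideal $(x^{p}-y^{p})=(x-y)^{p}$ is homogeneous while $(x-y)$ is not (and there $\p^{\frT}=(x^{p}-y^{p})S\subsetneq(x-y)S=\sqrt{\p S}$, so the cover is \emph{not} tamely $\frT$-ramified). So the clean reading of the equivalence is ``$\p$ is the contraction of a homogeneous radical ideal'' — equivalently a homogeneous prime, since $\q$ is the unique prime of $S$ over $\p$ — and under that reading the argument above is complete. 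The remaining verifications (that $\pi_{0}$ is $R$-linear and surjective, the permanence of transpositions, $\mathbf{1}_{\frT}=R$, and the primeness bookkeeping) are routine via \autoref{pro.BasicPropertiesTranspositions}.
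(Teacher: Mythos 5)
Your proof is correct and follows essentially the same route as the paper's (homogeneity of $\p^{\frT}$ via the projection trick, the inclusion $(\sqrt{\p S})_{\mathrm{h}} \subset \p^{\frT}$ from $\sqrt{\p S}\cap R = \p$, and the reverse inclusion in the $p$-group case from $S^{q}\subset R$); the paper compresses the final equivalences into ``the rest then follows,'' which you fill in correctly. Your caveat about ``contraction of a homogeneous ideal'' is well taken and is a genuine imprecision in the statement: since $\p S$ is always homogeneous (being generated in degree $0$), the condition as literally written is vacuous, and it must be read as ``contraction of a homogeneous \emph{radical} (equivalently, prime) ideal,'' exactly as you do --- your $(x^{p}-y^{p})=(x-y)^{p}$ example shows the radicality hypothesis cannot be dropped, and this stronger reading is indeed how the proposition is used later (e.g.\ in \autoref{small case contract keeps compatibility}).
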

\begin{proof}
Let $s \in \pt$. We must show that $s_{\gamma} \coloneqq \pi_{\gamma}(s) \in \pt$ for all $\gamma \in \Gamma$. For this, it suffices to show that $\frT(s_{\gamma}x) \in \p$ for all homogeneous elements $x \in S$, say of degree $\gamma'$. Note that $\frT(s_{\gamma} x)=0$ unless $\gamma'= -\gamma$, so we may assume that $\gamma'= - \gamma$. Moreover, $\frT(sx)=\frT(s_{\gamma}x)$, which is in $\p$ as $s \in \pt$. The containment $(\sqrt{ \p S})_{\mathrm{h}} \subset \p^{\frT}$ follows from the facts that $\sqrt{\p S} \cap R= \p$ and $\frT$ is the projection onto $R=S_0$. The rest then follows.
\end{proof}

\begin{remark}
    By \cite[\S3.2]{CarvajalFiniteTorsors}, $S/R$ (or rather its spectrum) is a $G$-torsor over $\p \in \Spec R$ if and only if each $(S_{\gamma})_{\p}$ is a free $R_{\p}$-module of rank $1$ and $(\pi_{\gamma})_{\p} \: S_{\p} \to R_{\p}$ is an $S_{\p}$-multiple of $\frT_{\p}$ for all $\gamma \in \Gamma$. This is the case precisely when the multiplication map $S_{\gamma} \otimes_R S_{-\gamma} \to R$ is surjective at $\p$ (\ie its image avoids $\p$) for all $\gamma \in \Gamma$ (\cf \cite[Theorem 8.1.7]{MontgomeryHopfAlgebras}).
\end{remark}

It is difficult to say much more about $\p^{\frT}$ in this generality. Hence, we are going to assume that $R$ is a normal integral domain with field of fractions $K$ and arrange for $S/R$ to be a $G$-torsor over all $\p \in \Spec R$ of height $\leq 1$ (aka $G$-quasitorsor). Let us assume that each $S_{\gamma}$ is a reflexive $R$-module of generic rank $1$. Since $R$ is normal, this is to say that $S$ is an $\textbf{S}_2$ ring. In particular, we may take $S_\gamma = R(D_\gamma)$ for some divisor $D_\gamma$ on $\Spec R$ with $D_{0}=0$ and let the ring structure on $S$ be encoded into $R$-linear maps:
\[
R(D_{\gamma}) \otimes_R R(D_{\gamma'}) \xrightarrow{\mathrm{can}} R(D_{\gamma} + D_{\gamma'}) \xrightarrow{\cong, \cdot \kappa_{\gamma,\gamma'}} R(D_{\gamma+\gamma'}),
\]
for some $\kappa_{\gamma,\gamma'} \in K^{\times}$ such that $\Div \kappa_{\gamma,\gamma'} + D_{\gamma+\gamma'} = D_{\gamma}+ D_{\gamma'}$. 

Observe that the above defines a homomorphism $\Gamma \to \Cl R$. For every $\p \in \Spec R$, let us define $\Gamma_{\p} \subset \Gamma$ as the kernel of $\Gamma \to \Cl R \to \Cl R_{\p}$. That is, $\Gamma_{\p} = \{\gamma \in \Gamma \mid D_{\gamma} \text{ is Cartier at }\p\}$. Let us consider the subring $R \subset S^{(\p)} \subset S$ given by $S^{(\p)}\coloneqq\bigoplus_{\gamma \in \Gamma_{\p}} S
_{\gamma}$. Note that, for each $\gamma \in \Gamma_{\p}$ we may find $\delta_{\gamma} \in K^{\times}$ such $\Div \delta_{\gamma} \geq D_{\gamma}$ with equality at those prime divisors going through $\p$. In particular, for all $\gamma,\gamma'\in \Gamma_{\p}$ there is $u_{\gamma,\gamma'} \in R_{\p}^{\times}$ such that $\kappa_{\gamma,\gamma'} \delta_{\gamma+\gamma'} = u_{\gamma,\gamma'} \delta_{\gamma} \delta_{\gamma'}$. Let $T_{\p}$ be the $\Gamma_{\p}$-graded $R_{\p}$-algebra $\bigoplus_{\gamma \in \Gamma} R_{\p} \cdot t^{\gamma}$ subject to $t^{\gamma}\cdot t^{\gamma'} \coloneqq u_{\gamma,\gamma'}t^{\gamma+\gamma'}$. Then, $1/\delta_{\gamma} \leftrightarrow t^{\gamma}$ establishes an isomorphism of $\Gamma_{\p}$-graded $R_{\p}$-algebras between $T_{\p}$ and the localization of $S^{(\p)}$ at $\p$. Hence, we may write $R_{\p} \subset T_{\p} \subset S_{\p}$. Observe that $T_{\p}/R_{\p}$ is a $D(\Gamma/\Gamma_{\p})$-torsor and $S/R$ is a $G$-torsor over $\p$ if and only if $T_{\p}=S_{\p}$ (\ie $\Gamma_{\p}=\Gamma$). Then, \cite[Corollary 3.13]{CarvajalFiniteTorsors} yields:

\begin{proposition} \label{pro.QuasitorsoNonSingularity}
    With notation as above, $S/R$ is a $G$-quasitorsor and $\frT$ is non-singular. 
\end{proposition}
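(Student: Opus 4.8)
The plan is to handle the two claims in turn; the quasitorsor claim is a codimension-one statement by definition, while for non-singularity I would decompose the trace map along the $\Gamma$-grading. For the quasitorsor claim, let $\p\in\Spec R$ have height $\le 1$. Since $R$ is a normal integral domain, $R_\p$ is a field or a DVR, so $\Cl R_\p=0$; hence the composite $\Gamma\to\Cl R\to\Cl R_\p$ is trivial, \ie $\Gamma_\p=\Gamma$, and therefore $T_\p=S_\p$. By the discussion preceding the statement, this is precisely the assertion that $S/R$ is a $G$-torsor over $\p$. As this holds for every $\p$ of height $\le 1$, $S/R$ is a $G$-quasitorsor; this is also \cite[Corollary 3.13]{CarvajalFiniteTorsors}.

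For non-singularity I must show that the $S$-linear map $\sigma\colon S\to\omega_{S/R}=\Hom_R(S,R)$, $s\mapsto\frT\cdot s$ with $(\frT\cdot s)(x)=\pi_0(sx)$, is an isomorphism. Because $\Gamma$ is finite, both sides split into finite direct sums, $S=\bigoplus_{\gamma}R(D_\gamma)$ and $\Hom_R(S,R)=\bigoplus_{\gamma}\Hom_R(S_\gamma,R)=\bigoplus_{\gamma}R(-D_\gamma)$, using the standard identification $\Hom_R(R(D),R)=R(-D)$ of divisorial fractional ideals over the normal domain $R$. For $s\in S_\gamma$ the functional $\sigma(s)$ annihilates $S_{\gamma'}$ whenever $\gamma'\ne-\gamma$ (since $S_\gamma S_{\gamma'}\subset S_{\gamma+\gamma'}$ and $\pi_0$ kills $S_\delta$ for $\delta\ne 0$), so $\sigma$ is the direct sum of its graded components $\sigma_\gamma\colon R(D_\gamma)\to\Hom_R(S_{-\gamma},R)=R(-D_{-\gamma})$. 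I would then trace through the ring structure $S_\gamma\otimes_R S_{-\gamma}\xrightarrow{\ \mathrm{can}\ }R(D_\gamma+D_{-\gamma})\xrightarrow{\ \cdot\,\kappa_{\gamma,-\gamma}\ }R(D_0)=R$, each step being honest multiplication inside $K$: this gives $\pi_0(sx)=\kappa_{\gamma,-\gamma}\,sx$, so $\sigma_\gamma$ is multiplication by the scalar $\kappa_{\gamma,-\gamma}\in K^{\times}$. Specializing the defining relation to $\gamma'=-\gamma$ (and $D_0=0$) gives $\Div\kappa_{\gamma,-\gamma}=D_\gamma+D_{-\gamma}$, so this scalar carries $R(D_\gamma)$ bijectively onto $R\big(D_\gamma-\Div\kappa_{\gamma,-\gamma}\big)=R(-D_{-\gamma})$. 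Hence each $\sigma_\gamma$, and therefore $\sigma$, is an isomorphism.

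The computations are routine; the one place that needs care is pinning down that the ``canonical'' maps assembling the multiplication on $S$ are literally multiplications inside $K$ --- so that $\sigma_\gamma$ really is multiplication by $\kappa_{\gamma,-\gamma}$ and not a map only agreeing with it in codimension one --- together with the identification $\Hom_R(R(D),R)=R(-D)$. An alternative, which I would also mention, is that $S$ and $\Hom_R(S,R)$ are reflexive (finite direct sums of rank-one reflexives), so $\sigma$ is an isomorphism as soon as it is one after localizing at every prime of height $\le 1$; there, by the first part, $S_\p$ is the twisted group algebra $\bigoplus_\gamma R_\p\,t^\gamma$ and a one-line computation gives $\sigma_\p(t^\gamma)=u_{\gamma,-\gamma}\,(t^{-\gamma})^{\ast}$, which runs over a basis of $\Hom_{R_\p}(S_\p,R_\p)$. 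Either route makes plain that the only real input for both halves of the statement is the codimension-one triviality of the quasitorsor.
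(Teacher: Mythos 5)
Your argument is correct, and it is genuinely more self-contained than what the paper does: the paper proves this proposition by simply invoking \cite[Corollary 3.13]{CarvajalFiniteTorsors}, whereas you supply the computation. Both halves of your proof check out. For the quasitorsor claim, the reduction to $\Cl R_\p=0$ for $\height\p\le 1$, hence $\Gamma_\p=\Gamma$ and $T_\p=S_\p$, is exactly the content of the discussion preceding the statement, so nothing more is needed. For non-singularity, your graded decomposition of $\omega_{S/R}=\bigoplus_\gamma\Hom_R(S_\gamma,R)=\bigoplus_\gamma R(-D_\gamma)$ is sound: since $\pi_0$ kills $S_\delta$ for $\delta\ne 0$, the map $\sigma$ sends $S_\gamma$ into the summand $\Hom_R(S_{-\gamma},R)$, and the component $\sigma_\gamma$ is multiplication by $\kappa_{\gamma,-\gamma}$ inside $K$; the cocycle relation specialized at $\gamma'=-\gamma$ gives $\Div\kappa_{\gamma,-\gamma}=D_\gamma+D_{-\gamma}$, so $\kappa_{\gamma,-\gamma}R(D_\gamma)=R(-D_{-\gamma})$ and each $\sigma_\gamma$ is bijective. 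You are right to flag the two points that need care --- that the ``canonical'' multiplication maps are literal multiplication of fractional ideals in $K$, and that $\Hom_R(R(D),R)=R(-D)$ as sub-$R$-modules of $K$ (which uses reflexivity of $R(D)$) --- and both are handled correctly. Your alternative route via reflexivity of $S$ and $\Hom_R(S,R)$ plus the height-$\le 1$ computation in the twisted group algebra $\bigoplus_\gamma R_\p t^\gamma$ is also valid and arguably closer in spirit to the cited source. The only cosmetic remark: your direct argument actually proves non-singularity without first knowing the quasitorsor property, so the closing sentence that ``the only real input for both halves is the codimension-one triviality'' applies to the reflexivity route but not to the direct graded computation.
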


\begin{corollary}\label{transpose in divisorial stuff} With notation as above, let $\p\in \Spec R$ and write $S_{\p}=T_{\p}\oplus M$. Then, $\p T_{\p}\oplus M = \p^{\frT}S_{\p}$ and so $\eta(\p) = |\Gamma_{\p}|$. In particular:
\begin{enumerate}
    \item $S/R$ is a $G$-torsor over $\p$ if and only $\p S_{\p} = \pt S_{\p}$. In that case:
    \begin{enumerate}
        \item If $p\nmid|\Gamma|$, then $S/R$ is tamely $\frT$-ramified over $\p$ and $\p S_{\p} = \sqrt{\p S_{\p}} = \pt S_{\p}$.
        \item If $\Gamma$ is a $p$-group, $S/R$ is tamely $\frT$-ramified over $\p$ if and only if $\bigoplus_{\gamma \in \Gamma}\kappa(\p)\cdot t^{\gamma}$ is a field; where $t^{\gamma}\cdot t^{\gamma'} = \bar{u}_{\gamma,\gamma'}t^{\gamma+\gamma'}$ with $\bar{u}_{\gamma,\gamma'} \in \kappa(\p)$ the class of $u_{\gamma,\gamma'} \in R_{\p}^{\times}$.
    \end{enumerate}
    
    \item If $\Gamma_{\p} = \{0\}$ then $S_{\p}$ is a local ring with maximal ideal $\pt S_{\p}$ and residue field $\kappa(\p)$ and so $S/R$ is tamely $\frT$-ramified over $\p$. 
\end{enumerate}
\end{corollary}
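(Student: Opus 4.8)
The plan is to pass to the local ring at $\p$ and compute there. Since transpositions commute with localization (\autoref{pro.BasicPropertiesTranspositions}), $\p^{\frT}S_{\p}=(\p R_{\p})^{\frT_{\p}}$, where $\frT_{\p}\colon S_{\p}=\bigoplus_{\gamma\in\Gamma}(S_{\gamma})_{\p}\to R_{\p}$ is still the projection onto the degree-$0$ piece. Take $M=\bigoplus_{\gamma\in\Gamma\setminus\Gamma_{\p}}(S_{\gamma})_{\p}$, so $S_{\p}=T_{\p}\oplus M$ and $\p T_{\p}=\bigoplus_{\gamma\in\Gamma_{\p}}\p(S_{\gamma})_{\p}$. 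Everything hinges on the behaviour of the multiplication pairings $(S_{\gamma})_{\p}\otimes_{R_{\p}}(S_{-\gamma})_{\p}\to R_{\p}$ inside $S_{\p}$: if $\gamma\in\Gamma_{\p}$ the pairing is onto $R_{\p}$ (this is built into the construction of $T_{\p}$, where $t^{\gamma}t^{-\gamma}=u_{\gamma,-\gamma}\in R_{\p}^{\times}$), while if $\gamma\notin\Gamma_{\p}$ then $(S_{\gamma})_{\p}$ is not $R_{\p}$-free, so $(S_{\gamma})_{\p}(S_{-\gamma})_{\p}$ is a proper, hence $\subset\p R_{\p}$, ideal of $R_{\p}$ --- this is the torsor criterion recalled before \autoref{pro.QuasitorsoNonSingularity}.

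With this in hand I would prove $\p^{\frT}S_{\p}=\p T_{\p}\oplus M$ by two inclusions. For ``$\supseteq$'': given $b\in\p T_{\p}\oplus M$ and $b'\in S_{\p}$, write both in homogeneous components and note $\frT_{\p}(bb')=\sum_{\gamma}b_{\gamma}b'_{-\gamma}$; the $\gamma$-term lies in $\p R_{\p}$ since either $\gamma\notin\Gamma_{\p}$, or $\gamma\in\Gamma_{\p}$ and $b_{\gamma}\in\p(S_{\gamma})_{\p}$. For ``$\subseteq$'': $\p^{\frT}$, hence $\p^{\frT}S_{\p}$, is homogeneous by \autoref{transpose is equivariant}, so it suffices to take homogeneous $b=b_{\gamma}\in(S_{\gamma})_{\p}$ with $\frT_{\p}(b_{\gamma}S_{\p})\subset\p R_{\p}$; if $\gamma\notin\Gamma_{\p}$ then $b_{\gamma}\in M$, and if $\gamma\in\Gamma_{\p}$ then testing against $(S_{-\gamma})_{\p}$ gives $b_{\gamma}(S_{-\gamma})_{\p}\subset\p R_{\p}$, whence $b_{\gamma}R_{\p}=b_{\gamma}(S_{-\gamma})_{\p}(S_{\gamma})_{\p}\subseteq\p(S_{\gamma})_{\p}$ using $(S_{-\gamma})_{\p}(S_{\gamma})_{\p}=R_{\p}$. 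Quotienting by $\p^{\frT}S_{\p}=\p T_{\p}\oplus M$ then gives $S_{\p}/\p^{\frT}S_{\p}\cong T_{\p}/\p T_{\p}$, a $\kappa(\p)$-vector space of dimension $|\Gamma_{\p}|$ since $T_{\p}$ is $R_{\p}$-free of that rank; thus $\eta(\p)=|\Gamma_{\p}|$ (alternatively, combine \autoref{prop.OnterpretationOfLength} with \autoref{pro.QuasitorsoNonSingularity}).

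For (1): if $S/R$ is a $G$-torsor over $\p$ then $\Gamma_{\p}=\Gamma$, so $M=0$ and $\p^{\frT}S_{\p}=\p T_{\p}=\p S_{\p}$; conversely $\p S_{\p}=\p^{\frT}S_{\p}=\p T_{\p}\oplus M$ together with $\p S_{\p}=\p T_{\p}\oplus\p M$ forces $M=\p M$, hence $M=0$ by Nakayama (it is a finitely generated $R_{\p}$-module), \ie $\Gamma_{\p}=\Gamma$, \ie $S/R$ is a $G$-torsor over $\p$. In that case, if $p\nmid|\Gamma|$ then $\frT$ commutes with Frobenius, so $\p^{\frT}$ is radical with $\sqrt{\p S}\subset\p^{\frT}$ by \autoref{pro.SeparableVSpurelyInseparable}; combined with $\p S_{\p}=\p^{\frT}S_{\p}$ this forces $\p S_{\p}=\sqrt{\p S_{\p}}=\p^{\frT}S_{\p}$, and tame $\frT$-ramification over $\p$ is exactly the local equality $\p^{\frT}S_{\p}=\sqrt{\p S_{\p}}$ by \autoref{cor.DifferentWaysToThinkOfTameRamification}. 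If instead $\Gamma$ is a $p$-group, then $\p$ has a unique prime of $S$ above it, so $S_{\p}$ is local and $S_{\p}/\p S_{\p}=\bigoplus_{\gamma}\kappa(\p)t^{\gamma}$ (with $t^{\gamma}t^{\gamma'}=\bar u_{\gamma,\gamma'}t^{\gamma+\gamma'}$) is a local Artinian $\kappa(\p)$-algebra; since $\p^{\frT}S_{\p}=\p S_{\p}$, tame $\frT$-ramification over $\p$ amounts to $\p S_{\p}=\sqrt{\p S_{\p}}$, \ie this algebra being reduced, equivalently a field.

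Finally (2): if $\Gamma_{\p}=\{0\}$ then $T_{\p}=R_{\p}$, so $\p^{\frT}S_{\p}=\p R_{\p}\oplus M$ and $S_{\p}/\p^{\frT}S_{\p}=\kappa(\p)$ is a field; hence $\p^{\frT}S_{\p}$ is a maximal, thus radical, ideal of $S_{\p}$. For homogeneous $m\in(S_{\gamma})_{\p}$ with $\gamma\neq 0$, letting $d$ be the order of $\gamma$ we get $m^{d}=m\cdot m^{d-1}\in(S_{\gamma})_{\p}(S_{-\gamma})_{\p}\subset\p R_{\p}$ because $\gamma\notin\Gamma_{\p}$, so $m\in\sqrt{\p S_{\p}}$; therefore $M\subset\sqrt{\p S_{\p}}$ and $\p^{\frT}S_{\p}=\p R_{\p}+M\subseteq\sqrt{\p S_{\p}}\subseteq\p^{\frT}S_{\p}$. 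So $\p^{\frT}S_{\p}=\sqrt{\p S_{\p}}=\bigcap_{\q\in Y_{\p}}\q$ (the last equality since $S_{\p}/\p S_{\p}$ is integral over the field $\kappa(\p)$, so its primes are all minimal over $\p S_{\p}$); being maximal and contained in every $\q\in Y_{\p}$, $\p^{\frT}S_{\p}$ equals each of them, so $Y_{\p}$ is a singleton and $S_{\p}$ is local with maximal ideal $\p^{\frT}S_{\p}$ and residue field $\kappa(\p)$, and $S/R$ is tamely $\frT$-ramified over $\p$. The step I expect to be most delicate is the reverse inclusion in the second paragraph, \ie turning ``$b_{\gamma}(S_{-\gamma})_{\p}\subset\p R_{\p}$'' into ``$b_{\gamma}\in\p(S_{\gamma})_{\p}$''; this requires making precise that $(S_{\gamma})_{\p}$ and $(S_{-\gamma})_{\p}$ are mutually inverse invertible ideals exactly for $\gamma\in\Gamma_{\p}$, together with the attendant dichotomy for the pairings that is used throughout.
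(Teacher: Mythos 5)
Your proof is correct, and for the central equality $\p T_{\p}\oplus M=\p^{\frT}S_{\p}$ it takes a genuinely different route from the paper's. The paper establishes only the inclusion $\p T_{\p}\oplus M\subset\p^{\frT}S_{\p}$ (by checking that the left-hand side is an ideal whose image under $\frT$ lies in $\p$) and then forces equality by a dimension count: by \autoref{pro.QuasitorsoNonSingularity} and \autoref{prop.OnterpretationOfLength}, $\eta(\p)$ is the free-rank of $S_{\p}$ over $R_{\p}$, hence $\geq|\Gamma_{\p}|$ because $T_{\p}$ is a free direct summand, while the inclusion gives $\eta(\p)\leq|\Gamma_{\p}|$. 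You instead prove both inclusions directly, using the homogeneity of $\p^{\frT}$ from \autoref{transpose is equivariant} to reduce to homogeneous elements, together with the component-wise dichotomy for the pairings $(S_{\gamma})_{\p}\otimes_{R_{\p}}(S_{-\gamma})_{\p}\to R_{\p}$ (onto precisely when $\gamma\in\Gamma_{\p}$). That dichotomy is a mild sharpening of the torsor criterion quoted before \autoref{pro.QuasitorsoNonSingularity}, and you rightly flag it as the delicate point: it rests on the standard fact that if the product of two divisorial fractional ideals of the normal local domain $R_{\p}$ is principal then each is invertible, hence free --- the same input the paper uses to see that $\p T_{\p}\oplus M$ is an ideal, so no extra hypothesis is being smuggled in. Your computation buys an explicit identification of $\p^{\frT}S_{\p}$ without invoking the free-rank interpretation of $\eta(\p)$; the paper's count is shorter but leaves the reverse inclusion implicit in the rank bound. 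Your arguments for the itemized consequences --- Nakayama for the converse in (1), radicality of $\p^{\frT}$ via \autoref{pro.SeparableVSpurelyInseparable} in (1a), reducedness of the local artinian fiber in (1b), and nilpotence modulo $\p S_{\p}$ of homogeneous elements of nonzero degree in (2) --- are exactly the intended deductions, which the paper's proof leaves to the reader.
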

\begin{proof}
    By \autoref{pro.QuasitorsoNonSingularity}, $\eta(\p)$ is the free-rank of $S_{\p}$ as an $R_{\p}$-module. In particular, since $T_{\p}$ is a direct summand of $S_{\p}$, we conclude that $\eta(\p) \geq |\Gamma_{\p}|$. Observe that $\p T_{\p} \oplus M \subset S_{\p}$ is an ideal. Indeed, this is to say that $R(D_{\gamma}) \otimes_R R(D_{\gamma'}) \to R(D_{\gamma+\gamma'})$ is not surjective at $\p$ for all $\gamma,\gamma' \in \Gamma \setminus \Gamma_{\p}$ such that $\gamma+\gamma'\in \Gamma_{\p}$. Therefore, $\p T_{\p}\oplus M \subset \p^{\frT}S_{\p}$ and the equality follows by comparing the $\kappa(\p)$-dimensions of their quotients. 
\end{proof}

\begin{question} \label{que.Computep^TForGneralQuasiTorsors}
    How can \autoref{transpose in divisorial stuff} be generalized to the following setup: $G/\kay$ is a finite group-scheme, $S/R$ is a finite $G$-quasitorsor between $\mathbf{S}_2$ integral $\kay$-domains, and $\frT \: S \to R$ is the (non-singular) trace map of \cite{CarvajalFiniteTorsors}?
\end{question}

\subsubsection{Cyclic case} Let $\Gamma$ be cyclic of order $m$ and set $D_{i}\coloneqq iD$ for $i=0,\ldots,m-1$ where $D$ is a divisor on $\Spec R$ such that $\Div \kappa = m D$ for some $\kappa \in K^{\times}$. Suppose that $m=nk$ for $n,k\in \bN_{+}$, so that $\Div \kappa=nE$ with $E \coloneqq kD$. Then, we obtain a factorization of $R$-algebras $R\subset T \subset S$ where $T=\bigoplus_{i=0}^{n-1}R(iE)$ is the cyclic cover given by $\Div \kappa = nE$. Moreover, if $E=\Div \epsilon$ for some $\epsilon \in K^{\times}$, it then follows that $S=T[t]/\langle t^n-u \rangle$ where $u \coloneqq \kappa/\epsilon^n \in R^{\times}$. In particular, by letting $k$ be the order of $\Lambda \coloneqq \ker(\Gamma \to \Cl R)$ (\ie $k$ is the index of $D\in \Cl R$), we may factor the general cyclic cover case $R \subset S$ as $R \subset T \subset S$ where $T/R$ is a cyclic cover such that $\Lambda \to \Cl R$ is injective and $S/T$ is a $D(\Gamma/\Lambda)$-torsor $S=T[t]/\langle t^n-u \rangle$ of degree $n=[\Gamma: \Lambda]$ for some unit $u \in R^{\times}$.

\begin{corollary}\label{tame t ramification in cyclic covers}
    With notation as above, if $p\nmid m$ then $S/R$ is tamely $\frT$-ramified everywhere. If $m=q$, then $S/R$ is tamely $\frT$-ramified over $\p \in \Spec R$ if and only if the following holds: the index of $D$ in $\Cl R_{\p}$ is either $q$ or: $q'=q/q''<q$ and if $q'D_{\p}=\Div_{R_{\p}} \delta$ then the residual class $\bar{u}\in \kappa(\p)$ of $u \coloneqq \kappa/\delta^{q''} \in R_{\p}^{\times}$ is not in $\kappa(\p)^p$. In that case, $S_{\kappa(\p)}=\kappa(\p)(\bar{u}^{1/q''})=\kappa(\q)$.
\end{corollary}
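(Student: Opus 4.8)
First I would reduce the statement, in both regimes, to a single local assertion: for every $\p\in\Spec R$ the cover $S/R$ is tamely $\frT$-ramified over $\p$ if and only if the ring $T_\p/\p T_\p=\bigoplus_{\gamma\in\Gamma_\p}\kappa(\p)t^\gamma$ is reduced. To set this up I would recall from \autoref{transpose in divisorial stuff} that $S_\p=T_\p\oplus M$ and $\p^\frT S_\p=\p T_\p\oplus M$, so $S_\p/\p^\frT S_\p\cong T_\p/\p T_\p$, and that $\frT=\pi_0$ being surjective makes $\p^\frT$ proper; and since transpositions and radicals commute with localization (\autoref{pro.BasicPropertiesTranspositions}), \autoref{cor.DifferentWaysToThinkOfTameRamification} lets me rewrite the equality $\p^\frT=\sqrt{\p S}$ as $\p^\frT S_\p=\sqrt{\p S_\p}$.

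The crux --- and the step I expect to be the main obstacle --- is showing $M\subseteq\sqrt{\p S_\p}$, \ie that for $\gamma\notin\Gamma_\p$ every element of $(S_\gamma)_\p$ is nilpotent modulo $\p S_\p$. The argument I have in mind: if $n$ is the order of $\gamma$ in $\Gamma$ and $s\in(S_\gamma)_\p$, then $s^n\in(S_0)_\p=R_\p$; were $s^n$ a unit of $R_\p$, then $s$ would be a unit of $S_\p$ whose inverse, being homogeneous, lies in $(S_{-\gamma})_\p$, so multiplication by $s^{-1}$ would be an $R_\p$-module isomorphism $(S_\gamma)_\p\cong R_\p$, contradicting $\gamma\notin\Gamma_\p$. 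Hence $s^n\in\p R_\p$, as wanted. It then follows that $\p^\frT S_\p=\p T_\p\oplus M\subseteq\sqrt{\p S_\p}$, so $\p^\frT S_\p=\sqrt{\p S_\p}$ exactly when $\p^\frT S_\p$ --- equivalently $T_\p/\p T_\p$ --- is reduced, which is the local assertion.

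For $p\nmid m$ I would then note that $\frT=\pi_0$ commutes with Frobenius (as recorded earlier, $\pi_\gamma(s^p)=\pi_{\gamma/p}(s)^p$, so $\frT(s^p)=\frT(s)^p$), whence $\p^\frT$ is radical for all $\p$ by \autoref{pro.SeparableVSpurelyInseparable}(a); combined with $\p S_\p\subseteq\p^\frT S_\p$ and the preceding paragraph this gives $\p^\frT=\sqrt{\p S}$ everywhere. (Equivalently, $T_\p/\p T_\p$ is a twisted group algebra of $\Gamma_\p$ over $\kappa(\p)$ with $|\Gamma_\p|\mid m$ a unit, hence separable, hence reduced.)

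For $m=q$ I would argue as follows. Since $\Gamma$ is cyclic of order $q$, $\Gamma_\p=\ker(\Gamma\to\Cl R_\p)=q'\bZ/q\bZ$ where $q'$ is the index of $D$ in $\Cl R_\p$, and $q\in\Gamma_\p$ forces $q'\mid q$, so $q'$ and $q'':=q/q'=|\Gamma_\p|$ are powers of $p$. If $q'=q$ then $\Gamma_\p=\{0\}$ and $T_\p/\p T_\p=\kappa(\p)$ is reduced, so $S/R$ is tamely $\frT$-ramified over $\p$. If $q'<q$, I would pick $\delta\in K^\times$ with $\Div_{R_\p}\delta=q'D_\p$, so $1/\delta$ generates the free rank-one $R_\p$-module $(S_{q'})_\p$, and compute $(1/\delta)^{q''}$ in $S_\p$: the partial degrees $q',2q',\dots,(q''-1)q'$ stay below $q$ and only the last multiplication wraps around $q$, contributing one factor $\kappa$, so $(1/\delta)^{q''}=\kappa/\delta^{q''}=u$ --- a unit, since $\Div_{R_\p}u=qD_\p-q''q'D_\p=0$. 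Hence $T_\p\cong R_\p[x]/(x^{q''}-u)$ and $T_\p/\p T_\p\cong\kappa(\p)[x]/(x^{q''}-\bar u)$, which, as $q''$ is a power of $p$, is reduced --- equivalently a field --- if and only if $\bar u\notin\kappa(\p)^p$, by the standard fact that $x^{p^e}-a$ is irreducible over a field $k$ exactly when $a\notin k^p$. By the local assertion this is precisely tame $\frT$-ramification over $\p$, which establishes the equivalence (the subcase $q'=q$ being the case $q''=1$). Finally, when $S/R$ is tamely $\frT$-ramified over $\p$, the inclusion $S^q\subseteq R$ forces a unique prime $\q$ over $\p$, so $\sqrt{\p S}=\q$ and $\kappa(\q)=S_\p/\sqrt{\p S_\p}=S_\p/\p^\frT S_\p=T_\p/\p T_\p\cong\kappa(\p)[x]/(x^{q''}-\bar u)=\kappa(\p)(\bar u^{1/q''})$, which gives the last claim.
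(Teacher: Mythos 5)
Your proof is correct, but it takes a genuinely different route from the paper's. The paper localizes at $\p$, factors $R_{\p} \subset T \subset S_{\p}$ into a piece covered by part (b) of \autoref{transpose in divisorial stuff} (grading group injecting into $\Cl R_{\p}$, hence always tamely $\frT$-ramified) and a torsor piece covered by part (a), and then invokes the transitivity of tame ramification (\autoref{pro.TransitivityTameRamificationGeneralCase}) to reduce everything to whether the special fiber $\kappa(\p)[t]/\langle t^{q''}-\bar{u}\rangle$ of the torsor piece is a field. You instead stay entirely at $\p$ and avoid transitivity: starting from the identity $\p^{\frT}S_{\p}=\p T_{\p}\oplus M$ of \autoref{transpose in divisorial stuff}, you add the structural lemma that $M\subset \sqrt{\p S_{\p}}$ --- every homogeneous element of degree $\gamma\notin\Gamma_{\p}$ is nilpotent modulo $\p S_{\p}$, since if its $|\gamma|$-th power (which lies in $R_{\p}$) were a unit then $(S_{\gamma})_{\p}$ would be free, contradicting $\gamma\notin\Gamma_{\p}$ --- which converts tame $\frT$-ramification over $\p$ into the single condition that $T_{\p}/\p T_{\p}$ be reduced. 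This handles the $p\nmid m$ and $m=q$ regimes uniformly and is arguably cleaner; the price is the extra lemma on $M$, which is correct and worth recording (it is only implicit in the paper via part (b) of \autoref{transpose in divisorial stuff}). The endgame is the same in both arguments: $\Gamma_{\p}=q'\bZ/q\bZ$, the explicit identification $T_{\p}\cong R_{\p}[x]/\langle x^{q''}-u\rangle$ with $u=\kappa/\delta^{q''}$ a unit, and the criterion that $x^{p^{e}}-a$ is irreducible over a field $k$ iff $a\notin k^{p}$. Your reading of the final claim as a statement about the reduced fiber $S_{\p}/\sqrt{\p S_{\p}}=\kappa(\q)$ (i.e.\ $\bar{S}_{\kappa(\p)}$ rather than $S_{\kappa(\p)}$ literally) is the correct one, since $S_{\kappa(\p)}$ has $\kappa(\p)$-dimension $q$ whenever $q'>1$.
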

\begin{proof}
In general, let $k \mid m$ be the index of $D$ in $\Cl R_{\p}$. As explained above, we then obtain a factorization $R_{\p} \subset T \subset S_{\p}$ where $T/R_{\p}$ (resp. $S_{\p}/T$) is as in (b) (resp. (a)) in \autoref{transpose in divisorial stuff}. Therefore, applying \autoref{pro.TransitivityTameRamificationGeneralCase} yields that $S/R$ is tamely $\frT$-ramified over $\p$ if and only if $S_{\p}/T$ is tamely $\frT$-ramified over the maximal ideal of $T$. If $p\nmid m$, this is always the case as established in \autoref{transpose in divisorial stuff}. If $m=q$, we need to look at when the special fiber $\kappa(\p)[t]/\langle t^{q''}-u\rangle$ of $S_{\p}/T$ is a field, which is the case exactly when either $q=q'$ or $\bar{u} \in \kappa(\p)$ has no $p$-th roots (\cite[VI, \S6, Theorem 9.1]{LangAlgebra}).
\end{proof}

\begin{example} \label{does not make sense to require tame ram everywhere}
We may impose a $\bZ/q$-grading on $S=\kay[x_1,\ldots,x_n]$ by declaring $S_i$ to be set of homogeneous polynomials of degree $i \bmod q$. In particular, $R=S_{0}$ is the $q$-th Veronese subring of $S$. As a matter of fact, $\Cl R \cong \bZ/q$ and $S$ is obtained as the corresponding cyclic cover. Moreover, $R$ is factorial everywhere except at $\fram\coloneqq \langle x_1, \ldots, x_n \rangle \cap R = \langle x_1^{i_{1}} \cdots x_n^{i_n} \mid i_1 + \cdots + i_n=q \rangle$. In particular, $S/R$ is tamely $\frT$-ramified over $\fram$. However, if $\fram \neq \p \in \Spec R$, then $S_{\p} \cong R_{\p}[t]/\langle t^q-u \rangle$ for some $u \in R_{\p}^{\times}$. Then, tame $\frT$-ramification over $\p$ is determined by whether or not $u \in \kappa(\p)$ has a $p$-th root and this, in turn, is determined by whether or not the prime ideal $\sqrt{\p S}$ is generated by $\bZ/q$-homogeneous elements in $S$. 
\end{example}

\subsection{Tame ramification in separable normalizations} \label{subsection.TameRamificationSeparableCase}

We restrict our attention now to the classical setup of ramification theory. In this subsection, we always work in:

\begin{setting} \label{setup.ClassicAlgebraicNumberTheorySetup}
    Let $R$ be a normal integral domain with field of fractions $K$ and let $L/K$ be a finite separable extension inside a fixed separable closure $\bar{K}$ of $K$. Set $S\coloneqq \bar{R}^L$. In particular, \autoref{ex.TracaConmuteWithFrobenius} applies here and $\Tr_{L/K} \: L \to K$ restricts to an $R$-linear map $\Tr=\Tr_{S/R} \: S \to R $ and the transpositions of radical ideals are radical (\autoref{pro.RadicalityAndObviousTameRamification}). Fix $\p \in \Spec R$.  We keep using \autoref{notation.Ranks}. We let the Galois closure of $L/K$ be the intersection of all the Galois finite extensions of $K$ in $\bar{K}$ that contain $L \subset \bar{K}$.
\end{setting}

Our first task is to verify that our notion is indeed a notion of tame ramification in higher dimensions. The following results establish that.

\begin{proposition} \label{prop.TameRamificationParticularCases} The following statements hold: 
    \begin{enumerate}
        \item If $\height \p = 1$, then $S/R$ is tamely ramified over $\p=R(-P)$ if and only if $S_{\p}/R_{\p}$ is a tamely ramified extension of Dedekind domains in the classical sense \cite[Ch. 2]{GrothendieckMurreTameFundamentalGroup}.
        \item If $\Char \kappa(\p)$ is either $0$ or $>[L:K]$ then $S/R$ is tamely ramified over $\p$ \cite{SpeyerFrobeniusSplit}.
        \item If $S/R$ is tamely ramified over $\p$ then $\kappa(\q)/\kappa(\p)$ is finite separable for all $\q \in Y_{\q}$.
    \end{enumerate}
\end{proposition}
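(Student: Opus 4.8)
The plan is to treat the three items separately, leaning on the general theory of transpositions developed above; items (b) and (c) will be quick consequences of earlier results, and only item (a) requires genuine input, namely the classical theory of the different.

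\textbf{Item (b)} will follow at once from the last sentence of \autoref{pro.RadicalityAndObviousTameRamification}, applied with $\fra=\p$: since $\Ass_R R/\p=\{\p\}$, the hypothesis on $\Char\kappa(\p)$ yields $\p^{\Tr}=\sqrt{\p S}$, which by \autoref{definition tamely T ramified} is exactly the assertion that $S/R$ is tamely ramified over $\p$. (This recovers Speyer's observation.)

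\textbf{Item (c).} Assume $S/R$ is tamely ramified over $\p$, so $\p^{\Tr}=\sqrt{\p S}$. As $\p S$ is proper (it lies over $\p$), so is $\p^{\Tr}$, hence $\Tr(S)=\mathbf{1}_{\Tr}\not\subset\p$ and $\Tr$ is non-degenerate along $\p$ by \autoref{pro.BasicPropertiesTranspositions}; this puts us in the setting of \autoref{MinimalPrimesTranspostions} and \autoref{cor.DifferentWaysToThinkOfTameRamification}. Each $\kappa(\q)$ with $\q\in Y_\p$ is finite over $\kappa(\p)$, being a quotient of the finite $\kappa(\p)$-algebra $S_{\kappa(\p)}$, so finiteness is automatic. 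Since $\p^{\Tr}=\sqrt{\p S}=\bigcap_{\q\in Y_\p}\q$, every $\q\in Y_\p$ is a minimal prime of $\p^{\Tr}$, so $\bar{\Tr}_{\q}\neq 0$ for all $\q\in Y_\p$ by \autoref{MinimalPrimesTranspostions} (equivalently, by the implication (a)$\Rightarrow$(c) of \autoref{cor.DifferentWaysToThinkOfTameRamification}). If $\Char\kappa(\p)=0$ there is nothing more to prove; if $\Char\kappa(\p)=p>0$, then $\bar{\Tr}_{\q}$ commutes with Frobenius by \autoref{pro.RadicalityAndObviousTameRamification}(a), and a nonzero Frobenius-compatible $\kappa(\p)$-linear functional $\kappa(\q)\to\kappa(\p)$ forces $\kappa(\q)/\kappa(\p)$ to be separable (\autoref{second criterion separability}).

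\textbf{Item (a)} is where the actual work lies, and I would reduce it to Dedekind's theorem on the different. Because transpositions and radicals commute with localization (\autoref{pro.BasicPropertiesTranspositions}) and both notions of tame ramification are local at $\p$, I may localize and assume $R$ is a discrete valuation ring with uniformizer $\pi$ and $S=\bar{R}^L$ is a semilocal Dedekind domain with $\p S=\prod_{\q\mid\p}\q^{e_\q}$. Writing $\mathfrak{d}$ for the different of $S/R$ and $\mathfrak{d}^{-1}=\{x\in L\mid \Tr_{L/K}(xS)\subseteq R\}$ for the inverse different, unwinding definitions shows that for $s\in S$ one has $\Tr(sS)\subseteq\p=\pi R$ if and only if $\pi^{-1}s\in\mathfrak{d}^{-1}$; consequently
\[
\p^{\Tr} \;=\; S\cap\pi\mathfrak{d}^{-1} \;=\; \prod_{\q\mid\p}\q^{\max\{0,\,e_\q-v_\q(\mathfrak{d})\}},
\]
where $v_\q$ is the $\q$-adic valuation (so $v_\q(\pi)=e_\q$). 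Comparing with $\sqrt{\p S}=\prod_{\q\mid\p}\q$, the equality $\p^{\Tr}=\sqrt{\p S}$ holds if and only if $v_\q(\mathfrak{d})=e_\q-1$ for every $\q$ dividing $\p$. Dedekind's theorem gives $v_\q(\mathfrak{d})\geq e_\q-1$ always, with equality precisely when $\q/\p$ is tamely ramified in the classical sense---that is, when $p\nmid e_\q$ and $\kappa(\q)/\kappa(\p)$ is separable; see \cite[Ch.~2]{GrothendieckMurreTameFundamentalGroup}. Hence $\p^{\Tr}=\sqrt{\p S}$ if and only if $S_\p/R_\p$ is a tamely ramified extension of Dedekind domains, which is item (a).

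I expect the main obstacle to be the bookkeeping in item (a): one must verify the identity $\p^{\Tr}=S\cap\pi\mathfrak{d}^{-1}$ with care (in particular the intersection with $S$, which is what makes $\sqrt{\p S}\subseteq\p^{\Tr}$ automatic and reduces the problem to the reverse containment), and one must quote the classical different theorem in exactly the form that packages both ``$p\nmid e_\q$'' and the separability of $\kappa(\q)/\kappa(\p)$ into the single equality $v_\q(\mathfrak{d})=e_\q-1$. Items (b) and (c) should then be essentially immediate from the results already established.
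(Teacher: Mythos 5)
Your proof is correct and takes essentially the same route as the paper: items (b) and (c) are read off from \autoref{pro.RadicalityAndObviousTameRamification} (together with \autoref{MinimalPrimesTranspostions} and \autoref{second criterion separability}), and item (a) reduces to the classical fact that the different of $S_\p/R_\p$ has valuation $e_\q-1$ at $\q$ exactly in the tame case. The only cosmetic difference is in (a): the paper computes $\p^{\Tr}$ via the divisorial formula $R(-P)^{\Tr}=S(-P^{\Tr})$ of \autoref{ex.TransposesOFDivisorialIdeals} and the ramification divisor $\Ram_{\Tr}$ (citing Hartshorne IV.2.2), whereas you work directly with the inverse different $\mathfrak{d}^{-1}$ and Dedekind's different theorem---the same computation, since $\Ram_{\Tr}$ is the divisor of the different here.
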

\begin{proof}
    Parts (b) and (c) are direct consequences of \autoref{pro.RadicalityAndObviousTameRamification}. For (a), recall that $R(-P)^{\Tr}=S(-P^{\Tr})$ where $P^{\Tr}$ is the effective part of $f^*P-\Ram$ and $\Ram=\Ram_{\Tr}$ is the ramification divisor of $S/R$ (\autoref{ex.TransposesOFDivisorialIdeals}). Let $Q_1,\ldots,Q_k$ be the prime divisors on $S$ lying over $P$, so that $\sqrt{\p S}=S(-Q_1-\cdots - Q_k)$. Then $\sqrt{\p S} \subset \p^{\frT}$ means that $Q_1 + \cdots + Q_k \geq P^{\Tr} \geq f^*P-\Ram$. Equivalently, $\Ram \geq \sum_{i=1}^k (e_i-1)Q_i$ where $e_i$ is the ramification index of the extension of DVRs $S_{Q_i}/R_P$. Now, recall that $S_{\p}/R_{\p}$ is tamely ramified if and only if the coefficient of $\Ram$ at $Q_i$ equals $e_i-1$ for all $i=1,\ldots,k$; see \cite[IV, Proposition 2.2]{Hartshorne}. In general, $P^{\Tr} = \sum_{i \in I} Q_i$ where $I \subset \{1,\ldots,k\}$ is the subset of indexes where the coefficient of $\Ram$ at $Q_i$ is $e_i-1$, so (a) follows. 
\end{proof}

\begin{corollary} \label{cor.TameRamificationInArithmeticFAMILIES}
    Let $A$ be an integral $\bZ$-algebra of finite type and suppose that $R$ and so $S$ are $A$-algebras of finite type. Let $\bar{\eta} \to \Spec A$ be a geometric generic point, \ie an algebraic closure of $\sK(A)$. Suppose that the geometric generic fibers $R_{\bar{\eta}}$ and $S_{\bar{\eta}}$ are normal integral domains. Then, there is a dense open subset $U \subset \Spec A$ such that the fiber $R_{\mu} \subset S_{\mu}$ is an everywhere tamely ramified finite extension of normal integral domains for all closed points in $\mu \in U$. 
\end{corollary}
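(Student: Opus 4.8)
The plan is a spreading-out argument; its only substantial ingredient is the characteristic bound in \autoref{prop.TameRamificationParticularCases}(b). Set $n\coloneqq[L:K]$ and let $\eta\in\Spec A$ be the generic point, over which $\bar\eta$ lies. First I would produce a dense open $U_0\subset\Spec A$ over which the hypotheses of \autoref{setup.ClassicAlgebraicNumberTheorySetup} hold fibrewise: for every $\mu\in U_0$, the rings $R_\mu$ and $S_\mu$ are normal integral domains, $R_\mu\hookrightarrow S_\mu$, $S_\mu$ is a finite $R_\mu$-module, $\Frac(S_\mu)/\Frac(R_\mu)$ is separable of degree $n$, and $S_\mu=\overline{R_\mu}^{\Frac(S_\mu)}$ (the last being automatic from the previous ones). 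Then I would shrink once more to a dense open $U\subset U_0$ on which every closed point has residue characteristic $>n$: this is obtained by inverting in $A$ the finitely many primes $\le n$, and $D(\prod_{p\le n}p)$ is dense precisely when $\sK(A)$ has characteristic $0$ or $>n$ (see the caveat below). Now fix a closed point $\mu\in U$. Since $R_\mu$ is of finite type over the \emph{finite} field $\kappa(\mu)$, every $\p\in\Spec R_\mu$ satisfies $\Char\kappa(\p)=\Char\kappa(\mu)>n=[\Frac(S_\mu):\Frac(R_\mu)]$, so \autoref{prop.TameRamificationParticularCases}(b) shows that $S_\mu/R_\mu$ is tamely ramified over $\p$. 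As $\p$ was arbitrary, $R_\mu\subset S_\mu$ is an everywhere tamely ramified finite extension of normal integral domains, which is the assertion.

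For the construction of $U_0$ I would invoke standard spreading out over the Noetherian base $A$ (see \cite{stacks-project}). By generic flatness, there is a dense open of $\Spec A$ over which $R$, $S$, and $S/R$ are flat over $A$; over it the sequence $0\to R_\mu\to S_\mu\to(S/R)_\mu\to 0$ stays exact, giving $R_\mu\hookrightarrow S_\mu$, while $S_\mu$ is finite over $R_\mu$ because finiteness is stable under base change. The loci of points $\mu\in\Spec A$ for which the fibre of $\Spec R$, resp.\ of $\Spec S$, is geometrically integral, resp.\ geometrically normal, are open for a flat finite type morphism and contain $\eta$ by the hypothesis on $R_{\bar\eta}$ and $S_{\bar\eta}$, hence are dense; over their intersection, $R_\mu$ and $S_\mu$ are normal integral domains. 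Finally, because $L/K$ is separable of degree $n$, there is $0\neq d\in R$ with $S_d/R_d$ finite \etale of rank $n$ (a discriminant of $S/R$); over the dense open of $\mu$ for which $d$ has nonzero image in the domain $R_\mu$, the localization $(S_\mu)_{\bar d}/(R_\mu)_{\bar d}$ is finite \etale of rank $n$, whence $\Frac(S_\mu)/\Frac(R_\mu)$ is separable of degree $n$. Taking the intersection of these finitely many dense opens gives $U_0$.

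The only genuine work is this first step: confirming that geometric normality and integrality of the fibres, finiteness of $S$ over $R$, the generic degree, and generic separability all spread out. It is routine rather than deep, but it is where the care lies; afterwards \autoref{prop.TameRamificationParticularCases}(b) does everything with no further effort. One point I would flag explicitly is that the argument needs, and the statement should include, the hypothesis that $\sK(A)$ has characteristic $0$ or $>[L:K]$ (automatic when $A$ is flat over $\bZ$). Otherwise the conclusion can fail: for $A=\bF_2[t]$ the finite cover $R=\bF_2[t,x]\subset S=\bF_2[t,x,y]/(y^2+xy+t)$ has normal integral geometric generic fibres, yet $S_\mu/R_\mu$ is wildly ramified over the prime $(x)$ for \emph{every} closed point $\mu$ — the residue field extension there is purely inseparable, in contradiction with \autoref{prop.TameRamificationParticularCases}(c).
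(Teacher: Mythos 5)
Your proof is correct and follows essentially the same route as the paper's: spread out normality, integrality, and injectivity of the fibers over a dense open of $\Spec A$, then invert the primes $\leq [L:K]$ and conclude by \autoref{prop.TameRamificationParticularCases}(b). The paper handles the first step a bit differently (openness of the geometrically-normal and connected loci, plus perfectness of the finite residue fields $\kappa(\mu)$, rather than generic flatness and a discriminant), but the architecture is identical.

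The caveat you flag is genuine and worth recording: if $\Char\sK(A)=p$ with $0<p\leq[L:K]$, then the paper's own choice $U=V\cap\Spec A[1/n]$, with $n$ the product of the primes $\leq[L:K]$, is empty (since $p\mid n$ and $p=0$ in $A$), hence not dense, and your example $A=\bF_2[t]$, $R=\bF_2[t,x]\subset S=\bF_2[t,x,y]/\langle y^2+xy+t\rangle$ shows the conclusion really does fail there: the cover is wildly ramified over $\langle x\rangle$ in every closed fiber. So the statement should be read with the implicit hypothesis that $\sK(A)$ has characteristic $0$ or $>[L:K]$ (as is automatic in the arithmetic, i.e.\ $\bZ$-flat, case the corollary is named for); your proof is complete under that hypothesis.
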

\begin{proof}
    First, we find a dense open subset $V \subset \Spec A$ such that $R_{\mu}$ and $S_{\mu}$ are connected and geometrically normal for all $\mu \in V$. For this, use \cite[\href{https://stacks.math.columbia.edu/tag/055G}{Tag 055G}]{stacks-project} and the standard reference \cite[\S12]{EGAIV-3} for the openness of geometrically normal fibers. Next, we use that $\kappa(\mu)$ is a finite field (by \cite[Corollary 5.24]{AtiyahMacdonald}) and so perfect to conclude that $R_{\mu}$ and $S_{\mu}$ are normal. Hence, since on a normal ring connectedness and irreducibility match, $R_{\mu}$ and $S_{\mu}$ are normal integral domains. This part of the argument is essentially the one in \cite[Proposition 2.1]{PatakfalviWaldronSingularitiesGeneralFibersLMMP} (and is ``well-known to experts''). Further, we may shrink $V$ such that $R_{\mu} \to S_{\mu}$ is injective for all $\mu \in V$ (as $R_{\eta} \subset S_{\eta}$). By \autoref{prop.TameRamificationParticularCases}, we may take $U \coloneqq V \cap \Spec A[1/n]$ where $n$ is the product of primes numbers $\leq[L:K]$.
\end{proof}

We can express unramification using transpositions as well:

\begin{proposition}[{\cf \cite[\href{https://stacks.math.columbia.edu/tag/0BTF}{Tag 0BTF}]{stacks-project}}] \label{pro.EtaleCase}
    The following statements are equivalent:
    \begin{enumerate}
        \item  $S/R$ is \'etale over $\p$.
        \item  $S/R$ is unramified over $\p$.
        \item $S_{\p}/R_{\p}$ is quasi-\'etale and flat.
        \item $\p S_{\p} = \p^{\Tr} S_{\p}$.
    \end{enumerate}
\end{proposition}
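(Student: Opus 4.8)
The plan is to establish the cycle of implications (a)$\Rightarrow$(c)$\Rightarrow$(d)$\Rightarrow$(b)$\Rightarrow$(a). Since transpositions and radicals commute with localization (\autoref{pro.BasicPropertiesTranspositions}), and since all four conditions refer only to the local rings $R_\p$, $S_\p$ and to the points of the fiber $Y_\p$, I would first reduce to working with $R_\p$ and its integral closure $S_\p$ in $L$ — a semilocal normal domain whose maximal ideals are exactly the primes in $Y_\p$. In this local picture (a)$\Rightarrow$(c) is immediate: étale means flat and unramified, and an étale extension is a fortiori étale in codimension one, hence quasi-étale.

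For (c)$\Rightarrow$(d) the idea is to identify condition (c) with the non-singularity of $\Tr$ in the sense of \autoref{def.TranspositionOfIdeals}, and then read off $\p^{\Tr}$. Being finite and flat, $S_\p$ is free over $R_\p$, so $\omega_{S_\p/R_\p}=\Hom_{R_\p}(S_\p,R_\p)$ is a reflexive $S_\p$-module — concretely the Dedekind complementary module $\{x\in L\mid \Tr(xS_\p)\subseteq R_\p\}$, which equals the intersection of its localizations at the height-one primes of $S_\p$ because $R_\p$ is normal. The $S_\p$-linear map $\sigma\colon S_\p\to\omega_{S_\p/R_\p}$, $1\mapsto\Tr$, is injective (as $L/K$ is separable) and, by quasi-étaleness, becomes an isomorphism after localizing at each height-one prime of $S_\p$ (the trace generates the relative dualizing module over an étale locus); since a morphism of reflexive $S_\p$-modules that is an isomorphism in codimension one is an isomorphism, $\sigma$ is an isomorphism and $\Tr$ is non-singular. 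Then, for any ideal $\fra$ of $R_\p$,
\[
\fra^{\Tr}=\sigma^{-1}\big(\Hom_{R_\p}(S_\p,\fra)\big)=\sigma^{-1}\big(\fra\cdot\omega_{S_\p/R_\p}\big)=\fra S_\p,
\]
the middle equality because $\Hom_{R_\p}(S_\p,-)$ commutes with $-\otimes_{R_\p}\fra$ for the finite projective module $S_\p$, the last because $\sigma$ is an isomorphism. Taking $\fra=\p R_\p$ gives $\p^{\Tr}S_\p=\p S_\p$, which is (d).

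For (d)$\Rightarrow$(b), I would use that $\p S_\p\subseteq\sqrt{\p S_\p}\subseteq\p^{\Tr}S_\p$ always holds (\autoref{ex.TracaConmuteWithFrobenius}), so (d) collapses all three: $\p S_\p$ is radical and $S/R$ is tamely ramified over $\p$. Tameness forces $\kappa(\q)/\kappa(\p)$ to be finite separable for every $\q\in Y_\p$ (\autoref{prop.TameRamificationParticularCases}). Radicality of $\p S_\p$ means $\p S_\p=\bigcap_{\q\in Y_\p}\q S_\p$, the $\q$'s being the maximal ideals of the semilocal ring $S_\p$; localizing at a single $\q$ kills the remaining factors and yields $\p S_\q=\q S_\q$. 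Together with the separability of $\kappa(\q)/\kappa(\p)$, this is exactly the criterion for the local homomorphism $R_\p\to S_\q$ to be unramified, for each $\q\in Y_\p$; hence $S/R$ is unramified over $\p$.

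Finally, for (b)$\Rightarrow$(a) it remains to see that unramifiedness forces flatness — then flat plus unramified is étale by definition. I expect this to be the main obstacle: "unramified" does not imply "flat" for arbitrary finite morphisms, so one must genuinely exploit that $S_\p$ is the integral closure of the normal domain $R_\p$ in the finite separable extension $L$ (equivalently, invoke \cite[\href{https://stacks.math.columbia.edu/tag/0BTF}{Tag 0BTF}]{stacks-project}). A self-contained argument reduces, after completion, to the structure of finite unramified local extensions with separable residue field: each $\widehat{S_\q}$ is monogenic, $\widehat{S_\q}\cong\widehat{R_\p}[x]/(g)$ with $g$ monic and $g'$ a unit modulo $\q$, hence $\widehat{R_\p}$-free, so $\widehat{S_\p}=\prod_{\q\in Y_\p}\widehat{S_\q}$ is $\widehat{R_\p}$-free and flatness of $S_\p/R_\p$ descends along completion. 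Apart from this step and the reflexivity argument in (c)$\Rightarrow$(d) — where it is crucial that $S$ be normal, so that $\omega_{S/R}$ is reflexive and codimension-one isomorphisms of reflexive $S$-modules are isomorphisms — the remaining manipulations are routine bookkeeping with transpositions and the local criterion for unramifiedness.
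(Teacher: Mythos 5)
Your proof is correct, but it is organized quite differently from the paper's. The paper proves (b)$\Rightarrow$(c) by noting that unramified gives $\Ram=0$ and then establishing flatness via a henselization argument (a rank count $[L:K]\geq\sum_{\fran}[\sK(S_{\fran}^{\mathrm{h}}):\sK(R^{\mathrm{h}})]$ together with a lifting argument showing $[\kappa(\fran):\kappa(\p)]$ divides the generic degree), deduces (c)$\Rightarrow$(a) from \emph{purity of the branch locus} for flat finite covers, and handles (c)$\Leftrightarrow$(d) by the chain of inequalities $\mu(\fram)\geq\rho(\fram)\geq\eta(\fram)$ using \autoref{prop.OnterpretationOfLength}. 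Your cycle (a)$\Rightarrow$(c)$\Rightarrow$(d)$\Rightarrow$(b)$\Rightarrow$(a) dispenses with purity of the branch locus altogether: you replace it by the explicit computation $\fra^{\Tr}=\sigma^{-1}(\fra\cdot\omega_{S_\p/R_\p})=\fra S_\p$ (valid since quasi-\'etale plus flat makes $\sigma$ an isomorphism of reflexive modules and $S_\p$ projective), followed by the observation that (d) collapses $\p S_\p\subset\sqrt{\p S_\p}\subset\p^{\Tr}S_\p$ and hence yields tame ramification, radicality of $\p S_\p$, and separable residue fields, i.e.\ unramifiedness. This is a genuinely nice reorganization: it shows that, in this setting, purity of the branch locus is a formal consequence of ``unramified implies flat'' plus the trace formalism, and your (c)$\Rightarrow$(d) argument is arguably more transparent than the paper's length count.

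Both proofs ultimately rest on the same hard step, that unramified implies flat, and this is the one place where your sketch is thinner than the paper's. Your completion argument needs the surjection $\widehat{R_\p}[x]/(g)\twoheadrightarrow\widehat{S_\q}$ to be injective, which tacitly requires $\widehat{S_\q}$ to be a domain (analytic irreducibility); this is automatic for excellent rings but not in the generality of \autoref{setup.ClassicAlgebraicNumberTheorySetup}. The paper avoids this by working with henselizations (which preserve normality and integrality, per the Milne reference it cites) and counting generic ranks over $K$ rather than presenting each completed local ring. Since you explicitly flag this step and offer the Stacks reference as an alternative, I would not call it a gap, but if you want a self-contained argument you should run your monogenic-presentation argument over $R^{\mathrm{h}}$ rather than $\widehat{R_\p}$.
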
    
    \begin{proof}
        We may assume that $(R,\fram,\kay)$ is a local ring and $\p=\fram$. Recall that $S/R$ being quasi-\'etale means that $\Ram=0$, \ie $\Tr_{S/R}$ is non-singular. In particular, if $S/R$ is unramified then it is quasi-\'etale. We explain next why it is flat (and so why (b) implies (c)). Observe that (c) implies (a) by purity of the branch locus for flat finite covers.
        \begin{claim}
            If $S/R$ is unramified then it is flat, \ie free of rank $[L:K]$.
        \end{claim}
        \begin{proof}[Proof of claim]
            It suffices to show that $[L:K]$ is at least $\mu(\fram)=\rho(\fram)$ (\cite[II, Lemma 8.9]{Hartshorne}). Consider an exact sequence of $R$-modules
            \[
            0 \to R^{\oplus[L:K]} \to S \to T \to 0
            \]
            where $T$ is a torsion finitely generated $R$-module (\ie choose a basis of $L/K$ consisting of elements in $S$). Base change it by the henselization of $R$ to obtain
            \[
            0 \to \big(R^{\mathrm{h}}\big)^{\oplus[L:K]} \to \prod_{\fran \in Y_\m} S_{\fran}^{\mathrm{h}} \to R^{\mathrm{h}} \otimes_R T \to 0 .
            \]
            See \cite[I, \S4, Example 4.10(a), Remark 4.11]{MilneEtaleCohomology} for why $R^{\mathrm{h}}$ and the $S_{\fran}^{\mathrm{h}}$ are (noetherian) normal integral domains. Since $R^{\mathrm{h}} \otimes_R T$ is a torsion $R^{\mathrm{h}}$-module, we conclude that
            \[
            [L:K] \geq \sum_{\fran \in Y_{\fram} } \big[\sK(S_{\fran }^{\mathrm{h}}):\sK(R^{\mathrm{h}})\big].
            \]
            Therefore, it suffices to show that $
            \big[\sK(S_{\fran}^{\mathrm{h}}):\sK(R^{\mathrm{h}})\big] \geq [\kappa(\fran):\kay]$ for all $\fran \in Y_{\fram}$. That is, we have reduced to the case in which $(S,\fran,\el,L)/(R,\fram,\kay,K)$ is a local extension of henselian rings. We show next that $[\el : \kay]$ divide $[L:K]$ in that case.
            
            Since $\el/\kay$ is separable, we may write $\el=\kay(\alpha)=\kay[t]/a(t)$ where $a(t)$ is a monic, irreducible, separable polynomial. Since $R$ is henselian, there is $A(t)\in R[t]$ monic and irreducible whose reduction modulo $\fram$ is $a(t)$. In particular, $R'\coloneqq R[T]/A(t)$ is a finite \'etale $R$-algebra whose special fiber is $\el/\kay$. Likewise, since $S$ henselian, there is $s \in S$ whose residual class is $\alpha$ and $A(s)=0$. In other words, we may realize $R'$ as an $R$-subalgebra of $S$. In particular, $[\el:\kay]$ is the rank of $R'/R$ and so it divides $[L:K]$.
        \end{proof}
        It remains to explain why $\fram S = \fram^{\Tr}$ is equivalent to $S/R$ being quasi-\'etale and free. 
        Consider the following inequalities $\mu(\fram) \geq \rho(\fram) \geq \eta(\fram)$. If $S/R$ is quasi-\'etale then $\eta(\fram)$ is the free-rank of $S/R$. If $S/R$ is further flat, then  $\eta(\fram)=[L:K] = \mu(\fram)$. Conversely, suppose that $\eta(\fram) \geq \mu(\fram)$. Then, the free rank of $S/R$ and so $[L:K]$ is at least $\mu(\fram)$ and so equal to it. Then $S/R$ is free of rank $\eta(\fram)$. This further implies that $\Tr$ is non-singular.
    \end{proof}

We discuss transitivity in towers next.

\begin{proposition}[Transitivity]\label{pro.TransitivityTameRamificationSeparableCase}
    Let $M/L/K$ be a tower of finite separable extension.
    Let $T\coloneqq \bar{R}^M$. Then, $T/R$ is tamely ramified over $\p$ if and only if so is $S/R$ and $T/S$ is tamely ramified over every prime ideal of $S$ lying over $R$. In particular, if $M$ is the Galois closure of $L$, then if $T/R$ is tamely ramified over $\p$ then so is $S/R$.
\end{proposition}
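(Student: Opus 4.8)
The plan is to deduce this directly from the general transitivity statement for transpositions, \autoref{pro.TransitivityTransposition}, together with its tame-ramification refinement \autoref{pro.TransitivityTameRamificationGeneralCase}, applied to the tower of finite covers $R \subset S \subset T$. (Note $T=\bar{R}^M=\bar{S}^M$, so each step is a normalization in a finite separable extension.) The one preliminary point to settle is that the three trace maps compose correctly: transitivity of the trace for the tower of finite separable field extensions $M/L/K$ gives $\Tr_{M/K}=\Tr_{L/K}\circ\Tr_{M/L}$, and since $R$ is normal and each of $L/K$, $M/L$ is separable, \autoref{ex.TracaConmuteWithFrobenius} guarantees that these restrict to $R$-linear (resp.\ $S$-linear) trace maps $\Tr_{S/R}\colon S\to R$, $\Tr_{T/S}\colon T\to S$, $\Tr_{T/R}\colon T\to R$; restricting the field identity then yields $\Tr_{T/R}=\Tr_{S/R}\circ\Tr_{T/S}$. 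So we are in the situation of \autoref{pro.TransitivityTameRamificationGeneralCase} with $\frT_{S/R}=\Tr_{S/R}$, $\frT_{T/S}=\Tr_{T/S}$, $\frT_{T/R}=\Tr_{T/R}$ (all nonzero, by separability).

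Granting this, the ``if'' direction is immediate from \autoref{pro.TransitivityTameRamificationGeneralCase}: if $S/R$ is tamely ramified over $\p$ and $T/S$ is tamely ramified over every prime of $S$ lying over $\p$, then $T/R$ is tamely ramified over $\p$. For the ``only if'' direction, suppose $T/R$ is tamely $\Tr_{T/R}$-ramified over $\p$. Here the separable hypothesis does its real work: since $\p$ is radical and we are in the setting of \autoref{ex.TracaConmuteWithFrobenius}, \autoref{pro.RadicalityAndObviousTameRamification} shows that $\p^{\Tr_{S/R}}$ is \emph{automatically} radical. By the converse part of \autoref{pro.TransitivityTameRamificationGeneralCase}, radicality of $\p^{\Tr_{S/R}}$ forces conditions (a) and (b) there, i.e.\ $S/R$ is tamely ramified over $\p$ and $T/S$ is tamely ramified over every prime of $S$ lying over $\p$. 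Finally, the ``in particular'' is the special case $M$ the Galois closure of $L/K$ inside $\bar{K}$ (a finite separable extension of $K$ containing $L$): the ``only if'' direction just established gives that $T/R$ tamely ramified over $\p$ implies $S/R$ tamely ramified over $\p$.

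I do not anticipate a genuine obstacle; the argument is essentially bookkeeping on top of \autoref{pro.TransitivityTameRamificationGeneralCase}. The only spot meriting a moment's care is the first paragraph — verifying that the composition of traces descends from the fraction fields to the normalizations and that all three traces are non-degenerate — together with remembering to invoke \autoref{pro.RadicalityAndObviousTameRamification} for the pair $R\subset S$ rather than re-deriving radicality of $\p^{\Tr_{S/R}}$ by hand.
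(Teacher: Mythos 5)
Your proposal is correct and follows exactly the paper's route: the paper's proof is the one-line observation that \autoref{pro.TransitivityTameRamificationGeneralCase} applies because in the separable setting transpositions of radical ideals are radical (\autoref{pro.RadicalityAndObviousTameRamification}), which is precisely the key point you identify for the ``only if'' direction. Your additional care about the compositional identity $\Tr_{T/R}=\Tr_{S/R}\circ\Tr_{T/S}$ descending from the fraction fields to the normalizations is a detail the paper leaves implicit but is correctly handled.
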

\begin{proof}
    It follows from \autoref{pro.TransitivityTameRamificationGeneralCase} as here transpositions of radical ideals are radical.
\end{proof}

The following lemma, although rather complicated-looking, is the key to compare our trace-theoretic approach with the ones discussed in \cite{KerzSchmidtOnDifferentNotionsOfTameness}.

\begin{lemma}\label{pro.GaloisExtensionsAreUnivTamelyRam}
    Let $R \subset R'$ be a normal integral ring extension of $R$ with field of fractions $K'$ (which contains $K$) and let $\p' \in \Spec R'$ lie over $\p$. Suppose that $L$ and $K'$ are linearly disjoint inside a larger field $E$ (\ie $K=L\cap K'$ inside $E$). Let $L'\coloneqq LK'\subset E$ be the composite field, so that $L'/K'$ is finite separable of degree $[L:K]$. Let $S'$ be the normalization of $R'$ in $L'$. Then, $\p'^{\Tr_{S'/R'}}\cap S \subset \p^{\Tr_{S/R}}$. In particular, for every $\q$ minimal prime of $\p^{\Tr_{S/R}}$ there is a $\q'$ in $\Spec S'$ lying over $\p'$ and over $\q$ such that $\p'^{\Tr_{S'/R'}} \subset \q'$.
\end{lemma}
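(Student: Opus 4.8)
The plan is to reduce the statement to the single fact that $\Tr_{S'/R'}$ restricts to $\Tr_{S/R}$ on $S$; this is the only place where the linear disjointness hypothesis genuinely enters, everything else being formal. First I would observe that, since $L/K$ is finite and $L,K'$ are linearly disjoint over $K$ inside $E$, the canonical $K'$-algebra map $L\otimes_K K'\to L'=LK'$ is an isomorphism; in particular $L'/K'$ is finite separable of degree $[L:K]$ and, by the base-change compatibility of the trace form, $\Tr_{L'/K'}(x\otimes 1)=\Tr_{L/K}(x)$ for all $x\in L$. Since every element of $S$ is integral over $R\subseteq R'$ and lies in $L\subseteq L'$, we have $S\subseteq S'$, and $R'\subseteq S'$ is again a finite extension of the type considered in \autoref{setup.ClassicAlgebraicNumberTheorySetup}; as $\Tr_{S/R}$ and $\Tr_{S'/R'}$ are the restrictions to $S$ and $S'$ of $\Tr_{L/K}$ and $\Tr_{L'/K'}$ respectively (well defined by \autoref{ex.TracaConmuteWithFrobenius}), this gives $\Tr_{S'/R'}(s)=\Tr_{S/R}(s)$ for every $s\in S$.

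The first containment is then immediate: if $s\in\p'^{\Tr_{S'/R'}}\cap S$ and $t\in S$, then $st\in S\subseteq S'$, so $\Tr_{S/R}(st)=\Tr_{S'/R'}(st)\in\p'$ (using $s\in\p'^{\Tr_{S'/R'}}$ and $t\in S'$); since also $\Tr_{S/R}(st)\in R$, we get $\Tr_{S/R}(st)\in\p'\cap R=\p$. As $t$ ranges over $S$ this says $s\in\p^{\Tr_{S/R}}$, hence $\p'^{\Tr_{S'/R'}}\cap S\subseteq\p^{\Tr_{S/R}}$.

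For the ``in particular'' part I would argue as follows. Let $\q$ be a minimal prime of $\p^{\Tr_{S/R}}$; its existence forces $\p^{\Tr_{S/R}}$ to be proper, so $1\notin\p^{\Tr_{S/R}}$ and, by the containment just proved, $1\notin\p'^{\Tr_{S'/R'}}$, i.e. $\p'^{\Tr_{S'/R'}}$ is proper (equivalently, $\Tr_{S'/R'}$ is non-degenerate along $\p'$). By \autoref{pro.RadicalityAndObviousTameRamification} both $\p^{\Tr_{S/R}}$ and $\p'^{\Tr_{S'/R'}}$ are radical, and by \autoref{MinimalPrimesTranspostions} (applied to $R'\subseteq S'$, and to $R\subseteq S$) every minimal prime of $\p'^{\Tr_{S'/R'}}$ lies over $\p'$ while $\q$ lies over $\p$. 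Writing $\p'^{\Tr_{S'/R'}}=\bigcap_j\q'_j$ as the intersection of its minimal primes and contracting to $S$, we get $\bigcap_j(\q'_j\cap S)=\p'^{\Tr_{S'/R'}}\cap S\subseteq\p^{\Tr_{S/R}}\subseteq\q$; since $\q$ is prime, $\q'_{j_0}\cap S\subseteq\q$ for some index $j_0$. Put $\q'\coloneqq\q'_{j_0}$: then $\p'^{\Tr_{S'/R'}}\subseteq\q'$, $\q'$ lies over $\p'$ (hence contracts to $\p$ in $R$), and $\q'\cap S$ and $\q$ are primes of $S$ lying over $\p$ with $\q'\cap S\subseteq\q$; incomparability in the integral extension $R\subseteq S$ then forces $\q'\cap S=\q$, as required.

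I expect the only real obstacle to be in the first paragraph: one must check carefully that linear disjointness of $L$ and $K'$ over $K$, together with finiteness of $L/K$, really identifies $L'$ with $L\otimes_K K'$, since this is precisely what forces $\Tr_{L'/K'}$ to restrict to $\Tr_{L/K}$ on $L$. After that the argument is pure bookkeeping with transpositions, radicality (\autoref{pro.RadicalityAndObviousTameRamification}), the description of the associated primes of a transpose ideal (\autoref{MinimalPrimesTranspostions}), and incomparability, and it does not interact with the structure of the ambient field $E$ at all.
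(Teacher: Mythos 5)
Your proof is correct and follows the paper's own argument: the key point is that linear disjointness identifies $L'$ with $L\otimes_K K'$ so that $\Tr_{S'/R'}$ restricts to $\Tr_{S/R}$ on $S$, after which the containment $\p'^{\Tr_{S'/R'}}\cap S\subset\p^{\Tr_{S/R}}$ is the same one-line computation the paper displays. Your third paragraph merely fills in the ``in particular'' step (radicality, minimal primes lying over $\p'$, prime avoidance, and incomparability), which the paper dispatches with ``the result follows.''
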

\begin{proof}
By linear disjointness, the restriction of $\Tr_{L'/K'}$ to $L$ is $\Tr_{L/K}$. In particular, the restriction of $\frT' \coloneqq \Tr_{S'/R'}$ to $S$ is $\frT \coloneqq \Tr_{S/R}$. Noting that
\[
\frT\big(\p'^{\frT'} \cap S\big) = \frT'\big(\p'^{\frT'} \cap S\big)\subset \frT'\big(\p'^{\frT'} \big) \cap R \subset \p' \cap R = \p,
\]
the result follows.
\end{proof}

\begin{definition}[Universal tame ramification] \label{def.UniversalTameRamification}
  We say that $S/R$ is \emph{universally tamely ramified over $\p$} if: for all fields $K'$ contained in a field $E$ that also contains $L$, the extension $S'/R'$ is tamely ramified over every prime lying over $\p$; where $R'=\bar{R}^{K'}$ and $S'=\bar{S}^{L'}$ for $L'\coloneqq LK'\subset E$. 
\end{definition}

\begin{corollary} \label{cor.UnivseralTamenessGaloisCase}
    If $L/K$ is Galois, $S/R$ universally tamely ramified over $\p$ if (and only if) it is tamely ramified over $\p$. 
\end{corollary}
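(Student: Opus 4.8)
The "only if" direction is the instance $K'=K$ of the definition (take $E=L$, so $R'=R$, $L'=L$, $S'=S$), so the content is the converse: assuming $L/K$ Galois and $S/R$ tamely ramified over $\p$, we must show universal tameness. Fix $K'$ inside a field $E\supseteq L$. The plan hinges on \emph{not} working with $K'$ directly, but first interposing the field $K''\coloneqq L\cap K'$ (computed in $E$) and the ring $R''\coloneqq\bar R^{K''}$. Since $K\subseteq K''\subseteq L$ and $L/K$ is Galois, the subextension $L/K''$ is again finite Galois, and $S=\bar R^L$ is also the normalization $\bar{R''}^{L}$ of $R''$ in $L$. Applying the transitivity of tame ramification (\autoref{pro.TransitivityTameRamificationSeparableCase}) to the tower $R\subseteq R''\subseteq S$, the hypothesis descends: $S/R''$ is tamely ramified over every prime $\p''$ of $R''$ lying over $\p$.

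Next I would exploit that $L\cap K'=K''$ forces $L$ and $K'$ to be linearly disjoint over $K''$; hence $L'\coloneqq LK'$ is finite Galois over $K'$, and with $R'\coloneqq\bar R^{K'}$ one has $\bar S^{L'}=\bar{R'}^{L'}=S'$. Fix any prime $\p'$ of $R'$ lying over $\p$ and set $\p''\coloneqq\p'\cap R''$, a prime over $\p$. Feeding the extension $R''\subseteq R'$, the Galois extension $L/K''$, and the primes $\p''\subseteq\p'$ into \autoref{pro.GaloisExtensionsAreUnivTamelyRam} yields, for every minimal prime $\q$ of $\p''^{\,\Tr_{S/R''}}$, a prime $\q'$ of $S'$ over $\p'$ with $\p'^{\,\Tr_{S'/R'}}\subseteq\q'$. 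Since $S/R''$ is tamely ramified over $\p''$, the ideal $\p''^{\,\Tr_{S/R''}}=\sqrt{\p'' S}$ is proper (\autoref{definition tamely T ramified}) and so has at least one minimal prime $\q$; therefore $\p'^{\,\Tr_{S'/R'}}$ is a proper ideal of $S'$, i.e.\ $\Tr_{S'/R'}$ is non-degenerate along $\p'$ (\autoref{pro.BasicPropertiesTranspositions}).

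To conclude, since $L'/K'$ is Galois and $S'$ is normal, \autoref{rem.TheGaloisCase} upgrades non-degeneracy of $\Tr_{S'/R'}$ along $\p'$ to tame ramification of $S'/R'$ over $\p'$. As $\p'$ ranges over all primes of $R'$ over $\p$ (each contracts to some $\p''$ of $R''$ over $\p$) and $K'$ was arbitrary, this shows $S/R$ is universally tamely ramified over $\p$.

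The main obstacle is exactly that $S'/R'$ need not be obtained from $S/R$ by base change along a linearly disjoint field extension, which blocks a direct application of \autoref{pro.GaloisExtensionsAreUnivTamelyRam}; inserting $K''=L\cap K'$ repairs this, because $L/K''$ stays Galois (so transitivity of tame ramification transports the hypothesis to $S/R''$) while linear disjointness of $L$ and $K'$ over $K''$ is now available. A secondary subtlety is that \autoref{pro.GaloisExtensionsAreUnivTamelyRam} only produces a single prime containing $\p'^{\,\Tr_{S'/R'}}$, which is too weak to give tame ramification in general; the argument genuinely relies on the Galois case, where by \autoref{rem.TheGaloisCase} a single such prime already suffices because $\p'^{\,\Tr_{S'/R'}}$ is either $S'$ or $\sqrt{\p' S'}$.
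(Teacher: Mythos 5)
Your argument is correct and follows the paper's own proof essentially verbatim: interpose $\tilde K = L\cap K'$, use \autoref{pro.TransitivityTameRamificationSeparableCase} to descend tameness to $S/\tilde R$, apply \autoref{pro.GaloisExtensionsAreUnivTamelyRam} via linear disjointness, and finish with \autoref{rem.TheGaloisCase}. You merely spell out the details (properness of $\p'^{\Tr_{S'/R'}}$ forcing it to equal $\sqrt{\p'S'}$ in the Galois case) that the paper leaves implicit.
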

\begin{proof}
    Let us work in the setup of \autoref{def.UniversalTameRamification}. Set $\tilde{K}\coloneqq K'\cap L$, $\tilde{R}\coloneqq \bar{R}^{\tilde{K}}$, and $\tilde{\p}\coloneqq \p'\cap \tilde{R}$. Note that $L/\tilde{K}$ and $L'/K'$ are also Galois (\cite[Ch.14, Proposition 19]{DummitFoote}). By \autoref{pro.TransitivityTameRamificationSeparableCase}, $S/\Tilde{R}$ is tamely ramified over $\tilde{\p}$. Then, we can apply \autoref{pro.GaloisExtensionsAreUnivTamelyRam} yielding the required result by using \autoref{rem.TheGaloisCase}.
\end{proof}

\begin{remark}
    Following \cite{KerzSchmidtOnDifferentNotionsOfTameness}, one may restrict itself in \autoref{def.UniversalTameRamification} to the case in which $R'$ belongs to a certain class of valuation rings of $K$, say: non-archimedean valuation rings, DVRs, divisorial valuation rings. Then, one has notions of \emph{valuation-tameness}, \emph{discrete-valuation-tameness}, and \emph{divisorial-tameness}; respectively. 
\end{remark}

Putting everything together, we obtain the following result comparing universal tame ramification and tame ramification as well as the role played by Galois closures.

\begin{theorem} \label{thm.UNIVERSALvsSimpleTameRamification}
     Let $M$ be the Galois closure of $L$ and $T\coloneqq \bar{R}^M$. Then, $S/R$ is universally tamely ramified over $\p$ if and only if $T/R$ is tamely ramified over $\p$.
\end{theorem}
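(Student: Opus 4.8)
The plan is to exploit the transitivity properties already established, together with the Galois case analysis, to reduce both directions to statements about the Galois extension $M/K$. The forward direction ``$\Rightarrow$'' is easy: if $S/R$ is universally tamely ramified over $\p$, then in particular the extension $T/R$ obtained by choosing $K' = M$ (so that $L' = LM = M$ and $R' = \bar{R}^M = T$) is tamely ramified over every prime of $T$ lying over $\p$; taking one such prime gives what we want, but actually since $M/K$ is Galois, \autoref{rem.TheGaloisCase} tells us that $T/R$ being tamely ramified over \emph{one} prime over $\p$ is equivalent to being tamely ramified over \emph{all} of them, hence to $\p^{\Tr_{T/R}} = \sqrt{\p T}$, i.e. $T/R$ is tamely ramified over $\p$ in the sense of \autoref{definition tamely T ramified}.

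For the converse ``$\Leftarrow$'', assume $T/R$ is tamely ramified over $\p$, where $T = \bar{R}^M$ and $M/K$ is Galois. I would first invoke \autoref{cor.UnivseralTamenessGaloisCase}: since $M/K$ is Galois and $T/R$ is tamely ramified over $\p$, the extension $T/R$ is in fact \emph{universally} tamely ramified over $\p$. Now fix an arbitrary field $K' \subset E$ with $L \subset E$; I must show $S' / R'$ is tamely ramified over every prime of $S'$ lying over $\p$, where $R' = \bar{R}^{K'}$, $L' = LK'$, $S' = \bar{S}^{L'}$. The idea is to pass to the composite $M' \coloneqq MK' \subset E'$ (enlarging $E$ if necessary to contain $M$), with $T' \coloneqq \bar{R'}^{M'}$; note $M'/K'$ is Galois (composite of a Galois extension with an arbitrary one). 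Since $T/R$ is universally tamely ramified over $\p$, the extension $T'/R'$ is tamely ramified over every prime $\p''$ of $T'$ lying over $\p$. Because $M'/K'$ is Galois, $T'/R'$ is then tamely ramified over $\p''$ for every such $\p''$, in particular over any fixed $\p' \in \Spec R'$ over $\p$ (after replacing $\p''$ by a prime of $T'$ over $\p'$).

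The last step is the tower $R' \subset S' \subset T'$ with $M'/L'/K'$ a tower of finite separable extensions and $M'/K'$ Galois — indeed $M'$ is the Galois closure of $L'$ over $K'$, since $M$ is the Galois closure of $L$ over $K$ and composing with $K'$ commutes with taking Galois closures in this setting. Then \autoref{pro.TransitivityTameRamificationSeparableCase} (applied over the base $R'$, to the tower $R' \subset S' \subset T'$) gives: since $T'/R'$ is tamely ramified over $\p'$, so is $S'/R'$. Running this over every $\p'$ over $\p$ and every choice of $K' \subset E$ yields that $S/R$ is universally tamely ramified over $\p$, as desired.

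The main obstacle I anticipate is the bookkeeping in the converse direction: one must be careful that the field $E$ appearing in the definition of universal tameness for $S/R$ can be enlarged to contain $M$ without changing the relevant extensions, and that $M' = MK'$ really is the Galois closure of $L' = LK'$ over $K'$ so that \autoref{pro.TransitivityTameRamificationSeparableCase}'s ``in particular'' clause (or rather its main statement with $T'$ Galois over $R'$) applies cleanly. One should also double-check that ``$T/R$ universally tamely ramified over $\p$'' (a conclusion of \autoref{cor.UnivseralTamenessGaloisCase}) genuinely feeds the hypothesis needed for $T'/R'$ over the \emph{new} base $R'$ — this is exactly what \autoref{def.UniversalTameRamification} provides, with $R'$ in the role of the auxiliary ring, so no further work is needed there. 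Everything else is a direct chain of the already-proven transitivity and Galois lemmas.
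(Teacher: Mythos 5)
Your converse direction is correct and is essentially the paper's own argument: pass to $M'=MK'$, use \autoref{cor.UnivseralTamenessGaloisCase} to get universal tameness of $T/R$, and then descend from $T'/R'$ to $S'/R'$ via \autoref{pro.TransitivityTameRamificationSeparableCase}. (A small point: $M'=MK'$ need not be the Galois closure of $L'/K'$, only a Galois extension of $K'$ containing $L'$, but that is all the transitivity statement needs.)

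The forward direction, however, has a genuine gap: you have misapplied \autoref{def.UniversalTameRamification}. Taking $K'=M$ in that definition produces $R'=\bar R^{M}=T$ \emph{and} $S'=\bar S^{LM}=\bar S^{M}=T$, so the extension the definition certifies as tamely ramified is $S'/R'=T/T$, which is trivial and says nothing about $T/R$. The definition always outputs information about the base-changed extension $S'/R'$ sitting \emph{over} $R'$, never about the extension $R'/R$ itself; no choice of $K'$ yields $T/R$ directly. This is precisely why the paper's proof of this implication is the nontrivial one: it writes $L=K(\alpha)$, lets $L_i=K(\alpha_i)$ run over the conjugate fields (each $\bar R^{L_i}/R$ being tamely, indeed universally tamely, ramified over $\p$ via the $R$-algebra isomorphisms $S_i\cong S$), and then proves and iterates the composite claim --- if $\bar R^{L'}/R$ is tame over $\p$, then so is $\bar R^{LL'}/R$, using universal tameness with $K'=L'$ to control $\bar R^{LL'}/\bar R^{L'}$ and then transitivity --- to reach $M=L_1\cdots L_k$. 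You need to supply this (or an equivalent) argument; as written, your forward implication does not go through.
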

\begin{proof}
    Suppose that $S/R$ is universally tamely ramified over $\p$.
    \begin{claim} \label{claim.Composites}
        Let $\bar{K}/L'/K$ be a finite separable extension, $S'\coloneqq \bar{R}^{L'}$, and $U\coloneqq \bar{R}^{LL'}$. Then, $U/R$ is tamely ramified over $\p$ if so is $S'/R$.
    \end{claim}
    \begin{proof}[Proof of claim]
        Since $S/R$ is universally tamely ramified over $\p$, it follows that $U/S'$ is tamely ramified over every prime of $S'$ lying over $\p$. Then, the claim follows from \autoref{pro.TransitivityTameRamificationSeparableCase}.
    \end{proof}
    Let us use the primitive element theorem to write $L=K(\alpha)$ and let $\alpha_1,\ldots,\alpha_k \in \bar{K}$ be the conjugates of $\alpha$; say $\alpha_1=\alpha$. Letting $S_i$ denote $\bar{R}^{K(\alpha_i)}$, we have that $S_i/R$ is tamely ramified over $\p$. Then, \autoref{claim.Composites} and the fact that $M=K(\alpha_1,\ldots,\alpha_k)=L_1\cdots L_k$ imply that $T/R$ is tamely ramified over $\p$.

    Conversely, assume that $T/R$ is tamely ramified over $\p \in \Spec R$. To show that $S/R$ is universally tamely ramified over $\p \in \Spec R$, work in the setup of \autoref{def.UniversalTameRamification}. Set $M'\coloneqq MK'$ and let $T'\coloneqq \bar{T}^{M'}$. By \autoref{cor.UnivseralTamenessGaloisCase} and \autoref{pro.TransitivityTameRamificationSeparableCase}, we conclude that $T'/R'$ and so $S'/R'$ is tamely ramified over every prime of $R'$ lying over $\p$.
\end{proof}

 We recall next the concept of \emph{ramification index}.

\begin{definition}[{\cite[\href{https://stacks.math.columbia.edu/tag/0BSD}{Tag 0BSD}]{stacks-project}}] \label{def.RamificationIndex}
      Suppose that $L/K$ is Galois with $G=\Gal(L/K)$. For $\q \in \Spec S$, we let $D_{\q} \coloneqq \{\sigma \in G \mid \sigma(\q) = \q\}$ be the \emph{decomposition group} of $\q$ and 
     \[
     I_{\q} \coloneqq \{\q \in D_{\q} \mid \sigma/\q = \id_{\kappa(\q)}\} = \ker\big(D_{\q}\to \Aut \big(\kappa(\q)/\kappa(\p) \big)\big)
     \] 
     be the \emph{inertia group} of $\q$. The inertia groups $I_{\q}$ are all conjugate and so $|I_{\q}|$ is independent of $\q$. We denote such number as $e_{\p}$ and refer to it as the \emph{ramification index} of $S/R$ over $\p$.
\end{definition}
\begin{remark} \label{rem.InertiaDecompositionAutExactSequence}
  With notation as in \autoref{def.RamificationIndex}, $\kappa(\q)/\kappa(\p)$ is a normal extension as $R=S^G$. See \cite[\href{https://stacks.math.columbia.edu/tag/0BRJ}{Tag 0BRJ}]{stacks-project}. Moreover, there is a short exact sequence
  \[
  1 \to I_{\q} \to D_{\q} \to A_{\q} \coloneqq \Aut \big(\kappa(\q)/\kappa(\p) \big) \to 1.
  \]
  Since $G$ acts transitively on $Y_{\p}$, every $\q \in Y_{\p}$ defines a bijection $G/D_{\q} \to Y_{\p}$ by sending a left coset $\sigma D_{\q}$ to $\sigma(\q)$. Of course, $D_{\q'}$ is the left coset that corresponds to $\q'$. In particular, $|Y_{\p}|=[G:D_{\q}]$ for all $\q \in Y_{\p}$. Hence, we may write $\Tr = \sum_{\q \in Y_{\p}}\sum_{\sigma \in D_{\q}} \sigma$ which let us see that
  \[
 \bar{\Tr}_{\q} = \sum_{\q \in D_{\q}} \sigma = e_{\p} \cdot \sum_{\sigma \in A_{\q}} \sigma
  \]
  and so that $\bar{\Tr}_{\q} = e_{\p} \cdot \Tr_{\kappa(\q)/\kappa(\p)}$ if $\kappa(\q)/\kappa(\p)$ is separable. Indeed, use \autoref{rem.BeingExplictAboutT_q} and note that if $s \in \frab \coloneqq  \bigcap_{\q \neq \q' \in Y_{\p}} \q'$ then $\Tr(s) -\sum_{\sigma \in D_{\q}} \sigma(s)=\sum_{\q \neq \q' \in Y_{\p}}\sum_{\sigma \in D_{\q'}} \sigma(s) \in R \cap \frab =\p$.
\end{remark}

\begin{proposition}[{\cite[\href{https://stacks.math.columbia.edu/tag/0BTF}{Tag 0BTF}]{stacks-project}}]
    In the setup of \autoref{pro.EtaleCase}, suppose that $L/K$ is Galois. Then, we may add $e_{\p}=1$ to the equivalent statements in  \autoref{pro.EtaleCase}.
    \begin{proof}
        Suppose that $\p S_{\p}=\p^{\Tr}S_{\p}$. Then, $[L:K]=|Y_{\q}| [\kappa(\q):\kappa(\p)]$ and so $|D_{\q}| = [\kappa(\q):\kappa(\p)]$ for all $\q$ (as $|Y_{\q}|\in \bN_+$). Since $\kappa(\q)/\kappa(\p)$ is separable, it is Galois and the result follows from \autoref{rem.InertiaDecompositionAutExactSequence}. Conversely, suppose that $e_{\p}=1$. In other words, $D_{\q}$ acts faithfully on $\kappa(\q)$ and so we obtain a tower $\kappa(\q)/\kappa(\q)^{D_{\q}}/\kappa(\p)$ such that $\kappa(\q)/\kappa(\q)^{D_{\q}}$ is Galois with Galois group $D_{\q}$; see \cite[\href{https://stacks.math.columbia.edu/tag/09I3}{Tag 09I3}]{stacks-project}. By \autoref{rem.InertiaDecompositionAutExactSequence}, there is a factorization
        \[
        \Tr_{\kappa(\q)/\kappa(\q)^{D_{\q}}} \: \kappa(\q) \xrightarrow{{\bar{\Tr}_{\q}}} \kappa(\p) \xrightarrow{\subset} \kappa(\q)^{D_{\q}}.
        \]
        which forces the displayed vertical inclusion to be an equality as $\Tr_{\kappa(\q)/\kappa(\q)^{D_{\q}}}$ is surjective. That is, $\kappa(\p) = \kappa(\q)^{D_{\q}}$. This implies that $\p^{\Tr}=\sqrt{\p S}$ and $\rho(\p)=[L:K]$. Therefore, the free rank of $S_{\p}$ as an $R_{\p}$-module is at least and so equal to $[L:K]$. That is, $S_{\p}/R_{\p}$ is flat of rank equal to the one of its special fiber. This shows that $\p^{\Tr} S_{\p}\subset \p S_{\p}$.
    \end{proof}
    
\end{proposition}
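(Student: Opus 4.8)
The plan is to prove the new equivalence by showing that ``$e_{\p}=1$'' is equivalent to condition (d) of \autoref{pro.EtaleCase}, namely $\p S_{\p}=\p^{\Tr}S_{\p}$; since (d) is already equivalent to (a)--(c), this suffices. Throughout I will use that $\p^{\Tr}$ is radical in this setup (\autoref{pro.RadicalityAndObviousTameRamification}), so $\sqrt{\p S}\subset \p^{\Tr}$ and the chain $\mu(\p)\ge\rho(\p)\ge\eta(\p)$ of \autoref{notation.Ranks} is available, together with the description $\bar{\Tr}_{\q}=\sum_{\sigma\in D_{\q}}\sigma$ and the exact sequence $1\to I_{\q}\to D_{\q}\to A_{\q}\to 1$ from \autoref{rem.InertiaDecompositionAutExactSequence}. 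Recall also that $\p S_{\p}=\p^{\Tr}S_{\p}$ amounts to the numerical identity $\mu(\p)=\eta(\p)$.

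For the implication (d) $\Rightarrow e_{\p}=1$: by \autoref{pro.EtaleCase}, (d) means $S/R$ is \'etale over $\p$, hence flat and unramified there. Flatness makes $S_{\p}$ free of rank $[L:K]$, so $\mu(\p)=[L:K]$, while unramifiedness makes $S_{\kappa(\p)}$ reduced, i.e. $\mu(\p)=\rho(\p)$; therefore $\rho(\p)=[L:K]$. Since $G$ acts transitively on $Y_{\p}$ with $|Y_{\p}|=[G:D_{\q}]$ and all the residue extensions $\kappa(\q)/\kappa(\p)$, $\q\in Y_{\p}$, are $\kappa(\p)$-isomorphic, we get $[L:K]=\rho(\p)=[G:D_{\q}]\cdot[\kappa(\q):\kappa(\p)]$, hence $|D_{\q}|=[\kappa(\q):\kappa(\p)]$. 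As $\kappa(\q)/\kappa(\p)$ is separable (by unramifiedness) and normal (as $R=S^{G}$), it is Galois, so $|A_{\q}|=[\kappa(\q):\kappa(\p)]=|D_{\q}|$; the exact sequence then forces $|I_{\q}|=1$, i.e. $e_{\p}=1$.

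For the converse $e_{\p}=1 \Rightarrow$ (d): $I_{\q}=\{1\}$ says $D_{\q}$ acts faithfully on $\kappa(\q)$, so $\kappa(\q)/\kappa(\q)^{D_{\q}}$ is Galois with group $D_{\q}$, and $\kappa(\p)\subset\kappa(\q)^{D_{\q}}$. Then $\bar{\Tr}_{\q}=\sum_{\sigma\in D_{\q}}\sigma$ factors as $\kappa(\q)\xrightarrow{\Tr_{\kappa(\q)/\kappa(\q)^{D_{\q}}}}\kappa(\q)^{D_{\q}}\hookrightarrow\kappa(\p)$; since the separable trace $\Tr_{\kappa(\q)/\kappa(\q)^{D_{\q}}}$ is surjective while its composite with the inclusion lands in $\kappa(\p)$, that inclusion must be an equality, so $\kappa(\p)=\kappa(\q)^{D_{\q}}$. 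Hence $\kappa(\q)/\kappa(\p)$ is Galois with group $D_{\q}$ and $\bar{\Tr}_{\q}=\Tr_{\kappa(\q)/\kappa(\p)}\ne 0$ for every $\q\in Y_{\p}$, which by \autoref{MinimalPrimesTranspostions} gives $\p^{\Tr}=\sqrt{\p S}$; thus $\eta(\p)=\rho(\p)=|Y_{\p}|\cdot[\kappa(\q):\kappa(\p)]=[G:D_{\q}]\cdot|D_{\q}|=[L:K]$. Finally, since $S$ is normal, $S_{\p}$ is reflexive over $R_{\p}$, so \autoref{prop.OnterpretationOfLength} furnishes a surjection $S_{\p}\twoheadrightarrow R_{\p}^{\oplus[L:K]}$; it splits because the target is projective, and as $S_{\p}$ is torsion-free of generic rank $[L:K]$ the complementary summand has rank $0$ and hence vanishes. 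Therefore $S_{\p}\cong R_{\p}^{\oplus[L:K]}$, so $\mu(\p)=[L:K]=\eta(\p)$, which is (d).

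I expect the main obstacle to be the last step of the converse: upgrading the numerical identity $\eta(\p)=[L:K]$ (a statement about the transpose ideal) to the genuine ring-theoretic conclusion (d). This is exactly where reflexivity of $S_{\p}$ over $R_{\p}$ together with \autoref{prop.OnterpretationOfLength} are needed, to split off a free summand $R_{\p}^{\oplus[L:K]}$ and then force the complement to vanish. A secondary subtlety is the identification $\kappa(\p)=\kappa(\q)^{D_{\q}}$, which rests on the surjectivity of the separable trace $\Tr_{\kappa(\q)/\kappa(\q)^{D_{\q}}}$ --- without separability this detection of the fixed field would break down.
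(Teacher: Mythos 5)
Your proof is correct and follows essentially the same route as the paper's: both directions hinge on the identity $|D_{\q}|=[\kappa(\q):\kappa(\p)]$ forced by the exact sequence $1\to I_{\q}\to D_{\q}\to A_{\q}\to 1$, and the converse uses the factorization of $\bar{\Tr}_{\q}$ through $\Tr_{\kappa(\q)/\kappa(\q)^{D_{\q}}}$ to get $\kappa(\p)=\kappa(\q)^{D_{\q}}$, then upgrades $\eta(\p)=[L:K]$ to freeness of $S_{\p}$ via \autoref{prop.OnterpretationOfLength}. You merely spell out a few steps the paper leaves implicit.
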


\begin{definition}[\cite{KerzSchmidtOnDifferentNotionsOfTameness}]
     With notation as in \autoref{def.RamificationIndex}, $S/R$ is \emph{numerically (resp. cohomologically) tamely ramified} over $\p$ if $\Char \kappa(\p) \nmid e_{\p}$ (resp. $\Tr_{\p}\:S_{\p} \to R_{\p}$ is surjective).
\end{definition}

\begin{proposition} \label{prop.DifferentNOTIONStameRamificationGaloisCase}
     If $L/K$ is Galois the following statements are equivalent:
     \begin{enumerate}
         \item  $S/R$ is tamely ramified over $\p$.
         \item  $S/R$ is cohomologically tamely ramified over $\p$.
         \item $S/R$ is numerically tamely ramified over $\p$.
     \end{enumerate}
\end{proposition}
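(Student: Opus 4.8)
The plan is to establish $(a)\Leftrightarrow(b)$ first --- this works in any characteristic --- and then $(a)\Leftrightarrow(c)$. For the first equivalence, note that since $\p$ is prime we have $Z_{\p}=\p$, so by \autoref{rem.OnDegeneracy} the map $\Tr$ is non-degenerate along $\p$ if and only if $W_{\p}^{-1}\Tr=\Tr_{\p}\colon S_{\p}\to R_{\p}$ is surjective, which is precisely condition (b). On the other hand, \autoref{rem.TheGaloisCase} asserts that in the present setting (with $S$ normal and $L/K$ Galois) tame $\Tr$-ramification over $\p$, namely (a), is equivalent to $\Tr$ being non-degenerate along $\p$. Combining these two observations yields $(a)\Leftrightarrow(b)$.

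For $(a)\Leftrightarrow(c)$, first dispatch the case $\Char\kappa(\p)=0$: there (c) holds automatically (as $e_{\p}\geq 1$) and $S/R$ is tamely ramified over $\p$ by \autoref{pro.RadicalityAndObviousTameRamification}(b), so both (a) and (c) hold; we may therefore assume $\Char\kappa(\p)=p>0$. Since $\p^{\Tr}$ is radical here (again \autoref{pro.RadicalityAndObviousTameRamification}), \autoref{cor.DifferentWaysToThinkOfTameRamification} shows that (a) holds if and only if $\bar{\Tr}_{\q}\colon\kappa(\q)\to\kappa(\p)$ is nonzero for every $\q\in Y_{\p}$. Fix such a $\q$. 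By \autoref{rem.InertiaDecompositionAutExactSequence} we have the identity of $\kappa(\p)$-linear maps $\bar{\Tr}_{\q}=e_{\p}\cdot\sum_{\sigma\in A_{\q}}\sigma$, where $A_{\q}=\Aut\big(\kappa(\q)/\kappa(\p)\big)$ and $\kappa(\q)/\kappa(\p)$ is normal. Let $\kappa_{0}$ be the maximal separable subextension of $\kappa(\q)/\kappa(\p)$; then $\kappa_{0}/\kappa(\p)$ is Galois and restriction induces an isomorphism $A_{\q}\xrightarrow{\sim}\Gal\big(\kappa_{0}/\kappa(\p)\big)$, so that $\big(\sum_{\sigma\in A_{\q}}\sigma\big)\big|_{\kappa_{0}}=\Tr_{\kappa_{0}/\kappa(\p)}\neq 0$ by separability. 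In particular $\sum_{\sigma\in A_{\q}}\sigma\neq 0$, and hence $\bar{\Tr}_{\q}=0$ if and only if $e_{\p}=0$ in $\kappa(\p)$, i.e. if and only if $p\mid e_{\p}$. Since $e_{\p}$, $A_{\q}$ and $\kappa(\q)/\kappa(\p)$ are the same up to conjugacy for all $\q\in Y_{\p}$, we conclude that (a) holds if and only if $p\nmid e_{\p}$, which is (c).

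I expect the only subtle point to be the direction $(c)\Rightarrow(a)$: one must not assume $\kappa(\q)/\kappa(\p)$ is separable a priori, since a priori it need not be, but rather extract the nonvanishing of $\sum_{\sigma\in A_{\q}}\sigma$ from its restriction to the separable subextension $\kappa_{0}$, using that for a normal extension the automorphism group maps isomorphically onto $\Gal(\kappa_{0}/\kappa(\p))$. (A posteriori, once (a) holds, \autoref{pro.RadicalityAndObviousTameRamification} forces $\kappa(\q)/\kappa(\p)$ to be separable, but this fact is not used in the argument.)
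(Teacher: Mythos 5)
Your proof is correct and follows essentially the same route as the paper: $(a)\Leftrightarrow(b)$ via \autoref{rem.TheGaloisCase}, and $(a)\Leftrightarrow(c)$ via the identity $\bar{\Tr}_{\q}=e_{\p}\cdot\sum_{\sigma\in A_{\q}}\sigma$ from \autoref{rem.InertiaDecompositionAutExactSequence} together with the nonvanishing of $\sum_{\sigma\in A_{\q}}\sigma$. The only (immaterial) difference is that you extract this nonvanishing by restricting to the maximal separable subextension, whereas the paper identifies the sum with $\Tr_{\kappa(\q)/\kappa(\q)^{A_{\q}}}$ for the Galois extension over the fixed field of $A_{\q}$.
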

\begin{proof}
The equivalence between (a) and (b) was discussed in \autoref{rem.TheGaloisCase}. By \autoref{rem.InertiaDecompositionAutExactSequence}, if $\bar{\Tr}_{\q} \neq 0$ then $e_{\p} \neq 0 \in \kappa(\p)$, \ie (a) implies (c). Conversely, suppose that $e_{\p}$ is a unit in $\kappa(\p)$.
According to \autoref{rem.InertiaDecompositionAutExactSequence}, $A_{\q} \coloneqq D_{\q}/I_{\q}$ acts faithfully on $\kappa(\q)$ and there is a factorization
 \[
        \Tr_{\kappa(\q)/\kappa(\q)^{A_{\q}}} \: \kappa(\q) \xrightarrow{ e_{\p}^{-1} \cdot\bar{\Tr}_{\q}} \kappa(\p) \xrightarrow{\subset} \kappa(\q)^{A_{\q}}.
\]
where $\kappa(\q)/\kappa(\q)^{A_{\q}}$ is Galois; see \cite[\href{https://stacks.math.columbia.edu/tag/09I3}{Tag 09I3}]{stacks-project}. In particular, $\bar{\Tr}_{\q} \neq 0$; as required.
\end{proof}

\begin{corollary}
    The locus of universal tame ramification is open and so closed under generization.
\end{corollary}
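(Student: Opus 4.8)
The plan is to identify the locus in question with the complement of a Zariski-closed subset of $\Spec R$, after which both assertions follow formally. Write $U \subseteq \Spec R$ for the set of primes $\p$ over which $S/R$ is universally tamely ramified, let $M$ be the Galois closure of $L/K$, and set $T \coloneqq \bar{R}^M$. By \autoref{thm.UNIVERSALvsSimpleTameRamification}, $\p \in U$ if and only if $T/R$ is tamely ramified over $\p$; so it suffices to prove that the $\Tr_{T/R}$-tamely-ramified locus of $T/R$ is open in $\Spec R$.

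The key point is that $M/K$ is Galois, which activates the ``non-degeneracy'' reformulation of tame ramification that is available only in the Galois case. Namely, by \autoref{rem.TheGaloisCase} (equivalently \autoref{prop.DifferentNOTIONStameRamificationGaloisCase}), $T/R$ is tamely ramified over $\p$ precisely when $\Tr_{T/R}$ is non-degenerate along $\p$, i.e.\ when $\mathbf{1}_{\Tr_{T/R}} = \Tr_{T/R}(T) \not\subset \p$ (here $\Tr_{T/R}(T)$ is an ideal of $R$, being the image of the $R$-linear map $\Tr_{T/R}$). By \autoref{pro.BasicPropertiesTranspositions}, the degeneracy locus of $\Tr_{T/R}$ is exactly $V\big(\Tr_{T/R}(T)\big)$. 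Hence
\[
U \;=\; \Spec R \setminus V\big(\Tr_{T/R}(T)\big),
\]
which is open.

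Finally, openness yields stability under generization for free: the complement $V\big(\Tr_{T/R}(T)\big)$ is closed, hence stable under specialization, so if $\p \in U$ and $\p' \subseteq \p$ is a generization of $\p$, then $\p' \in U$ as well---otherwise $\p'$ would lie in the closed complement of $U$, forcing its specialization $\p$ to lie there too, a contradiction.

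I do not expect a serious obstacle here. The only subtlety is that the clean closed-set description of the tamely ramified locus genuinely needs the Galois hypothesis (for a non-Galois $S/R$ the tamely ramified locus is not obviously open), and this is precisely what \autoref{thm.UNIVERSALvsSimpleTameRamification} supplies by allowing us to replace $S/R$ by $T/R$ with $M/K$ Galois.
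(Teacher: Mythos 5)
Your argument is correct and is essentially identical to the paper's proof: both reduce via \autoref{thm.UNIVERSALvsSimpleTameRamification} to the Galois cover $T/R$, invoke the Galois-case equivalence of tameness with non-degeneracy of the trace (\autoref{prop.DifferentNOTIONStameRamificationGaloisCase}), and observe that the complement of the locus is the closed set $V\big(\Tr_{M/K}(T)\big)$. No issues.
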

\begin{proof}
    With notation as in \autoref{thm.UNIVERSALvsSimpleTameRamification} and using \autoref{prop.DifferentNOTIONStameRamificationGaloisCase}, $\Tr_{M/K}(T) \subset R$ cuts out the locus of points $\p \in \Spec R$ over which $S/R$ is not universally tamely ramified.
\end{proof}

  \begin{remark}[Final comparison]
    Following \cite{KerzSchmidtOnDifferentNotionsOfTameness}, we may say that $S/R$ is numerically (resp. cohomologically) tamely ramified over $\p$ if so is the normalization of $S$ in the Galois closure of $L/K$. Kerz--Schmidt showed that these two notions are equivalent and are the strongest among other notions of tameness in the literature. We have shown them to be equivalent to our notion of universal tameness. We believe that our trace-theoretic approach is advantageous in at least two ways. Namely, universal tameness does not require passing to Galois closures to be defined and it makes explicit the extent to which trace tameness is much weaker than cohomological/numerical tameness. For instance, if $S/R$ is a local extension in dimension $2$, tameness means that the trace is surjective whereas universal tameness would ensure tameness over the height-$1$ prime ideals and so everywhere on $\Spec R$ (and far beyond). For example, we may take $S/R$ of degree prime to the characteristic, ensuring the surjectivity of the trace, but it can still ramify wildly over height $1$ primes. For instance, take $R\coloneqq \bF_2[x,y]_{\langle x,y \rangle}$ and $S$ its normalization in $\bF_2(x,y)[t]/\langle t^3+ t +x \rangle$. This is wildly ramified over $\langle x \rangle$.
  \end{remark}

\section{Centers of $F$-Purity and the Cartier Core Map}\label{Section CFPs and Cartier Core map}
We revisit next some of the basics of $F$-singularities using the language developped in \autoref{section tame ramification}. We focus on centers of $F$-purity and the \emph{Cartier core map} as introduced in \cite{BadillaCespedesFInvariantsSRRings,BrosowskyCartierCoreMap}. As stated in the introduction, we also study these as their own spectrum.
Although interesting in its own right, this rephrasing will be useful to understand the behavior of centers of $F$-purity under homomorphisms in \autoref{sec.FiberedHomomorphisms} and \autoref{sec.FiberedTranspositions}.

From now on, \emph{we assume all rings to be $F$-finite $\bF_p$-algebras}. In particular, $F^e \: R \to R$ is a finite cover and choosing $\phi \in \omega_{F^e}$ lands us in the setup of \autoref{def.TranspositionOfIdeals}. Notably, $\Spec F^e = \id_{\Spec R}$. We refer to the data $(R,\phi)$ of a ring $R$ and a map $\phi \in \omega_{F^e}$ as a \emph{Cartier pair}. In this section, we specialize \autoref{section tame ramification} to this specific but prominent case.

Note that $\omega_{F^e}$ has an $R$-bimodule structure such that $\phi \cdot r^q = r \cdot \phi$ for all $r\in R$. Let us set $\omega_{F^0}\coloneqq R$ so that we may generalize this $R$-bimodule structure as follows. For all $e,e'\in \bN$ and $\phi \in \omega_{F^e}$, $\phi'\in \omega_{F^{e'}}$, define $\phi \cdot \phi' \in \omega_{F^{e+e'}}$ as the composition $\phi \circ F_*^e \phi'$. In particular, for all $n\in \bN$, one defines $\phi^n \in \omega_{F^{en}}$ inductively as $\phi^0 \coloneqq 1$ and $\phi^{n+1} \coloneqq \phi \cdot \phi^n = \phi^n \cdot \phi$. If $W\subset R$ be a multiplicative subset, we let $W^{-1}\phi \in \omega_{F^e_{W^{-1}R}}$ be the only map making the following diagram commutative
\[
\xymatrixcolsep{3pc}\xymatrix{
F^e_* R \ar[r]^-{\phi} \ar[d]_-{F^e_*\lambda} & R \ar[d]^-{\lambda \coloneqq \text{localization}} \\
F^e_* W^{-1}R  \ar[r]^-{W^{-1}\phi} & W^{-1}R, 
}
\]
When $W=R\setminus \p$ for some $\p \in \Spec R$, we write $W^{-1}\phi$ as $\phi_{\p} \: F^e_* R_{\p} \to R_{\p}$ instead.

 According to \autoref{pro.SeparableVSpurelyInseparable}, for all radical ideals $\fra \in I(R)$, we have that $\fra^{\phi} \subset \fra$ if and only if $\phi$ is non-degenerate along $\fra$. However, in this specific setup, we also have the following.
 \begin{lemma}\label{lemm.CuteLemma}
     Let $(R,\phi)$ be a Cartier pair. If $\phi$ is non-degenerate along $\fra \in I(R)$ then $\fra^{\phi} \subset \fra$. 
 \end{lemma}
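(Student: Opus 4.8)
The plan is to reduce the statement to the contraction formula \autoref{pro.BasicPropertiesTranspositions}(\ref{key.Item}), using the one feature that distinguishes the Cartier-pair case from a general finite cover: the map $\theta = F^e\colon R\to S=F^e_*R$ is, once one identifies the underlying ring of $F^e_*R$ with $R$, nothing but the $e$-th Frobenius $x\mapsto x^q$ of $R$ itself (recall that $\Spec F^e=\id_{\Spec R}$). Under this identification $\fra^\phi$, which a priori is an ideal of $S$, becomes an ideal of $R$, and its contraction is $\fra^\phi\cap R=\theta^{-1}(\fra^\phi)=\{x\in R\mid x^q\in\fra^\phi\}$.

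With that set up, I would argue in two short steps. First, since $\fra^\phi$ is an ideal, $x\in\fra^\phi$ implies $x^q=x\cdot x^{q-1}\in\fra^\phi$; that is, $\fra^\phi\subseteq\theta^{-1}(\fra^\phi)=\fra^\phi\cap R$. Second, \autoref{pro.BasicPropertiesTranspositions}(\ref{key.Item}) gives $\fra\subseteq\fra^\phi\cap R=\fra:\mathbf{1}_{\phi}$, with equality exactly when $\phi$ is non-degenerate along $\fra$; since that is the hypothesis, $\fra^\phi\cap R=\fra$. Chaining the two inclusions gives $\fra^\phi\subseteq\fra$.

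The only thing that genuinely requires care --- and essentially the only content of the proof beyond quoting \autoref{pro.BasicPropertiesTranspositions} --- is the bookkeeping of this identification: that $\fra^\phi$ is read as an ideal of $R$, that $\theta$ has become $x\mapsto x^q$, and hence that $\theta^{-1}(\fra^\phi)$ really is a ``Frobenius pullback'' of $\fra^\phi$. This is also precisely what allows us to do away with the radicality hypothesis needed in \autoref{pro.SeparableVSpurelyInseparable}(b): for a general finite cover one only knows $x^q\in R$ when $x\in S$, so from $x^q\in\fra$ one can conclude no more than $x\in\sqrt{\fra}$, whereas here the ring identification places $x$ itself inside $\fra^\phi\cap R$, so neither radicality of $\fra$ nor $F$-finiteness of $R$ enters.
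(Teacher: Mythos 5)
Your proof is correct and is essentially identical to the paper's: the paper also takes $x\in\fra^{\phi}$, notes $x^q\in\fra^{\phi}$, and concludes $x\in\fra:\mathbf{1}_{\phi}=\fra$ via \autoref{pro.BasicPropertiesTranspositions}\autoref{key.Item}. Your closing remark correctly pinpoints why the identification of $S=F^e_*R$ with $R$ lets one avoid the radicality hypothesis of \autoref{pro.SeparableVSpurelyInseparable}(b).
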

 \begin{proof}
Let $x\in \fra^{\phi}$. Then $x^q \in \fra^{\phi}$ and so $x \in \fra \: \mathbf{1}_{\phi}=\fra$ (see \autoref{pro.BasicPropertiesTranspositions}\autoref{key.Item}).     
 \end{proof}
 
\begin{definition}\label{definition phi ideals}
    Let $(R,\phi)$ be a Cartier pair. We define the \emph{ideals of $(R,\phi)$} as the set
    \[
   I(R,\phi) \coloneqq \{\fra \in I(R) \mid \fra_\phi\subset \fra\} = \{\fra \in I(R) \mid \fra \subset \fra^{\phi}\}
    \]
    and refer to its elements as \emph{$\phi$-ideals}.\footnote{Aka ($\phi$-)compatible ideals in the literature.} That is, a $\phi$-ideal is an ideal $\fra \in I(R)$ for which there is a (necessarily unique) $R/\fra$-linear map $\phi/\fra$ making the following diagram commutative
    \[
\xymatrix{
F^e_* R \ar[r]^-{\phi} \ar[d]_-{F^e_*\pi} & R \ar[d]^-{\pi\coloneqq \text{quotient}} \\
F^e_* R/\fra  \ar[r]^-{\phi/\fra} & R/\fra, 
}
\]
If $\fra_\phi= \fra$, we refer to $\fra$ as a \emph{fixed $\phi$-ideal}. We define the \emph{Cartier spectrum} of $(R,\phi)$ as 
\[
\CSpec(R,\phi) \coloneqq I(R,\phi) \cap \Spec R,\] 
and refer to its elements as \emph{Cartier primes}, which are the prime ideals $\p \in \Spec R$ such that $\phi$ descends to a homomorphism $\bar{\phi}_{\p} \coloneqq \phi_{\p}/\p R_{\p}\:F^e_* \kappa(\p) \to \kappa(\p)$.
If $\bar{\phi}_{\p} \neq 0$ (\ie $F^e$ is tamely $\phi$-ramified over $\p$),  one says that $\p$ is a \emph{center of $F$-purity} (\cite{SchwedeCentersOfFPurity}). We define the \emph{Schwede spectrum} of $(R,\phi)$ as
\[
\Schpec(R,\phi) \coloneqq \{\p \in \CSpec(R,\phi) \mid \bar{\phi}_{\p} \neq 0 \}.
\]
We endow $\Schpec(R,\phi)$ and $\CSpec(R,\phi)$ with the subspace topology from $\Spec R$ and we write $V_{\phi}(\fra) \coloneqq \CSpec(R,\phi) \cap V(\fra)$ and  $V_{\phi}^{\circ}(\fra) \coloneqq V(\fra) \cap \Schpec(R,\phi) = V_{\phi}(\fra) \cap \Schpec(R,\phi)$.
\end{definition}

The following properties are direct from the definitions and well-documented in the literature; see e.g. \cite[Proposition 3.3]{SmithZhang}, \cite[Lemma 2.13]{BlickleSchwedeTuckerFSigPairs1}, \cite{SchwedeTuckerTestIdealSurvey}, \cite[Proposition 6.7]{SchwedeSmithFSingularitiesBook}.

\begin{proposition}\label{first proposition properties of comp ideal}
    Let $(R,\phi)$ be a Cartier pair, $\fra\in I\rphi$, $\frab\in I(R)$, and $W\subset R$ a multiplicative subset. Then,
    \begin{enumerate}
        \item $I(R,\phi) \subset I(R)$ is a sublattice, \ie it is closed under sums and intersections and it contains $\mathbf{0}, \mathbf{1}$.
        \item  $(\fra : \frab) \in I(R,\phi)$ and consequently $\Ass_R R/\fra \subset I(R,\phi)$ and $\sqrt{\fra} \in I(R,\phi)$.
        \item  Extension of ideals along $\pi \: R \to R/\fra$ induces an isomorphism of lattices  
        \[\{\frab \in I(R,\phi) \mid \fra \subset \frab \} \to I(R/\fra, \phi/\fra),\] which restricts to homeomorphisms 
        \[ 
        V_{\phi}(\fra) \to \CSpec(R/\fra,\phi/\fra)  \text{ and } V_{\phi}^{\circ}(\fra) \to \Schpec(R/\fra,\phi/\fra).
        \]
        \item Extension of ideals along $\lambda \: R \to W^{-1}R$ yields a homomorphism of lattices \[I(R,\phi) \to I\big(W^{-1}R, W^{-1}\phi\big),\]
        which restricts to a homeomorphism 
        $$\{\p \in \CSpec(R,\phi) \mid \p \cap W = \emptyset\} \to \CSpec\big(W^{-1}R, W^{-1}\phi\big)$$
        where centers of $F$-purity correspond to centers of $F$-purity.
    \end{enumerate}
\end{proposition}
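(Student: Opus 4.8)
The plan is to reduce every assertion to the elementary calculus of transpositions in \autoref{pro.BasicPropertiesTranspositions}, together with the two characterizations $\fra\in I(R,\phi)\Longleftrightarrow\fra_\phi\subseteq\fra\Longleftrightarrow\fra\subseteq\fra^\phi$ built into \autoref{definition phi ideals}, all specialized to $\theta=F^e\:R\to R$. I expect none of the four items to be difficult; the only places that need care are the compatibility checks in (c) and (d), where one must track not merely the underlying lattices of ideals but also the auxiliary data $\phi/\fra$, $W^{-1}\phi$, and the residual maps $\bar\phi_\p$.

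For (a) and (b) I would argue directly from the image description $\fra_\phi=\phi(F^e_*\fra)$. One has $\mathbf 0_\phi=\mathbf 0$ and $\mathbf 1_\phi=\phi(F^e_*R)\subseteq R=\mathbf 1$, so the bounds lie in $I(R,\phi)$; for a family $\{\fra_i\}$ of $\phi$-ideals, $(\bigcap_i\fra_i)_\phi=\phi(F^e_*\bigcap_i\fra_i)\subseteq\phi(F^e_*\fra_j)=(\fra_j)_\phi\subseteq\fra_j$ for every $j$, and $(\sum_i\fra_i)_\phi=\sum_i(\fra_i)_\phi\subseteq\sum_i\fra_i$ by \autoref{pro.BasicPropertiesTranspositions}(b); that settles (a). For (b), given $x\in(\fra:\frab)$ and $b\in\frab$, I would use that $b^q\in\frab$, hence $xb^q\in\fra$, and then the bimodule rule $\phi\cdot b^q=b\cdot\phi$ together with $\fra\in I(R,\phi)$ to obtain $b\cdot\phi(F^e_*x)=\phi(F^e_*(xb^q))\in\phi(F^e_*\fra)\subseteq\fra$; letting $b$ vary gives $\phi(F^e_*(\fra:\frab))\subseteq(\fra:\frab)$. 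The two consequences then drop out: every $\p\in\Ass_R R/\fra$ equals some $(\fra:x)$, hence is a $\phi$-ideal, and $\sqrt\fra=\bigcap_{\p\in\Min(\fra)}\p$ is an intersection of associated primes of $\fra$, hence a $\phi$-ideal by (a).

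For (c), I would first record that $\phi/\fra$ exists (as $\fra$ is a fixed $\phi$-ideal) and is characterized by $\pi\circ\phi=(\phi/\fra)\circ F^e_*\pi$. The classical order bijection $\frab\mapsto\pi(\frab)$ between ideals of $R$ containing $\fra$ and ideals of $R/\fra$ is already a lattice isomorphism, so the only thing to add is that it carries $\phi$-ideals to $\phi/\fra$-ideals and back: pushing the defining square forward along $\pi$ gives $(\phi/\fra)(F^e_*\pi(\frab))=\pi(\phi(F^e_*\frab))$, and since $\pi^{-1}(\pi(\frab))=\frab+\fra=\frab$, the inclusion $(\phi/\fra)(F^e_*\pi(\frab))\subseteq\pi(\frab)$ is equivalent to $\phi(F^e_*\frab)\subseteq\frab$. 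On prime ideals this restricts to the classical homeomorphism $V(\fra)\xrightarrow{\sim}\Spec R/\fra$, and for a prime $\p\supseteq\fra$ the residue field $\kappa(\p)$ and the induced map $\bar\phi_\p$ are the same whether computed in $R$ or in $R/\fra$; hence $\p$ is a Cartier prime (resp.\ a center of $F$-purity) for $(R,\phi)$ exactly when $\p/\fra$ is one for $(R/\fra,\phi/\fra)$, which yields the homeomorphisms $V_\phi(\fra)\xrightarrow{\sim}\CSpec(R/\fra,\phi/\fra)$ and $V_\phi^\circ(\fra)\xrightarrow{\sim}\Schpec(R/\fra,\phi/\fra)$.

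For (d), \autoref{pro.BasicPropertiesTranspositions}(a) gives $(\fra W^{-1}R)^{W^{-1}\phi}=\fra^\phi W^{-1}R\supseteq\fra W^{-1}R$ for $\fra\in I(R,\phi)$, so extension of ideals maps $I(R,\phi)$ into $I(W^{-1}R,W^{-1}\phi)$; it preserves finite sums always and finite intersections by flatness of localization, hence is a lattice homomorphism, though not injective in general---whence only ``homomorphism''. On primes disjoint from $W$ it is the classical homeomorphism onto $\Spec W^{-1}R$; to see it surjects onto $\CSpec(W^{-1}R,W^{-1}\phi)$ and matches centers of $F$-purity I would contract the square $\lambda\circ\phi=(W^{-1}\phi)\circ F^e_*\lambda$: if $\p W^{-1}R$ is a $W^{-1}\phi$-ideal, then $\phi(F^e_*\p)\subseteq\lambda^{-1}\big((W^{-1}\phi)(F^e_*\p W^{-1}R)\big)\subseteq\lambda^{-1}(\p W^{-1}R)=\p$, so $\p\in\CSpec(R,\phi)$; and since the localization of $R$ at $\p$ factors through $R\to W^{-1}R$, the localization of $W^{-1}\phi$ at $\p W^{-1}R$ coincides with $\phi_\p$, so the residual maps agree and $\p\in\Schpec(R,\phi)\Longleftrightarrow\p W^{-1}R\in\Schpec(W^{-1}R,W^{-1}\phi)$. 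The only mildly delicate point in the whole argument---and the only one I would actually spell out---is exactly this bookkeeping in (c) and (d): that the classical quotient and localization correspondences on ideals respect not only the lattice operations but also the Cartier structures $\phi/\fra$, $W^{-1}\phi$ and the residual data $\bar\phi_\p$. Beyond that there is no genuine obstacle.
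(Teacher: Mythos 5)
Your proof is correct and supplies exactly the direct-from-definitions verification that the paper merely asserts (it cites the literature in place of a proof), so the approach is the intended one. The only blemish is the parenthetical in (c) claiming $\fra$ is a \emph{fixed} $\phi$-ideal: the hypothesis is only $\fra\in I(R,\phi)$, i.e.\ $\fra_\phi\subseteq\fra$, which is already all that the existence of $\phi/\fra$ requires, so the argument is unaffected.
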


\begin{definition}[\cf {\cite{BadillaCespedesFInvariantsSRRings,BrosowskyCartierCoreMap,AberbachEnescuStructureOfFPure}}] 
Let $\rphi$ be a Cartier pair and $\fra \in I(R)$. The \emph{Cartier core} of $\fra$ is the $\phi$-ideal $\upkappa_{\fra}(R,\phi) \coloneqq  \bigcap_{n \in \bN_{+}} \fra^{\phi^n}$. Dually, we define the \emph{image ideal of $(R,\phi)$ along $\fra$} as the $\phi$-ideal $\uplambda_{\fra}(R,\phi) \coloneqq  \sum_{n \in \bN_{+}} \fra_\phin$.
\end{definition}

\begin{proposition}\label{propdef cartier core} Let $(R,\phi)$ be a Cartier pair, $X\coloneqq \Spec R$, $A \subset I(R)$, $\fra,\frab\in I(R)$, and $W\subset R$ be a multiplicative subset. Then:
\begin{enumerate}
    \item $W^{-1}\upkappa_{\fra}(R,\phi) = \upkappa_{W^{-1}\fra}(W^{-1}R,W^{-1}\phi)$.
    \item $\upkappa_{\bigcap_{\fra \in A} \fra}\rphi=\bigcap_{\fra \in A} \upkappa_{\fra}\rphi$ and $\uplambda_{\sum_{\fra \in A} \fra}\rphi=\sum_{\fra \in A} \uplambda_{\fra}\rphi$.
    \item $\fra \subset \upkappa_{\frab}(R,\phi)$ if and only if $\uplambda_{\fra}(R,\phi) \subset \frab$.
    \item $ V\big(\uplambda_{\fra}(R,\phi)\big) = \{\p \in X \mid \fra \subset \upkappa_{\p}(R,\phi)\}$. 
    \item  $\upkappa_{\fra}(R,\phi) \neq \mathbf{1} $ 
    if and only if $\mathbf{1}_\phi \not\subset \fra$.
    \item If $\mathfrak{c} \in I(R,\phi)$ is contained in $\fra$ then $\mathfrak{c} \subset \upkappa_{\fra}(R,\phi)$.
     \item If $\phi$ is non-degenerate along $\fra$ then $\upkappa_{\fra}(R,\phi) \subset \fra $ and so $\upkappa_{\fra}(R,\phi) $ is the largest $\phi$-ideal contained in $\fra$. In particular, if $\fra\in I\rphi$ then $\fra=\upkappa_{\fra}\rphi$.
 \end{enumerate}
\end{proposition}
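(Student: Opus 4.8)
The plan is to obtain each of (a)--(g) as the iterated analogue of the corresponding item of \autoref{pro.BasicPropertiesTranspositions} (occasionally aided by \autoref{lemm.CuteLemma} and \autoref{definition phi ideals}). The one structural fact I would record at the outset is the transitivity identity
\[
\fra^{\phi^{n+1}} = \bigl(\fra^{\phi^{n}}\bigr)^{\phi} = \bigl(\fra^{\phi}\bigr)^{\phi^{n}}, \qquad n\in\bN,
\]
which is \autoref{pro.TransitivityTransposition} applied to the tower $R\xrightarrow{F^{e}}R\xrightarrow{F^{en}}R$ with sections $\phi$ and $\phi^{n}$ (equivalently, it records the associativity of the bimodule product $\phi\cdot\phi^{n}=\phi^{n+1}$). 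I would also note $W^{-1}(\phi^{n}) = (W^{-1}\phi)^{n}$, which is immediate from the defining square of $W^{-1}\phi$.

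I would begin with the adjunction (c), since (d), (e) and (f) all flow from it. By item (c) of \autoref{pro.BasicPropertiesTranspositions}, $\fra\subset\frab^{\phi^{n}}\iff\fra_{\phi^{n}}\subset\frab$ for every $n$; intersecting the left-hand assertions over $n$ and summing the right-hand ones gives $\fra\subset\upkappa_{\frab}\iff\uplambda_{\fra}\subset\frab$, which is (c), and taking $\frab=\p\in\Spec R$ yields (d) at once. For (b), apply item (b) of \autoref{pro.BasicPropertiesTranspositions} to each $\phi^{n}$ and interchange the two intersections, $\upkappa_{\bigcap_{\fra\in A}\fra}=\bigcap_{n}\bigcap_{\fra\in A}\fra^{\phi^{n}}=\bigcap_{\fra\in A}\upkappa_{\fra}$; the $\uplambda$-statement is the same after interchanging sums, using $(\sum_{\fra\in A}\fra)_{\phi^{n}}=\sum_{\fra\in A}\fra_{\phi^{n}}$ ($R$-linearity of $\phi^{n}$). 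For (e): item (d) of \autoref{pro.BasicPropertiesTranspositions} gives $\fra^{\phi^{n}}\neq\mathbf{1}\iff\mathbf{1}_{\phi^{n}}\not\subset\fra$, and since $\mathbf{1}_{\phi^{n+1}}=\phi^{n}\bigl(F^{en}_{*}\mathbf{1}_{\phi}\bigr)\subset\mathbf{1}_{\phi^{n}}$ the chain $\{\mathbf{1}_{\phi^{n}}\}$ is non-increasing, so ``$\mathbf{1}_{\phi^{n}}\subset\fra$ for all $n$'' is equivalent to ``$\mathbf{1}_{\phi}\subset\fra$''; hence $\upkappa_{\fra}=\mathbf{1}$ iff $\mathbf{1}_{\phi}\subset\fra$. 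For (f): if $\mathfrak{c}\in I(R,\phi)$ then $\mathfrak{c}_{\phi}\subset\mathfrak{c}$ by \autoref{definition phi ideals}, hence $\mathfrak{c}_{\phi^{n}}\subset\mathfrak{c}\subset\fra$ for all $n$, so $\uplambda_{\mathfrak{c}}\subset\fra$ and (c) gives $\mathfrak{c}\subset\upkappa_{\fra}$.

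For (g): first $\upkappa_{\fra}\subset\fra^{\phi}$ (the $n=1$ term) and $\fra^{\phi}\subset\fra$ by \autoref{lemm.CuteLemma} since $\phi$ is non-degenerate along $\fra$, so $\upkappa_{\fra}\subset\fra$; moreover $\upkappa_{\fra}$ is itself a $\phi$-ideal, since by item (b) of \autoref{pro.BasicPropertiesTranspositions} and transitivity
\[
(\upkappa_{\fra})^{\phi}=\Bigl(\bigcap_{n\ge1}\fra^{\phi^{n}}\Bigr)^{\phi}=\bigcap_{n\ge1}\fra^{\phi^{n+1}}=\bigcap_{n\ge2}\fra^{\phi^{n}}\supset\upkappa_{\fra}.
\]
Combining with (f), $\upkappa_{\fra}$ is the largest $\phi$-ideal contained in $\fra$, and if $\fra\in I(R,\phi)$ then (f) with $\mathfrak{c}=\fra$ gives $\fra\subset\upkappa_{\fra}$, whence $\fra=\upkappa_{\fra}$.

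Finally (a), which I would treat last and which I expect to be the only point needing genuine care. The inclusion $W^{-1}\upkappa_{\fra}\subset\upkappa_{W^{-1}\fra}$ is formal: $W^{-1}(\fra^{\phi^{n}})=(W^{-1}\fra)^{(W^{-1}\phi)^{n}}$ by \autoref{pro.BasicPropertiesTranspositions}\autoref{pro.TransposesLocalization}, and localization of an intersection sits inside the intersection of the localizations. The reverse inclusion is where the crux lies, since localization need not commute with an infinite intersection of ideals; the remedy is the standard Noetherian observation that \emph{any} intersection of ideals of $R$ already equals a finite sub-intersection, which applied here yields $\upkappa_{\fra}=\bigcap_{n\le N}\fra^{\phi^{n}}$ for $N\gg0$, after which localization passes through the finite intersection. (Alternatively, one localizes first and then invokes, as in (g), that $\upkappa_{W^{-1}\fra}$ is a $(W^{-1}\phi)$-ideal to reduce to finitely many terms.)
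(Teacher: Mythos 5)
Your write-up of (b)--(g) is correct and is essentially the paper's own (very terse) proof spelled out: the paper likewise derives (b), (c), (d) from \autoref{pro.BasicPropertiesTranspositions}, handles (e) via the chain of images $\mathbf{1}_{\phi^n}$, proves (f) by the computation $\mathfrak{c}_{\phi^n}\subset\mathfrak{c}\subset\fra$, and obtains (g) from \autoref{lemm.CuteLemma}; your verification that $\upkappa_{\fra}$ is itself a $\phi$-ideal via $(\upkappa_{\fra})^{\phi}=\bigcap_{n\ge 2}\fra^{\phi^n}$ is a detail the paper builds into the definition, and it is right.

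The gap is in (a), exactly the item you flagged as delicate. The ``standard Noetherian observation'' you invoke --- that any intersection of ideals of $R$ equals a finite sub-intersection --- is false: Noetherianity is the \emph{ascending} chain condition, and already $\bigcap_{n}\langle x^{n}\rangle=\langle 0\rangle$ in $\bF_p[x]$ agrees with no finite sub-intersection. Worse, the failure occurs for the very intersection at hand: for the standard Frobenius splitting $\phi$ of $R=\bF_p[x]$ one computes $\langle x\rangle^{\phi^{n}}=\langle x^{p^{n}}\rangle$, so the partial intersections $\bigcap_{n\le N}\langle x\rangle^{\phi^{n}}=\langle x^{p^{N}}\rangle$ never stabilize, while $\upkappa_{\langle x\rangle}(R,\phi)=\langle 0\rangle$. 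The same example shows that no purely formal manipulation can close the gap, because the identity in (a) genuinely fails for arbitrary $W$: taking $W=\{x^{n}\}_{n\ge 0}$ and $\fra=\langle x\rangle$ gives $W^{-1}\upkappa_{\fra}=\langle 0\rangle$ whereas $\upkappa_{W^{-1}\fra}=\upkappa_{\mathbf{1}}=\mathbf{1}$. So (a) must be read with $W\cap\fra=\emptyset$ (which is how it is applied in the paper, to primes and localizations away from them), and even then one must produce a \emph{single} $w\in W$ with $wx\in\fra^{\phi^{n}}$ for all $n$ simultaneously --- a genuine uniformity statement, which is the content of the cited \cite[Proposition 3.6]{BrosowskyCartierCoreMap} that the paper leans on and that you would need to either reproduce or invoke. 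Your parenthetical alternative (``localize first and use that $\upkappa_{W^{-1}\fra}$ is a $(W^{-1}\phi)$-ideal to reduce to finitely many terms'') does not supply this either: compatibility is a statement about one application of $\phi$ and yields no finiteness of the defining intersection.
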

\begin{proof}
Parts (a), (b), (c), and (d) follow from \autoref{pro.BasicPropertiesTranspositions}. See also \cite[Notation 3.2, Proposition 3.6]{BrosowskyCartierCoreMap}. For (e), $\upkappa_\fra\rphi\neq \mathbf{1}$ if and only if there is an $n\in \N_+$ such that $\fra^{\phi^n}=(\fra^\phi)^{\phi^{n-1}}\neq \mathbf{1}$. For (f), $\mathfrak{c}_\phin \subset \mathfrak{c} \subset \fra$ so $\mathfrak{c} \subset \fra^{\phi^n}$ for all $n\in \N_+$. \autoref{lemm.CuteLemma} yields (g).
\end{proof}

\begin{defprop}\label{defprop sigma} Let $\rphi$ be a Cartier pair and $\fra \in I(R,\phi)$. We let $\upsigma(\fra,\phi)\in I(R,\phi)$ denote the stable element of the descending chain of $\phi$-ideals $
    \fra \supset \fra_{\phi^1} \supset \fra_{\phi^2} \supset \cdots $. It is the largest fixed $\phi$-ideal contained in $\fra$. Moreover, 
    \[
    V(\upsigma(\fra,\phi))=V(\fra_\phi)=V(\uplambda_\fra\rphi)=\{\p \in \Spec R \mid \fra \subset \upkappa_{\p}(R,\phi)  \}.
    \]
\end{defprop}

\begin{proof} The stabilization of the chain is well-known \cite{Gabber.tStruc,BlickleSchwedeSurveyPMinusE,BlickleTestIdealsViaAlgebras}. We explain why for all $n \in \bN_{+}$ and all $\p$, if $\fra_{\phi^{n+1}} \subset \p$ then $\fra_\phin \subset \p$. It suffices to do so after localizing at $\p$ whence we may take $R$ local with maximal ideal $\p$. Then, we must show that: if $\phi^n \cdot a$ is surjective for some $a \in \fra$ then so is $\phi^{n+1}\cdot b$ for some $b\in \fra$. Let $a \in \fra$ be such that $\phi^n \cdot a \: F^{en}_*1 \mapsto 1$. Then, $\phi^n(F^{en}_* ar)= a$ for some $r \in R$. Let $b\coloneqq\phi^{n-1}(F^{en-e}_* ar)\in \fra$. Then, $\phi(F^e_* b) = a$ and $\phi^{n+1} \cdot b$ is surjective. Since $\fra_{\phi}=\uplambda_{\fra}(R,\phi)$, the rest follows from \autoref{propdef cartier core}.
\end{proof}

\begin{corollary}\label{Pro.NonDegeneracyPhiIdeals}
Let $\rphi$ be a Cartier pair and $\fra\in I\rphi$. Then, $\phi$ is non-degenerate along $\fra$ (\ie $\Ass_R R/\fra \subset \Schpec(R,\phi)$) if and only if $\fra$ is radical and $\fra=\fra^\phi$. In particular, $\phi$ is non-degenerate if and only if $R$ is reduced and $\mathbf{0}^{\phi}=\mathbf{0}$.
\end{corollary}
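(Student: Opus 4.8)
The plan is to reduce everything to two facts already in hand: first, the inclusion $\fra\subset\fra^{\phi}\cap R=\fra:\mathbf{1}_{\phi}$ is an equality precisely when $\phi$ is non-degenerate along $\fra$ (\autoref{pro.BasicPropertiesTranspositions}\autoref{key.Item}); second, in the Cartier setting ``$\phi$ non-degenerate along $\fra$'' already forces $\fra^{\phi}\subset\fra$ (\autoref{lemm.CuteLemma}). First I would dispose of the parenthetical reformulation. Since $\fra\in I(R,\phi)$, \autoref{first proposition properties of comp ideal} says every $\p\in\Ass_R R/\fra$ is a $\phi$-ideal, hence a Cartier prime, so $\bar\phi_{\p}$ is defined; it vanishes exactly when $\phi_{\p}(F^e_*R_{\p})\subset\p R_{\p}$, i.e.\ when the image ideal $\mathbf{1}_{\phi}$ localizes into $\p R_{\p}$, i.e.\ when $\mathbf{1}_{\phi}\subset\p$. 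Hence ``$\Ass_R R/\fra\subset\Schpec(R,\phi)$'' is the same as ``$V(\mathbf{1}_{\phi})\cap\Ass_R R/\fra=\emptyset$'', which by \autoref{def.TranspositionOfIdeals} is exactly the definition of $\phi$ being non-degenerate along $\fra$.

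For the forward implication of the equivalence, assume $\phi$ is non-degenerate along $\fra$. Then \autoref{lemm.CuteLemma} gives $\fra^{\phi}\subset\fra$, and together with the inclusion $\fra\subset\fra^{\phi}$ built into $\fra\in I(R,\phi)$ this yields $\fra=\fra^{\phi}$. For radicality, non-degeneracy and \autoref{pro.BasicPropertiesTranspositions}\autoref{key.Item} upgrade $\fra\subset\fra^{\phi}\cap R$ to $\fra=\fra^{\phi}\cap R$; the key move is then to read the contraction $\fra^{\phi}\cap R$ in this setting as the Frobenius preimage $(F^e)^{-1}(\fra^{\phi})=\{\,r\in R\mid r^{q}\in\fra^{\phi}\,\}$, which equals $\{\,r\in R\mid r^{q}\in\fra\,\}$ since $\fra^{\phi}=\fra$. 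Thus $\fra$ is closed under $q$-th roots, and a one-line iteration (if $x^{q^{n}}\in\fra$ then peeling off $q$-th powers gives $x\in\fra$, and every element of $\sqrt{\fra}$ has $x^{q^{n}}\in\fra$ for $n\gg0$) shows $\sqrt{\fra}=\fra$.

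The converse and the final sentence are then short. If $\fra$ is radical and $\fra=\fra^{\phi}$, then $\fra^{\phi}$ is radical, so $(F^e)^{-1}(\fra^{\phi})=\fra^{\phi}=\fra$, i.e.\ $\fra=\fra^{\phi}\cap R$, and \autoref{pro.BasicPropertiesTranspositions}\autoref{key.Item} turns this equality back into non-degeneracy of $\phi$ along $\fra$. Taking $\fra=\mathbf{0}$ (always in $I(R,\phi)$) gives the ``in particular'': $\mathbf{0}$ radical means $R$ reduced, and since $\mathbf{0}\subset\mathbf{0}^{\phi}$ always, $\mathbf{0}=\mathbf{0}^{\phi}$ is just $\mathbf{0}^{\phi}=\mathbf{0}$. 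I expect the only genuinely delicate point to be exactly the interpretation of $(-)\cap R$ along $F^e$: even though $\Spec F^e=\mathrm{id}_{\Spec R}$, the contraction $(F^e)^{-1}$ is \emph{not} the identity on non-radical ideals, so one must treat $\fra$, $\fra^{\phi}$, and $\fra^{\phi}\cap R$ as three a priori distinct ideals of $R$ and collapse them only where the hypotheses (non-degeneracy, or radicality of $\fra$) license it.
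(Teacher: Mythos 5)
Your proof is correct; the difference from the paper is in how radicality is obtained in the forward direction. The paper first proves $\fra=\sqrt{\fra}$ by a multiplier argument: non-degeneracy along $\fra$ lets one pick $r\in W_{\fra}\cap\upsigma(R,\phi)$ (via \autoref{defprop sigma}), write $r=\phi^n(F^{en}_*r_n)$ for every $n$, and then for $x\in\sqrt{\fra}$ with $x^{q^n}\in\fra$ compute $xr=\phi^n(F^{en}_*x^{q^n}r_n)\in\fra_{\phi^n}\subset\fra$, whence $x\in\fra$ since $r$ is regular modulo $\fra$; the remaining equivalence is then delegated to \autoref{pro.SeparableVSpurelyInseparable}(b). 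You instead establish $\fra=\fra^{\phi}$ first via \autoref{lemm.CuteLemma} and then extract radicality from the identity $\fra=\fra^{\phi}\cap R=\{r\in R\mid r^{q}\in\fra^{\phi}\}=\{r\in R\mid r^{q}\in\fra\}$ by iterating $q$-th roots. Both are sound: your route is more self-contained (it avoids the stabilized image ideal $\upsigma$ and the chain-stabilization result behind it, needing only \autoref{pro.BasicPropertiesTranspositions}\autoref{key.Item} and \autoref{lemm.CuteLemma}, whose own proof already contains exactly the Frobenius-contraction reading you flag as the delicate point), while the paper's multiplier argument is the standard test-element mechanism and outsources the rest to the already-proved purely inseparable half of \autoref{pro.SeparableVSpurelyInseparable}. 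Your handling of the parenthetical reformulation, the converse, and the $\fra=\mathbf{0}$ specialization matches what the paper leaves implicit and is accurate.
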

\begin{proof}
Suppose that $\phi$ is non-degenerate along $\fra$. By \autoref{defprop sigma}, this means that we may take $r\in W_{\fra}\cap \upsigma(R,\phi)$. In particular, for every $n \in \N_{+}$, there is $r_n\in R$ such that $r=\phi^n(\fstar^{en} r_n)$. Let $x\in \sqrt{\fra}$. Then, there is an $n\gg 0 $ such that $x^{q^{n}}\in \fra$ and so $xr=\phi^n(\fstar^{en} x^{q^{n}} r_n)\in \fra$, so $x\in \fra$. The rest follows from \autoref{pro.SeparableVSpurelyInseparable}. 
\end{proof}

\begin{definition}\label{propdef f pure locus}
Let $\rphi$ be a Cartier pair. The \emph{non-$F$-pure locus} of $(R,\phi)$ is the closed subset $V(\upsigma(R,\phi))=\{\p \in \Spec R \mid \upkappa_{\p}(R,\phi)=\mathbf{1}\}$. Its complement, $\sU(R,\phi)$, is the \emph{$F$-pure locus} of $(R,\phi)$. The pair $(R,\phi)$ is \emph{$F$-pure} if $\sU(R,\phi)=\Spec R$.  
\end{definition}

\begin{remark}
     Inside $\Spec R$, we have $\CSpec(R,\phi) = \Schpec(R,\phi) \sqcup V(\upsigma(R,\phi))$ and further $\Schpec(R,\phi)=\CSpec(R,\phi)\cap \sU(R,\phi)$. If $(R,\phi)$ is $F$-pure then $I(R,\phi)$ is a finite set of radical ideals (\cite[Corollary 5.10]{SchwedeFAdjunction}, \cite[Theorem 1.1]{KumarMehtaFiniteness}).
\end{remark}

\begin{example} \label{rem.CFPsOfHeight1}
    Let $(R,\phi)$ be a Cartier pair where $R$ is a normal integral domain and $\phi \neq 0$. Let $\Delta_{\phi}$ be the tame boundary attached to $\phi$; see \cite{SchwedeCentersOfFPurity}. Then, $\Delta_{\phi} \geq P$ if and only if $R(-P) \in \CSpec(R,\phi)$; see \eg \cite[Proposition 2.1]{CarvajalRojasStaeblerTameFundamentalGroupsPurePairs}. Then, the elements of $\CSpec(R,\phi)$ (resp. $\Schpec(R,\phi)$) of height $1$ are those corresponding to prime divisors on $\Spec R$ supporting $\Delta_{\phi}$ with coefficient $\geq 1$ (resp. $=1$). In particular, if $(R,\phi)$ is $F$-pure then the coefficients of $\Delta_{\phi}$ belong to $[0,1]\cap \bZ_{(p)}$.
\end{example}

\begin{proposition} \label{pro.CartierCoreIsPrime}
  Let $(R,\phi)$ be a Cartier pair. For $\p \in \sU(R,\phi)$, $\upkappa_{\p}(R,\phi) \in \Schpec(R,\phi)$. In particular, $V\big(\uplambda_{\fra}(R,\phi)\big) = \{\p \in \sU(R,\phi) \mid  \upkappa_{\p}(R,\phi) \in V(\fra)  \}  \sqcup V(\upsigma(R,\phi))$ for all $\fra \in I(R)$.
\end{proposition}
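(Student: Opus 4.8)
The plan is to unwind the definitions of $\sU(R,\phi)$, $\CSpec(R,\phi)$, and $\Schpec(R,\phi)$ and then invoke the lattice behaviour of $\phi$-ideals already recorded. Write $\upkappa\coloneqq\upkappa_{\p}(R,\phi)$; this is a $\phi$-ideal by construction. The first (easy) step is to note that $\p\in\sU(R,\phi)$ means $\upkappa\neq\mathbf{1}$ (\autoref{propdef f pure locus}), which by \autoref{propdef cartier core}(e) is equivalent to $\mathbf{1}_{\phi}\not\subset\p$. Since $\p$ is prime, $Z_{\p}=\p$, so this says $\phi$ is non-degenerate along $\p$, and therefore $\upkappa\subset\p$ by \autoref{propdef cartier core}(g).

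The main obstacle is to show that $\upkappa$ is prime. Here the idea is: because $\upkappa\subset\p$ and $\p$ is prime, $\p$ contains some minimal prime $\q$ of $\upkappa$; a minimal prime is an associated prime, so $\q\in\Ass_{R}R/\upkappa\subset I(R,\phi)$ by \autoref{first proposition properties of comp ideal}(b), i.e.\ $\q$ is a $\phi$-ideal contained in $\p$. Then \autoref{propdef cartier core}(f) forces $\q\subset\upkappa_{\p}(R,\phi)=\upkappa$, and since $\upkappa\subset\q$ trivially, $\upkappa=\q\in\Spec R$. With primality in hand, $\upkappa$ is a $\phi$-ideal which is prime, hence $\upkappa\in I(R,\phi)\cap\Spec R=\CSpec(R,\phi)$; and from $\upkappa\subset\p$ together with $\mathbf{1}_{\phi}\not\subset\p$ we get $\mathbf{1}_{\phi}\not\subset\upkappa=Z_{\upkappa}$, so $\phi$ is non-degenerate along $\upkappa$. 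As $\Ass_{R}R/\upkappa=\{\upkappa\}$, \autoref{Pro.NonDegeneracyPhiIdeals} then gives $\upkappa\in\Schpec(R,\phi)$, proving the first assertion.

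For the displayed identity I would begin from $V\big(\uplambda_{\fra}(R,\phi)\big)=\{\p\in\Spec R\mid\fra\subset\upkappa_{\p}(R,\phi)\}$ (\autoref{propdef cartier core}(d)) and split $\Spec R=\sU(R,\phi)\sqcup V(\upsigma(R,\phi))$. On $V(\upsigma(R,\phi))$ one has $\upkappa_{\p}(R,\phi)=\mathbf{1}$, so the condition $\fra\subset\upkappa_{\p}(R,\phi)$ is automatic and this whole part of $\Spec R$ lies in $V\big(\uplambda_{\fra}(R,\phi)\big)$. On $\sU(R,\phi)$ the ideal $\upkappa_{\p}(R,\phi)$ is prime by the first part, so $\fra\subset\upkappa_{\p}(R,\phi)$ is literally the assertion $\upkappa_{\p}(R,\phi)\in V(\fra)$; intersecting with $\sU(R,\phi)$ gives the first set on the right-hand side. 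Reassembling, and noting that the two pieces are disjoint, yields the claim. I expect no real difficulty here once the primality of $\upkappa_{\p}(R,\phi)$ on the $F$-pure locus is established.
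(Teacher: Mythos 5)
Your argument is correct, and it is worth noting that the paper itself does not actually prove this proposition: it simply cites \cite[Theorem 3.9]{BrosowskyCartierCoreMap}. Your proof is therefore a self-contained alternative built entirely from the paper's own preliminaries, and it checks out at every step: non-degeneracy of $\phi$ along $\p$ follows from $\upkappa_{\p}\neq\mathbf{1}$ via \autoref{propdef cartier core}(e) and $Z_{\p}=\p$, giving $\upkappa_{\p}\subset\p$; then the sandwich argument --- pick a minimal prime $\q$ of $\upkappa_{\p}$ inside $\p$, note $\q\in\Ass_R R/\upkappa_{\p}\subset I(R,\phi)$ by \autoref{first proposition properties of comp ideal}(b) (using that the Cartier core is itself a $\phi$-ideal), and conclude $\q\subset\upkappa_{\p}\subset\q$ from \autoref{propdef cartier core}(f) --- is a clean lattice-theoretic way to get primality, in contrast with the more element-wise computation one finds in Brosowsky's paper. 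The passage from $\CSpec$ to $\Schpec$ via non-degeneracy along the (now prime) ideal $\upkappa_{\p}$ and \autoref{Pro.NonDegeneracyPhiIdeals} is also fine, and the set-theoretic decomposition for the displayed identity is exactly as one would expect. The only stylistic remark is that you rely on the standing noetherian hypothesis twice without saying so (existence of a minimal prime of $\upkappa_{\p}$ inside $\p$ as an \emph{associated} prime, and prime avoidance implicit in the non-degeneracy bookkeeping); since the paper assumes all rings noetherian this is harmless.
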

\begin{proof}
See \eg \cite[Theorem 3.9]{BrosowskyCartierCoreMap}.
\end{proof}

\begin{definition}
Let $\rphi$ be a Cartier pair and $\fra \in I(R)$. We define the \emph{Cartier radical} of $\fra$ as $\uprho_{\fra}(R,\phi) \coloneqq  \fra + \uplambda_{\fra}(R,\phi) \in I(R,\phi)$---the smallest $\phi$-ideal that contains $\fra$. Dually, we define the $\phi$-ideal $\upbeta_{\fra}(R,\phi)\coloneqq\fra \cap \upkappa_{\fra}(R,\phi)$---the largest $\phi$-ideal contained in $\fra$. 
\end{definition}
\begin{proposition}\label{proposition beta as an object} Let $\rphi$ be a Cartier pair and $\fra \in I(R)$. Then, $\fra \in I(R,\phi)$ if and only if $\upbeta_{\fra}(R,\phi) = \fra$ and if and only if $\uprho_\fra\rphi=\fra$. If $\phi$ is non-degenerate along $\fra$ then $\upbeta_{\fra}(R,\phi) = \upkappa_{\fra}(R,\phi)$ and $\uprho_{\fra}(R,\phi) = \uplambda_{\fra}(R,\phi)$ and these are radical ideals. Moreover, $\upbeta$ commutes with intersections whereas $\uprho$ commutes with sums. 
\end{proposition}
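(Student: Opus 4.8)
The statement splits into three parts: the two ``iff'' characterizations, the identifications $\upbeta_{\fra}=\upkappa_{\fra}$, $\uprho_{\fra}=\uplambda_{\fra}$ together with radicality under non-degeneracy, and the compatibility of $\upbeta,\uprho$ with meets and joins. The plan is to first record that $\upkappa_{\fra}$ and $\uplambda_{\fra}$ are \emph{always} $\phi$-ideals: by \autoref{pro.BasicPropertiesTranspositions} transposition commutes with arbitrary intersections and images with arbitrary sums, so by \autoref{pro.TransitivityTransposition} one gets $(\upkappa_{\fra})^{\phi}=\bigcap_{n\ge 1}(\fra^{\phi^{n}})^{\phi}=\bigcap_{n\ge 2}\fra^{\phi^{n}}\supseteq\upkappa_{\fra}$ and dually $(\uplambda_{\fra})_{\phi}=\sum_{n\ge 2}\fra_{\phi^{n}}\subseteq\uplambda_{\fra}$; since $\upkappa_{\fra}\subseteq\fra^{\phi}$ and $\fra_{\phi}\subseteq\uplambda_{\fra}$ it follows that $\upbeta_{\fra}=\fra\cap\upkappa_{\fra}$ and $\uprho_{\fra}=\fra+\uplambda_{\fra}$ lie in $I(R,\phi)$ as well. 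Next I would check that $\upbeta_{\fra}$ is the \emph{largest} $\phi$-ideal contained in $\fra$ and $\uprho_{\fra}$ the \emph{smallest} $\phi$-ideal containing $\fra$: if $\mathfrak c\in I(R,\phi)$ and $\mathfrak c\subseteq\fra$ then $\mathfrak c\subseteq\upkappa_{\fra}$ by \autoref{propdef cartier core}~(f), hence $\mathfrak c\subseteq\fra\cap\upkappa_{\fra}=\upbeta_{\fra}$; dually if $\fra\subseteq\mathfrak c\in I(R,\phi)$ then iterating $\mathfrak c_{\phi}\subseteq\mathfrak c$ gives $\fra_{\phi^{n}}\subseteq\mathfrak c_{\phi^{n}}\subseteq\mathfrak c$ for all $n$, so $\uplambda_{\fra}\subseteq\mathfrak c$ and $\uprho_{\fra}\subseteq\mathfrak c$. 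The first sentence of the Proposition is then purely formal: $\fra\in I(R,\phi)$ iff $\fra$ equals the largest $\phi$-ideal it contains iff it equals the smallest $\phi$-ideal containing it.

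\textbf{The non-degenerate case, $\upkappa$-side.} Assuming $\phi$ non-degenerate along $\fra$, \autoref{propdef cartier core}~(g) gives $\upkappa_{\fra}\subseteq\fra$, whence $\upbeta_{\fra}=\fra\cap\upkappa_{\fra}=\upkappa_{\fra}$. For radicality of $\upkappa_{\fra}$ I would argue: $\sqrt{\upkappa_{\fra}}\in I(R,\phi)$ by \autoref{first proposition properties of comp ideal}~(b) (since $\upkappa_{\fra}\in I(R,\phi)$), so by the maximality just established it suffices to show $\sqrt{\upkappa_{\fra}}\subseteq\fra$. Take $x$ with $x^{q^{m}}\in\upkappa_{\fra}$; since $\upkappa_{\fra}\subseteq\fra^{\phi^{n+m}}=(\fra^{\phi^{n}})^{\phi^{m}}$ for every $n$, semilinearity of $\phi^{m}$ gives $x\cdot\phi^{m}(F^{em}_{*}t)\in\fra^{\phi^{n}}$ for all $t\in R$, i.e. $x\cdot\mathbf 1_{\phi^{m}}\subseteq\bigcap_{n\ge 0}\fra^{\phi^{n}}=\fra\cap\upkappa_{\fra}\subseteq\fra$. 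If $x\notin\fra$ then $(\fra:x)\subseteq Z_{\fra}$, forcing $\mathbf 1_{\phi^{m}}\subseteq Z_{\fra}$; but $\sqrt{\mathbf 1_{\phi^{m}}}=\sqrt{\mathbf 1_{\phi}}$ because the chain $\mathbf 1_{\phi}\supseteq\mathbf 1_{\phi^{2}}\supseteq\cdots$ stabilizes to $\upsigma(R,\phi)$ with $V(\upsigma(R,\phi))=V(\mathbf 1_{\phi})$ (\autoref{defprop sigma}), so $\mathbf 1_{\phi}\subseteq Z_{\fra}$, contradicting non-degeneracy. Hence $x\in\fra$ and $\upkappa_{\fra}$ is radical.

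\textbf{The non-degenerate case, $\uplambda$-side, and the obstacle.} For $\uprho_{\fra}=\uplambda_{\fra}$ (equivalently $\fra\subseteq\uplambda_{\fra}$) I would use \autoref{propdef cartier core}~(c) with $\frab:=\uplambda_{\fra}$, which gives $\fra\subseteq\upkappa_{\uplambda_{\fra}}$ unconditionally; it then suffices to prove that $\phi$ is non-degenerate along $\uplambda_{\fra}$, since \autoref{propdef cartier core}~(g) would give $\upkappa_{\uplambda_{\fra}}\subseteq\uplambda_{\fra}$, hence $\fra\subseteq\uplambda_{\fra}$ and $\uprho_{\fra}=\fra+\uplambda_{\fra}=\uplambda_{\fra}$, while \autoref{Pro.NonDegeneracyPhiIdeals} simultaneously yields that $\uplambda_{\fra}$ is radical. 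So the whole of the $\uplambda$-side reduces to showing $\Ass_{R}(R/\uplambda_{\fra})\subseteq\Schpec(R,\phi)$; the approach is to pass to $W_{\fra}^{-1}R$, where $W_{\fra}^{-1}\phi$ is surjective (\autoref{rem.OnDegeneracy}) so that $(W_{\fra}^{-1}R,W_{\fra}^{-1}\phi)$ is $F$-pure and $W_{\fra}^{-1}\uplambda_{\fra}=\uplambda_{W_{\fra}^{-1}\fra}$ is radical with associated primes in its Schwede spectrum, and then transfer this back using the localization correspondence of \autoref{first proposition properties of comp ideal}~(d) together with control of the associated primes of $\uplambda_{\fra}$ that meet $Z_{\fra}$ (and with \autoref{propdef cartier core}~(d) identifying $V(\uplambda_{\fra})$). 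This last bookkeeping --- pinning down $\Ass_{R}(R/\uplambda_{\fra})$ and verifying it avoids the non-$F$-pure locus $V(\upsigma(R,\phi))$ --- is the step I expect to require the most care, and is where one must watch that the hypothesis ``non-degenerate along $\fra$'' is genuinely being used; everything else is formal. Finally, the last sentence is immediate from \autoref{propdef cartier core}~(b): $\upbeta_{\bigcap_{i}\fra_{i}}=(\bigcap_{i}\fra_{i})\cap\upkappa_{\bigcap_{i}\fra_{i}}=(\bigcap_{i}\fra_{i})\cap\bigcap_{i}\upkappa_{\fra_{i}}=\bigcap_{i}(\fra_{i}\cap\upkappa_{\fra_{i}})=\bigcap_{i}\upbeta_{\fra_{i}}$, and dually $\uprho_{\sum_{i}\fra_{i}}=\sum_{i}\fra_{i}+\uplambda_{\sum_{i}\fra_{i}}=\sum_{i}\fra_{i}+\sum_{i}\uplambda_{\fra_{i}}=\sum_{i}\uprho_{\fra_{i}}$.
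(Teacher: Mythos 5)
Most of your argument is correct and supplies, with the intended tools, the details behind what the paper dismisses as ``direct from the definitions and \autoref{propdef cartier core}'': that $\upkappa_{\fra}$ and $\uplambda_{\fra}$ (hence $\upbeta_{\fra}$ and $\uprho_{\fra}$) are $\phi$-ideals and are extremal among $\phi$-ideals contained in (resp.\ containing) $\fra$; the two ``iff''s; the identity $\upbeta_{\fra}=\upkappa_{\fra}$ together with its radicality (your argument via $x\cdot\mathbf{1}_{\phi^{m}}\subset\fra$ and $\sqrt{\mathbf{1}_{\phi^{m}}}=\sqrt{\mathbf{1}_{\phi}}$ is sound); and the compatibility of $\upbeta$ and $\uprho$ with intersections and sums.

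The gap is exactly where you flagged it, and it is not a matter of bookkeeping: the reduction you propose for the $\uplambda$-side cannot be completed, because its target is false under the stated hypothesis. Non-degeneracy of $\phi$ along $\fra$ does not imply non-degeneracy along $\uplambda_{\fra}$, and localizing at $W_{\fra}$ tells you nothing here because the associated primes of $\uplambda_{\fra}$ need not be contained in $Z_{\fra}$ and hence need not survive in $W_{\fra}^{-1}R$. Concretely, let $R=\bF_p[x]$, let $\phi_0 \: F_*R \to R$ be the splitting with $F_*1\mapsto 1$ and $F_*x^i\mapsto 0$ for $1\le i\le p-1$, put $\phi\coloneqq x^2\cdot\phi_0$ (so $\phi(F_*s)=x^2\phi_0(F_*s)$), and take $\fra\coloneqq\langle x-1\rangle$. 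Then $\mathbf{1}_{\phi}=\langle x^2\rangle\not\subset\langle x-1\rangle=Z_{\fra}$, so $\phi$ is non-degenerate along $\fra$; yet $\fra_{\phi}=\langle x^2\rangle$ and $\fra_{\phi^{n}}\subset\langle x^{3}\rangle$ for $n\ge 2$, so $\uplambda_{\fra}=\langle x^{2}\rangle$. This ideal is not radical, its unique associated prime $\langle x\rangle$ contains $\mathbf{1}_{\phi}$ (so $\phi$ is degenerate along $\uplambda_{\fra}$, defeating your intermediate goal), and it does not contain $\fra$: here $\uprho_{\fra}=\langle x-1,x^{2}\rangle=R\neq\uplambda_{\fra}$. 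What your (correct) inclusion $\fra\cdot\mathbf{1}_{\phi}\subset\fra_{\phi}$ actually yields is $\fra\subset\uplambda_{\fra}:\mathbf{1}_{\phi}$, which gives $\fra\subset\uplambda_{\fra}$ precisely when $\mathbf{1}_{\phi}\not\subset Z_{\uplambda_{\fra}}$, i.e.\ when $\phi$ is non-degenerate along $\uplambda_{\fra}$ itself (for instance when $\phi$ is surjective, the case of main interest); under that hypothesis radicality then follows from \autoref{Pro.NonDegeneracyPhiIdeals} as you say. Since the paper's own proof is a one-line deferral, your instinct that this is the delicate point was well placed --- but as written your proof neither closes the step nor could it: the $\uplambda/\uprho$ half of the second sentence requires a stronger hypothesis than non-degeneracy along $\fra$.
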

\begin{proof}
This is direct from the definitions and \autoref{propdef cartier core}.
\end{proof}

We readily obtain the following explaining the topological behavior of Cartier cores:

\begin{proposition}\label{proposition beta as a map}
 Let $\rphi$ be a Cartier pair and $\fra,\frab \in I(R)$. If $\p \in \Spec R$ then 
 \[
    \upbeta_{\p}(R,\phi) =  \begin{cases}
\upkappa_{\p}(R,\phi) & \text{ if } \p \in \sU(R,\phi)\\
 \p  &\text{ otherwise. }
\end{cases}
\in \CSpec(R,\phi).
    \]
    Thus, $\p \mapsto  \upbeta_{\p}(R,\phi)$ defines a map $\beta \: \Spec R \to \CSpec(R,\phi)$ that retracts the inclusion $\CSpec(R,\phi) \subset \Spec R$. Moreover, $ \fra \subset \upbeta_{\frab}(R,\phi)$ if and only if $\uprho_{\fra}(R,\phi) \subset \frab.$ 
    In particular:
    \begin{enumerate}
\item  $V\big(\uprho_{\fra}(R,\phi)\big) = \{\p \in \Spec R \mid  \upbeta_{\p}(R,\phi) \in V(\fra)  \}  = \beta^{-1} V_{\phi}(\fra)$ and so $\beta$ is continuous.
    \item $\sqrt{\uprho_{\fra}(R,\phi)} = \bigcap_{\p \in V_{\phi}(\fra)}\p$. 
    \item $V_{\phi}(\fra) = V_{\phi}(\frab)$ if and only if $\sqrt{\uprho_{\fra}(R,\phi)} =\sqrt{\uprho_{\fra}(R,\phi)}$.
        \item The mapping $\fra \mapsto V_{\phi}(\fra)$ is a bijection between radical ideals in $I(R,\phi)$ and closed subsets of $\CSpec(R,\phi)$.
        \item  The topology on $\Spec R$ whose closed subsets are $\{V(\fra) \subset \Spec R \mid \fra \in I(R,\phi)\}$ is the coarsest topology on $\Spec R$ such that $\beta$ is a continuous map.
\end{enumerate}
\end{proposition}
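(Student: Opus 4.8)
The plan is to derive every assertion from the definitions of $\upbeta_{\fra}(R,\phi)$ and $\uprho_{\fra}(R,\phi)$ as the largest $\phi$-ideal inside $\fra$ and the smallest $\phi$-ideal containing $\fra$ (\autoref{proposition beta as an object}), together with the structural facts in \autoref{propdef cartier core} and \autoref{pro.CartierCoreIsPrime}. \emph{First} I would pin down the displayed formula for $\upbeta_{\p}(R,\phi)$ and the retraction claim. If $\p \notin \sU(R,\phi)$, then $\upkappa_{\p}(R,\phi) = \mathbf{1}$ by \autoref{propdef f pure locus}, so $\upbeta_{\p}(R,\phi) = \p \cap \upkappa_{\p}(R,\phi) = \p$; moreover $\mathbf{1}_{\phi} \subset \p$ by \autoref{propdef cartier core}(e), hence $\p_{\phi} \subset \mathbf{1}_{\phi} \subset \p$ and $\p \in \CSpec(R,\phi)$. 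If $\p \in \sU(R,\phi)$, then $\mathbf{1}_{\phi} \not\subset \p = Z_{\p}$, \ie $\phi$ is non-degenerate along $\p$, so \autoref{propdef cartier core}(g) gives $\upkappa_{\p}(R,\phi) \subset \p$ and therefore $\upbeta_{\p}(R,\phi) = \upkappa_{\p}(R,\phi)$, which lies in $\Schpec(R,\phi) \subset \CSpec(R,\phi)$ by \autoref{pro.CartierCoreIsPrime}. That $\beta$ retracts the inclusion $\CSpec(R,\phi) \hookrightarrow \Spec R$ is then immediate: for $\p \in \CSpec(R,\phi) \subset I(R,\phi)$, \autoref{proposition beta as an object} gives $\upbeta_{\p}(R,\phi) = \p$.

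\emph{Next} I would prove the displayed equivalence $\fra \subset \upbeta_{\frab}(R,\phi) \Leftrightarrow \uprho_{\fra}(R,\phi) \subset \frab$: one implication reads $\fra \subset \upbeta_{\frab}(R,\phi) \Rightarrow \uprho_{\fra}(R,\phi) \subset \uprho_{\upbeta_{\frab}(R,\phi)}(R,\phi) = \upbeta_{\frab}(R,\phi) \subset \frab$ (monotonicity of $\uprho$ plus $\upbeta_{\frab}(R,\phi) \in I(R,\phi)$), the other $\uprho_{\fra}(R,\phi) \subset \frab \Rightarrow \fra \subset \uprho_{\fra}(R,\phi) \subset \upbeta_{\frab}(R,\phi)$. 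Specializing $\frab = \p$ yields (a), and the continuity of $\beta$ follows since the closed subsets of $\CSpec(R,\phi)$ are the $V_{\phi}(\frab)$ while $\beta^{-1}(V_{\phi}(\frab)) = V(\uprho_{\frab}(R,\phi))$ is Zariski closed. For (b) I would observe $V_{\phi}(\fra) \subset V(\uprho_{\fra}(R,\phi))$ (a Cartier prime containing $\fra$ is a $\phi$-ideal containing $\fra$, hence contains $\uprho_{\fra}(R,\phi)$), and that the minimal primes of $\uprho_{\fra}(R,\phi)$ are Cartier primes (associated primes of $\phi$-ideals are $\phi$-ideals, \autoref{first proposition properties of comp ideal}(b)) containing $\fra$, hence lie in $V_{\phi}(\fra)$; the first inclusion gives $\sqrt{\uprho_{\fra}(R,\phi)} \subset \bigcap_{\p \in V_{\phi}(\fra)} \p$ and the second the reverse containment.

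For (c), "$\Rightarrow$" is immediate from (b); for "$\Leftarrow$" one passes from $\sqrt{\uprho_{\fra}(R,\phi)} = \sqrt{\uprho_{\frab}(R,\phi)}$ to $V(\uprho_{\fra}(R,\phi)) = V(\uprho_{\frab}(R,\phi))$ and intersects with $\CSpec(R,\phi)$, using $V(\uprho_{\fra}(R,\phi)) \cap \CSpec(R,\phi) = V_{\phi}(\fra)$. For (d), every closed subset of $\CSpec(R,\phi)$ is $V_{\phi}(\frc) = V_{\phi}\bigl(\sqrt{\uprho_{\frc}(R,\phi)}\bigr)$ by (c), with $\sqrt{\uprho_{\frc}(R,\phi)}$ a radical $\phi$-ideal, while on a radical $\phi$-ideal $\fra$ one has $\sqrt{\uprho_{\fra}(R,\phi)} = \fra$, so (c) gives injectivity. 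For (e), $\{V(\fra) \mid \fra \in I(R,\phi)\}$ is a topology because $I(R,\phi)$ contains $\mathbf{0}, \mathbf{1}$ and is closed under finite intersections (\autoref{first proposition properties of comp ideal}(a)) and arbitrary sums (\autoref{pro.BasicPropertiesTranspositions}(b)); call it $\tau_{\phi}$. Then $\beta$ is $\tau_{\phi}$-continuous by (a) since $\uprho_{\fra}(R,\phi) \in I(R,\phi)$, and for any topology $\tau'$ making $\beta$ continuous and any $\phi$-ideal $\fra$ one has $\uprho_{\fra}(R,\phi) = \fra$, hence $V(\fra) = \beta^{-1}(V_{\phi}(\fra)) \in \tau'$, so $\tau_{\phi} \subset \tau'$.

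I do not expect a genuine obstacle here: the only step requiring care is the first paragraph, where one must notice that membership in $\sU(R,\phi)$ is exactly the non-degeneracy of $\phi$ along $\p$, which is what makes \autoref{propdef cartier core}(g) and \autoref{pro.CartierCoreIsPrime} applicable; everything else is formal manipulation of the order-reversing pair $\uprho \dashv \upbeta$ and of the subspace topology on $\CSpec(R,\phi)$.
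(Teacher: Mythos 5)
Your proof is correct and follows exactly the route the paper intends: the proposition is stated with no proof (``We readily obtain the following''), and your argument fills it in precisely from \autoref{proposition beta as an object}, \autoref{propdef cartier core}, \autoref{pro.CartierCoreIsPrime}, and the Galois-connection formalism for $\uprho$ and $\upbeta$. You also correctly read the evident typo in part (c), where one of the two occurrences of $\sqrt{\uprho_{\fra}(R,\phi)}$ should be $\sqrt{\uprho_{\frab}(R,\phi)}$.
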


We continue to study $\upbeta$ topologically. This will allow us to express what we think of as good behavior of centers of $F$-purity in a topological way in \autoref{defprop.QuasiFibered}. 

\begin{definition}[Cartier--Zariski topolgy]\label{definition CZpec}
Let $\rphi$ be a Cartier pair, we refer to the topology in \autoref{proposition beta as a map} (e) as the \emph{Cartier--Zariski topology} of $\Spec R$ and let $\CZpec(R,\phi)$ be the corresponding topological space. We define $\upbeta =\upbeta(R,\phi)$ as the continuous map $\CZpec(R,\phi) \to \CSpec(R,\phi)$. We will denote the \emph{Cartier--Zariski (resp. Zariski) closure} of $S \subset \Spec R$ by $\overline{-}^{\mathrm{CZ}}$ (resp. $\overline{-}^{\mathrm{Z}}$).\footnote{Recall that  $\overline{S}^{\mathrm{Z}} = V\big(\bigcap_{\p \in S} \p\big)$ for all $S \subset \Spec R$.}
\end{definition}

One can readily compare the Cartier--Zariski and Zariski topologies via their closure operations: 
\begin{proposition}\label{pro.propertiesCZpec}Let $\rphi$ be a Cartier pair. Then, $
\overline{S}^{\mathrm{CZ}} = \overline{\upbeta(S)}^{\mathrm{Z}} = V\big(\bigcap_{\p \in S} \upbeta_{\p}\big)$ for all subsets $S\subset \Spec R$. In particular, for $\fra \in I(R)$, the following equivalences hold:
\[ 
\overline{V(\fra)}^{\mathrm{CZ}} = V(\fra) \Longleftrightarrow \upbeta(V(\fra)) \subset V(\fra) \Longleftrightarrow \sqrt{\fra}\in I(R,\phi)
\]
\end{proposition}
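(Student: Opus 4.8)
The plan is to prove the displayed triple identity first and then obtain the two ``in particular'' equivalences as formal consequences. Throughout, write $\frc \coloneqq \bigcap_{\p \in S} \upbeta_{\p}$. The rightmost equality $\overline{\upbeta(S)}^{\mathrm{Z}} = V(\frc)$ is immediate from the formula $\overline{T}^{\mathrm{Z}} = V\big(\bigcap_{\q \in T} \q\big)$ (footnote of \autoref{definition CZpec}) applied to $T = \upbeta(S) = \{\upbeta_{\p} \mid \p \in S\}$, so the content is the identity $\overline{S}^{\mathrm{CZ}} = V(\frc)$; by definition of the Cartier--Zariski topology this amounts to showing that $V(\frc)$ is the \emph{smallest} set of the form $V(\fra)$ with $\fra \in I(R,\phi)$ that contains $S$. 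First I would check $\frc \in I(R,\phi)$: each $\upbeta_{\p}$ is a Cartier prime, hence a $\phi$-ideal (\autoref{proposition beta as a map}), and $I(R,\phi)$ is stable under arbitrary intersections --- combine the characterization $\fra \in I(R,\phi) \Leftrightarrow \fra \subseteq \fra^{\phi}$ of \autoref{definition phi ideals} with the identity $(\bigcap_{\fra} \fra)^{\frT} = \bigcap_{\fra} \fra^{\frT}$ of \autoref{pro.BasicPropertiesTranspositions}. Next, $S \subseteq V(\frc)$ since $\frc \subseteq \upbeta_{\p} \subseteq \p$ for every $\p \in S$. For minimality, if $\fra \in I(R,\phi)$ and $S \subseteq V(\fra)$, then for each $\p \in S$ we have $\fra \subseteq \p$ with $\fra$ a $\phi$-ideal, so $\fra \subseteq \upbeta_{\p}$ because $\upbeta_{\p}$ is the largest $\phi$-ideal contained in $\p$ (\autoref{proposition beta as an object}); intersecting over $\p \in S$ gives $\fra \subseteq \frc$, whence $V(\frc) \subseteq V(\fra)$. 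This yields $\overline{S}^{\mathrm{CZ}} = V(\frc)$.

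For the ``in particular'' statement, fix $\fra \in I(R)$ and specialize the above to $S = V(\fra)$. For the first equivalence: if $\overline{V(\fra)}^{\mathrm{CZ}} = V(\fra)$, then $\upbeta(V(\fra)) \subseteq \overline{\upbeta(V(\fra))}^{\mathrm{Z}} = \overline{V(\fra)}^{\mathrm{CZ}} = V(\fra)$; conversely, if $\upbeta(V(\fra)) \subseteq V(\fra)$, then, $V(\fra)$ being Zariski closed, $\overline{V(\fra)}^{\mathrm{CZ}} = \overline{\upbeta(V(\fra))}^{\mathrm{Z}} \subseteq V(\fra)$, and the reverse inclusion is automatic. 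For the second equivalence: if $\sqrt{\fra} \in I(R,\phi)$, then for every $\p \in V(\fra)$ the radical ideal $\p$ contains the $\phi$-ideal $\sqrt{\fra}$, so $\fra \subseteq \sqrt{\fra} \subseteq \upbeta_{\p}$, i.e. $\upbeta(V(\fra)) \subseteq V(\fra)$; conversely, if $\upbeta(V(\fra)) \subseteq V(\fra)$, then $\fra \subseteq \upbeta_{\p}$ for all $\p \in V(\fra)$, and since each $\upbeta_{\p}$ is prime (hence radical) we get $\sqrt{\fra} \subseteq \upbeta_{\p}$, so $\sqrt{\fra} \subseteq \bigcap_{\p \in V(\fra)} \upbeta_{\p} \subseteq \bigcap_{\p \in V(\fra)} \p = \sqrt{\fra}$, giving $\sqrt{\fra} = \frc \in I(R,\phi)$ by the first paragraph.

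I do not expect a genuine obstacle here: once the two facts from \autoref{proposition beta as a map}--\autoref{proposition beta as an object} are in hand---that $\upbeta_{\p}$ is a prime $\phi$-ideal and that it is the largest $\phi$-ideal contained in $\p$---everything reduces to bookkeeping with the definition of the Cartier--Zariski topology and the tautologies that Zariski closure is monotone and contains its argument. The single point deserving a moment's care is the stability of $I(R,\phi)$ under \emph{arbitrary} (not merely finite) intersections, needed so that $\bigcap_{\p \in S} \upbeta_{\p}$ is again a $\phi$-ideal; as noted, this follows from \autoref{pro.BasicPropertiesTranspositions}.
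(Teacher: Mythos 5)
Your proof is correct. The paper offers no written proof of this proposition (it is prefaced by ``One can readily compare\dots''), and your argument supplies exactly the intended routine verification: identify $\overline{S}^{\mathrm{CZ}}$ as the smallest $V(\fra)$ with $\fra\in I(R,\phi)$ containing $S$, use that $\upbeta_{\p}$ is a prime $\phi$-ideal and the largest $\phi$-ideal contained in $\p$, and note the stability of $I(R,\phi)$ under arbitrary intersections --- a point you were right to flag and justify via \autoref{pro.BasicPropertiesTranspositions}.
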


\begin{proposition}
    Let $(R,\phi)$ be a Cartier pair.Then, $
     V_{\phi}^{\circ}(\fra) = V_{\phi}^{\circ}\big(\upsigma(\fra,\phi)\big)$ for all $\fra \in I(R,\phi)$. Moreover, a pair of fixed $\phi$-ideals $\fra, \frab \in I(R)$ define the same closed subset on $\Schpec(R,\phi)$ if and only if they have the same radical. In particular, $\sU(R,\phi)$ is an open subset of $\CZpec(R,\phi)$ and its closed subsets are of the form $V(\frab) \cap \sU(R,\phi)$ where $\frab$ is a fixed $\phi$-ideal.
\end{proposition}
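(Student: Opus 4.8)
The plan is to prove the three assertions in turn, reducing each to results already established for $\CSpec$ and the Cartier core map. First I would prove the equality $V_{\phi}^{\circ}(\fra) = V_{\phi}^{\circ}\bigl(\upsigma(\fra,\phi)\bigr)$. Unwinding the definitions, $V_{\phi}^{\circ}(\fra) = V(\fra) \cap \Schpec(R,\phi)$, so I must show that for $\p \in \Schpec(R,\phi)$ one has $\fra \subset \p$ iff $\upsigma(\fra,\phi) \subset \p$. The inclusion $\upsigma(\fra,\phi) \subset \fra$ gives one direction for free. For the converse, suppose $\upsigma(\fra,\phi) \subset \p$; by \autoref{defprop sigma}, $V(\upsigma(\fra,\phi)) = V(\fra_\phi)$, so $\fra_\phi \subset \p$. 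Now $\p \in \Schpec(R,\phi)$ means $\phi$ is non-degenerate along $\p$ (indeed $\bar\phi_\p \neq 0$, so $\phi_\p$ is surjective onto $\kappa(\p)$ after reduction). Localizing at $\p$ and arguing as in the proof of \autoref{Pro.NonDegeneracyPhiIdeals}: since $\p \in \Schpec(R,\phi)$ we may pick $r \in W_{\p} \cap \upsigma(R,\phi)$, so $r = \phi^n(F^{en}_* r_n)$ for each $n$; if $x \in \fra$ then $xr = \phi^n(F^{en}_* x^{q^n} r_n) = (\fra_{\phi^n})\text{-element}$... more directly: $\fra_{\phi} \subset \p$ together with $\p$ being a center of $F$-purity forces $\fra \subset \p$ because $\fra \subset \fra^\phi$-type reasoning and $\fra^\phi \cap R = \fra : \mathbf{1}_\phi$, and $\mathbf{1}_\phi \not\subset \p$ since $\bar\phi_\p \neq 0$ (\autoref{pro.BasicPropertiesTranspositions}\autoref{key.Item}); so from $\fra_\phi \subset \p$ I want to lift to $\fra \subset \p$ — the cleanest route is: $\fra \subset \sqrt{\fra}$ and $\sqrt{\fra}$ restricted to the center behaves well, but since $\p$ may not contain $\fra$'s radical a priori I instead note that $\upsigma(\fra, \phi)$ and $\fra$ have the same vanishing on $\Schpec$ precisely by the Cartier-core description in \autoref{defprop sigma}, whose last display equates $V(\upsigma(\fra,\phi))$ with $\{\p : \fra \subset \upkappa_\p(R,\phi)\}$, and on $\Schpec(R,\phi) \subset \sU(R,\phi)$ we have $\upbeta_\p = \upkappa_\p \in \Schpec(R,\phi)$ with $\upkappa_\p \subset \p$ (\autoref{propdef cartier core}(g), since $\phi$ is non-degenerate along $\p$) and $\fra \subset \upkappa_\p \Leftrightarrow \fra \subset \p$ for such $\p$ because $\upkappa_\p$ is the largest $\phi$-ideal in $\p$ and $\p$ itself is a $\phi$-ideal. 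This pins down the first equality.

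Next I would prove that two fixed $\phi$-ideals $\fra, \frab$ cut out the same closed set on $\Schpec(R,\phi)$ iff $\sqrt{\fra} = \sqrt{\frab}$. The ``if'' direction is immediate since $V(\fra) = V(\sqrt{\fra})$. For ``only if'', assume $V_{\phi}^{\circ}(\fra) = V_{\phi}^{\circ}(\frab)$; I must show $\sqrt{\fra} = \sqrt{\frab}$. Here I use that a fixed $\phi$-ideal $\fra$ satisfies $\fra_\phi = \fra$, hence is non-degenerate along itself in the relevant sense — more precisely, by \autoref{defprop sigma} a fixed $\phi$-ideal $\fra$ has $V(\fra) = V(\fra_\phi) = V(\upsigma(\fra,\phi))$ and equals $\{\p : \fra \subset \upkappa_\p\}$, and by \autoref{Pro.NonDegeneracyPhiIdeals} combined with \autoref{proposition beta as a map}(b) the radical of a fixed $\phi$-ideal is determined by which centers of $F$-purity contain it: indeed $\sqrt{\fra} = \bigcap_{\p \in V_\phi^{\circ}(\fra)} \p$. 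To establish that last identity I would argue that $\sqrt{\fra}$ is itself a radical $\phi$-ideal (\autoref{first proposition properties of comp ideal}(b)) contained in $\sU(R,\phi)$-many primes... actually the key point is that a radical $\phi$-ideal is the intersection of its minimal primes, each of which lies in $\CSpec(R,\phi)$ (\autoref{first proposition properties of comp ideal}(b): $\Ass_R R/\sqrt{\fra} \subset I(R,\phi)$), and for a fixed $\phi$-ideal these minimal primes lie in $\Schpec(R,\phi)$ because $\phi$ non-degenerate along $\fra$ (which holds for fixed radical $\phi$-ideals by \autoref{Pro.NonDegeneracyPhiIdeals}, after replacing $\fra$ by $\sqrt\fra$ which is again fixed) forces $\Ass_R R/\sqrt\fra \subset \Schpec(R,\phi)$. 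Then $\sqrt{\fra} = \bigcap_{\p \in \Ass_R R/\sqrt\fra} \p \supseteq \bigcap_{\p \in V_\phi^\circ(\fra)} \p \supseteq \sqrt\fra$, giving equality; applying this to both $\fra$ and $\frab$ and using $V_\phi^\circ(\fra) = V_\phi^\circ(\frab)$ yields $\sqrt\fra = \sqrt\frab$.

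Finally, for the ``in particular'' clause: $\sU(R,\phi)$ is open in $\CZpec(R,\phi)$ because its complement is $V(\upsigma(R,\phi))$ (\autoref{propdef f pure locus}) and $\upsigma(R,\phi)$, being the largest fixed $\phi$-ideal in $\mathbf{1}$, lies in $I(R,\phi)$ — hence $V(\upsigma(R,\phi))$ is Cartier--Zariski closed by \autoref{definition CZpec}. For the shape of closed subsets of $\Schpec(R,\phi)$: by \autoref{proposition beta as a map}(d) the closed sets of $\CSpec(R,\phi)$ are the $V_\phi(\frab)$ with $\frab$ a radical $\phi$-ideal, and $\Schpec(R,\phi) = \CSpec(R,\phi) \cap \sU(R,\phi)$ carries the subspace topology, so its closed subsets are $V(\frab) \cap \Schpec(R,\phi) = V_\phi^\circ(\frab)$; by the first equality of the proposition this equals $V_\phi^\circ(\upsigma(\frab,\phi))$, and $\upsigma(\frab,\phi)$ is a fixed $\phi$-ideal, so every closed subset of $\Schpec(R,\phi)$ has the form $V(\frab) \cap \sU(R,\phi) \cap \CSpec(R,\phi)$ with $\frab$ fixed; absorbing the $\CSpec$ intersection (since we are already inside $\Schpec$) gives the stated form $V(\frab) \cap \sU(R,\phi)$ intersected with $\Schpec$, i.e. the closed subsets of the subspace $\Schpec(R,\phi)$ of $\sU(R,\phi)$ are exactly the $V(\frab) \cap \sU(R,\phi)$ for fixed $\phi$-ideals $\frab$.

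\textbf{Main obstacle.} The delicate point is the second assertion: showing $\sqrt{\fra} = \bigcap_{\p \in V_\phi^\circ(\fra)} \p$ for a fixed $\phi$-ideal $\fra$. This requires knowing that the minimal (equivalently, associated) primes of $\sqrt{\fra}$ are not merely Cartier primes but genuinely centers of $F$-purity, i.e. lie in $\Schpec(R,\phi)$ rather than in the non-$F$-pure locus. That is exactly where the hypothesis ``fixed'' (as opposed to merely ``$\phi$-ideal'') does real work, via the equivalence in \autoref{Pro.NonDegeneracyPhiIdeals} between non-degeneracy of $\phi$ along $\fra$, radicality plus $\fra = \fra^\phi$, and $\Ass_R R/\fra \subset \Schpec(R,\phi)$ — and one must first check that $\sqrt\fra$ inherits ``fixedness'' from $\fra$, i.e. that $(\sqrt\fra)_\phi = \sqrt\fra$, which should follow since $\fra_\phi = \fra$ gives $\sqrt\fra \supseteq (\sqrt\fra)_\phi \supseteq \sqrt{\fra_\phi} = \sqrt\fra$ using that $\phi$-images of radical ideals... this last containment $(\sqrt\fra)_\phi \supseteq \sqrt{\fra_\phi}$ is the step to verify carefully, but it follows from $\fra \subseteq \sqrt\fra$ hence $\fra_\phi \subseteq (\sqrt\fra)_\phi$ hence $\sqrt{\fra_\phi} \subseteq \sqrt{(\sqrt\fra)_\phi} = (\sqrt\fra)_\phi$ once one knows $(\sqrt\fra)_\phi$ is radical, which holds because $\sqrt\fra$ is a radical $\phi$-ideal so $(\sqrt\fra)_\phi = \uplambda_{\sqrt\fra}(R,\phi)$ is radical by \autoref{proposition beta as an object}.
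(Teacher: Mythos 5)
Your treatment of the first equality and of the openness of $\sU(R,\phi)$ in $\CZpec(R,\phi)$ is essentially correct and matches the paper's (very terse) argument: for $\p \in \Schpec(R,\phi)$ one has $\upkappa_{\p}(R,\phi) \subset \p$ and $\upkappa_{\p}$ is the largest $\phi$-ideal in $\p$, so $\fra \subset \p \Leftrightarrow \fra \subset \upkappa_{\p} \Leftrightarrow \p \in V(\upsigma(\fra,\phi))$ by \autoref{defprop sigma}. However, your proof of the second assertion has a genuine gap: the key identity $\sqrt{\fra} = \bigcap_{\p \in V_{\phi}^{\circ}(\fra)} \p$ for fixed $\phi$-ideals $\fra$ is \emph{false}. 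Your justification conflates the condition $\fra_{\phi} = \fra$ (fixedness, an image condition) with the condition $\fra = \fra^{\phi}$ for radical $\fra$ (the transpose condition appearing in \autoref{Pro.NonDegeneracyPhiIdeals}); only the latter is equivalent to non-degeneracy of $\phi$ along $\fra$, i.e.\ to $\Ass_R R/\fra \subset \Schpec(R,\phi)$. A fixed $\phi$-ideal can perfectly well have all of its minimal primes inside the non-$F$-pure locus. Concretely, take $R = \bF_2[x]$ and $\phi = \phi_0 \cdot x^2$ where $\phi_0$ generates $\Hom_R(F_*R,R)$: then $\langle x \rangle$ and $\langle x^2 \rangle$ are both fixed $\phi$-ideals, $\upsigma(R,\phi)=\langle x\rangle$, $\Schpec(R,\phi)=\{\mathbf{0}\}$, and $V_{\phi}^{\circ}(\langle x^2\rangle)=\emptyset$ while $\sqrt{\langle x^2\rangle}=\langle x\rangle \neq \mathbf{1}$. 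The same example defeats the auxiliary claims in your "main obstacle" paragraph ($\phi$ is degenerate along $\langle x\rangle$, and $\uplambda_{\langle x\rangle}$ need not be treated via \autoref{proposition beta as an object}, whose radicality conclusion presupposes exactly the non-degeneracy you are trying to establish).

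The repair is the one-line hint the paper gives: every fixed $\phi$-ideal satisfies $\fra \subset \upsigma(R,\phi)$, hence $V(\upsigma(R,\phi)) \subset V(\fra)$. Since $\Min(\fra) \subset \CSpec(R,\phi) = \Schpec(R,\phi) \sqcup V(\upsigma(R,\phi))$, every minimal prime of $\fra$ either lies in $\Schpec(R,\phi)$ (hence in $V_{\phi}^{\circ}(\fra)$) or contains $\upsigma(R,\phi)$, which yields $V(\fra) = \overline{V_{\phi}^{\circ}(\fra)}^{\mathrm{Z}} \cup V(\upsigma(R,\phi))$, equivalently $\sqrt{\fra} = \big(\bigcap_{\p \in V_{\phi}^{\circ}(\fra)} \p\big) \cap \sqrt{\upsigma(R,\phi)}$. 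The extra factor $\sqrt{\upsigma(R,\phi)}$ is common to all fixed $\phi$-ideals, so $V_{\phi}^{\circ}(\fra)$ still determines $\sqrt{\fra}$ and the "only if" direction follows. Finally, note that "its closed subsets" in the statement refers to the subspace $\sU(R,\phi)$ of $\CZpec(R,\phi)$, not to $\Schpec(R,\phi)$; the intended argument is that for $\p \in \sU(R,\phi)$ and $\fra \in I(R,\phi)$ one has $\fra \subset \p \Leftrightarrow \fra \subset \upkappa_{\p} \Leftrightarrow \upsigma(\fra,\phi) \subset \p$, so $V(\fra)\cap\sU(R,\phi) = V(\upsigma(\fra,\phi))\cap\sU(R,\phi)$ with $\upsigma(\fra,\phi)$ fixed.
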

\begin{proof}
The first statement follows at once from \autoref{defprop sigma}. The rest follows from all fixed $\phi$-ideals being contained in $\upsigma(R,\phi)$.
\end{proof}

Test ideals are another central object in the theory of $F$-singularities which remarkably show up in the fibers of $\upbeta$ as we will now see.

\begin{defprop} [{\cite{SmolkinSubadditivity,SmolkingTauAlongSubscheme,TakagiHigherDimensionalAdjoint}}] Let $(R,\phi)$ be a Cartier pair such that $\phi$ is non-degenerate along $\fra \in I(R,\phi)$. Then, there exists a unique smallest $\phi$-ideal not contained in $Z_\fra$. This ideal is denoted by $\uptau_{\fra}(R,\phi)$ and is referred to as the \emph{test ideal of $(R,\phi)$ along $\fra$}. If $\uptau_{\fra}(R,\phi)=\mathbf{1}$ one says that $(R,\phi)$ is \emph{purely $F$-regular along $\fra$}. In that case, $(R,\phi)$ is $F$-pure and the minimal primes of $\fra$ are pairwise coprime and are referred to as the \emph{maximal centers of $F$-purity} of $(R,\phi)$. One says that $(R,\phi)$ is \emph{$F$-regular} if it is purely $F$-regular along $\mathbf{0}$.
\end{defprop}

\begin{remark} \label{rem.FPurityAsPureFRgularity}
    One readily sees that $\uptau_{\fra}(R,\phi) = \sum_{\p \in \Ass_R R/\fra} \uptau_{\p}(R,\phi)$. Moreover, $(R,\phi)$ is $F$-regular if and only if $\dim \CSpec(R,\phi)=0$ and if and only if it is $F$-pure (so $R$ is reduced) and $\Schpec(R,\phi)$ is the set of minimal primes of $R$. Also, if $(R,\phi)$ is an $F$-pure Cartier pair then it is purely $F$-regular along some $\fra \in I(R,\phi)$. Indeed, since $\Schpec(R,\phi)=\CSpec(R,\phi)$ is a finite set we may take $\fra$ equal the intersection of the maximal elements in there.
\end{remark}

\begin{proposition}[The fibers of $\upbeta$.] Let $\rphi$ be a Cartier pair. Then, for all $\p \in \CSpec(R,\phi)$, we have
    \[
\upbeta^{-1}(\p) = \begin{cases}
V(\p) \cap V\big(\uptau_{\p}(R,\phi)\big)^{\mathsf{c}} & \text{ if } \p \in \sU(R,\phi),\\
 \{\p\}  &\text{ otherwise.}
\end{cases}
\]
In particular, the fibers of  $\upbeta \: \CZpec(R,\phi) \to \Schpec(R,\phi)$ over $\sU(R,\phi)$ are locally closed.
\end{proposition}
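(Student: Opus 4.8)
The plan is to verify the displayed formula in the two stated cases and then read off local‑closedness from it. Throughout I would use that $\upbeta$ retracts the inclusion $\CSpec(R,\phi)\subset\Spec R$, the explicit description $\upbeta_{\q}(R,\phi)=\upkappa_{\q}(R,\phi)$ for $\q\in\sU(R,\phi)$ and $\upbeta_{\q}(R,\phi)=\q$ otherwise (\autoref{proposition beta as a map}), and the defining equality $\upbeta_{\q}(R,\phi)=\q\cap\upkappa_{\q}(R,\phi)\subseteq\q$, so that $\upkappa_{\q}(R,\phi)\subseteq\q$ whenever $\q\in\sU(R,\phi)$. First I would dispose of the case $\p\notin\sU(R,\phi)$: then $\upbeta_{\p}(R,\phi)=\p$, and for any $\q$ with $\upbeta_{\q}(R,\phi)=\p$ either $\q\in\sU(R,\phi)$, whence $\upbeta_{\q}(R,\phi)=\upkappa_{\q}(R,\phi)\in\Schpec(R,\phi)\subseteq\sU(R,\phi)$ by \autoref{pro.CartierCoreIsPrime} and so $\upbeta_{\q}(R,\phi)\ne\p$; or $\q\notin\sU(R,\phi)$, whence $\upbeta_{\q}(R,\phi)=\q$, forcing $\q=\p$. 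Hence $\upbeta^{-1}(\p)=\{\p\}$; moreover this same dichotomy shows that in the remaining case $\p\in\sU(R,\phi)$ one automatically has $\upbeta^{-1}(\p)\subseteq\sU(R,\phi)$, so there $\upbeta^{-1}(\p)=\{\q\in\sU(R,\phi)\mid\upkappa_{\q}(R,\phi)=\p\}$.

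Now assume $\p\in\sU(R,\phi)$, so $\p\in\Schpec(R,\phi)$, $\phi$ is non‑degenerate along $\p$ (\autoref{Pro.NonDegeneracyPhiIdeals}), $\uptau_{\p}(R,\phi)$ is defined, $\p,\uptau_{\p}(R,\phi)\in I(R,\phi)$, and $Z_{\p}=\p$. The key step is the containment $\uptau_{\p}(R,\phi)\subseteq\upsigma(R,\phi)$: indeed $\uptau_{\p}(R,\phi)$ is the smallest $\phi$‑ideal not contained in the prime $\p$, the intersection of two $\phi$‑ideals not contained in $\p$ is again a $\phi$‑ideal not contained in $\p$ (closure under intersection, \autoref{first proposition properties of comp ideal}, plus primeness of $\p$), and $\upsigma(R,\phi)$ is a $\phi$‑ideal not contained in $\p$ because $\p\in\sU(R,\phi)=\Spec R\setminus V(\upsigma(R,\phi))$; minimality then gives $\uptau_{\p}(R,\phi)\subseteq\uptau_{\p}(R,\phi)\cap\upsigma(R,\phi)\subseteq\upsigma(R,\phi)$. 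Granting this, take $\q\in\sU(R,\phi)$. If $\upkappa_{\q}(R,\phi)=\p$ then $\p=\upkappa_{\q}(R,\phi)\subseteq\q$, so $\q\in V(\p)$, and $\uptau_{\p}(R,\phi)\subseteq\q$ would give $\uptau_{\p}(R,\phi)\subseteq\upkappa_{\q}(R,\phi)=\p=Z_{\p}$ by \autoref{propdef cartier core}~(f), contradicting the definition of $\uptau_{\p}(R,\phi)$; hence $\q\notin V(\uptau_{\p}(R,\phi))$. Conversely, if $\p\subseteq\q$ and $\uptau_{\p}(R,\phi)\not\subseteq\q$, then $\upsigma(R,\phi)\not\subseteq\q$ by the key step, so $\q\in\sU(R,\phi)$; \autoref{propdef cartier core}~(f) gives $\p\subseteq\upkappa_{\q}(R,\phi)$, and if this were strict then $\upkappa_{\q}(R,\phi)$ would be a $\phi$‑ideal not contained in $\p$, whence $\uptau_{\p}(R,\phi)\subseteq\upkappa_{\q}(R,\phi)\subseteq\q$, a contradiction; so $\upkappa_{\q}(R,\phi)=\p$. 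Combining with the first paragraph, $\upbeta^{-1}(\p)=V(\p)\cap V(\uptau_{\p}(R,\phi))^{\mathsf{c}}$.

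For the last assertion, since $\p\in I(R,\phi)$ and $\uptau_{\p}(R,\phi)\in I(R,\phi)$, the set $V(\p)$ is closed and $V(\uptau_{\p}(R,\phi))^{\mathsf{c}}$ is open in $\CZpec(R,\phi)$ by \autoref{proposition beta as a map}~(e), so their intersection $\upbeta^{-1}(\p)$ is locally closed there (a fortiori in $\Spec R$). The one genuinely non‑formal ingredient is the containment $\uptau_{\p}(R,\phi)\subseteq\upsigma(R,\phi)$ — equivalently, the minimality‑plus‑primeness argument that lets one propagate local $F$‑purity from $\p$ up to any specialization $\q\supseteq\p$ satisfying $\uptau_{\p}(R,\phi)\not\subseteq\q$; everything else is bookkeeping with the results already established in this section.
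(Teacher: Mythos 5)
Your proof is correct, but it takes a different route from the paper's for the main case $\p \in \sU(R,\phi)$. The paper disposes of this case in one line by passing to the quotient pair $(R/\p,\phi/\p)$ via the lattice isomorphism of \autoref{first proposition properties of comp ideal} and then invoking \cite[Proposition 4.10]{SmolkinSubadditivity}, which identifies the $F$-pure locus of the quotient with the complement of $V(\uptau_\p(R,\phi))$. You instead stay in $R$ and argue directly: the containment $\uptau_\p(R,\phi)\subseteq\upsigma(R,\phi)$ (immediate from the minimality of $\uptau_\p$ among $\phi$-ideals not contained in $Z_\p=\p$), together with \autoref{propdef cartier core}(f) and the same minimality, gives both inclusions of $\upbeta^{-1}(\p)=V(\p)\cap V(\uptau_\p(R,\phi))^{\mathsf c}$. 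This buys self-containedness — no reduction modulo $\p$ and no external citation — at the cost of a slightly longer argument; the paper's version buys brevity by outsourcing exactly the equivalence you prove by hand (namely, $\upkappa_\q=\p$ iff $\uptau_\p\not\subseteq\q$ for $\q\supseteq\p$). For the case $\p\notin\sU(R,\phi)$ the two arguments are essentially the same observation phrased differently: the paper notes $\upsigma(R,\phi)\subseteq\p\subseteq\q$ forces $\upbeta_\q=\q$ on all of $V(\p)$, while you use \autoref{pro.CartierCoreIsPrime} to rule out $\q\in\sU(R,\phi)$; both are valid. Your final observation that local-closedness follows because $\p$ and $\uptau_\p(R,\phi)$ are $\phi$-ideals, hence cut out Cartier--Zariski closed sets, is exactly what the statement requires.
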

\begin{proof}
If $\p\in \sU\rphi$, this follows by going modulo $\p$ and using \cite[Proposition 4.10]{SmolkinSubadditivity} and \autoref{first proposition properties of comp ideal}. If $\upsigma(R,\phi)\subset \p$ then $\upsigma(R,\phi)\subset \q$ for all $\q\in V(\p)$ and so $\upbeta_\q\rphi = \q\neq \p$. 
\end{proof}

So far we used the Cartier core to define the Cartier--Zariski topology. To conclude this section, we would like to do the converse. That is, we explain how to define the Cartier core map from the Cartier--Zariski topology.
For this, we need to review some basic point-set topology.

\begin{defprop}\label{defprop quotient continuous map}
    On a topological space $X$, we define the equivalence relation $x \sim y \Longleftrightarrow \overline{\{x\}} = \overline{\{y\}}$ and let $\upalpha_X \:X \to X/{\sim}$ be the corresponding quotient in $\mathsf{Top}$---the category of topological spaces. We may write $x \sim_X y$ if more than one topological space is involved. Given a continuous map $f\: Y \to X$, there is a commutative diagram in $\mathsf{Top}$:
     \[
\xymatrix{
X\ar[d]_-{\upalpha_X} & Y \ar[l]_-{f} \ar[d]^-{\upalpha_Y} \\
X/{\sim} & Y/{\sim} \ar[l]_-{f/{\sim}}
}
\]
In other words, we have a natural transformation $\upalpha \: \id_{\mathsf{Top}} \to (-)/{\sim}$ between the functors $\id_{\mathsf{Top}}, (-)/{\sim} \: \mathsf{Top}\to\mathsf{Top}$.
\end{defprop}
\begin{proof}
It suffices to show that if $y_1 \sim_Y y_2$ then $f(y_1) \sim_X f(y_2)$. Recall that $f$ being continuous means that $f(\overline{B}) \subset \overline{f(B)}$ for all $B\subset Y$.
Taking $B=\{y_2\}$ yields $f(\overline{\{y_2\}}) \subset \overline{\{f(y_2)\}}$. Hence, if $y_1 \sim y_2$ then $y_1 \in \overline{\{y_2\}}$ and so $f(y_1) \in \overline{\{f(y_2)\}}$. Switching $y_1$ and $y_2$ yields $f(y_1) \sim f(y_2)$.
\end{proof}

\begin{remark}
    If $Y$ is an irreducible subset of a topological space $X$ then so is $\overline{Y}$ (\cite[I, Example 1.1.4]{Hartshorne}). In particular, the closed subsets $\overline{\{x\}}$ are irreducible. One refers to $y\in \overline{\{x\}}$ as \emph{specializations} of $x$ and write $x \rightsquigarrow y$ for all $y \in \overline{\{x\}}$. One also says that $x$ is a \emph{generization} of $y$. This establishes a partial order in $X$ if $X$ is a $\text{T}_0$ space; see \cite[II, Exercise 3.17]{Hartshorne}.
\end{remark}

\begin{definition}
    A topological space $X$ is \emph{quasi-sober} (resp. \emph{sober}) if the mapping $x \mapsto \overline{\{x\}}$ is a surjection (resp. bijection) from $X$ onto the irreducible closed subsets of $X$. A topological space is \emph{quasi-Zariski} (resp. \emph{Zariski}) if it is noetherian and quasi-sober (resp. sober). See \cite[p.5]{Hartshorne} for the definition of noetherianity.
\end{definition}

\begin{example}
    If $(R,\phi)$ is a Cartier pair then $\CZpec(R,\phi)$ is a quasi-Zariski space whereas $\CSpec(R,\phi)$ and so $\Schpec(R,\phi)$ are Zariski spaces. 
\end{example}

As promised, the Cartier core can be understood as a purely topological phenomenon:

\begin{theorem} \label{pro.TopologicalDescriptionOFbETA}
    Let $(R,\phi)$ be a Cartier pair. Then, $\upbeta(R,\phi)= \upalpha_{\CZpec(R,\phi)}$.
   \end{theorem}
\begin{proof}
 We must show that: if $f\: \CZpec(R,\phi) \to X$ in $\mathsf{Top}$ is such that $f(\p) = f(\q)$ whenever $\p \sim \q$, then there is a unique continuous function $g\:\CSpec(R,\phi) \to X$ factoring $f$ through $\upbeta\rphi\:\CZpec(R,\phi) \to \CSpec\rphi$. By \autoref{pro.propertiesCZpec},
    \[
    \overline{\{\p\}}^{\mathrm{CZ}}=\overline{\{\q\}}^{\mathrm{CZ}} \Longleftrightarrow \upbeta_{\p}(R,\phi) =  \upbeta_{\q}(R,\phi)
    \]
    and so the required factorization holds in the category of sets. To do this in $\mathsf{Top}$, we must show that $Z \subset \CSpec(R,\phi)$ is closed if so is $\upbeta^{-1}Z \subset \CZpec(R,\phi)$. Let $Z \subset \CSpec(R,\phi)$ be such that $\upbeta\inv Z=V(\fra)$ for some $\fra \in I\rphi$. Note that $\upbeta_{\p} \in Z$ if and only if $\upbeta_\p \in V(\fra)$. Since $\upbeta$ is a surjection, we conclude that $Z=V_{\phi}(\fra)$ and so it is closed.
\end{proof}

We need one last lemma on topological spaces which will be useful to us when studying the behavior of $\upbeta$ under homomorphisms \autoref{defprop.QuasiFibered}.

 \begin{lemma} \label{lem.NaturalityImpliesContinuity}
Let $X$ be a quasi-Zariski space and $f \: Y \to X$ be a function such that $f(y_1) \sim f(y_2)$ if $y_1 \sim y_2$ in $Y$, \ie there is a commutative diagram of functions
\[
\xymatrixcolsep{3pc}\xymatrix{
X\ar[d]_-{\upalpha_X} & Y \ar[l]_-{f} \ar[d]^-{\upalpha_Y} \\
X/{\sim} & Y/{\sim} \ar[l]_-{g \coloneqq f/{\sim}}
}
\]
If $g$ is continuous then so is $f$. 
    \end{lemma}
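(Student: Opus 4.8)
The plan is to reduce the continuity of $f$ to that of $g$ by exploiting the universal property of the quotient map $\upalpha_Y$ together with the fact that, on a quasi-Zariski (hence noetherian, quasi-sober) space $X$, the quotient $\upalpha_X$ has especially good topological properties. First I would recall that $\upalpha_X \: X \to X/{\sim}$ is always a quotient map, so a subset of $X/{\sim}$ is closed if and only if its preimage in $X$ is closed; and since every point of $X$ is topologically indistinguishable from its image under $\upalpha_X$, the assignment $Z \mapsto \upalpha_X^{-1}(Z)$ is a bijection between closed subsets of $X/{\sim}$ and closed subsets of $X$ (closed sets are saturated for $\sim$, as $\overline{\{x\}}$ depends only on the class of $x$). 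In particular $\upalpha_X$ is both continuous and closed, and to prove $f$ continuous it suffices to prove $\upalpha_X \circ f$ continuous.

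Next I would use the commutativity $\upalpha_X \circ f = g \circ \upalpha_Y$. Since $g$ is continuous by hypothesis and $\upalpha_Y$ is continuous, the composite $g \circ \upalpha_Y$ is continuous; hence $\upalpha_X \circ f$ is continuous. So for any closed $C \subset X$ we get that $f^{-1}(\upalpha_X^{-1}(\upalpha_X(C)))$ is closed in $Y$. The remaining point is to pass from preimages of saturations $\upalpha_X^{-1}(\upalpha_X(C))$ back to preimages of $C$ itself. This is exactly where quasi-sobriety of $X$ enters: I would argue that every closed subset $C \subset X$ is saturated, i.e. $C = \upalpha_X^{-1}(\upalpha_X(C))$, because $C$ is a (finite, by noetherianity) union of irreducible closed sets $\overline{\{x_i\}}$, and each such set is a union of equivalence classes — if $y \sim x$ then $\overline{\{y\}} = \overline{\{x\}} \subset C$ so $y \in C$. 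Therefore $\upalpha_X^{-1}(\upalpha_X(C)) = C$, and $f^{-1}(C) = f^{-1}(\upalpha_X^{-1}(\upalpha_X(C)))$ is closed.

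Thus the proof would run: (1) observe closed sets of $X$ are $\sim$-saturated, using noetherianity/quasi-sobriety to write them as finite unions of closures of points; (2) deduce $\upalpha_X$ is a closed map and $Z \mapsto \upalpha_X^{-1} Z$ identifies closed sets of $X/{\sim}$ with closed sets of $X$; (3) for closed $C \subset X$, write $C = \upalpha_X^{-1}(\upalpha_X(C))$ and compute $f^{-1}(C) = (\upalpha_X \circ f)^{-1}(\upalpha_X(C)) = (g \circ \upalpha_Y)^{-1}(\upalpha_X(C)) = \upalpha_Y^{-1}\big(g^{-1}(\upalpha_X(C))\big)$, which is closed since $g$ and $\upalpha_Y$ are continuous and $\upalpha_X(C)$ is closed in $X/{\sim}$. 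I expect the only genuinely substantive step to be the saturation claim in (1); everything else is formal manipulation of quotient maps. One subtlety worth double-checking is whether quasi-sobriety (surjectivity but not injectivity of $x \mapsto \overline{\{x\}}$) is truly sufficient here — it is, because saturation of closed sets only needs that each irreducible closed set arises as $\overline{\{x\}}$ for some $x$ and that $y \sim x \Rightarrow y \in \overline{\{x\}}$, neither of which requires injectivity; noetherianity is used solely to reduce an arbitrary closed set to a finite union of such irreducible pieces, though in fact the argument goes through for arbitrary closed sets since membership only depends on the equivalence class regardless of finiteness.
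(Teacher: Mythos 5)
Your proof is correct and follows essentially the same route as the paper's: both hinge on the observation that closed subsets of $X$ are $\sim$-saturated (hence $\upalpha_X$ is a closed map) together with the factorization $\upalpha_X \circ f = g \circ \upalpha_Y$. The only difference is that the paper first invokes the quasi-Zariski hypothesis to reduce to closed sets of the form $\overline{\{x\}}$, whereas you correctly observe that the saturation argument applies to arbitrary closed sets, so that hypothesis is in fact superfluous.
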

    \begin{proof}
        Since $X$ is quasi-Zariski, it suffices to show that $f^{-1}\big(\overline{\{x\}}\big) \subset Y$ is closed for all $x \in X$. Note that $Z \subset \upalpha_X\inv (\upalpha_X(Z))$ is an equality if $Z\subset X$ is closed. Hence, $\upalpha_X$ is closed and: 
        \[
        f^{-1}\big(\overline{\{x\}}\big) = (\upalpha_X \circ f)^{-1} \Big(\upalpha_X\big(\overline{\{x\}}\big)\Big) =
        (\upalpha_X \circ f)^{-1} \big(\overline{\{\upalpha_X(x)\}}\big)= (g\circ \upalpha_Y)^{-1} \big(\overline{\{\upalpha_X(x)\}}\big),
        \]
        which is closed as $g$ (and $\upalpha_Y$) are continuous.
    \end{proof}

\section{Fibered Homomorphisms} \label{sec.FiberedHomomorphisms}

 A ring homomorphism $\theta \: R \to S$ defines a continuous map $f \coloneqq \Spec \theta \: \Spec S \to \Spec R$,  $\q \mapsto \q \cap R$. Moreover, this defines a functor $\Spec \: \mathsf{Rings} \to \mathsf{Top}$ from rings to topological spaces. This relies on two things: the contraction of a prime $S$-ideal is a prime $R$-ideal (\ie the map $f$ is well-defined) and $f$ is continuous as $f^{-1}\big(V(\fra)\big)=V\big(\sqrt{\fra S}\big)$ for all $R$-ideals $\fra$. We determine next to what extent the above extends works for spectra of centers of $F$-purity.

\begin{definition}
    We say that a ring homomorphism is \emph{Cohen--Seidenberg} if it satisfies the incomparability, going-up (so lying-over), and going-down properties. 
\end{definition}

\begin{definition} A \emph{homomorphism of Cartier pairs} $\theta \: (R,\phi) \to (S,\psi)$ is a commutative diagram
\[
\xymatrix{
F^e_* S \ar[r]^-{\psi} & S \\
F^e_* R \ar[u]^-{F^e_* \theta} \ar[r]^-{\phi} & R \ar[u]_-{\theta}
}
\]
where $\theta \: R \to S$ is a homomorphism of rings, $\phi \in \omega_{F^e_R}$, and $\psi \in \omega_{F^e_S}$. 
\end{definition}
\begin{defprop}\label{defprop compatible ideals contract in homomorphisms}
    Let $\theta \: (R,\phi) \to (S,\psi)$ be a homomorphism of pairs. If $\frab \in I(S,\psi)$ then $\frab \cap R  \in I(R,\phi)$. In particular, $f \coloneqq \Spec \theta \: \Spec S \to \Spec R$ restricts to a continuous map $g \coloneqq \CSpec \theta \:  \CSpec(S,\psi) \to \CSpec(R,\phi)$.
    Moreover, every $\q\in \CSpec(S,\psi)$ induces a local homomorphism $\theta_{\q}\:(R_{f(\q)},\phi_{f(\q)}) \to (S_{\q},\psi_{\q})$ and further a residual one $\bar{\theta}_{\q}\:(\kappa({f(\q)}),\bar{\phi}_{f(\q)}) \to (\kappa(\q),\bar{\psi}_{\q})$. In particular, $ g^{-1} \Schpec(R,\phi) \subset \Schpec(S,\psi)$.
\end{defprop}

\begin{example} \label{ex.NonContinuity}
It is well-known (see \eg \cite{SpeyerFrobeniusSplit,CarvajalStablerFsignaturefinitemorphisms}) that $\fra \in I(R,\phi)$ does not imply $\sqrt{\fra S} \in I(S,\psi)$. Indeed, let $R=\bF_2[x]$ and $\phi \: F_*R = R\cdot F_*1 \oplus R \cdot F_*x \to R$ be such that $F_*1 \mapsto 1$ and $F_*x \mapsto x$. Then, $\fra = \langle x \rangle \in I(R,\phi)$. Letting $K=\bF_2(x)$, consider the Artin--Schreier extension over $K$:
    \[
    L \coloneqq \bF_2(y) = K[t]/\langle t^2+xt+1 \rangle = K[t]/\langle (t/x)^2+t/x+1/x^2 \rangle.
    \]   
    Let $S\coloneqq \bar{R}^L =\bF_2[y,y^{-1}]$. Then, $F_* S = S\cdot F_*1 \oplus F_* y$ (\eg $F_*y^{-1} = y^{-1}F_*y$) and the map $\psi\:F_*S \to S$ such that $F_*1 \mapsto 1$ and $F_* y \mapsto 1+y$ is an extension of $\phi$. However, $\sqrt{\fra S} = \langle y+1 \rangle \notin I(S,\psi)$. Noteworthily, $S/R$ is wildly ramified over $\langle x \rangle$. See \cite[Example 4.1]{SchwedeTuckerExplicitlyExtending} for another example. Therefore, although $\q \mapsto\q \cap R$ defines a function $h \coloneqq \CZpec \theta  \:\CZpec(S,\psi) \to \CZpec(R,\phi)$, it is not a continuous one.
\end{example}

\begin{defprop} \label{defprop.QuasiFibered}
Let $\theta \: (R,\phi) \to (S,\psi)$ be a homomorphism of pairs. We say that it is \emph{quasi-fibered} if any of the the following equivalent statements holds:
\begin{enumerate}
    \item The map $h \coloneqq \CZpec \theta  \:\CZpec(S,\psi) \to \CZpec(R,\phi)$ is continuous.
    \item The following diagram is commutative 
    \[
\xymatrixcolsep{5pc}\xymatrix{
\CZpec(R,\phi)\ar[d]_-{\upbeta(R,\phi)} & \CZpec(S,\psi) \ar[l]_-{h=\CZpec \theta} \ar[d]^-{\upbeta(S,\psi)} \\
\CSpec(R,\phi) & \CSpec(S,\psi) \ar[l]_-{g\coloneqq \CSpec \theta}.
}
    \]
    \item 
    The inclusion $ \upbeta_{\q}(S,\psi) \cap R \subset \upbeta_{\q \cap R}(R,\phi)$ is an equality for all $\q \in \Spec S$.
    \item $\sqrt{\upbeta_{\fra}(R,\phi) S} \subset \upbeta_{\sqrt{\fra S}}(S,\psi)$ for all $\fra \in I(R)$.
    \item $\sqrt{\upbeta_{\p}(R,\phi) S} \subset \upbeta_{\sqrt{\p S}}(S,\psi)$ for all $\p \in \Spec R$.
\end{enumerate}
Furthermore, if $\theta \: R \to S$ is Cohen--Seidenberg, then the inclusion in (d) (and so in (e)) becomes an equality whenever $\fra$ is radical (\ie $\upbeta$ commutes with radical extensions). Moreover, in that case, (e) becomes equivalent to each of the following two statements:
\begin{enumerate}
    \item[(f)] $f^{-1}\{\upbeta_{\p}\rphi\} = \upbeta(S,\psi) \big(f^{-1}\{\p\}\big)$ for all $\p \in \Spec R$. 
    \item[(h)] The inclusion $\CSpec(S,\psi) \subset f^{-1}  \CSpec(R,\phi) $ is an equality.
\end{enumerate}
\end{defprop}
\begin{proof} For notational ease, when the ambient ring is clear, we may write $\upbeta$ instead of $\upbeta\rphi$ or $\upbeta\spsi$. The equivalence between (a) and (b) is a direct application of \autoref{pro.TopologicalDescriptionOFbETA} and \autoref{lem.NaturalityImpliesContinuity} after observing that $g=h/{\sim}$. Indeed, note that $h/{\sim}$ is explicitly given by the mapping $\q \mapsto \upbeta_{\q \cap R}$ for all $\q \in \CSpec(S,\psi)$. Since $\q \cap R  \in \CSpec(R,\phi)$, then $h/{\sim}=g$. 

The equivalence of (b) with (c) is straightforward. To see why (c) implies (d), notice that $\upbeta_{\sqrt{\fra S}}=\bigcap_{\fra \subset \q \cap R} \upbeta_{\q}$ whereas $\sqrt{\upbeta_{\fra} S}=\bigcap_{\upbeta_{\fra} \subset \mathfrak{q} \cap R} \mathfrak{q}$. Hence, (d) means that $\fra \subset \q \cap R \Longrightarrow \upbeta_{\fra} \subset \upbeta_{\q} \cap R$ for all $\q \in \Spec S$. This is implied by (c) as $\fra \subset \q \cap R$ implies that $\upbeta_{\fra} \subset \upbeta_{\q \cap R} \subset \upbeta_{\q} \cap R$.

Next, we explain why (e) implies (a).  Observe that (e) implies $\sqrt{\upbeta_{\p} S} \subset \upbeta_{\sqrt{\p S}} \subset \sqrt{\p S}$. In particular, if $\p \in \CSpec(R,\phi)$ then $\sqrt{\pS} = \upbeta_{\sqrt{\p S}}$ and so $\sqrt{\p S} \in I(S,\psi)$. In other words, $h^{-1}\big(V(\p)\big)=V(\sqrt{\p S})$ is Cartier-Zariski closed and so $h$ is continuous.

The above proves the equivalence between the first five statements. We explain now why (d) and (e) are equalities if the Cohen--Seidenberg properties hold and $\fra$ is radical. It suffices to show the equality in (e). Fix $\p\in \Spec R$, then the equality follows if we are able to show that for $\q \in \Spec S$, if $\upbeta_{\p} \subset \q \cap R$ then $\upbeta_{\sqrt{\p S}} \subset \q$. Let $\q \in \Spec S$ be such that $\upbeta_{\p} \subset \q \cap R$.
By the going-down property, there is $\q'\subset \q$ lying over $\upbeta_{\p}$. By the going-up property, there is $\q''\supset \q'$ lying over $\p$ and so that $\sqrt{\p S} \subset \q''$. By the commutativity of the square in (b) (\ie by having inclusions $\upbeta_{\mathfrak{r} \cap R} \subset \upbeta_{\mathfrak{r}} \cap R$ for all $\mathfrak{r}\in \Spec S$), we conclude that $\upbeta_{\p}$ is a generization of both $\upbeta_{\q''}\cap R$ and $\upbeta_{\q'} \cap R$. It then follows that $\upbeta_{\q'}$ lies over $\upbeta_{\p}$. In particular, $\upbeta_{\q'}\cap R = \upbeta_{\p} \subset \upbeta_{\q''} \cap R$, which is further an equality as $\upbeta_{\q''} \supset \upbeta_{\q'}$. By the incomparability property, it follows that $\upbeta_{\q''}=\upbeta_{\q'} \subset \q'$. Using now that $\sqrt{\p S} \subset \q''$, we get that $\upbeta_{\sqrt{\p S}} \subset \q' \subset \q$; as desired.

Finally, (f) is saying that all $\q\in \Spec S$ lying over $\upbeta_\p$ have to be compatible, which follows from the equality of (e). Similarly, (h) is saying that all $\q\in \Spec S$ lying over an element of $\CSpec\rphi$ have to be compatible, so the equivalence follows.
\end{proof}

\begin{defprop} \label{defprop.Fibered}
    Let $\theta \: (R,\phi) \to (S,\psi)$ be a quasi-fibered homomorphism of pairs. We say that it is \emph{fibered} if any of the following equivalent statements holds:
    \begin{enumerate}

        \item The inclusion $h^{-1} \sU(R,\phi) \subset \sU(S,\psi)$ is an equality, \ie $h=\CZpec \theta$ restricts to a continuous map $\sU(\theta)
        \: \sU(S,\psi) \to \sU(R,\phi)$.
        \item The inclusion $g^{-1} \Schpec(R,\phi) \subset \Schpec(S,\psi)$ is an equality, \ie $g=\CSpec \theta$ restricts to a continous map $\Schpec \theta \:\Schpec(S,\psi) \to \Schpec(R,\phi)$).
    \end{enumerate}
    In that case, we obtain a commutative square in $\mathsf{Top}:$
     \[
\xymatrixcolsep{5pc}\xymatrix{
\sU(R,\phi)\ar[d]_-{\upbeta(R,\phi)} & \sU(S,\psi) \ar[l]_-{\sU(\theta)} \ar[d]^-{\upbeta(S,\psi)} \\
\Schpec(R,\phi) & \Schpec(S,\psi) \ar[l]_-{\Schpec \theta}
}
    \]
Furthermore, if $\theta \: R \to S$ is Cohen--Seidenberg then the above conditions are equivalent to:
\begin{enumerate}
    \item[(c)] 
    The inclusion $f^{-1}\Schpec(R,\phi)\subset \Schpec\spsi$ is an equality.
\end{enumerate}
\end{defprop}
\begin{proof}
Using \autoref{propdef f pure locus}, these are all equivalent to $\sqrt{\upsigma(R,\phi) S}=\sqrt{\upsigma(S,\psi)}$ (\ie a center of $F$-purity contracts to a center of $F$-purity). 
\end{proof}

\begin{theorem} \label{thm.FibrationsHomos}
    Let $\theta \: (R,\phi) \to (S,\psi)$ be a homomorphism of Cartier pairs. The following statements are equivalent:
    \begin{enumerate}
        \item It is fibered.
        \item The inclusion $\upkappa_{\q}(S,\psi) \cap R \subset \upkappa_{\q \cap R}(R,\phi)$ is an equality for all $\q \in \Spec S$.
        \item $\sqrt{\upkappa_{\fra}(R,\phi) S} \subset \upkappa_{\sqrt{\fra S}}(S,\psi)$ for all $\fra \in I(R)$.
    \end{enumerate}
    Moreover, if $\theta \: R \to S$ is Cohen--Seidenberg we may write an equality in (c) when $\fra = \sqrt{\fra}$.
\end{theorem}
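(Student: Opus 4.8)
The plan is to read \autoref{thm.FibrationsHomos} as the Cartier-core analogue of the quasi-fibered criterion \autoref{defprop.QuasiFibered}: conditions (2) and (3) are exactly conditions (c) and (d) there with $\upbeta$ replaced by $\upkappa$, so the only new ingredient is the interplay between $\upbeta$ and $\upkappa$, which differ only over the non-$F$-pure locus. Two facts will be used throughout. First, for $\p\in\Spec R$ one has $\upbeta_\p\rphi=\upkappa_\p\rphi$ when $\p\in\sU\rphi$, while $\upbeta_\p\rphi=\p$ and $\upkappa_\p\rphi=\mathbf 1$ when $\p\notin\sU\rphi$ (\autoref{proposition beta as a map}, \autoref{propdef f pure locus}). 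Second --- the ``automatic'' observation --- iterating the defining square of $\theta$ gives $\theta\circ\phi^n=\psi^n\circ F^{en}_*\theta$, hence $\theta\bigl(\upsigma\rphi\bigr)\subset\upsigma\spsi$ for $n\gg0$, i.e.\ $\upsigma\rphi S\subset\upsigma\spsi$; in particular $\q\notin\sU\spsi$ always forces $\q\cap R\notin\sU\rphi$, and by \autoref{defprop.Fibered} a quasi-fibered $\theta$ is fibered precisely when the converse implication holds too, i.e.\ when $\sU\spsi=h^{-1}\sU\rphi$.

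First I would prove (1)$\Leftrightarrow$(2). For (2)$\Rightarrow$(1): from (2) one gets $\upkappa_\q\spsi=\mathbf 1\Longleftrightarrow\upkappa_\q\spsi\cap R=\mathbf 1\Longleftrightarrow\upkappa_{\q\cap R}\rphi=\mathbf 1$, so $\q\in\sU\spsi\Longleftrightarrow\q\cap R\in\sU\rphi$, which together with the automatic observation is the fibered condition; moreover (2) gives \autoref{defprop.QuasiFibered}(c), since over $\sU\spsi$ both $\upbeta$'s equal the corresponding $\upkappa$'s and (2) applies, while over its complement $\upbeta_\q\spsi\cap R=\q\cap R=\upbeta_{\q\cap R}\rphi$. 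Hence $\theta$ is quasi-fibered and therefore fibered. For (1)$\Rightarrow$(2): a fibered $\theta$ satisfies \autoref{defprop.QuasiFibered}(c) and $\sU\spsi=h^{-1}\sU\rphi$; case-split according to whether $\q\in\sU\spsi$ (equivalently $\q\cap R\in\sU\rphi$) --- over $\sU\spsi$ replace $\upkappa$ by $\upbeta$ on both sides and use (c), over its complement both sides of (2) are $\mathbf 1$.

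Next I would prove (2)$\Leftrightarrow$(3), mimicking the argument for (c)$\Leftrightarrow$(d) in \autoref{defprop.QuasiFibered}. The extra inputs are: $\upkappa$ commutes with intersections (\autoref{propdef cartier core}), so $\upkappa_{\sqrt{\fra S}}\spsi=\bigcap_{\q\in\Min(\fra S)}\upkappa_\q\spsi$; $\upkappa$ is monotone in its ideal argument; and for $\q$ prime the ideal $\upkappa_\q\spsi$ is radical, being a Cartier prime (\autoref{pro.CartierCoreIsPrime}) if $\q\in\sU\spsi$ and $\mathbf 1$ otherwise. For (2)$\Rightarrow$(3): if $\q\in\Min(\fra S)$ then $\fra\subset\q\cap R$, so $\upkappa_\fra\rphi\subset\upkappa_{\q\cap R}\rphi=\upkappa_\q\spsi\cap R$ by monotonicity and (2); hence $\upkappa_\fra\rphi S\subset\upkappa_\q\spsi$ and, taking radicals, $\sqrt{\upkappa_\fra\rphi S}\subset\upkappa_\q\spsi$; intersecting over $\Min(\fra S)$ yields (3). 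For (3)$\Rightarrow$(2): fix $\q$ and put $\p=\q\cap R$; applying (3) to $\fra=\p$, together with $\sqrt{\p S}\subset\q$ and monotonicity, gives $\upkappa_\p\rphi\subset\upkappa_\q\spsi\cap R$. For the reverse inclusion, $\upkappa_\q\spsi\cap R\in I\rphi$ by \autoref{defprop compatible ideals contract in homomorphisms}; if $\q\in\sU\spsi$ then $\upkappa_\q\spsi=\upbeta_\q\spsi\subset\q$, so $\upkappa_\q\spsi\cap R$ is a $\phi$-ideal contained in $\q\cap R$ and hence in $\upkappa_{\q\cap R}\rphi$ (\autoref{propdef cartier core}), while if $\q\notin\sU\spsi$ the automatic observation forces $\q\cap R\notin\sU\rphi$, so $\upkappa_{\q\cap R}\rphi=\mathbf 1$ and there is nothing to prove.

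Finally, for the ``moreover'', assume (1)--(3) hold and $\theta$ is Cohen--Seidenberg. Using \autoref{propdef cartier core} and the identity $\sqrt{\bigl(\bigcap_i\p_i\bigr)S}=\bigcap_i\sqrt{\p_i S}$ for finitely many primes $\p_i$ (a prime containing the intersection contains one of the $\p_i$), both sides of (3) decompose over $\Min\fra$ (note $\bigcap_{\p\in\Min\fra}\upkappa_\p\rphi$ is radical, being an intersection of primes and copies of $\mathbf 1$), reducing to $\fra=\p$ prime. Every $\q\in\Min(\p S)$ satisfies $\q\cap R=\p$ by incomparability, hence $\q\in\sU\spsi\Longleftrightarrow\p\in\sU\rphi$. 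If $\p\notin\sU\rphi$ then each $\upkappa_\q\spsi=\mathbf 1$, so $\upkappa_{\sqrt{\p S}}\spsi=\mathbf 1=\sqrt{\upkappa_\p\rphi S}$. If $\p\in\sU\rphi$ then $\upkappa_\q\spsi=\upbeta_\q\spsi$ for all $\q\in\Min(\p S)$, so, since $\upbeta$ commutes with intersections (\autoref{proposition beta as an object}), $\upkappa_{\sqrt{\p S}}\spsi=\upbeta_{\sqrt{\p S}}\spsi=\sqrt{\upbeta_\p\rphi S}=\sqrt{\upkappa_\p\rphi S}$, where the middle equality is the Cohen--Seidenberg clause of \autoref{defprop.QuasiFibered}. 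The hard part will be none of the individual steps but the discipline of the case analysis: all the content sits on the $F$-pure locus, where $\upkappa=\upbeta$ lets one import the already-established $\upbeta$-statements, yet one must repeatedly verify that the non-$F$-pure behaviour is forced --- and the single observation that makes this painless is $\upsigma\rphi S\subset\upsigma\spsi$, without which the implication ``$\q$ not $F$-pure $\Rightarrow$ $\q\cap R$ not $F$-pure'' looks like it should require an extra hypothesis.
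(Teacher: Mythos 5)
Your proposal is correct and takes essentially the same route as the paper: everything is reduced to the $F$-pure locus, where $\upkappa=\upbeta$ lets one import the quasi-fibered/fibered equivalences of \autoref{defprop.QuasiFibered} and \autoref{defprop.Fibered}, while the non-$F$-pure locus is handled by the automatic containment $\upsigma(R,\phi)S\subset\upsigma(S,\psi)$ (which the paper uses implicitly when it declares (b) vacuous off $\sU(S,\psi)$). The only differences are organizational --- you prove (2)$\Leftrightarrow$(3) directly by decomposing over minimal primes rather than routing both through (1), and in the ``moreover'' part the fact that a minimal prime of $\p S$ lies over $\p$ uses going-down rather than incomparability --- neither of which affects correctness.
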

\begin{proof}
Observe that both (c) in \autoref{defprop.QuasiFibered} and our current condition (b) are unconditionally true if $\q \notin \sU(S,\psi)$. That is, they are only interesting for what they say about $\q \in \sU(S,\psi)$. Note that (b) says $\upbeta_{\q}(S,\psi) \cap R \supset \upkappa_{\q \cap R}(R,\phi)$ for all $\q\in\sU(S,\psi)$. Therefore, (b) means that for all $\q \in \sU(S,\phi)$ we have $\q \cap R \in \sU(R,\phi)$ and $\upbeta_{\q}(S,\psi) \cap R \supset \upbeta_{\q \cap R}(R,\phi)$. This explains why (a) and (b) are equivalent. An analogous analysis lets us conclude the equivalence between (a) and (c). Indeed, note that (c) implies
\[
\sqrt{\upbeta_\fra S}=\sqrt{(\upkappa_\fra \cap \fra) S}\subset \sqrt{\upkappa_\fra S \cap \fra S} =\sqrt{\upkappa_\fra S}\cap \sqrt{\fra S}\subset \upkappa_{\sqrt{\fra S}} \cap \sqrt{\fra S}=\upbeta_{\sqrt{\fra S}}
\]
Further, plugging in $\fra = \p \notin \sU(R,\phi)$ in (c) yields $S=\upkappa_{\sqrt{\p S}}$ which means that $\q \notin \sU(S,\psi)$ for all $\q$ such that $\p \subset \q \cap R$. That is, (c) implies $\sqrt{\upbeta_\fra S} \subset \upbeta_{\sqrt{\fra S}}$ for all $\fra \in I(R)$ and that $\q \in \sU(S,\psi)$ contracts to $\q\cap R \in \sU(R,\phi)$. The converse holds as $\upkappa_{\sqrt{\fra S}}=\bigcap_{\q \in A} \upbeta_{\q}$ where $A \coloneqq f^{-1}V(\fra) \cap \sU(S,\psi)$ and for $\q\in A$ we have $\p \coloneqq \q \cap R \in \sU(R,\phi)$ and $\sqrt{\upkappa_{\fra}S} \subset \sqrt{\upbeta_{\p}S} \subset \upbeta_{\q}$.

When $\theta$ is Cohen--Seidenberg, the equality follows from \autoref{defprop.QuasiFibered}. Indeed, it suffices to test it for $\fra=\p$ prime, in which case it is only non-trivial if $\p \in \sU(R,\phi)$. It then follows from knowing that all minimal primes of $\sqrt{\p S}$ are in $\sU(S,\psi)$ as then $\upkappa_{\sqrt{\pS}} = \upbeta_{\sqrt{\p S}}$. 
\end{proof}

The following justifies the terminology ``fibered.''

\begin{corollary} \label{cor.FibereJustification}
    Let $\theta \: (R,\phi) \to (S,\psi)$ be a homomorphism of pairs and let $\bar{S}_{\p}$ denote the reduced fiber over $\p \in \Spec R$. Then, $\theta$ is quasi-fibered if and only if $\psi$ descends to an $\bar{S}_{\p}$ linear map $\bar{\psi}_{\p} \:F_*^e  \bar{S}_{\p} \to \bar{S}_{\p}$ for all $\p \in \CSpec(R,\phi)$. In that case, there is a canonical commutative diagram of Cartier pairs:
\[
\xymatrix{
(S,\psi) \ar[r] & \big(\bar{S}_{\p},\bar{\psi}_{\p}\big)\\
(R,\phi) \ar[u]^-{\theta} \ar [r] & \big(\kappa(\p),\bar{\phi}_{\p}\big) \ar[u]_-{\bar{\theta}_{\p}}
}
\]
Moreover, $\CSpec\big(\bar{S}_{\p},\bar{\psi}_{\p}\big) \to \CSpec(S,\psi)$ induces a homeomorphism onto $g^{-1}\{\p\}$ where $g= \CSpec \theta$ and centers of $F$-purity correspond to centers of $F$-purity. 

If  $\p \in \Schpec(R,\phi)$ then $\big(\bar{S}_{\p},\bar{\psi}_{\p}\big)$ is $F$-pure and so: $\bar{\psi}_{\p} \neq 0$ if $\bar{S}_{\p} \neq 0$. On the other hand, $\theta$ is fibered if and only if: $\bar{\psi}_{\p}\neq 0$ implies $\p \in \Schpec(R,\phi)$.
\end{corollary}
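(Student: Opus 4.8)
The plan is to convert each clause of the corollary into a statement about $\psi$-ideals and Cartier spectra, and then feed it through the localization/quotient functoriality recorded in \autoref{first proposition properties of comp ideal} and \autoref{propdef cartier core}, together with \autoref{defprop.QuasiFibered} and \autoref{defprop.Fibered}.

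First I would establish the governing equivalence. For $\p\in\CSpec(R,\phi)$ one has $\upbeta_{\p}(R,\phi)=\p$, and $\upbeta_{\sqrt{\p S}}(S,\psi)\subseteq\sqrt{\p S}$ always, so condition (e) of \autoref{defprop.QuasiFibered} at such a $\p$ forces $\sqrt{\p S}=\upbeta_{\sqrt{\p S}}(S,\psi)\in I(S,\psi)$; conversely, if $\sqrt{\q'S}\in I(S,\psi)$ for every $\q'\in\CSpec(R,\phi)$, then for arbitrary $\p$ we get $\sqrt{\upbeta_{\p}(R,\phi)S}\subseteq\sqrt{\p S}$, and this $\psi$-ideal lies inside the largest one contained in $\sqrt{\p S}$, namely $\upbeta_{\sqrt{\p S}}(S,\psi)$ by \autoref{proposition beta as an object}, so (e) holds for all $\p$. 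Hence $\theta$ is quasi-fibered if and only if $\sqrt{\p S}\in I(S,\psi)$ for all $\p\in\CSpec(R,\phi)$. By \autoref{definition phi ideals} this says exactly that $\psi$ descends along $S\to S/\sqrt{\p S}$; composing with localization at $\p$ and invoking \autoref{first proposition properties of comp ideal}(d) — using that radicals commute with localization, so $(S/\sqrt{\p S})_{\p}=S_{\p}/\sqrt{\p S_{\p}}$ is the reduced fiber $\bar{S}_{\p}$ — this is equivalent to $\psi$ descending to $\bar{S}_{\p}$; for the reverse direction of this last step, a descent of $\psi$ to $\bar{S}_{\p}$ forces the kernel of $S\to\bar{S}_{\p}$, which one checks equals $\sqrt{\p S}$, to be a $\psi$-ideal.

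With the equivalence in hand, the canonical square of Cartier pairs assembles the descent along $R\to\kappa(\p)$ (whose existence is the very definition of $\p\in\CSpec(R,\phi)$), the descent along $S\to\bar{S}_{\p}$ just produced, and the induced map $\kappa(\p)\to\bar{S}_{\p}$, with commutativity being functoriality of descent. The homeomorphism $\CSpec(\bar{S}_{\p},\bar{\psi}_{\p})\xrightarrow{\sim}g^{-1}\{\p\}$ carrying centers of $F$-purity to centers of $F$-purity is \autoref{first proposition properties of comp ideal}(c) applied to $S\twoheadrightarrow S/\sqrt{\p S}$ followed by \autoref{first proposition properties of comp ideal}(d) for localization at $\p$: the primes of $S/\sqrt{\p S}$ surviving localization and lying in the Cartier spectrum are precisely the primes of $S$ over $\p$ that are Cartier, i.e.\ $g^{-1}\{\p\}$. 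Next, if $\p\in\Schpec(R,\phi)$ then $\bar{\phi}_{\p}$ is a nonzero $\kappa(\p)$-linear map into a field, hence surjective; picking $a$ with $\bar{\phi}_{\p}(F^e_* a)=1$ and plugging into the residual square $\bar{\psi}_{\p}\circ F^e_*\bar{\theta}_{\p}=\bar{\theta}_{\p}\circ\bar{\phi}_{\p}$ yields $1_{\bar{S}_{\p}}\in\im\bar{\psi}_{\p}$, so $\bar{\psi}_{\p}$ is surjective; a ring with a surjective Cartier operator is $F$-pure, so $(\bar{S}_{\p},\bar{\psi}_{\p})$ is $F$-pure and in particular $\bar{\psi}_{\p}\neq 0$ whenever $\bar{S}_{\p}\neq 0$. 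Finally, for quasi-fibered $\theta$, being fibered means $\Schpec(S,\psi)=g^{-1}\Schpec(R,\phi)$ by \autoref{defprop.Fibered}(b); transported across the fiber homeomorphism this reads: for every $\p\in\CSpec(R,\phi)$, $\Schpec(\bar{S}_{\p},\bar{\psi}_{\p})\neq\emptyset$ implies $\p\in\Schpec(R,\phi)$, and since $\bar{S}_{\p}$ is a finite product of fields, $\Schpec(\bar{S}_{\p},\bar{\psi}_{\p})\neq\emptyset$ exactly when $\bar{\psi}_{\p}\neq 0$.

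I expect the main obstacle to be the bookkeeping identifying the various avatars of the fiber: that $\ker(S\to\bar{S}_{\p})$ really is $\sqrt{\p S}$ (equivalently, that $\sqrt{\p S}$ is saturated with respect to $R\setminus\p$, which is where the finiteness of the cover $\theta$, via lying-over and going-up, enters), that the residual operator $\bar{\psi}_{\q}$ at $\q\in Y_{\p}$ defining membership in $\Schpec(S,\psi)$ coincides with the $\q$-component of $\bar{\psi}_{\p}$ under $\bar{S}_{\p}=\prod_{\q\in Y_{\p}}\kappa(\q)$, and that the Cartier primes of $\bar{S}_{\p}$ match $g^{-1}\{\p\}$. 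Each of these is a routine consequence of the compatibility of descent of Cartier operators with localization and with quotients; the work is in arranging them so that the four displayed clauses of the corollary fall out in sequence.
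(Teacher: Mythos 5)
Your proposal is correct and is exactly the unwinding the paper intends: the corollary is stated without proof, and your route---reducing quasi-fiberedness to ``$\sqrt{\p S}\in I(S,\psi)$ for all $\p\in\CSpec(R,\phi)$'' via condition (e) of \autoref{defprop.QuasiFibered} and \autoref{proposition beta as an object}, then translating through the quotient/localization functoriality of \autoref{first proposition properties of comp ideal}---is the natural one. The single delicate point, which you rightly flag, is the identification $\ker(S\to\bar{S}_{\p})=\sqrt{\p S}$; incomparability only gives that each $\q\in Y_{\p}$ is minimal over $\p S$, and the converse (every minimal prime of $\p S$ lies over $\p$) is precisely the standing assertion the paper records in \autoref{notation.Ranks} for finite covers, so your appeal to it is consistent with the paper's own conventions.
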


\begin{proposition} \label{pro.FormalUnramificationGivesFiberness}
A homomorphism of pairs $\theta \: (R,\phi) \to (S,\psi)$ is fibered if $\theta \: R \to S$ is formally unramified. If $\theta$ is fibered then $\kappa(\q)/\kappa(\p)$ is separable for all $\q \in \Schpec(S,\psi)$. 
\end{proposition}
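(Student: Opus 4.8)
The plan is to prove the two assertions separately: the first through the fibre\-wise criterion of \autoref{cor.FibereJustification}, and the second through the separability criterion of \autoref{cor.FirstCriterionSeparability}. Throughout I write $f=\Spec\theta$ and, for $\p\in\Spec R$, let $\bar{S}_{\p}$ denote the reduced fibre over $\p$.

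For the first assertion, suppose $\theta\colon R\to S$ is formally unramified. By \autoref{cor.FibereJustification} it suffices to verify, for every $\p\in\CSpec(R,\phi)$, that (i) $\psi$ descends to an $\bar{S}_{\p}$-linear map $\bar{\psi}_{\p}\colon F^{e}_{*}\bar{S}_{\p}\to\bar{S}_{\p}$, and (ii) $\bar{\psi}_{\p}\neq0$ forces $\bar{\phi}_{\p}\neq0$. The first thing to observe is that formal unramifiedness makes the fibre $S\otimes_{R}\kappa(\p)=S_{\p}/\p S_{\p}$ a finite formally unramified, hence finite \'etale, $\kappa(\p)$-algebra; in particular it is reduced, so $\bar{S}_{\p}=S\otimes_{R}\kappa(\p)=\prod_{\q\in Y_{\p}}\kappa(\q)$ with every $\kappa(\q)/\kappa(\p)$ finite separable. (Granting the first sentence of the proposition, this remark will already yield the second; see the fourth paragraph.)

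Condition (i) says precisely that $\p S_{\p}\in I(S_{\p},\psi_{\p})$. Since Cartier cores commute with localization (\autoref{propdef cartier core}), the conditions of \autoref{thm.FibrationsHomos} localize, and the relevant invariants are insensitive to completion for $F$-finite rings, I would first reduce to the case where $R$ is complete local with maximal ideal $\p$. Then $S$ is a finite unramified algebra over a complete local ring, so, writing $S=\prod_{i}S_{i}$ for its finitely many local factors, Hensel's lemma presents each $S_{i}$ as a quotient of a finite \'etale local $R$-algebra; hence $S=R'/J$ with $R'$ finite \'etale over $R$ and $\pi\colon R'\twoheadrightarrow S$ the projection. \'Etale base change for the Frobenius pushforward provides a canonical identification $F^{e}_{*}R'=R'\otimes_{R}F^{e}_{*}R$; consequently $\phi$ extends uniquely to $\phi'=\id_{R'}\otimes_{R}\phi\in\omega_{F^{e}_{R'}}$, and the same identification shows $F^{e}_{*}R'$ is generated as an $R'$-module by the image of $F^{e}_{*}R$. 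Therefore the two $R'$-linear maps $\pi\circ\phi'$ and $\psi\circ F^{e}_{*}\pi$ from $F^{e}_{*}R'$ to $S$ (with $S$ an $R'$-module via $\pi$) coincide, since both restrict to $\theta\circ\phi$ on $F^{e}_{*}R$. As $\psi\circ F^{e}_{*}\pi$ factors through $F^{e}_{*}S=F^{e}_{*}R'/F^{e}_{*}J$, so does $\pi\circ\phi'$; that is, $\phi'(F^{e}_{*}J)\subset J$, so $J\in I(R',\phi')$ and $\psi=\phi'/J$. Finally $\p R'\in I(R',\phi')$ (again from $F^{e}_{*}R'=R'\otimes_{R}F^{e}_{*}R$ together with $\phi(F^{e}_{*}\p)\subset\p$), so $\p S=(\p R'+J)/J\in I(S,\psi)$ by \autoref{first proposition properties of comp ideal}, which is (i). For (ii), the same base change applied on the fibre gives $F^{e}_{*}\bar{S}_{\p}=\bar{S}_{\p}\otimes_{\kappa(\p)}F^{e}_{*}\kappa(\p)$, so $\bar{\psi}_{\p}$ is the unique $\bar{S}_{\p}$-linear extension of $\bar{\phi}_{\p}$ and vanishes whenever $\bar{\phi}_{\p}$ does. (Equivalently, $\theta$ is then the composite of the \'etale homomorphism of pairs $(R,\phi)\to(R',\phi')$ with the closed-immersion homomorphism $(R',\phi')\to(S,\psi)$, each of which is fibered by the characterizations in \autoref{thm.FibrationsHomos}, and fiberedness is visibly transitive.)

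For the last assertion, assume $\theta$ is fibered and take $\q\in\Schpec(S,\psi)$. By \autoref{defprop.Fibered} (equality of the $\Schpec$-spectra over the base) the contraction $\p\coloneqq\q\cap R$ lies in $\Schpec(R,\phi)$, so $\bar{\phi}_{\p}\neq0$, while $\bar{\psi}_{\q}\neq0$ by the definition of $\Schpec(S,\psi)$. The residual homomorphism of Cartier pairs $\bar{\theta}_{\q}\colon(\kappa(\p),\bar{\phi}_{\p})\to(\kappa(\q),\bar{\psi}_{\q})$ of \autoref{defprop compatible ideals contract in homomorphisms} exhibits the nonzero $\bar{\psi}_{\q}\in\Hom_{\kappa(\q)}(F^{e}_{*}\kappa(\q),\kappa(\q))$ as an extension of the nonzero $\bar{\phi}_{\p}\in\Hom_{\kappa(\p)}(F^{e}_{*}\kappa(\p),\kappa(\p))$, whence $\kappa(\q)/\kappa(\p)$ is separable by \autoref{cor.FirstCriterionSeparability}.

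The hard part is condition (i): producing the descent of $\psi$ to the (now \'etale) fibre, equivalently showing $\p S$ is a $\psi$-ideal. In contrast to \autoref{ex.NonContinuity}, where wild ramification blocks this, formal unramifiedness is exactly the hypothesis that makes it go through, and the one nontrivial ingredient is \'etale base change for $F^{e}_{*}$, which simultaneously carries $\phi$ over to $R'$ and pins down the defining ideal $J$ of $S=R'/J$ as $\phi'$-compatible.
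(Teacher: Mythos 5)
Your argument for the separability claim matches the paper's, which simply invokes \autoref{cor.FirstCriterionSeparability} on the residual extension. For the main claim you take a genuinely different route. The paper never passes to a henselian or complete local ring and never invokes the structure theory of unramified algebras: it proves directly that formal unramifiedness, together with $F$-finiteness of $S$, forces the relative Frobenius $F^e_\theta \: S \otimes_R F^e_* R \to F^e_* S$ to be surjective (its spectrum is finite, unramified since $\Omega_{F^e_\theta}=\Omega_\theta=0$, and universally injective, hence a closed immersion), and then verifies condition (b) of \autoref{thm.FibrationsHomos} in one line: if the image of $\phi^n\cdot r$ lies in $\p$ then the image of $\psi^n\cdot r$ lies in $\p S\subset\q$, because $F^{en}_*S$ is generated over $S$ by the image of $F^{en}_*R$. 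Your presentation via $S=R'/J$ with $R'$ finite \'etale is a pleasant structural picture, and the individual steps (\'etale base change for $F^e_*$, the $\phi'$-compatibility of $J$ and of $\p R'$, the lattice isomorphism from \autoref{first proposition properties of comp ideal}) are correct; but it costs you a reduction to the complete local case and delivers nothing the direct relative-Frobenius argument does not already give.

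There is, however, a gap of scope. A homomorphism of Cartier pairs is, by the paper's definition, an \emph{arbitrary} ring homomorphism making the square commute, and the proposition is meant in that generality (the remark immediately after applies it to formally \'etale maps, which include localizations). Your proof assumes from the outset that $\theta$ is finite: you need this both to say the fiber is a \emph{finite} \'etale $\kappa(\p)$-algebra and, crucially, to apply the structure theorem presenting $S$ as a quotient of a finite \'etale $R$-algebra over a henselian base. For a non-finite formally unramified map (\eg $\bF_p\to\overline{\bF}_p$, or $R\to W^{-1}R$) this reduction is unavailable, whereas the paper's argument still applies because $F^e_\theta$ is finite for the unrelated reason that $S$ is $F$-finite. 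Separately, the phrase ``the relevant invariants are insensitive to completion'' should be made precise --- one needs faithful flatness of $S_\p\to S_\p\otimes_{R_\p}\widehat{R_\p}$ together with the identification $F^e_*\widehat{S}=\widehat{S}\otimes_S F^e_*S$ for $F$-finite $S$ --- though this is standard and not a real obstruction.
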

\begin{proof}
   Use \autoref{cor.FirstCriterionSeparability} for the last statement. We explain next why if $\theta$ is formally unramified (and $S$ is $F$-finite by assumption) then its relative Frobenius $F_{\theta}^e \:S \otimes_R F^e_* R \to F^e_* S$ is surjective. Recall that a morphism of schemes is a closed immersion if and only if it is proper, unramified, and universally injective \cite[\href{https://stacks.math.columbia.edu/tag/04XV}{Tag 04XV}]{stacks-project}. In particular, an affine morphism of schemes is a closed immersion if and only if it is finite, unramified, and universally injective (since a flat closed immersion is open, this is used to show that an open immersion is nothing but an \'etale and universally injective morphism). Thus, we must show that $\Spec F_{\theta}^e$ is finite, unramified, and universally injective. Its finiteness follows from $S$ being $F$-finite. It is then unramified as $\Omega_{F^e_{\theta}} = \Omega_{\theta}$, which is zero as $\theta$ is formally unramified. Finally, $\Spec F^e_{\theta}$ is a universal homeomorphism (\cite[\href{https://stacks.math.columbia.edu/tag/0CCB}{Tag 0CCB}]{stacks-project}) and so universally injective.

    We must show that for all $\q \in \Spec R$ the inclusion $\upkappa_{\q}(S,\psi) \cap R \subset \upkappa_{\p}(R,\phi)$ is an equality, where $\p \coloneqq \q \cap R$. In fact, we prove that the containment $\q^{\psi^n}\cap R \subset (\q\cap R)^{\phin}$ is an equality for all $n  \in \bN$ (the case $n=0$ is trivial). In other words, we show that if $r \in R$ is such that the image of $\phi^n\cdot r$ is inside $\p$ then the image of $\psi^n \cdot r$ is inside $\q$. This follows right away from $F_{\theta}^e \:S \otimes_R F^e_* R \to F^e_* S$ being surjective as then the image of $\psi^n \cdot r$ is inside $\p S \subset \q$. 
\end{proof}

\begin{remark}
    Let $R \to S$ be formally unramified. If $\psi_1,\psi_2 \in \omega_{F^e_S} $ coincide on $F^e_*R$ then $\psi_1=\psi_2$. If $R \to S$ is formally \'etale, then every $\phi \in \omega_{F^e_R}$ extends uniquely to a map $\psi \in \omega_{F^e_S} $.
\end{remark}

\subsection{Categorical formulation} \label{sec.CategoricalFormulation}
In \autoref{defprop.QuasiFibered} and \autoref{defprop.Fibered}, the diagrams suggest that $\upbeta$ is a natural transformation. This can be made precise by defining a suitable category to interpret $\CZpec, \CSpec, \sU, \Schpec$ as functors from it into $\mathsf{Top}$. We define such category next.

\begin{remark}[{\cite{SchwedeFAdjunction,SchwedeCentersOfFPurity}}]
Let $R$ be a normal integral domain (or more generally a $\textbf{G}_1+\textbf{S}_2$ ring). To any non-degenerate $\phi\in \omega_{F^e}$ there corresponds an effective $\bZ_{(p)}$-divisor $\Delta_{\phi}$ on $X=\Spec R$ such that $K_R+\Delta_{\phi}\sim_{\bZ_{(p)}} 0$. Moreover, any such boundary is obtained in this way and two homomorphisms $\phi\in \omega_{F^e}$ and $\psi \in \omega_{F^d}$ yield the same boundary if and only if there is $n \in \langle e \rangle \cap \langle d \rangle \subset \bZ$ such that $\phi^{n/e} =\psi^{n/d} \cdot u$ for some unit $u\in R$. More generally, an effective $\bZ_{(p)}$-divisor $\Delta$ on $X$ such that $K_R+\Delta$ is $\bZ_{(p)}$-Cartier (say a \emph{tame boundary} on $X$) corresponds to a non-degenerate $R$-linear map $\varphi \: F^e_* R(D) \to R$ where $D$ is a Cartier divisor on $X$. A pair of non-degenerate $R$-linear maps $\varphi \: F^e_* R(D) \to R$ and $\psi \: F^d_* R(E) \to R$ where $D$ and $E$ are Cartier divisors on $X$ correspond to the same tame boundary if and only if there is $n \in \langle e \rangle \cap \langle d \rangle \subset \bZ$ such that $\varphi^{n/e} =\psi^{n/d} \circ F^{n}_*\upsilon$ where $\upsilon \: R(D) \to R(E)$ is an isomorphism of $R$-modules.
\end{remark}
\begin{remark} \label{rem.GeneralBoundaries}
Centers of $F$-purity are objects that one can more generally attach to $R$-linear maps of the form $\varphi \: F^e_* R(D) \to R$ for some Cartier divisor $D$. Indeed, a prime ideal $\p \subset R$ is a center of $F$-purity of $\varphi \: F^e_* R(D) \to R$ if $\varphi\big(F^e_* \p R(D)\big) \subset \p$ and $\varphi_{\p}$ is surjective. All we have done so far readily generalizes to this seemingly more general setup. However, $\p$ is a center of $F$-purity for $\varphi \: F^e_* R(D) \to R$ if and only if it is so for all those $\phi$'s with $D=0$ such that $\Delta_{\phi} \geq \Delta_{\varphi}$. Thus, we do not lose generality by reducing to the case in which $D=0$. Also, one may always work sufficiently locally when studying singularities and assume $D=0$ anyways. Thus, for the sake of simplicity, we restrict ourselves to the case $D=0$.
\end{remark}

\begin{definition}
    Let $R$ be a ring. We say that $\phi \in \omega_{F^e}$ and  $\psi \in \omega_{F^{d}}$ are \emph{boundary equivalent} if there is $n \in \langle e \rangle \cap \langle d \rangle \subset \bZ$ such that $\phi^{n/e} =\psi^{n/d} \cdot u$ for some unit $u\in R$. This induces an equivalence relation $\sim$ on $\bigsqcup_{e \in \bN_{+}} \omega_{F^e}$. We refer to an equivalence class as a \emph{tame boundary} on $R$. We let $\bigsqcup_{e \in \bN_{+}} \omega_{F^e} \to \sB_R$; $\phi \mapsto \Delta_{\phi}$, denote the corresponding quotient. That is, $\Delta_{\phi}$ denotes the  tame boundary of $\phi$.
\end{definition}

 One readily verifies that $I(R,\phi)\subset I(R,\phi^n)$ for all $n\in \bN_+$. The converse inclusion holds, for example, if $\phi$ is surjective. In general: 
\begin{defprop}\label{defprop boundary equivalence and F singularities}
    Let $R$ be a ring. Let $\omega_{F^e}\ni\phi \sim \psi \in \omega_{F^d}$ and $\Delta \in \sB_R$ be the corresponding boundary. Letting $I^{1/2}(R)$ be the radical ideals of $R$, we have:
    \begin{enumerate}
        \item $\upbeta_{\fra}(R,\phi)=\upbeta_{\fra}(R,\psi) \eqqcolon \upbeta_{\fra}(R,\Delta)$ for all $\fra \in I^{1/2}(R)$.
        \item $I(R,\phi)\cap I^{1/2}(R) =  I(R,\psi) \cap I^{1/2}(R) \eqqcolon I^{1/2}(R,\Delta)$.
        \item $\uptau_{\fra}(R,\phi)=\uptau_{\fra}(R,\psi) \eqqcolon \uptau_{\fra}(R,\Delta)$ for all non-degenerate $\fra \in I^{1/2}(R,\Delta)$.
\end{enumerate}
    Thus, $\upbeta(R,\Delta) \: \CZpec(R,\Delta) \to \CSpec(R,\Delta)$, $\sU(R,\Delta)$, and $\Schpec(R,\Delta)$ are well-defined.
\end{defprop}
\begin{proof}
    For (a), since $\fra$ and so $\upbeta_\fra$ are radical, it suffices to show that $ \upbeta_{\fra}(R,\phi) \subset \upbeta_{\fra}(R,\phi^n) \subset \sqrt{\upbeta_{\fra}(R,\phi)}$ for all $1< n \in \bN$. This is a consequence of the following.
    \begin{claim} \label{claim.DifferentTestElements}
        For every $a \in \bN$, if $r \in \fra^{\phi^{an}}$ then $r^{q^{n-1}} \in \fra^{\phi^{an+b}}$ for all $0\leq b\leq n-1$.
    \end{claim}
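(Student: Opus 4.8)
The plan is to deduce the claim from the transitivity of transposition (\autoref{pro.TransitivityTransposition}) together with the basic $R$-linearity of the iterated Cartier operator. Since $\phi^{an+b}=\phi^{an}\cdot\phi^{b}=\phi^{an}\circ F^{ean}_{*}\phi^{b}$, applying \autoref{pro.TransitivityTransposition} to the tower $R\xrightarrow{F^{ean}}R\xrightarrow{F^{eb}}R$ (with $\frT_{S/R}=\phi^{an}$ and $\frT_{T/S}$ the transposition operation induced by $\phi^{b}$) gives
\[
\fra^{\phi^{an+b}}=\bigl(\fra^{\phi^{an}}\bigr)^{\phi^{b}}\qquad\text{for all }a,b\in\bN .
\]
Writing $\frc\coloneqq\fra^{\phi^{an}}\in I(R)$, which contains $r$ by hypothesis, it therefore suffices to show $r^{q^{n-1}}\in\frc^{\phi^{b}}$ whenever $0\le b\le n-1$; that is, $\phi^{b}\bigl(F^{eb}_{*}(r^{q^{n-1}}s)\bigr)\in\frc$ for every $s\in R$.

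For this I would invoke the defining relation $\phi^{b}\bigl(F^{eb}_{*}(w^{q^{b}}z)\bigr)=w\,\phi^{b}(F^{eb}_{*}z)$. Because $0\le b\le n-1$ we have $n-1-b\ge 0$ and $q^{n-1}=q^{b}\cdot q^{n-1-b}$, so $r^{q^{n-1}}=\bigl(r^{q^{n-1-b}}\bigr)^{q^{b}}$ and hence
\[
\phi^{b}\bigl(F^{eb}_{*}(r^{q^{n-1}}s)\bigr)=r^{q^{n-1-b}}\,\phi^{b}\bigl(F^{eb}_{*}s\bigr).
\]
Since $q^{n-1-b}\ge 1$ and $r\in\frc$, we get $r^{q^{n-1-b}}\in\frc$, and as $\frc$ is an ideal the right-hand side lies in $\frc$, as required. (For $b=0$ this just records that $r^{q^{n-1}}\in\fra^{\phi^{an}}$ because $r\in\fra^{\phi^{an}}$.)

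I do not expect a genuine obstacle: the argument is purely formal, and one can even sidestep \autoref{pro.TransitivityTransposition} by expanding $\phi^{an+b}\bigl(F^{e(an+b)}_{*}(r^{q^{n-1}}t)\bigr)=\phi^{an}\bigl(F^{ean}_{*}\phi^{b}(F^{eb}_{*}(r^{q^{n-1}}t))\bigr)$ and running the same computation to reduce it to $\phi^{an}\bigl(F^{ean}_{*}(r^{q^{n-1-b}}\,\phi^{b}(F^{eb}_{*}t))\bigr)\in\fra$, which holds since $r\in\fra^{\phi^{an}}$. The only point requiring care is the bookkeeping of Frobenius powers — specifically, that $q^{n-1}$ is just large enough to absorb one factor $q^{b}$ and still leave a non-negative exponent — which is precisely why the range $0\le b\le n-1$ and the exponent $q^{n-1}$ appear in the statement, and why this power is the one that propagates through the proof of \autoref{defprop boundary equivalence and F singularities}(a).
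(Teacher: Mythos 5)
Your proof is correct and, especially in its second formulation (expanding $\phi^{an+b}=\phi^{an}\circ F^{ean}_*\phi^b$ and pulling a power of $r$ out of the inner $\phi^b$ via $F^{eb}$-linearity, then invoking $r\in\fra^{\phi^{an}}$), it is essentially the paper's own argument; the first formulation merely repackages the same computation through \autoref{pro.TransitivityTransposition}. The only cosmetic difference is that you extract the full factor $r^{q^{n-1-b}}=\bigl(r^{q^{n-1-b}}\bigr)^{q^b\cdot(1/q^b)}$ whereas the paper extracts a single $r=\bigl(r\bigr)^{q^b\cdot(1/q^b)}$ from $r^{q^{n-1}}$; both uses of $q^{n-1}\ge q^b$ are valid.
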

    \begin{proof}[Proof of claim]
        Note that:
        \begin{align*}
        \phi^{an+b}\big(F^{e(an+b)}_* r^{q^{n-1}}R\big) = \phi^{an}\Big(F^{ean}_* \phi^b\big(F^{eb}_* r^{q^{n-1}}  \big)R \Big) &= \phi^{an}\Big(F^{ean}_* r\phi^b\big(F^{eb}_* r^{q^{n-1-b}} \big) R\Big)\\
        &\subset \phi^{an}(F^{ean}_*rR)
        \end{align*}
        which is further contained in $\fra$ if $r \in \fra^{\phi^{an}}$. This shows the claim. \end{proof}
Part (b) follows from (a) and \autoref{proposition beta as an object}. For (c), it suffices to show $\uptau_\fra(R,\phi)=\uptau_\fra(R,\phin)$ for all $n \in \bN_{+}$. The inclusion $\supset$ is straightforward. The converse follows from \autoref{claim.DifferentTestElements} after taking $r$ a test element along $\fra$ and using the description of test ideals in terms of test elements in \cite[Lemma 4.12]{SmolkinSubadditivity}.
\end{proof}

\begin{definition}
    We refer to a pair $(R,\Delta)$ where $R$ is ring  and $\Delta \in \sB_R$ as a \emph{tame log ring}. A homomorphism of tame log rings $(R,\Delta_R) \to (S,\Delta_S)$ is a homomorphism of rings $\theta \: R \to S$ such that there exists $\phi\in \Delta_R$ and $\psi \in \Delta_S$ such that $\theta\: (R,\phi) \to (S,\psi)$ is a fibered homomorphism of Cartier pairs (\ie for all $\phi \in \Delta_R$ there is $n\gg 0$ such that $\phi^n$ lifts to a map $\psi$ with $\Delta_{\psi} = \Delta_S$ defining a fibered homomorphism). We denote the corresponding category by $\mathsf{TLRings}$---the \emph{category of tame log rings}.
\end{definition}

\begin{definition}
    We define four functors $\CZpec, \CSpec,\sU, \Schpec \:\mathsf{TLRings} \to \mathsf{Top}$.
\end{definition}

Our promised categorical formulation is the following. 

\begin{theorem}\label{theorem beta is retraction}
With notation as above, $\upbeta$ defines a natural retraction $\upbeta \: \CZpec \to \CSpec$ of $\CSpec \subset \CZpec$. Likewise, $\upbeta \: \sU \to \Schpec$ is a natural retraction of $\Schpec \subset \sU$.
\end{theorem}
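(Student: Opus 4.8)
The plan is to verify that the assignments $(R,\Delta)\mapsto \CZpec(R,\Delta)$, etc., are genuinely functorial on $\mathsf{TLRings}$ and that the retractions and inclusions already constructed at the level of a single pair assemble into natural transformations. The hard part is entirely bookkeeping: checking well-definedness on morphisms (independence of the choice of $\phi,\psi$) and checking naturality squares commute; the topological content has all been done in \autoref{defprop.QuasiFibered}, \autoref{defprop.Fibered}, \autoref{thm.FibrationsHomos}, and \autoref{defprop boundary equivalence and F singularities}.

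First I would pin down the four functors on morphisms. Given a homomorphism of tame log rings $\theta\:(R,\Delta_R)\to (S,\Delta_S)$, choose (by definition of $\mathsf{TLRings}$) representatives $\phi\in\Delta_R$, $\psi\in\Delta_S$ making $\theta\:(R,\phi)\to(S,\psi)$ a fibered homomorphism of Cartier pairs. Set $\CZpec(\theta)\coloneqq \CZpec\theta$ as in \autoref{defprop.QuasiFibered}, and similarly $\CSpec(\theta)\coloneqq g=\CSpec\theta$, $\sU(\theta)$, $\Schpec(\theta)$ from \autoref{defprop.Fibered}. The point to check is that these continuous maps of topological spaces do not depend on the chosen representatives: the underlying set map is always $\q\mapsto\q\cap R$, which is manifestly representative-independent, and by \autoref{defprop boundary equivalence and F singularities}(a)--(b) the topologies on $\CZpec$, $\CSpec$, $\sU$, $\Schpec$ depend only on $\Delta_R$ and $\Delta_S$; hence continuity, once established for one choice, holds for all. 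Functoriality (composition and identities) then follows because contraction of ideals is functorial and because a composite of fibered homomorphisms of Cartier pairs is again fibered --- here one invokes \autoref{pro.TransitivityTransposition} together with \autoref{thm.FibrationsHomos}(b) (the equality $\upkappa_{\q}(S,\psi)\cap R=\upkappa_{\q\cap R}(R,\phi)$ composes transitively through a tower $R\to S\to T$). I would also record that $\CSpec\subset\CZpec$ and $\Schpec\subset\sU$ are natural subfunctor inclusions: this is immediate since on morphisms both are given by $\q\mapsto\q\cap R$ and, for a fibered $\theta$, $\CSpec\theta$ and $\Schpec\theta$ are by construction the restrictions of $\CZpec\theta$.

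Next I would check the naturality squares. For $\upbeta\:\CZpec\to\CSpec$, the required square for a morphism $\theta$ is exactly the commutative diagram in \autoref{defprop.QuasiFibered}(b), which holds because $\theta$ is (quasi-)fibered; well-definedness of $\upbeta(R,\Delta)$ was established in \autoref{defprop boundary equivalence and F singularities}(a) (so $\upbeta$ really is a map out of $\CZpec(R,\Delta)$, not just $\CZpec(R,\phi)$). Similarly, for $\upbeta\:\sU\to\Schpec$ the naturality square is the one displayed in \autoref{defprop.Fibered}, valid because $\theta$ is fibered. That $\upbeta$ is a retraction of the inclusion in each case --- i.e.\ $\upbeta\circ(\text{incl})=\id$ --- is the content of \autoref{proposition beta as a map} (the map $\beta\:\Spec R\to\CSpec(R,\phi)$ retracts $\CSpec(R,\phi)\subset\Spec R$), applied on each object; naturality of the identity natural transformation is trivial, so composing with naturality of $\upbeta$ and of the inclusion gives that $\upbeta$ is a natural retraction. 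I would close by remarking that the second statement ($\upbeta\:\sU\to\Schpec$) is proved verbatim the same way, replacing \autoref{defprop.QuasiFibered} by \autoref{defprop.Fibered} throughout, since $\sU$ and $\Schpec$ are open, resp.\ locally closed, subspaces on which $\upbeta$ restricts by \autoref{pro.CartierCoreIsPrime}.
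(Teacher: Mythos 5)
Your proof is correct and follows exactly the route the paper intends: the theorem is stated without proof precisely because it is the assembly of \autoref{defprop boundary equivalence and F singularities} (well-definedness on objects and independence of representatives), \autoref{defprop.QuasiFibered}(b) and \autoref{defprop.Fibered} (the naturality squares), and \autoref{proposition beta as a map} (the objectwise retraction identity), which is what you verify. The only cosmetic point is that \autoref{pro.TransitivityTransposition} concerns transposes $\fra^{\frT}$ rather than Cartier cores, so for closure of fibered homomorphisms under composition the citation you actually need is just the transitivity of contraction in \autoref{thm.FibrationsHomos}(b) together with the ``for all $\phi$ there is $n\gg 0$'' clause in the definition of morphisms of tame log rings to reconcile the exponents of the chosen representatives.
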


\begin{remark}
    The commutative diagram in \autoref{cor.FibereJustification} defines a fibered coproduct if $\p \in \Schpec(R,\phi)$. The reason is that if $(A,\alpha)$ is a fibered extension of $(\kappa(\p),\bar{\phi}_{\p})$ then it is $F$-pure and so reduced. Thus, if it is further a fibered over $(S,\psi)$, then $S_{\p} \to A$ factors through $S_{\p} \to \bar{S}_{\p}$. However, general fibered coproducts do not exist in $\mathsf{TLRings}$. Indeed, let $R\coloneqq \F_2[x]$, $\phi\in \omega_{F_R}$,
    $S\coloneqq \F_2[y]$, and $\psi\in \omega_{F_S}$ be such that $\phi(\fstar 1)=\psi(\fstar 1)=1\in \F_2$. If the fibered coproduct $(R,\phi)\tensor_{(\F_2,\id)} (S,\psi)$ were to exist in $\mathsf{TLRings}$, one would have that there is a unique extension $F_* \bF_2[x,y] \to \bF_2[x,y]$ of both $\phi$ and $\psi$. However, there are as many extensions as elements in $\bF_2[x,y]$, for all we need to do is to specify is where to send $F_*xy$.
\end{remark}

\section{Fibered Transpositions} \label{sec.FiberedTranspositions}
In this section, we figure out how bad the ramification can be so that a crepant map $\theta\: (R,\Delta_R) \to (S,\Delta_S)$ between tame log rings is fibered. In particular, we prove the Main Theorem from the introduction.

\begin{definition}[{\cite[Definition 5.3]{SchwedeTuckerTestIdealFiniteMaps}, \cite[\S3]{CarvajalStablerFsignaturefinitemorphisms}}]\label{def.Transposition}
With notation as in \autoref{def.TranspositionOfIdeals}, we refer to a commutative diagram
\[
\xymatrix{
F^e_* S \ar[d]_-{F^e_* \frT} \ar[r]^-{\psi} & S  \ar[d]^-{\frT} \\
F^e_* R  \ar[r]^-{\phi} & R
}
\]
with $\frT$ non-degenerate as a \emph{transposition of Cartier pairs} $(\theta,\frT)\:(R,\phi) \to (S,\psi)$. We may write $\phit$ instead of $\psi$ and refer to it as a \emph{$\frT$-transpose} of $\phi$.
\end{definition}

\begin{proposition} \label{trace commutes with frobenius} Let $\thetat\colon\rphi\to\sphit$ be a transposition of Cartier pairs that is also a homomorphism of Cartier pairs. If $\phi$ is non-degenerate then $\frT$ commutes with $F^e$.
\end{proposition}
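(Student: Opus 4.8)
The plan is to squeeze, out of the two commuting squares at hand --- the transposition square $\frT\circ\psi=\phi\circ F^e_*\frT$ and the homomorphism-of-pairs square $\psi\circ F^e_*\theta=\theta\circ\phi$ --- an identity that forces $z_s\coloneqq\frT(s^q)-\frT(s)^q\in R$ into $\Ann_R\mathbf{1}_\phi$, and then to use non-degeneracy of $\phi$ to conclude $z_s=0$, i.e.\ that $\frT$ commutes with $F^e$.

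First I would combine $S$-linearity of $\psi$ with the homomorphism-of-pairs square. Since $F^e_*(\theta(r)\,s^q)=s\cdot F^e_*\theta(r)$ in the $S$-module $F^e_*S$, one gets, for all $r\in R$ and $s\in S$,
\[
\psi\bigl(F^e_*(\theta(r)\,s^q)\bigr)=s\cdot\psi\bigl(F^e_*\theta(r)\bigr)=s\cdot\theta\bigl(\phi(F^e_*r)\bigr),
\]
the last step being the homomorphism square evaluated at $F^e_*r$. Feeding the element $\theta(r)\,s^q$ into the transposition square, and using $R$-linearity of $\frT$ on the left and of $\frT$ inside $\phi$ on the right, this yields
\[
\phi(F^e_*r)\cdot\frT(s)=\frT\bigl(\theta(\phi(F^e_*r))\cdot s\bigr)=\phi\bigl(F^e_*\frT(\theta(r)\,s^q)\bigr)=\phi\bigl(F^e_*(r\,\frT(s^q))\bigr).
\]
On the other hand the bimodule relation $\phi\cdot a^q=a\cdot\phi$ (with $a=\frT(s)$) gives directly $\phi\bigl(F^e_*(r\,\frT(s)^q)\bigr)=\frT(s)\cdot\phi(F^e_*r)$. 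Subtracting the two, and using additivity of $F^e_*(-)$ and of $\phi$, I obtain $\phi\bigl(F^e_*(r\,z_s)\bigr)=0$ for every $r\in R$.

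It then remains to upgrade $\phi\bigl(F^e_*(r\,z_s)\bigr)=0\ (\forall r)$ to $z_s=0$. Specializing $r=z_s^{\,q-1}t$ and using $F^e_*(z_s^{\,q}t)=z_s\cdot F^e_*t$, the vanishing becomes $z_s\cdot\phi(F^e_*t)=0$ for all $t\in R$, i.e.\ $z_s\cdot\mathbf{1}_\phi=\mathbf{0}$. Since $\phi$ is non-degenerate, $\mathbf{1}_\phi\not\subset Z_{\mathbf{0}}$ (\autoref{def.TranspositionOfIdeals}), so $\mathbf{1}_\phi$ contains a nonzerodivisor of $R$ and hence $z_s=0$; equivalently, one invokes $\mathbf{0}^\phi\cap R=\mathbf{0}:\mathbf{1}_\phi$ from \autoref{pro.BasicPropertiesTranspositions}\autoref{key.Item}. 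Thus $\frT(s^q)=\frT(s)^q$ for all $s\in S$, which is the assertion.

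I do not expect a genuine obstacle: every step is a one-line manipulation. The one place that needs care is bookkeeping the three relevant actions --- the twist $t\cdot F^e_*u=F^e_*(t^q u)$, ordinary $R$-linearity of $\phi$ and $\frT$, and $S$-linearity of $\psi$ --- since it is precisely their interplay that produces the two cancellations above. It is worth noting that non-degeneracy of $\phi$ is used only at the very end; without it the argument still shows $\frT(s^q)-\frT(s)^q\in\Ann_R\mathbf{1}_\phi$ for every $s\in S$.
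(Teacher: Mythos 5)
Your proof is correct and follows essentially the same route as the paper's: the same chain of identities obtained by pushing $rs^q$ through the transposition square, the homomorphism square, and the bimodule relation $\phi\cdot a^q=a\cdot\phi$, yielding $\phi\cdot(\frT(s^q)-\frT(s)^q)=0$, followed by non-degeneracy. Your final step merely makes explicit what the paper leaves implicit, namely that $z_s\cdot\mathbf{1}_\phi=\mathbf{0}$ together with $\mathbf{1}_\phi\not\subset Z_{\mathbf{0}}$ forces $z_s=0$.
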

\begin{proof}
Observe that the following equalities hold for all $r \in R$, $s\in S$:
\[
\phi(F^e_* (\frT(s^q)r))=\phi(F^e_* \frT(s^q r)) = \frT(\psi(F^e_* rs^q)) 
 = \frT(s\psi(F^e_* r))
 = \phi( F^e_* r)\frT(s) 
 = \phi( F^e_* (\frT(s)^q r)).
\]
Hence, $\phi \cdot s'=0$ for $s'\coloneqq \frT(s^q)-\frT(s)^q$ and so $s'=0$ as $(R,\phi)$ is non-degenerate.
\end{proof}
\begin{proposition}[\cf {\cite[Corollary 4.2]{SchwedeTuckerTestIdealFiniteMaps}}] \label{equiv transposition and extension}
With notation as in \autoref{def.Transposition}, suppose that $\frT$ is strongly non-degenerate and commutes with $F^e$. Then, $(R,\phi) \to (S,\psi)$ is a homomorphism if it is a transposition. The converse holds if $R$ and $S$ are integral.
\end{proposition}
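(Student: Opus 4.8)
The plan is to prove the two implications separately, treating the forward direction by a direct bimodule computation and the converse by descent to fraction fields.

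\emph{Forward direction (transposition $\Rightarrow$ homomorphism).} I would verify the homomorphism square directly, i.e.\ show $\psi(F^e_*\theta(r)) = \theta(\phi(F^e_* r))$ for all $r\in R$. Strong non-degeneracy of $\frT$ means $\mathbf{0}^{\frT}=\mathbf{0}$, so two elements of $S$ coincide as soon as $\frT(-\cdot s)$ agrees on them for every $s\in S$. Fixing $r\in R$, $s\in S$, I would absorb $s$ into $\psi$ using $S$-linearity (so $s\cdot\psi(F^e_* x)=\psi(F^e_*(s^q x))$), apply the transposition square $\frT\circ\psi=\phi\circ F^e_*\frT$, then use $R$-linearity of $\frT$ together with $\frT(x^q)=\frT(x)^q$, and finally the bimodule relations for $\phi$, to obtain
\[
\frT\bigl(\psi(F^e_*\theta(r))\cdot s\bigr)=\phi\bigl(F^e_*\frT(s^q\theta(r))\bigr)=\phi\bigl(F^e_*(r\,\frT(s)^q)\bigr)=\phi(F^e_* r)\cdot\frT(s),
\]
whereas $\frT\bigl(\theta(\phi(F^e_* r))\cdot s\bigr)=\phi(F^e_* r)\cdot\frT(s)$ by $R$-linearity of $\frT$ alone. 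Strong non-degeneracy then gives the desired equality. This is the bookkeeping of the proof of \autoref{trace commutes with frobenius} run backwards, and I expect it to be routine.

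\emph{Converse, reduction step.} Assume now $R,S$ integral and $\theta\colon(R,\phi)\to(S,\psi)$ a homomorphism; I must show $\frT(\psi(F^e_* s))=\phi(F^e_*\frT(s))$ in $R$ for all $s\in S$. Since $R,S$ are domains, $\theta$ is injective, so I identify $R\subseteq S\subseteq L:=\Frac S$ and $K:=\Frac R\subseteq L$, and localize at $R\setminus\{0\}$ to get $\phi_K\colon F^e_* K\to K$, $\psi_L\colon F^e_* L\to L$, $\frT_K\colon L\to K$, with the homomorphism square localizing to the analogous square over $K$. Because $\frT$ is non-degenerate, $\frT_K\neq 0$, and it still commutes with $F^e$; by \autoref{second criterion separability} this forces $L/K$ separable and $\frT_K$ to generate the free rank-one $L$-module $\Hom_K(L,K)$, hence $\frT_K$ is non-singular. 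The elements $\frT(\psi(F^e_* s))$ and $\phi(F^e_*\frT(s))$ of $R$ map to $\frT_K(\psi_L(F^e_* s))$ and $\phi_K(F^e_*\frT_K(s))$ in $K$, and $R\hookrightarrow K$, so it suffices to prove the transposition square commutes over $K$.

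\emph{Converse, field case.} With $\frT_K$ non-singular, post-composition with the isomorphism $L\xrightarrow{\sim}\Hom_K(L,K)$, $1\mapsto\frT_K$, and Hom--tensor adjunction evaluated at $1$, give an isomorphism $\omega_{F^e_L}=\Hom_L(F^e_* L,L)\xrightarrow{\sim}\Hom_K(F^e_* L,K)$, $\psi\mapsto\frT_K\circ\psi$. Let $\psi_0\in\omega_{F^e_L}$ be the unique preimage of $\phi_K\circ F^e_*\frT_K$; then $(R\hookrightarrow S$ localized$,\frT_K)\colon(K,\phi_K)\to(L,\psi_0)$ is a transposition, hence by the forward direction a homomorphism, so $\psi_0$ restricts to $\phi_K$ on $F^e_* K$. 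The given square says $\psi_L$ does too. Since $L/K$ is separable the relative Frobenius is an isomorphism, so $F^e_* L=L\otimes_K F^e_* K$ and restriction identifies $\omega_{F^e_L}=\Hom_L(L\otimes_K F^e_* K,L)$ with $\Hom_K(F^e_* K,L)$; thus $\psi_L=\psi_0$ and $\frT_K\circ\psi_L=\phi_K\circ F^e_*\frT_K$, as needed. (One may alternatively invoke \cite[Corollary 4.2]{SchwedeTuckerTestIdealFiniteMaps} for this last step.) The only genuine obstacle is here: over $R$ and $S$ one is given strong non-degeneracy but not non-singularity, so there is no adjunction presenting $\Hom_R(F^e_* S,R)$ as $\omega_{F^e_S}$ and no guaranteed $\frT$-transpose of $\phi$; passing to fraction fields is precisely what upgrades $\frT$ to a non-singular map, after which the content reduces to the uniqueness of Frobenius-splitting extensions along a separable field extension.
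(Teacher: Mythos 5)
Your proof is correct and follows essentially the same route as the paper's: the forward direction is the identical chain of equalities $\frT(\psi(F^e_* r)s)=\phi(F^e_*\frT(rs^q))=\cdots=\frT(\phi(F^e_* r)s)$ closed off by strong non-degeneracy, and the converse rests on the same two inputs, namely generic separability obtained from \autoref{second criterion separability} and the resulting surjectivity of the relative Frobenius $L\otimes_K F^e_* K\to F^e_* L$. The only difference is packaging: the paper checks that $\phi\circ F^e_*\frT-\frT\circ\psi$ vanishes on the image of $F^e_\theta$ and extends by torsion-freeness of $R$, whereas you pass explicitly to fraction fields, construct the unique transpose $\psi_0$ of $\phi_K$ there, and identify $\psi_L$ with it by restricting to $F^e_* K$ --- a slightly longer but equivalent argument.
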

\begin{proof}
Suppose that $(R,\phi) \to (S,\psi)$ is a $\frT$-transposition. Let $r\in R$ and $s\in S$. Then:
\[ \frT(\psi(\fstare r)s) = \phi(\fstare \frT(rs^q)) 
 = \phi(\fstare (r\frT(s^q)))
 = \phi(\fstare r(\frT(s))^q) 
 = \frT(s)\phi(\fstare r)
 = \frT(\phi(\fstare r)s).
 \]
Since $\frT$ is (strongly) non-degenerate, $\psi(\fstare r)=\phi(\fstare r)$ for all $r\in R$; as required.

Conversely, suppose that $(R,\phi) \to (S,\psi)$ is a homomorphism where $S$ and $R$ are integral. In particular, $S/R$ is generically separable by \autoref{second criterion separability}. Then, to prove that $\phi \circ F^e_*\frT - \frT \circ \psi \: F^e_* S \to R$ is zero, it suffices to do it after composing it with $F^e_{\theta}\: S \otimes_R F^e_* R \to F^e_* S$ and further with both $S \to  S \otimes_R F^e_* R$ and  $F^e_* R \to S \otimes_R F^e_* R$, in which case it is direct.
\end{proof}

With notation as in \autoref{def.Transposition}, we consider the map 
\[h\coloneqq\CZpec(\theta,\frT) \: \CZpec(S,\phit) \to \CZpec(R,\phi),\quad \q \mapsto \q \cap R.\] However, $\q \mapsto \q \cap R$ no longer yields a map $g \: \CSpec(S,\phit) \to \CSpec(R,\phi)$ as the contraction of a $\phit$-ideal need not be a $\phi$-ideal. In some cases, however, one can ensure that the contraction of a $\phit$-ideal is a $\phi$-ideal; see \autoref{section contracting compatible stuff}. Hence, we do not get the inclusion $\upbeta_{\q}\cap R\subset \upbeta_{\q \cap R}$ for free. To fix this, we must define 
\[
\CSpec(\theta,\frT) \: \CSpec(S,\phit) \to \CSpec(R,\phi), \quad g\coloneqq h/{\sim} \: \q \mapsto \q \cap R \mapsto \upbeta_{\q \cap R}\rphi.
\] 

\begin{proposition}\label{Pro.EquivalencePreQuasiFibered}
Let $(\theta, \frT) \: (R,\phi) \to (S,\phit)$ be a transposition of Cartier pairs. Then, the following statements are equivalent: 
\begin{enumerate}
    \item The map $\CZpec(\theta,\frT) \:\CZpec(S,\phit) \to \CZpec(R,\phi)$ is continuous.
    \item The following diagram is conmutative
    \[
\xymatrixcolsep{5pc}\xymatrix{
\CZpec(R,\phi)\ar[d]_-{\upbeta(R,\phi)} & \CZpec(S,\phit) \ar[l]_-{\CZpec(\theta,\frT)} \ar[d]^-{\upbeta(S,\phit)} \\
\CSpec(R,\phi) & \CSpec(S,\phit) \ar[l]_-{\CSpec(\theta,\frT)}
}
    \]
    \item $\upbeta_{\q \cap R}\rphi \subset \upbeta_{\q}\sphit \cap R$ for all $\q \in \Spec S$.
    \item $\sqrt{\upbeta_{\fra}\rphi S}  \subset \upbeta_{\sqrt{\fra S}}\sphit$ for all $\fra \in I(R)$.
    \item $\sqrt{\upbeta_{\p}\rphi S}  \subset \upbeta_{\sqrt{\p S}}\sphit$ for all $\p \in \Spec R$.
\end{enumerate}    
    Furthermore, if $\theta \: R \to S$ is Cohen--Seidenberg (\ie going-down holds, \eg either it is flat or $R$ and $S$ are integral domains with $R$ normal)
then the inclusion in (d) (and so in (e)) is an equality if $\fra$ is radical. In that case, (e) is equivalent to each of the following two statements:
\begin{enumerate}
     \item[(f)] $f^{-1}\{\upbeta_{\p}\rphi\} = \upbeta\sphit \big(f^{-1}\{\p\}\big)$ for all $\p \in \Spec R$.
    \item[(h)] $f^{-1}  \CSpec(R,\phi)\subset \CSpec\sphit$.
\end{enumerate}
\end{proposition}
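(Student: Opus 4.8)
The plan is to run this proof entirely in parallel with \autoref{defprop.QuasiFibered}, of which the statement here is the transposition-theoretic twin. The one genuine difference is that for a transposition the contraction $\q\cap R$ of a $\phit$-ideal need not be a $\phi$-ideal, so wherever \autoref{defprop.QuasiFibered} silently used the identity $\q\cap R=\upbeta_{\q\cap R}(R,\phi)$ I will instead carry along the Cartier core $\upbeta_{\q\cap R}(R,\phi)$ throughout; this is exactly why $g=\CSpec(\theta,\frT)$ was defined as the composite $\q\mapsto\q\cap R\mapsto\upbeta_{\q\cap R}(R,\phi)$. The inputs I expect to use are \autoref{pro.TransitivityTransposition} and \autoref{pro.BasicPropertiesTranspositions} on the transposition side, and \autoref{pro.TopologicalDescriptionOFbETA}, \autoref{lem.NaturalityImpliesContinuity}, \autoref{pro.propertiesCZpec}, \autoref{proposition beta as a map}, \autoref{proposition beta as an object}, and \autoref{propdef cartier core} on the topological side.

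For $(a)\Leftrightarrow(b)$ I would argue as in \autoref{defprop.QuasiFibered}: by \autoref{pro.TopologicalDescriptionOFbETA} the two vertical maps are the $\mathrm{T}_0$-reflections $\upalpha$ of the quasi-Zariski spaces $\CZpec(R,\phi)$ and $\CZpec(S,\phit)$, and since $\upbeta(S,\phit)$ restricts to the identity on $\CSpec(S,\phit)$ one checks that the function $g$ agrees with $h/{\sim}$ whenever the latter exists. If $h$ is continuous it preserves the specialization-equivalence, so $h/{\sim}=g$ exists and the square in $(b)$ commutes in $\mathsf{Top}$ by \autoref{defprop quotient continuous map}; conversely, commutativity of $(b)$ forces $h$ to preserve that equivalence and $g=h/{\sim}$ to be continuous, whence $h$ is continuous by \autoref{lem.NaturalityImpliesContinuity} applied with $X=\CZpec(R,\phi)$.

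For the algebraic chain I would proceed as follows. Unwinding $g$, commutativity of the square in $(b)$ reads $\upbeta_{\q\cap R}(R,\phi)=\upbeta_{\upbeta_\q(S,\phit)\cap R}(R,\phi)$ for all $\q\in\Spec S$; since $\upbeta_\q(S,\phit)\subset\q$, since $\upbeta_{(-)}(R,\phi)$ is monotone, and since it produces the largest $\phi$-ideal contained in its argument (\autoref{propdef cartier core}, \autoref{proposition beta as an object}), this equality is equivalent to the single inclusion $(c)$. For $(c)\Rightarrow(d)$ I would write $\upbeta_{\sqrt{\fra S}}(S,\phit)=\bigcap_{\fra\subset\q\cap R}\upbeta_\q(S,\phit)$ and $\sqrt{\upbeta_\fra(R,\phi)S}=\bigcap_{\upbeta_\fra(R,\phi)\subset\q\cap R}\q$ and reduce the claim to the implication $\fra\subset\q\cap R\Rightarrow\upbeta_\fra(R,\phi)\subset\upbeta_\q(S,\phit)\cap R$, which follows from monotonicity of $\upbeta$ together with $(c)$; $(d)\Rightarrow(e)$ is immediate. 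Finally $(e)\Rightarrow(a)$: statement $(e)$ gives $\sqrt{\upbeta_\p(R,\phi)S}\subset\upbeta_{\sqrt{\p S}}(S,\phit)\subset\sqrt{\p S}$, so for a radical $\phi$-ideal $\fra$ and each $\p\in\Min\fra$ we get $\sqrt{\p S}=\upbeta_{\sqrt{\p S}}(S,\phit)\in I(S,\phit)$, hence $\sqrt{\fra S}=\bigcap_{\p\in\Min\fra}\sqrt{\p S}\in I(S,\phit)$; as the Cartier--Zariski closed subsets of $\CZpec(R,\phi)$ are exactly the $V(\fra)$ with $\fra$ a radical $\phi$-ideal and $h^{-1}V(\fra)=V(\sqrt{\fra S})$, continuity of $h$ follows.

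For the Cohen--Seidenberg refinements I would establish the equality in $(d)$/$(e)$ for radical $\fra=\p$ by the same going-down/going-up chase as in \autoref{defprop.QuasiFibered}: given $\q$ with $\upbeta_\p(R,\phi)\subset\q\cap R$, pick $\q'\subset\q$ over $\upbeta_\p(R,\phi)$ by going-down and $\q''\supset\q'$ over $\p$ by going-up (so $\sqrt{\p S}\subset\q''$), use the already-available square $(b)$ to see that $\upbeta_{\q'}(S,\phit)$ and $\upbeta_{\q''}(S,\phit)$ both lie over $\upbeta_\p(R,\phi)$, conclude $\upbeta_{\q''}(S,\phit)=\upbeta_{\q'}(S,\phit)\subset\q'$ by incomparability, and combine with $\sqrt{\p S}\subset\q''$ to land $\upbeta_{\sqrt{\p S}}(S,\phit)\subset\q'\subset\q$; the general radical case then follows by intersecting over minimal primes. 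The equivalence of $(e)$ with $(f)$ and $(h)$ is then bookkeeping: applying $V(-)$ to the equality in $(e)$ says precisely that every prime of $S$ lying over $\upbeta_\p(R,\phi)$ is a $\phit$-prime, which is $(f)$; running this over all $\p$ (equivalently, over $\fra$) yields $(h)$, and $(h)$ specialized to $\phi$-primes recovers $(e)$. The step I expect to be the main obstacle is exactly this Cohen--Seidenberg chase: it is the one place where several primes must be tracked simultaneously and where one must constantly remember that only the Cartier core $\upbeta_{\q\cap R}(R,\phi)$ — not $\q\cap R$ itself — carries $\phi$-structure; everything else is a faithful transcription of \autoref{defprop.QuasiFibered} with $\psi$ replaced by $\phit$ and $g=\CSpec\theta$ replaced by $\q\mapsto\upbeta_{\q\cap R}(R,\phi)$.
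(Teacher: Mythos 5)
Your expansion of the main equivalences (a)--(e) is correct and is exactly the paper's route: the paper's own proof is literally the single sentence that the argument for \autoref{defprop.QuasiFibered} applies \emph{mutatis mutandis}, and your write-up is a faithful rendering of that, with the one genuine adaptation (carrying $\upbeta_{\q\cap R}\rphi$ in place of $\q\cap R$, and checking that the redefined $g$ coincides with $h/{\sim}$) handled correctly via \autoref{pro.TopologicalDescriptionOFbETA}, \autoref{lem.NaturalityImpliesContinuity}, \autoref{propdef cartier core}, and \autoref{proposition beta as an object}.

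The place where \emph{mutatis mutandis} is not automatic---and where your stated justification does not go through---is the Cohen--Seidenberg chase. You claim that ``the already-available square (b)'' shows that $\upbeta_{\q''}\sphit$ lies over $\upbeta_{\p}\rphi$. Condition (c) only yields the sandwich $\upbeta_{\p}\rphi=\upbeta_{\q''\cap R}\rphi\subset\upbeta_{\q''}\sphit\cap R\subset\q''\cap R=\p$. In \autoref{defprop.QuasiFibered} the missing inclusion $\upbeta_{\q''}(S,\psi)\cap R\subset\upbeta_{\p}(R,\phi)$ comes from \autoref{defprop compatible ideals contract in homomorphisms}: there $\upbeta_{\q''}(S,\psi)\cap R$ is a $\phi$-ideal contained in $\p$, hence lands in the largest such, namely $\upbeta_{\p}(R,\phi)$. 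For a transposition the contraction of a $\phi\transp$-ideal need not be a $\phi$-ideal---which is the entire reason this proposition has to be restated separately---so that inclusion is precisely what you cannot invoke, and without it incomparability cannot be applied to $\upbeta_{\q'}\subset\upbeta_{\q''}$. Note that the case you actually need for (h) does survive: if $\p\in\CSpec\rphi$ then $\upbeta_{\p}\rphi=\p$, the sandwich collapses, and incomparability gives $\upbeta_{\q''}\sphit=\q''$ for every $\q''$ over $\p$. But for a general $\p$ with $\upbeta_{\p}\rphi\subsetneq\p$ you still owe an argument that $\upbeta_{\q''}\sphit$ contracts into $\upbeta_{\p}\rphi$ (equivalently, that no $\phi\transp$-prime sitting strictly between $\q'$ and $\q''$ contracts to a prime strictly between $\upbeta_{\p}\rphi$ and $\p$). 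You correctly flag the Cohen--Seidenberg step as the danger zone, but the write-up does not actually close it; in fairness, the paper's one-line proof does not supply this step either.
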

\begin{proof}
The proof of \autoref{defprop.QuasiFibered} applies \emph{mutatis mutandis} here. For instance, if (b) holds then $\upbeta_{\q\cap R}\rphi=\upbeta_{\upbeta_\q\sphit \cap R}\rphi\subset\upbeta_\q\sphit \cap R$ for all $\q$, and so (c) holds.
\end{proof}

\begin{definition} \label{def.FiberedQuasiFiberedTransposition}
A transposition of pairs $\thetat\colon\rphi\to\sphit$ is \emph{quasi-fibered} if:
\begin{enumerate}
    \item any of the equivalent conditions of \autoref{Pro.EquivalencePreQuasiFibered} holds,
    \item $\pt$ is radical for all $\p\in \CSpec\rphi$, so that there is an induced commutative diagram
\[
\xymatrix{
F^e_* \bar{S}_{\kappa(\p)} \ar[d]_-{F^e_* \bar{\frT}_{\p} } \ar[r]^-{\bar{\phi}_{\p}^{\frT}} &  \bar{S}_{\kappa(\p)}  \ar[d]^-{ \bar{\frT}_{\p}} \\
F^e_* \kappa(\p)  \ar[r]^-{\bar{\phi}_{\p}} & \kappa(\p)
}
\]
\item and $\bar{\phi}_\p\neq 0$ implies $\bar{\phi}_\p\transp$ is $F$-pure.
\end{enumerate}
We say that $\thetat$ is \emph{fibered} if moreover
\begin{enumerate}[resume]
    \item $\bar{\phi}_\p=0 $ implies $\bar{\phi}_\p\transp=0$ for all $\p\in \CSpec\rphi$.
\end{enumerate}
\end{definition}

\begin{remark}
In \autoref{def.FiberedQuasiFiberedTransposition}, 
(a) and (d) combined are equivalent to each of the following:
\begin{enumerate}
     \item $h$ is continuous and it restricts to a map $\sU\thetat \: \sU(S,\phit) \to \sU(R,\phi)$.
    \item  $h$ restricts to a map $\sU\thetat \: \sU(S,\phit) \to \sU(R,\phi)$, so that $\CSpec(\theta,\frT)$ restricts to a map $\Schpec\thetat\:\Schpec(S,\phit) \to \Schpec(R,\phi)$, and the following diagram is commutative
    \[
\xymatrixcolsep{5pc}\xymatrix{
\sU(R,\phi)\ar[d]_-{\upbeta(R,\phi)} & \sU(S,\phit) \ar[l]_-{\sU(\theta,\frT)} \ar[d]^-{\upbeta(S,\phit)} \\
\Schpec(R,\phi) & \Schpec(S,\phit) \ar[l]_-{\Schpec(\theta,\frT)}
}
    \]
    \item $\upkappa_{\q \cap R}\rphi \subset \upkappa_{\q}\sphit \cap R$ for all $\q \in \Spec S$.
    \item $\sqrt{\upkappa_{\fra}\rphi S}  \subset \upkappa_{\sqrt{\fra S}}\sphit$ for all $\fra \in I(R)$.
\end{enumerate}    
    When $\theta \: R \to S$ is Cohen--Seidenberg, the inclusion in (d) is an equality for $\fra$ radical.
\end{remark}

\begin{corollary}
Let $\thetat\colon \rphi\to\sphit$ be a transposition of Cartier pairs 
that is also a homomorphism of Cartier pairs. Then, it is quasi-fibered  (resp. fibered) as a transposition if and only if it is quasi-fibered (resp. fibered) as a homomorphism.
\end{corollary}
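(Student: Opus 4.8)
The plan is to show that, under the hypothesis that $\thetat$ is both a transposition and a homomorphism of Cartier pairs, the two a priori different notions of (quasi-)fiberedness literally coincide because the relevant maps $\CSpec$, $\upbeta$, and so on, agree in both frameworks. The key observation is that when $\thetat$ is a homomorphism of Cartier pairs, the contraction of a $\phit$-ideal \emph{is} a $\phi$-ideal by \autoref{defprop compatible ideals contract in homomorphisms} (applied to the homomorphism $\theta\:(R,\phi)\to(S,\phit)$, noting $\psi = \phit$). Hence $g \coloneqq \CSpec(\theta,\frT)$ as defined before \autoref{Pro.EquivalencePreQuasiFibered}, namely $\q \mapsto \upbeta_{\q\cap R}\rphi$, simplifies to the honest contraction map $\q\mapsto \q\cap R$, which is exactly $\CSpec\theta$ from the homomorphism setup. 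Therefore the two commutative-square conditions (condition (b) of \autoref{Pro.EquivalencePreQuasiFibered} and condition (b) of \autoref{defprop.QuasiFibered}) are verbatim the same diagram, and likewise conditions (c)--(h) match up term by term.

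First I would record that $\CZpec(\theta,\frT)$ and $\CZpec\theta$ are the same set map $\q\mapsto\q\cap R$ — this is immediate since both are $\Spec\theta$ restricted to underlying sets and the Cartier--Zariski topologies on $\CZpec(S,\phit)$ and $\CZpec(S,\psi)$ coincide because $I(S,\phit) = I(S,\psi)$ (as $\phit = \psi$). Then continuity of one is continuity of the other, giving the equivalence of condition (a) in \autoref{Pro.EquivalencePreQuasiFibered} with condition (a) in \autoref{defprop.QuasiFibered}. For the remaining conditions I would invoke the observation above that $g$ coincides with $\CSpec\theta$, so conditions (c), (d), (e), (f), (h) are literally identical in the two lists (with $\psi$ replaced by $\phit$). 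For the quasi-fibered case there is one extra clause in \autoref{def.FiberedQuasiFiberedTransposition}, namely (b): $\pt$ radical for all $\p\in\CSpec\rphi$, and (c): $\bar\phi_\p\neq0$ forces $\bar\phi_\p\transp$ to be $F$-pure. I would argue (b) is automatic: when $\thetat$ is a homomorphism and condition (a) of \autoref{Pro.EquivalencePreQuasiFibered} holds, then for $\p\in\CSpec\rphi$ we get $\sqrt{\p S}\in I(S,\psi) = I(S,\phit)$ (as in the proof of \autoref{defprop.QuasiFibered}, the step showing (e)$\Rightarrow$(a)), hence $\sqrt{\pS} = \upbeta_{\sqrt{\pS}}\sphit \supset \sqrt{\upbeta_\p S} = \sqrt{\pS}$; but also $\frT$ commuting with $F^e$ (by \autoref{trace commutes with frobenius}, using non-degeneracy of $\phi$, which holds since $\phi$ is a Cartier pair hypothesis forces... here I must be careful) gives that $\pt$ is radical by \autoref{pro.SeparableVSpurelyInseparable}(a). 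Clause (c) then follows from \autoref{cor.FibereJustification}, since $\bar\phi_\p\neq0$ means $\p\in\Schpec(R,\phi)$, whence $\bigl(\bar S_\p,\bar\psi_\p\bigr) = \bigl(\bar S_{\kappa(\p)},\bar\phi_\p\transp\bigr)$ is $F$-pure.

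The main obstacle I anticipate is the subtle point about non-degeneracy of $\phi$: \autoref{trace commutes with frobenius} needs $\phi$ non-degenerate to conclude $\frT$ commutes with $F^e$, and without $\frT$ commuting with $F^e$ I cannot deduce that $\pt$ is radical, which is needed for the diagram in \autoref{def.FiberedQuasiFiberedTransposition}(b) to even make sense. If $\phi$ is allowed to be degenerate the statement may need either that additional hypothesis or a separate argument showing the relevant transpositions are still radical in the degenerate case; I would either add ``$\phi$ non-degenerate'' to the hypotheses (harmless in all intended applications, where $R$ is reduced) or check directly that for a homomorphism of Cartier pairs with condition (a) holding, $\pt$ is forced to be radical by a localization argument at the minimal primes of $\sqrt{\pS}$ using \autoref{MinimalPrimesTranspostions} and the fact that $\sqrt{\pS}\in I(S,\phit)$. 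Finally, for the \emph{fibered} (as opposed to quasi-fibered) equivalence, I would observe that the extra clause \autoref{def.FiberedQuasiFiberedTransposition}(d), $\bar\phi_\p = 0 \Rightarrow \bar\phi_\p\transp = 0$, is exactly the condition in the last sentence of \autoref{cor.FibereJustification} characterizing fiberedness of the homomorphism $\theta\:(R,\phi)\to(S,\phit)$, so the two notions of fiberedness agree once the quasi-fibered equivalence is established.
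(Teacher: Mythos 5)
Your argument is correct and is essentially the paper's own proof, which is literally the one-liner ``Apply \autoref{pro.SeparableVSpurelyInseparable}, \autoref{cor.FibereJustification}'': the term-by-term matching of the conditions in \autoref{Pro.EquivalencePreQuasiFibered} with those in \autoref{defprop.QuasiFibered} (using that for a homomorphism the reverse inclusion $\upbeta_{\q}\cap R\subset\upbeta_{\q\cap R}$ is automatic, so $g$ is honest contraction), the radicality of $\p^{\frT}$ via $\frT$ commuting with $F^e$, and the reading-off of clauses (c) and (d) of \autoref{def.FiberedQuasiFiberedTransposition} from \autoref{cor.FibereJustification} are exactly what those citations encode. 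The non-degeneracy caveat you flag is genuine---\autoref{trace commutes with frobenius} does require $\phi$ non-degenerate to conclude that $\frT$ commutes with $F^e$, which is what feeds \autoref{pro.SeparableVSpurelyInseparable}(a)---but the paper makes the same tacit assumption, so your write-up is, if anything, more careful than the published proof.
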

\begin{proof}
Apply \autoref{pro.SeparableVSpurelyInseparable}, \autoref{cor.FibereJustification}.
\end{proof}

\begin{remark}\label{Rem.CommDiagNotTransposition}
Let $(\theta, \frT) \: (R,\phi) \to (S,\phit)$ be a quasi-fibered transposition. The diagram in (b) is not referred to as a transposition as $\bar{\frT}_{\p}$ may be zero (\ie $\pt = \mathbf{1}$). Nevertheless, if $\bar{\frT}_{\p} \neq 0$ then we would have (c) for free.  Likewise, for all $\p \in \CSpec(R,\phi)$, by \autoref{notation.Ranks}, there are induced commutative diagrams on residue fields 
\[
\xymatrix{
F^e_* \kappa(\q) \ar[d]_-{F^e_* \bar{\frT}_{\q} } \ar[r]^-{\bar{\phi}_{\q}^{\frT}} & \kappa(\q)  \ar[d]^-{ \bar{\frT}_{\q}} \\
F^e_* \kappa(\p)  \ar[r]^-{\bar{\phi}_{\p}} & \kappa(\p)
}
\]
for all $\q\in Y_\p$. However, this is not necessarily a transposition as $\bar{\frT}_{\q}$ may vanish. In fact, this is a transposition for all $\q\in Y_\p$ if and only if $\theta$ is tamely $\frT$-ramified over $\p$ by \autoref{cor.DifferentWaysToThinkOfTameRamification}. In that case, it would follow that $(\theta,\frT)$ is fibered.
\end{remark}

Using the transitivity of transpositions (\ie \autoref{pro.TransitivityTransposition}), we see right away that Cartier cores commute with transpositions:

\begin{proposition}\label{pro.betaCommutesWithTransposeAndApplications}
    Let $(\theta,\frT) \: (R,\phi) \to (S,\phit)$ be a transposition of Cartier pairs. Then, $ \upbeta_{\fra}\rphi^{\frT} = \upbeta_{\fra^{\frT}}\sphit$ for all $\fra\in I(R)$.
    In particular, if $\fra \in I(R,\phi)$ then $\fra^{\frT} \in I(S,\phit)$.
\end{proposition}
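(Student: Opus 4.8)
The plan is to prove the identity $\upbeta_{\fra}(R,\phi)^{\frT} = \upbeta_{\fra^{\frT}}(S,\phi\transp)$ by unwinding the definitions of $\upbeta$ and of transposition, then invoking the transitivity properties of transpositions already available in the excerpt. Recall that $\upbeta_{\fra}(R,\phi) = \fra \cap \upkappa_{\fra}(R,\phi) = \fra \cap \bigcap_{n\in\bN_+}\fra^{\phi^n}$, and similarly $\upbeta_{\fra^{\frT}}(S,\phi\transp) = \fra^{\frT}\cap\bigcap_{n\in\bN_+}(\fra^{\frT})^{(\phi\transp)^n}$. So the whole statement will reduce to comparing, termwise, the $S$-ideals $(\fra^{\phi^n})^{\frT}$ and $(\fra^{\frT})^{(\phi\transp)^n}$, together with the $n=0$ term $\fra^{\frT}$ itself, and then checking that transposition $(-)^{\frT}$ commutes with the relevant intersection.

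The key computational step is the claim that for all $n\in\bN$ one has $(\fra^{\phi^n})^{\frT} = (\fra^{\frT})^{(\phi\transp)^n}$, where on the left we first iterate the transpose of $\phi$ in $R$ and then apply $(-)^{\frT}$ up to $S$, while on the right we first go up to $S$ and then iterate $\phi\transp$. This is exactly a statement about composites of transpositions, so I would set it up using \autoref{pro.TransitivityTransposition}. Concretely, the defining commutative square of the transposition $(\theta,\frT)$ says that $\frT \circ \psi = \phi \circ F^e_*\frT$ as maps $F^e_* S \to R$, i.e. $\frT\circ\phi\transp$ (as an element of $\omega_{F^e S/R}=\Hom_R(F^e_* S, R)$, viewing $\phi\transp=\psi$) equals $\phi\circ (F^e_*\frT)$. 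Reading both sides as compositions of finite covers --- the Frobenius tower $R \xrightarrow{F^e} R \to S$ on one hand and $R \to S \xrightarrow{F^e} S$ on the other, with the respective ``trace'' data $\phi$ then $\frT$, versus $\frT$ then $\phi\transp$ --- \autoref{pro.TransitivityTransposition} gives $(\fra^{\phi})^{\frT} = \fra^{\frT\circ\psi} = \fra^{\phi\circ F^e_*\frT} = (\fra^{\frT})^{\phi\transp}$ for all $\fra$. (Here I need the elementary bimodule bookkeeping that $\phi\circ F^e_*\frT$, as a composite of the cover data for the tower $R\to S\xrightarrow{F^e} S$, has transpose operation equal to ``apply $(-)^{\frT}$ then $(-)^{\phi\transp}$''; this is immediate from the definition of $(-)^{\phi\transp}$ as the largest ideal with image under $\phi\transp$ in the given ideal.) The general $n$ then follows by induction: $(\fra^{\phi^{n+1}})^{\frT} = ((\fra^{\phi})^{\phi^n})^{\frT} = (\fra^{\phi})^{\frT, (\phi\transp)^n}$ by the inductive hypothesis applied to $\fra^{\phi}$, and this equals $((\fra^{\phi})^{\frT})^{(\phi\transp)^n} = ((\fra^{\frT})^{\phi\transp})^{(\phi\transp)^n} = (\fra^{\frT})^{(\phi\transp)^{n+1}}$ using the base case; I should be slightly careful about which variable the induction runs over, but rearranging the two transposition operations is exactly what \autoref{pro.TransitivityTransposition} licenses, and associativity of the $\omega$-bimodule composition handled before \autoref{lemm.CuteLemma} makes all bracketings agree.

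Granting the termwise identity, the proof concludes by taking intersections over $n$: by \autoref{pro.BasicPropertiesTranspositions}(b), transposition $(-)^{\frT}$ commutes with arbitrary intersections of ideals, so
\[
\upbeta_{\fra}(R,\phi)^{\frT} = \Bigl(\fra\cap\bigcap_{n\in\bN_+}\fra^{\phi^n}\Bigr)^{\frT} = \fra^{\frT}\cap\bigcap_{n\in\bN_+}(\fra^{\phi^n})^{\frT} = \fra^{\frT}\cap\bigcap_{n\in\bN_+}(\fra^{\frT})^{(\phi\transp)^n} = \upbeta_{\fra^{\frT}}(S,\phi\transp),
\]
which is the desired equality. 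For the ``in particular'' clause, suppose $\fra\in I(R,\phi)$. By \autoref{proposition beta as an object}, $\fra\in I(R,\phi)$ is equivalent to $\upbeta_{\fra}(R,\phi)=\fra$; applying $(-)^{\frT}$ and the identity just proved gives $\upbeta_{\fra^{\frT}}(S,\phi\transp) = \fra^{\frT}$, and invoking \autoref{proposition beta as an object} once more (now over $(S,\phi\transp)$) yields $\fra^{\frT}\in I(S,\phi\transp)$.

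\textbf{Anticipated main obstacle.} The only real subtlety is the bookkeeping in the termwise claim: making precise that the defining square of a transposition, iterated, literally is an instance of the tower situation of \autoref{pro.TransitivityTransposition}, and that the two ways of bracketing ``go up by Frobenius'' and ``go up by $\theta$'' commute on the level of the transposition operation. This is purely formal --- it rests on the associativity of the $\phi$-composition $\phi\cdot\phi'$ and on \autoref{pro.BasicPropertiesTranspositions}(c) (the adjunction $\frab\subset\fra^{\frT}\iff\frab_{\frT}\subset\fra$) --- but it must be written carefully so that the induction on $n$ is unambiguous. Everything else is a direct substitution of definitions.
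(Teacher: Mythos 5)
Your proposal is correct and follows essentially the same route as the paper: the paper likewise observes that $(\phi^{\frT})^n$ is a $\frT$-transposition of $\phi^n$, applies \autoref{pro.TransitivityTransposition} to get $(\fra^{\phi^n})^{\frT}=(\fra^{\frT})^{(\phi^{\frT})^n}$, and then intersects over $n\in\bN$. Your more detailed induction on $n$ and the use of \autoref{proposition beta as an object} for the ``in particular'' clause just make explicit what the paper leaves as an observation.
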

\begin{proof} Let $\fra\in  I(R)$.  Observe that $\phi^{\frT, n}=(\phit)^n$ is a $\frT$-transposition of $\phi^n$ for all $n\in \bN$, so we may write $\phi^{n\frT}=(\phit)^n$. By \autoref{pro.TransitivityTransposition}, we then see that $(\fra^{\phin})^{\frT}= \fra^{\phin \circ \frT}=\fra^{\frT \circ \phi^{n\frT}} = (\fra^{\frT})^{\phi^{n\frT}}$.
Taking the intersection over $n \in \bN$ gives the desired equality.
\end{proof}

\begin{remark}\label{rem tame ram in purely insep}
Let $\thetat\colon\rphi\to\sphit$ be a purely inseparable transposition of Cartier pairs and $\p\in \CSpec\rphi$. If $\pt$ is proper (\eg $\frT$ is surjective) then the previous proposition implies that $\q\coloneqq\sqrt{\pS}=\sqrt{\pt}\in \CSpec\sphit$. In fact, using \autoref{Pro.NonDegeneracyPhiIdeals}, one can see that $\theta$ is tamely $\frT$-ramified over $\p$ if $\upsigma(\mathbf{1}_S,\phit)\nsubset \q$ \ie $\q\in \Schpec\sphit$.
\end{remark}

The following lemma is implicit in the proof of \cite[Theorem 5.12]{CarvajalStablerFsignaturefinitemorphisms}.

\begin{lemma} \label{lem.TriviaInlcusionTestIdeals}
     Let $(\theta,\frT) \: (R,\phi) \to (S,\phit)$ be a transposition of Cartier pairs and $\q \in \Schpec (S,\phit)$ be such that $\p \coloneqq \q \cap R \in \Schpec(R,\phi)$. Then $\uptau_{\p}(R,\phi) \supset \frT\big(\uptau_{\q}(S,\phit)\big)$.
\end{lemma}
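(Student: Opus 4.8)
The statement compares test ideals along centers of $F$-purity. Recall from the definition of $\uptau_{\fra}(R,\phi)$ (see the definition-proposition citing \cite{SmolkinSubadditivity} and related references) that $\uptau_{\p}(R,\phi)$ is the unique smallest $\phi$-ideal not contained in $Z_{\p}=\p$; by \autoref{rem.FPurityAsPureFRgularity} it equals the test ideal along the center $\p$ and is realized by test elements. The strategy is: produce a candidate $\phi$-ideal inside $\uptau_{\p}(R,\phi)$'s role, namely $\frT\big(\uptau_{\q}(S,\phit)\big)$, show it is a $\phi$-ideal, and show it is not contained in $\p$; then minimality of $\uptau_{\p}(R,\phi)$ forces the desired containment $\uptau_{\p}(R,\phi) \subset \frT\big(\uptau_{\q}(S,\phit)\big)$. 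Wait --- the inclusion in the statement goes the other way: $\uptau_{\p}(R,\phi) \supset \frT\big(\uptau_{\q}(S,\phit)\big)$. So instead I would show $\frT\big(\uptau_{\q}(S,\phit)\big)$ is a $\phi$-ideal contained in $\uptau_{\p}(R,\phi)$ by a different route: first that it is a $\phi$-ideal, and then that, being a nonzero-ish $\phi$-ideal sitting below $\uptau_{\q}$ after applying $\frT$, it must lie in the minimal such ideal $\uptau_{\p}$.

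\textbf{Key steps.} First I would check that $\frT\big(\uptau_{\q}(S,\phit)\big)$ is a $\phi$-ideal. Since $(\theta,\frT)$ is a transposition, we have the commutativity $\phi \circ F^e_*\frT = \frT \circ \psi$ with $\psi = \phit$. Given $\frab := \uptau_{\q}(S,\phit) \in I(S,\phit)$, i.e. $\psi(F^e_* \frab) \subset \frab$, we compute $\phi\big(F^e_* \frT(\frab)\big) = \frT\big(\psi(F^e_*\frab)\big) \subset \frT(\frab)$, using that $\frT$ is $R$-linear so it commutes with forming $F^e_*$ of an ideal in the sense that $F^e_*\frT(\frab) = F^e_*\frT(F^e_*\frab)$ under the identifications. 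Hence $\frT(\frab) \in I(R,\phi)$. Second, I would show $\frT\big(\uptau_{\q}(S,\phit)\big) \subset \uptau_{\p}(R,\phi)$. For this, note $\p = \q \cap R \in \Schpec(R,\phi)$ means $\phi$ is non-degenerate along $\p$, so by the defining property $\uptau_{\p}(R,\phi)$ is the smallest $\phi$-ideal not contained in $\p$; equivalently, every $\phi$-ideal $\frc$ with $\frc \not\subset \p$ contains $\uptau_{\p}(R,\phi)$, and every $\phi$-ideal contained in $\p$ ... is contained in $\uptau_{\p}$? No. The correct formulation: $\uptau_{\p}(R,\phi) = \bigcap \{\frc \in I(R,\phi) : \frc \not\subset \p\}$. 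So to conclude $\frT(\frab) \subset \uptau_{\p}(R,\phi)$ I must show $\frT(\frab) \subset \frc$ for every $\phi$-ideal $\frc \not\subset \p$. Given such $\frc$, its transpose $\frc^{\frT} \in I(S,\phit)$ by \autoref{pro.betaCommutesWithTransposeAndApplications}, and since $\frc \not\subset \p$ and $\q$ lies over $\p$, one checks $\frc^{\frT} \not\subset \q$: indeed if $\frc^{\frT} \subset \q$ then $\frT(\frc^{\frT}) \subset \frT(\q)$, but we need $\frc^{\frT} \not\subset Z_\q = \q$; here I would use that $\q \in \Schpec(S,\phit)$ so $\phit$ is non-degenerate along $\q$, together with $\frc^{\frT} \cap R \supset \frc \not\subset \p$. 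Then by minimality of $\uptau_{\q}(S,\phit)$, $\frab = \uptau_{\q}(S,\phit) \subset \frc^{\frT}$, whence $\frT(\frab) \subset \frT(\frc^{\frT}) \subset \frc$ by \autoref{pro.BasicPropertiesTranspositions}(c). Intersecting over all such $\frc$ gives $\frT(\frab) \subset \uptau_{\p}(R,\phi)$.

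\textbf{Main obstacle.} The delicate point is the implication ``$\frc \not\subset \p$ $\Rightarrow$ $\frc^{\frT} \not\subset \q$'' where $\q$ is a specified prime over $\p$ (not merely ``$\frc^{\frT} \not\subset \sqrt{\p S}$''). This is exactly where the hypothesis $\q \in \Schpec(S,\phit)$, i.e. $\bar{\psi}_{\q} \neq 0$, must be used --- it guarantees that $\frc^{\frT}$, which after contraction to $R$ avoids $\p$, cannot be swallowed by the particular prime $\q$. I would argue this via \autoref{pro.BasicPropertiesTranspositions}(e): $\frc^{\frT} \cap R = \frc : \mathbf{1}_{\frT} = \frc$ when $\frT$ is non-degenerate along $\frc$ --- and non-degeneracy of $\frT$ along $\frc$ should follow from $\q \in \Schpec$ via \autoref{MinimalPrimesTranspostions} or \autoref{cor.DifferentWaysToThinkOfTameRamification}, since $\bar{\frT}_{\q} \neq 0$. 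Once $\frc^{\frT} \cap R = \frc \not\subset \p$ and $\q$ lies over $\p$, we get $\frc^{\frT} \not\subset \q$. Everything else is formal manipulation with transposes, so this non-degeneracy bookkeeping is the crux. Since this lemma is explicitly noted to be implicit in \cite[Theorem 5.12]{CarvajalStablerFsignaturefinitemorphisms}, I expect the argument there can be cited or lightly adapted rather than redone from scratch.
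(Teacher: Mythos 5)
Your argument is correct, but it takes a genuinely different route from the paper's. The paper works ``bottom-up'' with test elements: it takes a test element $c$ for $(S,\phit)$ along $\q$ and verifies, using the iterated transposition identity $\frT\circ\phi^{\frT,n}=\phi^n\circ F^{en}_*\frT$, that $\frT(s''c)$ is a test element for $(R,\phi)$ along $\p$ for every $s''\in S$. You instead work ``top-down'' from the characterization $\uptau_{\p}(R,\phi)=\bigcap\{\frc\in I(R,\phi)\mid \frc\not\subset\p\}$: for each such $\frc$ you pass to $\frc^{\frT}\in I(S,\phit)$ via \autoref{pro.betaCommutesWithTransposeAndApplications}, observe $\frc^{\frT}\not\subset\q$, invoke minimality of $\uptau_{\q}(S,\phit)$ to get $\uptau_{\q}(S,\phit)\subset\frc^{\frT}$, and then apply $\frT$ using \autoref{pro.BasicPropertiesTranspositions}(c). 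Both routes are legitimate within the paper's framework; yours trades the explicit test-element description (cited from Smolkin) for the compatibility of transposes with $\phi$-ideals, and it makes transparent where the hypotheses enter: $\p\in\Schpec(R,\phi)$ and $\q\in\Schpec(S,\phit)$ are needed only so that the two test ideals are defined.

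One correction to your own assessment: the step you flag as the ``main obstacle''---deducing $\frc^{\frT}\not\subset\q$ from $\frc\not\subset\p$---is not delicate, and the machinery you propose for it (non-degeneracy of $\frT$ along $\frc$, or $\bar{\frT}_{\q}\neq 0$) is neither available as a hypothesis nor needed. The unconditional containment $\frc\subset\frc^{\frT}\cap R$ from \autoref{pro.BasicPropertiesTranspositions}(e) already does the job: if $\frc^{\frT}\subset\q$ then $\frc\subset\frc^{\frT}\cap R\subset\q\cap R=\p$, contradicting $\frc\not\subset\p$. Likewise, your first step showing that $\frT\big(\uptau_{\q}(S,\phit)\big)$ is itself a $\phi$-ideal is true but not needed for the stated containment.
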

\begin{proof}
    Let $c \in S$ be such that for all $s\in S \setminus \q$ there is $n$ and $s'\in S$ such that $\phi^{\frT,n}(F^{en}_*s s') = c$ \ie $c$ is a test element along $\q$. It suffices to prove that, for every $s''\in S$, $\frT(s''c)$ has the following property: for all $r \in R\setminus \p$ there is $n\in \N$ and $r'$ such that $\phi^n(F^{en}_* rr')=\frT(s''c)$. Let $s''\in S$ and $r \in R \setminus \p$, so that $r \in S \setminus \q$. There is $n\in \N$ and $s'\in S$ such that $\phi^{\frT,n}(F^{en}_*r s') = c$ and so $\phi^{\frT,n}(F^{en}_*r s' s''^{q^n}) = s''c$. Applying $\frT$ to this last equality yields $
    \phi^n\big(F^{en}_* r \frT(s' s''^{q^n})\big) = \frT(s''c)$
    so $r' \coloneqq \frT(s' s''^{q^n}) $ works.
\end{proof}

\begin{proposition}\label{pro.minimalPrimesOfptInCSpec}
     Let $(\theta,\frT) \: (R,\phi) \to (S,\phit)$ be a transposition of Cartier pairs and $\p\in \Schpec\rphi$. Then, the minimal primes of $\p^{\frT}$ (if any) belong to $Y_{\p} \cap \CSpec(R,\phit)$. If $\pt$ is radical then its minimal primes belong to $Y_{\p} \cap \Schpec(R,\phit)$. If a minimal prime $\q$ of $\p^{\frT}$ is in $\Schpec(R,\phit)$ then  $\uptau_\p\rphi=\frT(\uptau_\q\sphit)$. Conversely, if $\q \in Y_{\p} \cap \Schpec(R,\phit)$ is such that $\uptau_\p\rphi=\frT(\uptau_\q\sphit)$ then $\q$ is a minimal prime of $\p^{\frT}$.
\end{proposition}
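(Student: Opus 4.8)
The plan is to break \autoref{pro.minimalPrimesOfptInCSpec} into four assertions and prove each in turn, leaning heavily on the machinery already built. First, the claim that the minimal primes of $\p^{\frT}$ lie in $Y_{\p}\cap\CSpec(S,\phit)$: by \autoref{pro.betaCommutesWithTransposeAndApplications} applied to $\fra=\p$ we get $\upbeta_{\p}(R,\phi)^{\frT}=\upbeta_{\p^{\frT}}(S,\phit)$, and since $\p\in\Schpec(R,\phi)\subset\CSpec(R,\phi)$ we have $\upbeta_{\p}(R,\phi)=\p$ by \autoref{proposition beta as an object}/\autoref{proposition beta as a map}; hence $\p^{\frT}=\upbeta_{\p^{\frT}}(S,\phit)\in I(S,\phit)$, so by \autoref{first proposition properties of comp ideal}(b) every associated prime of $\p^{\frT}$, in particular every minimal prime, is a $\phit$-ideal, i.e.\ lies in $\CSpec(S,\phit)$. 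That the minimal primes lie over $\p$ (i.e.\ in $Y_{\p}$) is \autoref{MinimalPrimesTranspostions}(a)/(b) once we know $\frT$ is non-degenerate along $\p$ — which holds since $\p$ is a center of $F$-purity for $\phi$ and $\frT$ is non-degenerate by the standing hypothesis of \autoref{def.Transposition}; more directly, a proper associated prime of $\p^{\frT}$ contracts to $\p$ by \autoref{pro.BasicPropertiesTranspositions}(e) since $\p$ is prime.

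For the second assertion, suppose $\p^{\frT}$ is radical. Then $\p^{\frT}=\upbeta_{\p^{\frT}}(S,\phit)$ is a radical $\phit$-ideal; I want to upgrade ``$\q\in\CSpec(S,\phit)$'' to ``$\q\in\Schpec(S,\phit)$'' for each minimal prime $\q$ of $\p^{\frT}$. Since $\p^{\frT}$ is radical, its minimal primes are exactly its associated primes, and by \autoref{Pro.NonDegeneracyPhiIdeals} applied to $\fra=\p^{\frT}$ it suffices to show $\phit$ is non-degenerate along $\p^{\frT}$ — equivalently $(\p^{\frT})^{\phit}=\p^{\frT}$ (using radicality and \autoref{Pro.NonDegeneracyPhiIdeals}). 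But $(\p^{\frT})^{\phit}=\p^{\frT\circ\phi^{\frT}}$; alternatively, since $\p^{\frT}=\upbeta_{\p^{\frT}}(S,\phit)$ is a fixed $\phit$-ideal when radical — here I would invoke \autoref{Pro.NonDegeneracyPhiIdeals} in the guise: a radical $\phit$-ideal $\fra$ with $\Ass_S S/\fra\subset\Schpec(S,\phit)$ iff $\phit$ non-degenerate along it, and the non-degeneracy follows because $\p^{\frT}$ being radical forces $\bar{\frT}_{\q}\ne 0$ for each minimal $\q$ by \autoref{MinimalPrimesTranspostions}(c), which is precisely $\q\in\Schpec$. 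So the cleanest route is: $\p^{\frT}$ radical $\Rightarrow$ (by \autoref{MinimalPrimesTranspostions}(c), noting $\sqrt{\p S}\subset\p^{\frT}$ automatically when $\p^{\frT}$ is radical) minimal primes of $\p^{\frT}$ are the $\q\in Y_{\p}$ with $\bar{\frT}_{\q}\ne0$, and $\bar{\frT}_{\q}\ne 0$ together with $\q\in\CSpec$ gives $\bar{\phi^{\frT}}_{\q}\ne 0$ via the residual commutative square in \autoref{Rem.CommDiagNotTransposition}, hence $\q\in\Schpec(S,\phit)$.

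For the third assertion — if a minimal prime $\q$ of $\p^{\frT}$ lies in $\Schpec(S,\phit)$ then $\uptau_{\p}(R,\phi)=\frT(\uptau_{\q}(S,\phit))$ — the inclusion ``$\supset$'' is exactly \autoref{lem.TriviaInlcusionTestIdeals}, since $\p=\q\cap R\in\Schpec(R,\phi)$ by the first assertion. For ``$\subset$'', I would run the converse direction of the argument in \cite[Theorem 5.12]{CarvajalStablerFsignaturefinitemorphisms}: take a test element $c\in R$ along $\p$ for $(R,\phi)$, and I need to produce, for every $s\notin\q$, an $n$ and $s'$ with $\phi^{\frT,n}(F^{en}_*ss')\in\uptau_{\q}(S,\phit)$ mapping under $\frT$ to something with $c$ in its span — the point being to exhibit $c\in\frT(\uptau_{\q}(S,\phit))$. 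Here the minimality of $\q$ over $\p^{\frT}$ is used to guarantee, via \autoref{MinimalPrimesTranspostions}(b)/(c), an element $s\in\bigcap_{\q'\in Y_{\p}\setminus\{\q\}}\q'$ with $\frT(sS)\not\subset\p$, which lets one transfer the test-element property from $(R,\phi)$ to $(S,\phit)$ along $\frT$. Finally, the converse assertion: if $\q\in Y_{\p}\cap\Schpec(S,\phit)$ satisfies $\uptau_{\p}(R,\phi)=\frT(\uptau_{\q}(S,\phit))$, then $\q$ is a minimal prime of $\p^{\frT}$ — I expect this to follow because $\uptau_{\q}(S,\phit)\subset\q$ strictly-ish, $\frT(\uptau_{\q})=\uptau_{\p}\not\subset\text{(smaller prime)}$, forcing $\bar{\frT}_{\q}\ne 0$ and hence $\q\supset\sqrt{\p S}$, $\q\in Y_{\p}$, and $\bar{\frT}_{\q}\ne 0$ is exactly the criterion in \autoref{MinimalPrimesTranspostions}(c) (using $\sqrt{\p S}\subset\p^{\frT}$ from \autoref{pro.SeparableVSpurelyInseparable}(a) in the separable case or the radicality input) for $\q$ to be minimal over $\p^{\frT}$.

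The main obstacle I anticipate is the ``$\subset$'' inclusion $\uptau_{\p}(R,\phi)\subset\frT(\uptau_{\q}(S,\phit))$ in the third assertion together with its converse: these are the only parts not formally handed to us by \autoref{pro.betaCommutesWithTransposeAndApplications}, \autoref{lem.TriviaInlcusionTestIdeals}, and \autoref{MinimalPrimesTranspostions}, and they require carefully choosing a test element and an auxiliary Chinese-Remainder element $s$ adapted to $\q$ (as in \autoref{rem.BeingExplictAboutT_q}), then chasing it through the $\frT$-compatible tower of powers $\phi^{\frT,n}$. The bookkeeping with test elements along centers of $F$-purity (as in \cite{SmolkinSubadditivity}) and the interaction with $\frT$ is where the real work lies; everything else is assembling already-proven statements.
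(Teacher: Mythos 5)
Your first two assertions are handled correctly. Part 1 is exactly the paper's route (\autoref{pro.betaCommutesWithTransposeAndApplications} gives $\pt\in I(S,\phit)$, then \autoref{first proposition properties of comp ideal}(b) and \autoref{MinimalPrimesTranspostions}(a)). For part 2 your route through \autoref{MinimalPrimesTranspostions}(c) and the residual square of \autoref{Rem.CommDiagNotTransposition} is workable, though the paper avoids any appeal to that square (whose commutativity it only really certifies under tame ramification, in \autoref{Cor.fiberedAtAPoint}) by arguing the contrapositive directly: if a minimal prime $\q$ of $\pt$ is not a center of $F$-purity, pick $s\notin\pt$ with $\frT(s\q)\subset\p$; then $\phi\big(F^e_*\frT(s^qS)\big)=\frT\big(s\,\phit(F^e_*S)\big)\subset\frT(s\q)\subset\p$, and $\p\in\Schpec\rphi$ forces $\frT(s^qS)\subset\p$, \ie $s^q\in\pt$, so $\pt$ is not radical.

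The genuine gap is in the two test-ideal assertions, exactly where you flag ``the real work.'' You only prove $\frT(\uptau_\q\sphit)\subset\uptau_\p\rphi$ (via \autoref{lem.TriviaInlcusionTestIdeals}); the reverse inclusion and the converse assertion are left as an uncarried-out sketch about test elements and Chinese-remainder elements, and your sketch of the converse even starts from the false premise ``$\uptau_\q\sphit\subset\q$'' (by definition $\uptau_\q\sphit$ is the smallest $\phit$-ideal \emph{not} contained in $Z_\q=\q$). No test-element bookkeeping is needed. The missing observation is that $\frT\big(\uptau_\q\sphit\big)$ is itself a $\phi$-ideal (apply $\phi\circ F^e_*\frT=\frT\circ\phit$ to $F^e_*\uptau_\q\sphit$), after which one has, for every $\q\in Y_\p\cap\Schpec\sphit$, the chain of equivalences
\[
\pt\subset\q\ \Longleftrightarrow\ \uptau_\q\sphit\not\subset\pt\ \Longleftrightarrow\ \frT\big(\uptau_\q\sphit\big)\not\subset\p\ \Longleftrightarrow\ \uptau_\p\rphi\subset\frT\big(\uptau_\q\sphit\big).
\]
The first equivalence is minimality of $\uptau_\q\sphit$ among $\phit$-ideals not contained in $\q$, using that $\pt$ is a $\phit$-ideal; the second is the adjunction of \autoref{pro.BasicPropertiesTranspositions}(c); the third is minimality of $\uptau_\p\rphi$ among $\phi$-ideals not contained in $\p$ (together with $\uptau_\p\rphi\not\subset\p$). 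Since, by incomparability, ``$\pt\subset\q$'' is the same as ``$\q$ is a minimal prime of $\pt$'' for $\q\in Y_\p$, this chain combined with \autoref{lem.TriviaInlcusionTestIdeals} yields both the equality $\uptau_\p\rphi=\frT(\uptau_\q\sphit)$ for minimal primes in $\Schpec\sphit$ and the converse, in one stroke.
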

\begin{proof}
The first statement follows from \autoref{pro.betaCommutesWithTransposeAndApplications} and \autoref{first proposition properties of comp ideal}. For the second one, suppose that $\q$ is a minimal prime of $\p^{\frT}$ that is not a center of $F$-purity. By \autoref{MinimalPrimesTranspostions}, there is $s\in S \setminus \p^{\frT}$ such that $\frT(s\q)\subset \p$. By assumption, $\phi^{\frT}(F^e_* S) \subset \q$ and so $\frT(s\phi^{\frT}(F^e_*S))\subset \frT(s\q)\subset \p$. In particular, $\phi(F^e_* \frT(s^qS)) \subset \p$ and so $\frT(s^qS) \subset \p$ as $\p \in \Schpec(R,\phi)$. Hence, $s^q \in \p^{\frT}$ and so $\p^{\frT}$ is not radical. For the other claims, note that for all $\q \in Y_{\p} \cap \Schpec(R,\phit)$:
\[
\p^{\frT} \subset \q \Longleftrightarrow \uptau_{\q}(S,\phit) \not\subset \p^{\frT} \Longleftrightarrow \frT\big(\uptau_{\q}(S,\phit)\big) \not\subset \p   \Longleftrightarrow  \uptau_{\p}(R,\phi) \subset \frT\big(\uptau_{\q}(S,\phit)\big).
\]
Then one applies \autoref{lem.TriviaInlcusionTestIdeals}.
\end{proof}

\begin{corollary}\label{Cor.fiberedAtAPoint}
Let $(\theta,\frT)\: \rphi \to \sphit$ be a transposition of Cartier pairs and $\p \in \Schpec(R,\phi)$. Then, $S/R$ is tamely $\frT$-ramified over $\p$ if and only if $Y_{\p} \subset \Schpec(S,\phit)$ and $\frT(\uptau_{\q}(R,\phit))=\uptau_{\p}(R,\phi)$ for all $\q \in Y_{\p}$. In that case, we obtain residual transpositions:
   \[
\xymatrix{
F^e_* \kappa(\q) \ar[d]_-{F^e_*  \Bar{\frT}_{\q}} \ar[r]^-{\bar{\phi}_{\q}^{\frT}} & \kappa(\q)  \ar[d]^-{\bar{\frT}_{\q}} \\
F^e_* \kappa(\p)  \ar[r]^-{\bar{\phi}_{\p}} & \kappa(\p)
}
\]
\end{corollary}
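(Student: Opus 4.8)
The plan is to read both directions directly off \autoref{pro.minimalPrimesOfptInCSpec}, whose forward and converse clauses are tailored to exactly this situation, feeding in two standing facts: that $\p^{\frT}$ is a $\phi^{\frT}$-ideal (this is the ``in particular'' of \autoref{pro.betaCommutesWithTransposeAndApplications}, applicable since $\p\in\Schpec(R,\phi)\subset\CSpec(R,\phi)=I(R,\phi)\cap\Spec R$), and the list of equivalent formulations of tame $\frT$-ramification over $\p$ from \autoref{cor.DifferentWaysToThinkOfTameRamification} (radicality of $\p^{\frT}$ together with non-vanishing of all $\bar{\frT}_{\q}$).

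For the forward implication, assume $\p^{\frT}=\sqrt{\p S}$. Then $\p^{\frT}$ is radical and its set of minimal primes is precisely $Y_{\p}$. \autoref{pro.minimalPrimesOfptInCSpec} then says, on one hand, that since $\p^{\frT}$ is radical its minimal primes lie in $Y_{\p}\cap\Schpec(S,\phi^{\frT})$, whence $Y_{\p}\subset\Schpec(S,\phi^{\frT})$; and, on the other hand, that for each minimal prime $\q$ of $\p^{\frT}$ lying in $\Schpec(S,\phi^{\frT})$ one has $\uptau_{\p}(R,\phi)=\frT\big(\uptau_{\q}(S,\phi^{\frT})\big)$, i.e.\ this holds for all $\q\in Y_{\p}$.

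For the converse, assume $Y_{\p}\subset\Schpec(S,\phi^{\frT})$ and $\frT\big(\uptau_{\q}(S,\phi^{\frT})\big)=\uptau_{\p}(R,\phi)$ for all $\q\in Y_{\p}$. The converse clause of \autoref{pro.minimalPrimesOfptInCSpec} makes every $\q\in Y_{\p}$ a minimal prime of $\p^{\frT}$; combined with its forward clause (every minimal prime of $\p^{\frT}$ lies in $Y_{\p}$) this gives $\mathrm{Min}(\p^{\frT})=Y_{\p}$. Since the finite cover $\theta$ has nilpotent kernel, $Y_{\p}\neq\emptyset$, so $\p^{\frT}$ sits inside a prime and is in particular a proper ideal; by \autoref{pro.BasicPropertiesTranspositions} this means $\frT$ is non-degenerate along $\p$, and then \autoref{MinimalPrimesTranspostions} forces $\Ass_S S/\p^{\frT}=Y_{\p}$, so $\p^{\frT}$ has no embedded primes. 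The remaining point—radicality of $\p^{\frT}$—I would check after localizing at each $\q\in Y_{\p}$: such a $\q$ lies in $\sU(S,\phi^{\frT})$, so $(S_{\q},(\phi^{\frT})_{\q})$ is $F$-pure and hence all of its compatible ideals are radical; and $\p^{\frT}S_{\q}$ is one such ideal (the localization of the $\phi^{\frT}$-ideal $\p^{\frT}$, cf.\ \autoref{first proposition properties of comp ideal}), so, being $\q S_{\q}$-primary and radical, it equals $\q S_{\q}$. Reassembling the embedded-prime-free primary decomposition of $\p^{\frT}$ then gives $\p^{\frT}=\bigcap_{\q\in Y_{\p}}\q=\sqrt{\p S}$, i.e.\ $S/R$ is tamely $\frT$-ramified over $\p$. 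Finally, once $\p^{\frT}=\sqrt{\p S}$, the residual squares over the $\q\in Y_{\p}$ are genuine transpositions by the last part of \autoref{Rem.CommDiagNotTransposition} together with \autoref{cor.DifferentWaysToThinkOfTameRamification}, this being precisely the statement that tame $\frT$-ramification over $\p$ is what guarantees $\bar{\frT}_{\q}\neq0$ for all $\q\in Y_{\p}$.

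The step I expect to be the crux is deducing radicality of $\p^{\frT}$ in the converse: it forces one to first recognize $\p^{\frT}$ as a $\phi^{\frT}$-ideal (so that its localizations stay within the world of compatible ideals), then to establish that $\p^{\frT}$ is proper and has no embedded primes (to control its primary decomposition), and only after that can the $F$-purity of the fibers over $Y_{\p}$ be used to collapse each primary component to its radical.
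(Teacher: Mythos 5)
Your proof is correct and follows essentially the same route as the paper: the forward direction and the identification $\mathrm{Min}(\p^{\frT})=Y_{\p}$ in the converse are both read off \autoref{pro.minimalPrimesOfptInCSpec}, and the whole point is indeed the radicality of $\p^{\frT}$. The only difference is that where the paper deduces radicality by citing \autoref{Pro.NonDegeneracyPhiIdeals} (non-degeneracy of $\phi^{\frT}$ along the compatible ideal $\p^{\frT}$, whose associated primes are exactly $Y_{\p}\subset\Schpec(S,\phi^{\frT})$, forces it to be radical) together with \autoref{pro.betaCommutesWithTransposeAndApplications}, you reprove that fact by hand via localization at each $\q\in Y_{\p}$ and primary decomposition --- a valid, slightly longer substitute for the same lemma.
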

\begin{proof}
The forward implication follows at once from \autoref{pro.minimalPrimesOfptInCSpec}. The converse follows if $\p^{\frT}$ is radical. For this, note that the minimal primes of $\pt$ are those of $\sqrt{\pS}$ and so $\pt$ is radical by \autoref{Pro.NonDegeneracyPhiIdeals} and \autoref{pro.betaCommutesWithTransposeAndApplications}.
\end{proof}

Putting everything together, we obtain the following: 

\begin{corollary}[Main Theorem] \label{cor.MainTheorem}
    Let $(\theta,\frT)\: \rphi \to \sphit$ be a transposition of Cartier pairs. Consider the following statements.
    \begin{enumerate}
        \item $\theta$ is tamely $\frT$-ramified over $\CSpec(R,\phi)$.
        \item $(\theta,\frT)$ is fibered and $\uptau_{\p}\rphi=\frT(\uptau_\q\sphit)$ for all $\p\in \Schpec\rphi$ and $\q \in Y_{\p}$.
        \item $\theta$ is tamely $\frT$-ramified over $\Schpec(R,\phi)$.
    \end{enumerate}
    Then, $(a)\Longrightarrow (b) \Longrightarrow (c)$. In particular, they are all equivalent if $(R,\phi)$ is $F$-pure.
\end{corollary}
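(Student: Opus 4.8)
The plan is to establish $(a)\Rightarrow(b)$ and $(b)\Rightarrow(c)$ and then observe that when $(R,\phi)$ is $F$-pure one has $\sU(R,\phi)=\Spec R$, hence $\Schpec(R,\phi)=\CSpec(R,\phi)$, so that $(a)$ and $(c)$ become literally the same statement and the chain closes into an equivalence. Throughout, $\theta$ finite with $\ker\theta\subset\sqrt{\mathbf{0}}$ forces lying-over, so all fibres $Y_\p$ are nonempty, and $\frT$ is non-degenerate by the definition of a transposition.

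For $(a)\Rightarrow(b)$ I would first fix $\p\in\Schpec(R,\phi)\subset\CSpec(R,\phi)$ and feed tame $\frT$-ramification over $\p$ into \autoref{Cor.fiberedAtAPoint}; this yields at once $Y_\p\subset\Schpec(S,\phit)$ and the test-ideal equalities $\frT\bigl(\uptau_\q(S,\phit)\bigr)=\uptau_\p(R,\phi)$ for all $\q\in Y_\p$. It then remains to check that $(\theta,\frT)$ is fibered, i.e.\ the four clauses of \autoref{def.FiberedQuasiFiberedTransposition}. Clause (b)---radicality of $\p^{\frT}$ for $\p\in\CSpec(R,\phi)$---is immediate from tameness ($\p^{\frT}=\sqrt{\p S}$) and it makes the reduced-fibre square at $\p$ well defined. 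For the topological clause (a) I would use \autoref{Pro.EquivalencePreQuasiFibered}(a): a radical $\phi$-ideal $\fra$ has all its minimal primes in $\Ass_R(R/\fra)\subset\CSpec(R,\phi)$ by \autoref{first proposition properties of comp ideal}, so tameness over each of them, combined with \autoref{pro.BasicPropertiesTranspositions} and \autoref{pro.betaCommutesWithTransposeAndApplications}, gives $\sqrt{\fra S}=\fra^{\frT}=\upbeta_{\fra^{\frT}}(S,\phit)\in I(S,\phit)$, so that $\CZpec(\theta,\frT)$ pulls Cartier--Zariski-closed sets back to closed sets. Clause (c) follows because, by \autoref{Cor.fiberedAtAPoint} and \autoref{Rem.CommDiagNotTransposition}, tameness over $\p$ makes each residual square at $\q\in Y_\p$ a genuine transposition with $\bar{\phi}_\q^{\frT}\neq 0$, so every point of $\Spec\bar{S}_{\kappa(\p)}$ is a center of $F$-purity of $\bar{\phi}_\p\transp$ and $\bigl(\bar{S}_{\kappa(\p)},\bar{\phi}_\p\transp\bigr)$ is $F$-pure. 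Clause (d) is essentially automatic: if $\bar{\phi}_\p=0$ then $\phi(F^e_*R)\subset\p$, hence $\phi(F^e_*\mathbf{1}_{\frT})\subset\p$ since $\mathbf{1}_{\frT}=\frT(S)\subset R$, and a short manipulation with the defining square of the transposition then forces $\phit(F^e_*S)\subset\p^{\frT}$, i.e.\ $\bar{\phi}_\p\transp=0$.

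For $(b)\Rightarrow(c)$ I would fix $\p\in\Schpec(R,\phi)$, so $\bar{\phi}_\p\neq 0$. Quasi-fiberedness gives $\p^{\frT}$ radical, so $\bar{S}_{\kappa(\p)}=S_\p/\p^{\frT}S_\p=\prod_{\q\in Y_\p}\kappa(\q)$, and fiberedness (clause (c) again) makes $\bigl(\bar{S}_{\kappa(\p)},\bar{\phi}_\p\transp\bigr)$ $F$-pure. As each $\q\in Y_\p$ is a minimal prime of $\bar{S}_{\kappa(\p)}$, \autoref{pro.CartierCoreIsPrime} shows the Cartier core of $\q$ in that pair is a center of $F$-purity contained in $\q$, hence equals $\q$; reducing $\bar{\phi}_\p\transp$ once more modulo $\q$ this says $\bar{\phi}_\q^{\frT}\neq 0$, i.e.\ $\q\in\Schpec(S,\phit)$. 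Thus $Y_\p\subset\Schpec(S,\phit)$, and together with the hypothesised equalities $\frT\bigl(\uptau_\q(S,\phit)\bigr)=\uptau_\p(R,\phi)$, \autoref{Cor.fiberedAtAPoint} yields tame $\frT$-ramification over $\p$. As $\p$ was arbitrary, $(c)$ holds; and when $(R,\phi)$ is $F$-pure, $\Schpec(R,\phi)=\CSpec(R,\phi)$ makes $(c)$ coincide with $(a)$, closing the equivalence.

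The step I expect to be the main obstacle is the fibered half of $(a)\Rightarrow(b)$: one must reconcile the ``residual transposition'' bookkeeping of \autoref{Cor.fiberedAtAPoint} and \autoref{Rem.CommDiagNotTransposition} with the $F$-purity clause (c) of \autoref{def.FiberedQuasiFiberedTransposition}, and one must be careful to verify the topological clause (a) using only radical $\phi$-ideals, so as never to have to reckon with the possible non-radicality of $\upsigma(R,\phi)$. Everything else is routine bookkeeping built from \autoref{Cor.fiberedAtAPoint}, \autoref{pro.betaCommutesWithTransposeAndApplications}, and \autoref{pro.CartierCoreIsPrime}.
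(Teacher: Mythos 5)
Your proposal is correct and follows essentially the same route as the paper, whose entire proof is a citation of \autoref{Rem.CommDiagNotTransposition} and \autoref{Cor.fiberedAtAPoint}; you simply unpack those two ingredients against the clauses of \autoref{def.FiberedQuasiFiberedTransposition}. One small slip worth fixing: in $(b)\Rightarrow(c)$ you write $\bar{S}_{\kappa(\p)}=S_\p/\p^{\frT}S_\p$, which is the conclusion (tameness) rather than a consequence of radicality of $\p^{\frT}$ alone --- but this identification is never actually used, since your argument only needs $\bar{S}_{\kappa(\p)}=\prod_{\q\in Y_\p}\kappa(\q)$, which is its definition.
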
 
\begin{proof}
It follows directly from \autoref{Rem.CommDiagNotTransposition} and \autoref{Cor.fiberedAtAPoint}.
\end{proof}

\begin{corollary} \label{cor.GaloisCase}
    Work in \autoref{setup.ClassicAlgebraicNumberTheorySetup} and let $(R,\phi) \to (S,\phi^{\Tr})$ be a homomorphism of $F$-pure Cartier pairs. If $S/R$ is tamely ramified over $\Schpec(R,\phi)$ then $\Tr\: S \to R$ is surjective. The converse holds if $L/K$ is Galois (in which case $S/R$ is tamely ramified everywhere).
\end{corollary}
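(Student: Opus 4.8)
The plan is to deduce \autoref{cor.GaloisCase} from the Main Theorem (\autoref{cor.MainTheorem}) together with the structure of tame ramification in the Galois case (\autoref{rem.TheGaloisCase}, \autoref{prop.DifferentNOTIONStameRamificationGaloisCase}). First I would invoke \autoref{cor.MainTheorem}: since $(R,\phi)$ is $F$-pure, the three conditions there are equivalent, so ``$S/R$ is tamely ramified over $\Schpec(R,\phi)$'' is the same as condition (a), i.e. $\theta$ is tamely $\Tr$-ramified over \emph{all} of $\CSpec(R,\phi)$ (note $\Schpec(R,\phi)=\CSpec(R,\phi)$ by $F$-purity). In particular, for every $\p \in \Schpec(R,\phi)$ the map $(\theta,\Tr)$ induces residual transpositions over each $\q \in Y_\p$ (\autoref{Cor.fiberedAtAPoint}), so $\bar{\Tr}_\q \neq 0$; by \autoref{cor.DifferentWaysToThinkOfTameRamification} this forces $\Tr_\p \: S_\p \to R_\p$ to be surjective for every $\p \in \Schpec(R,\phi)$. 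To upgrade this to global surjectivity of $\Tr \: S \to R$, I would argue that the $\phi$-ideal $\uplambda_{\mathbf{1}_{\Tr}}(R,\phi)$ (equivalently, $\mathbf{1}_{\Tr}$ itself, up to radical) cannot be proper: if it were, it would be contained in some center of $F$-purity $\p$ (using that $I(R,\phi)$ is finite and nonzero and that any proper $\phi$-ideal is contained in a maximal one, which lies in $\Schpec(R,\phi)$ by $F$-purity), contradicting the surjectivity of $\Tr_\p$. Hence $\Tr(S) = R$.

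For the converse, assume $L/K$ is Galois and $\Tr \: S \to R$ is surjective. I would first show that $\Tr$ is then surjective at every prime: by \autoref{rem.TheGaloisCase}, in the Galois normal setting tame $\Tr$-ramification over $\p$ is equivalent to $\Tr$ being non-degenerate along $\p$, and the key dichotomy there (``$\p^{\Tr}=S$ or $\p^{\Tr}=\sqrt{\p S}$'') means $\Tr_\p$ is surjective precisely when $\p^{\Tr} \neq S$, i.e. when $\mathbf{1}_{\Tr} \not\subset \p$ (\autoref{pro.BasicPropertiesTranspositions}(d)). Since $\Tr(S)=R$ gives $\mathbf{1}_{\Tr}=R$, this holds for \emph{every} $\p \in \Spec R$. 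Therefore $S/R$ is tamely ramified over every prime of $R$, in particular over $\Schpec(R,\phi)$, which is the claimed conclusion (and is what ``tamely ramified everywhere'' records). It remains only to observe that this does produce a genuine transposition/homomorphism situation at each point: since $L/K$ is separable (being Galois), \autoref{ex.TracaConmuteWithFrobenius} applies and $\Tr$ commutes with $F^e$, so $(\theta,\Tr)$ is indeed simultaneously a transposition and a homomorphism of Cartier pairs, and \autoref{cor.MainTheorem} applies to give the fibered conclusion as well.

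The main obstacle I anticipate is the passage from \emph{pointwise} surjectivity of $\Tr_\p$ over centers of $F$-purity to \emph{global} surjectivity of $\Tr \: S \to R$ in the forward direction. Pointwise surjectivity over all primes would immediately give it, but a priori we only know it over $\Schpec(R,\phi)$, so one genuinely needs the finiteness of $I(R,\phi)$ and the $F$-purity hypothesis: the ideal $\mathbf{1}_{\Tr}$ (or its radical, which is a $\phi$-ideal by \autoref{first proposition properties of comp ideal}) must be checked against the maximal elements of $I(R,\phi)$, all of which are centers of $F$-purity because $(R,\phi)$ is $F$-pure (see the remark following \autoref{propdef f pure locus}). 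If $\mathbf{1}_{\Tr}$ were proper its radical would be a nonzero proper $\phi$-ideal, hence contained in a maximal one, hence in some $\p \in \Schpec(R,\phi)$, forcing $\p^{\Tr}=\mathbf{1}$ and contradicting $\p^{\Tr}=\sqrt{\p S} \subsetneq S$. So $\mathbf{1}_{\Tr}=\mathbf{1}$, i.e. $\Tr$ is surjective; the rest is bookkeeping via the results already established.
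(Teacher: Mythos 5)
Your proof is correct, and the forward direction takes a slightly different route from the paper's. The paper argues via test ideals: by \autoref{rem.FPurityAsPureFRgularity} the $F$-pure pair $(R,\phi)$ is purely $F$-regular along the intersection $\fra$ of its maximal centers of $F$-purity, and the transformation rule from \autoref{cor.MainTheorem}(b) gives $\uptau_{\p}(R,\phi)=\Tr\big(\uptau_{\q}(S,\phi^{\Tr})\big)\subset \Tr(S)$ for each such $\p$; summing over them yields $R=\uptau_{\fra}(R,\phi)\subset\Tr(S)$. You instead use only the pointwise surjectivity of $\Tr_{\p}$ from \autoref{cor.DifferentWaysToThinkOfTameRamification} together with the observation that $\mathbf{1}_{\Tr}=\Tr(S)$ is a $\phi$-ideal, so that if it were proper it would sit inside a maximal proper element of $I(R,\phi)$, which is a prime and hence (by $F$-purity) a center of $F$-purity --- a contradiction. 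Both arguments lean on the same structural fact (the maximal centers of $F$-purity of an $F$-pure pair control all proper compatible ideals), but yours bypasses the test-ideal transformation rule, which is arguably more economical. One small point to tighten: you justify that (the radical of) $\mathbf{1}_{\Tr}$ is a $\phi$-ideal by citing \autoref{first proposition properties of comp ideal}, but that proposition presupposes membership in $I(R,\phi)$; the clean justification is the one-line computation $\phi\big(F^e_*\,r\Tr(s)\big)=\phi\big(F^e_*\Tr(rs)\big)=\Tr\big(\phi^{\Tr}(F^e_* rs)\big)\in\Tr(S)$, which uses that $(\theta,\Tr)$ is a transposition in the sense of \autoref{def.Transposition}. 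Your converse direction (via the dichotomy $\p^{\Tr}=S$ or $\p^{\Tr}=\sqrt{\p S}$ from \autoref{rem.TheGaloisCase}) is exactly what the paper leaves implicit and is correct.
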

\begin{proof}
By \autoref{rem.FPurityAsPureFRgularity}), $(R,\phi)$ is purely $F$-regular along $\fra$ where the minimal primes of $\fra$ are the maximal centers of $F$-purity of $(R,\phi)$. For every $\p$ maximal center of $F$-purity of $(R,\phi)$ one applies \autoref{cor.MainTheorem} to a $\q$ lying over $\p$ to conclude the desired surjectivity.
\end{proof}

\begin{lemma}[{\cf \cite[Remark 5.14]{CarvajalStablerFsignaturefinitemorphisms}}]
     Working in \autoref{setup.ClassicAlgebraicNumberTheorySetup}, $(R,\phi) \to (S,\phi^{\Tr})$ be a homomorphism of Cartier pairs and $\p \in \Schpec(R,\phi)$ be of height $\leq 1$. Then, $S/R$ is tamely ramified over $\p$ if (and only if) $Y_{\p} \subset \Schpec(R,\phi^{\Tr})$.
\end{lemma}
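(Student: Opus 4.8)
The ``only if'' direction requires no hypothesis on $\height\p$: it is the forward implication of \autoref{Cor.fiberedAtAPoint}. So the content is the converse, and the plan is to turn it into a one-line comparison of divisorial coefficients. First, if $\height\p=0$ then $\p=\mathbf 0$, and since $S$ is a domain and $\Tr\neq 0$ we have $\p^{\Tr}=\mathbf 0^{\Tr}=\mathbf 0=\sqrt{\p S}$, so $S/R$ is tamely ramified over $\p$. So assume $\height\p=1$; let $P\subset X=\Spec R$ be the prime divisor with $\p=R(-P)$ and let $Q_1,\dots,Q_k$ be the prime divisors of $Y=\Spec S$ over $P$, so that $Y_\p=\{S(-Q_1),\dots,S(-Q_k)\}$ and each $Q_i$ has height one. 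Write $e_i$ for the ramification index of $S_{Q_i}/R_\p$ and $d_i$ for the coefficient of $\Ram\coloneqq\Ram_{\Tr}$ at $Q_i$; recall, as in the proof of \autoref{prop.TameRamificationParticularCases}(a), that $d_i\geq e_i-1$ with equality exactly when $S_{Q_i}/R_\p$ is tamely ramified, and that $S/R$ is tamely ramified over $\p$ --- equivalently, by \autoref{prop.TameRamificationParticularCases}(a), $S_\p/R_\p$ is a tamely ramified extension of Dedekind domains --- precisely when $d_i=e_i-1$ for every $i$.

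Next I compute the coefficient of the tame boundary $\Delta_{\phi^{\Tr}}$ at each $Q_i$. Since $\p\in\Schpec(R,\phi)$ has height one, \autoref{rem.CFPsOfHeight1} forces the coefficient of $\Delta_\phi$ at $P$ to be exactly $1$. Because $(R,\phi)\to(S,\phi^{\Tr})$ is a homomorphism of Cartier pairs, $\phi^{\Tr}$ extends $\phi$ and $f\colon(Y,\Delta_{\phi^{\Tr}})\to(X,\Delta_\phi)$ is log crepant, so $\Delta_{\phi^{\Tr}}=f^{*}\Delta_\phi-\Ram$ for the compatible choice $K_Y\coloneqq\Ram+f^{*}K_X$; as $\Delta_{\phi^{\Tr}}$ is effective (it is a tame boundary --- note $\phi^{\Tr}\neq 0$ since $\phi\neq 0$, $\Tr\neq 0$ and $R,S$ are domains), this also forces $f^{*}\Delta_\phi-\Ram\geq 0$. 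No component of $\Delta_\phi$ other than $P$ can pull back to $Q_i$, since $f(Q_i)=P$; hence the coefficient of $\Delta_{\phi^{\Tr}}$ at $Q_i$ equals $e_i\cdot 1-d_i=e_i-d_i$, an integer with $0\leq e_i-d_i\leq 1$.

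Now apply \autoref{rem.CFPsOfHeight1} to the Cartier pair $(S,\phi^{\Tr})$ ($S$ is a normal integral domain and $\phi^{\Tr}\neq 0$): the height-one prime $S(-Q_i)$ lies in $\Schpec(S,\phi^{\Tr})$ if and only if the coefficient of $\Delta_{\phi^{\Tr}}$ at $Q_i$ equals $1$, i.e.\ $e_i-d_i=1$, i.e.\ $d_i=e_i-1$. Thus $Y_\p\subset\Schpec(S,\phi^{\Tr})$ holds if and only if $d_i=e_i-1$ for every $i$, which by the first paragraph is exactly the statement that $S/R$ is tamely ramified over $\p$; in particular this re-proves the ``only if'' direction as well. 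The main obstacle --- indeed essentially the only nontrivial point --- is assembling the divisorial dictionary cleanly: that under the lemma's hypotheses one really has $\Delta_{\phi^{\Tr}}=f^{*}\Delta_\phi-\Ram$ with an effective left-hand side, and that \autoref{rem.CFPsOfHeight1} genuinely identifies membership of a height-one prime in $\Schpec$ with its coefficient in the associated boundary being exactly $1$, on both $(R,\phi)$ and $(S,\phi^{\Tr})$. With those in hand the conclusion is the displayed coefficient comparison.
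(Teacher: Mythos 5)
Your argument is correct and follows the same route as the paper's own (very terse) proof: height $0$ is handled by separability forcing $\mathbf{0}^{\Tr}=\mathbf{0}$, and height $1$ by combining \autoref{rem.CFPsOfHeight1} with the coefficient computation $\Delta_{\phi^{\Tr}}=f^{*}\Delta_{\phi}-\Ram$ carried out in the proof of \autoref{prop.TameRamificationParticularCases}(a). You have simply made explicit the divisorial bookkeeping that the paper leaves as a citation.
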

\begin{proof}
If $\height \p = 0$ (\ie $\p=0$) the result is simply \autoref{cor.FirstCriterionSeparability}. Suppose $\height \p = 1$. Recalling \autoref{rem.CFPsOfHeight1}, the result then follows from the remarks made in the proof \autoref{prop.TameRamificationParticularCases}.
\end{proof}

\begin{corollary}[Main Result] \label{cor.MainResult}
     Work in \autoref{setup.ClassicAlgebraicNumberTheorySetup} and let $\theta \:(R,\phi) \to (S,\phi^{\Tr})$ be a homomorphism of $F$-pure Cartier pairs. Then, $S/R$ is tamely ramified over $\Schpec(R,\phi)$ if (and only if) $\theta$ is fibered and for all $\q \in \Schpec(R,\phi^{\Tr})$ of height $\geq 2$,  $\Tr(\uptau_\q(S,\phi^{\Tr}))=\uptau_{\q\cap R}\rphi$.
\end{corollary}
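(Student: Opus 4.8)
The plan is to deduce \autoref{cor.MainResult} from \autoref{cor.MainTheorem} by showing that, in the present separable normal setup, the two conditions of \autoref{cor.MainTheorem}(b) — fiberedness together with the test-ideal transformation rule over \emph{all} $\q \in \Schpec(S,\phi^{\Tr})$ — can be weakened so that the test-ideal rule only needs to be checked in height $\geq 2$, because in heights $0$ and $1$ it follows for free once fiberedness is known. So the real content is: \emph{if $\theta$ is fibered, then $\Tr(\uptau_\q(S,\phi^{\Tr})) = \uptau_{\q\cap R}(R,\phi)$ holds automatically for every $\q \in \Schpec(S,\phi^{\Tr})$ of height $\leq 1$.}

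First I would reduce to proving that, under fiberedness, $S/R$ is tamely ramified over every $\p \in \Schpec(R,\phi)$ of height $\leq 1$. Indeed, by \autoref{Cor.fiberedAtAPoint}, tame $\frT$-ramification over $\p$ is equivalent to "$Y_\p \subset \Schpec(S,\phi^{\Tr})$ and $\Tr(\uptau_\q(S,\phi^{\Tr})) = \uptau_\p(R,\phi)$ for all $\q \in Y_\p$." Since $\theta$ is fibered, we already have $\Schpec(S,\phi^{\Tr}) = f^{-1}\Schpec(R,\phi)$ (by \autoref{defprop.Fibered}(c), noting $\theta$ is Cohen–Seidenberg as $R$ is normal and $R,S$ are domains), so $Y_\p \subset \Schpec(S,\phi^{\Tr})$ is automatic for $\p \in \Schpec(R,\phi)$. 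Then the immediately preceding lemma in the excerpt tells us precisely that, for $\p$ of height $\leq 1$, tame ramification over $\p$ is equivalent to $Y_\p \subset \Schpec(S,\phi^{\Tr})$ — hence it holds. In height $0$ this is just \autoref{cor.FirstCriterionSeparability} (separability of $L/K$); in height $1$ it is the divisorial computation from the proof of \autoref{prop.TameRamificationParticularCases}, combined with \autoref{rem.CFPsOfHeight1} identifying the height-$1$ centers of $F$-purity with the coefficient-$1$ components of $\Delta_\phi$ and $\Delta_{\phi^{\Tr}}$. So the height-$\leq 1$ points cost nothing beyond fiberedness.

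Next I would assemble the equivalence. Assume $\theta$ is fibered and $\Tr(\uptau_\q(S,\phi^{\Tr})) = \uptau_{\q\cap R}(R,\phi)$ for all $\q \in \Schpec(S,\phi^{\Tr})$ of height $\geq 2$. By the previous paragraph, the same transformation rule holds in heights $0$ and $1$; hence it holds for \emph{all} $\q \in \Schpec(S,\phi^{\Tr})$, which (again via $\Schpec(S,\phi^{\Tr}) = f^{-1}\Schpec(R,\phi)$ and \autoref{Cor.fiberedAtAPoint}) is exactly condition (b) of \autoref{cor.MainTheorem}. Since $(R,\phi)$ is $F$-pure, \autoref{cor.MainTheorem} gives condition (a): $\theta$ is tamely $\Tr$-ramified over $\CSpec(R,\phi) = \Schpec(R,\phi)$. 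Conversely, if $S/R$ is tamely ramified over $\Schpec(R,\phi)$, then \autoref{cor.MainTheorem} ($F$-pure case) gives (b), i.e. $\theta$ is fibered and the full transformation rule holds, in particular in height $\geq 2$.

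**The main obstacle** is bookkeeping rather than a deep new idea: one must be careful that "fibered" in \autoref{cor.MainResult} refers to the transposition notion of \autoref{def.FiberedQuasiFiberedTransposition}, which — by \autoref{trace commutes with frobenius} and the corollary following \autoref{Rem.CommDiagNotTransposition} — coincides with the homomorphism notion here because $\Tr$ commutes with $F^e$ (\autoref{ex.TracaConmuteWithFrobenius}), so $\Schpec(S,\phi^{\Tr}) = f^{-1}\Schpec(R,\phi)$ really is available. One must also confirm that the appeal to the height-$1$ lemma is legitimate for \emph{every} associated prime of $\q\cap R$ simultaneously, but since we are working prime-by-prime over $\p \in \Schpec(R,\phi)$ and localizing is harmless (transpositions and test ideals commute with localization, \autoref{pro.BasicPropertiesTranspositions}, \autoref{defprop boundary equivalence and F singularities}), this reduces to the local Dedekind-type statement already handled.
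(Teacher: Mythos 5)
Your proof is correct and follows exactly the route the paper intends (the paper leaves the proof implicit, as an immediate consequence of the lemma on height $\leq 1$ primes stated just before it together with \autoref{cor.MainTheorem} and \autoref{Cor.fiberedAtAPoint}). Your bookkeeping points — that fiberedness as a transposition and as a homomorphism agree here since $\Tr$ commutes with Frobenius, and that $\Schpec(S,\phi^{\Tr})=f^{-1}\Schpec(R,\phi)$ supplies $Y_{\p}\subset\Schpec(S,\phi^{\Tr})$ for free — are precisely the points that make the reduction to height $\geq 2$ legitimate.
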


\begin{example}
    The test ideals transformation rule in \autoref{cor.MainResult} cannot be dropped. Indeed, let $R\coloneqq \bar{\bF}_p[x,y,z]/\langle x^3+y^3+z^3\rangle$ with $p \equiv 1 \bmod 3$. Note that $R$ is a standard graded Gorenstein normal domain. In fact, $R$ is the affine cone of the ordinary/$F$-split elliptic curve $E\subset \bP^2$ cut out by $x^3+y^3+z^3=0$. Let $\phi \: F^e_* R \to R$ be a Frobenius trace for $R$, which is surjective as $E$ is $F$-split. One readily sees that $\Schpec(R,\phi)=\{\mathbf{0},\fram\}$ where $\fram \coloneqq \langle x, y, z \rangle$, so that we may assume that $\phi$ is an $F$-splitting. To construct $S$, we may proceed as follows. 
    
    First, recall that $F$ induces a $p$-linear action on the graded $\bar{\bF}_p$-module $H^2_{\fram}(R)=H^1(U,\sO_U)$, where $U \coloneqq \Spec R \setminus \{\fram\}$. In fact, by the cone construction abstract nonsense, we have a $\bG_{\mathrm{m}}$-bundle map $\pi \: U \to E$ such that $\pi_* \sO_U = \bigoplus_{i\in \bZ} \sO_E(i)$ and $\pi^{\#} \:\sO_E \to \pi_* \sO_U$ is the canonical structural map onto degree $0$. Moreover, this realizes an isomorphism $H^1(U,\sO_U)=\bigoplus_{i \in \bZ}H^1(E,\sO_E(i))$ of graded $\bar{\bF}_p$-modules and the Frobenius action is given by the canonical maps $\sO_E(i) \to F_* \sO_E(pi)$. In particular, the $F$-action is not graded as $F$ maps $[H^2_{\fram}(R)]_i$ into $[H^2_{\fram}(R)]_{pi}$. Nonetheless, $[H^2_{\fram}(R)]_0 = H^1(E,\sO_E) \cong \bar{\bF}_p$ is an $F$-stable submodule and moreover:
    \[
    H \coloneqq \{\xi \in H_{\fram}^2(R) \mid F \xi = \xi \} =  \{\xi \in H^1(E,\sO_E) \mid F \xi = \xi \} \cong \bF_p.
    \]
    where the latter isomorphism is a consequence of $R$ and $E$ being $F$-split. By Artin--Schreier theory \cite[p.127]{MilneEtaleCohomology}, to a $0 \neq \xi \in H$ there corresponds a non-trivial $\bZ/p$-torsor $\xi \: C \to E$ whose pullback along $\pi \: U \to E$ gives a non-trivial $\bZ/p$-torsor $\xi \: V \to U$. By the Hurwitz formula, $C$ is an elliptic curve. In fact, $\xi\: C \to E$ is $E$'s Verschiebung (auto)morphism.
    
    Letting $S$ be the normalization of $R$ in the function field of $V$, we see that $(S,\fran,\bar{\bF}_p)$ is a local normal integral finite extension of $(R,\fram,\bar{\bF}_p)$ whose spectrum $f\: \Spec S \to \Spec R$ is such that $f_U=\xi\: V \to U$. In particular, $S$ is Gorenstein and $\phi$ extends to a unique Frobenius trace $\psi \: F_*S \to S$ on $S$, which is necessarily an $F$-splitting. Moreover, $(S,\psi)$ is not $F$-regular as $H^2_{\fran}(S)=H^1(V,\sO_V)$ contains $H^1(C,\sO_{C}) \cong \bar{\bF}_p$ as an $F$-stable submodule (for $\pi$ pulls back to a $\bG_{\mathrm{m}}$-bundle $V \to C$ along $\xi \: C \to E$). Therefore, $\Schpec(S,\psi) = \{\mathbf{0},\fram\}$. In conclusion, $(R,\phi)\to (S,\psi)$ is a fibered extension but it is not tamely ramified over $\fram$ as the ramification index is $e_{\fram}=p$.
\end{example}

\subsection{On the work of Speyer} \label{sec.SpeyerCharatcteristicZero}
    We may rephrase Speyer's theorem \cite[Theorem 3]{SpeyerFrobeniusSplit} in our terminology as follows. 
    \begin{theorem}[{Speyer \cite[Theorem 3]{SpeyerFrobeniusSplit}}]
         Let $\theta \:R \to S$ be a finite cover between $\bZ$-algebras of finite type. Then, there is a dense open $U \subset \Spec \bZ$ such that for all $p\in U$: every homomorphism of $F$-pure Cartier pairs $\theta_{p}\coloneqq \theta \otimes \bF_p \:(R_{p},\phi) \to (S_{p},\psi)$ is fibered.
    \end{theorem}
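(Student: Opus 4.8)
The plan is to reduce Speyer's statement to the test-ideal transformation rule together with our \autoref{cor.MainResult}, and then to spread out over $\Spec \bZ$. First I would fix a finite cover $\theta\colon R\to S$ of $\bZ$-algebras of finite type; after possibly inverting finitely many integers I may assume $\theta$ is injective and that $R,S$ are $\bZ$-flat. By generic flatness I may further shrink $\Spec\bZ$ so that $S$ is finite free over $R$ and $R$ is normal with geometrically normal (indeed geometrically integral) fibers; this is the exact openness input recorded in \autoref{cor.TameRamificationInArithmeticFAMILIES} and its proof (via \cite[Tag 055G]{stacks-project} and \cite[\S12]{EGAIV-3}), and it guarantees that for $p$ in a dense open $U_0\subset\Spec\bZ$ the base change $\theta_p\colon R_p\to S_p$ is a finite injective extension of $F$-finite normal integral domains with $R_p$ normal, so $L_p/K_p$ is separable and $\Tr\colon S_p\to R_p$ is the relevant $\frT$ as in \autoref{ex.TracaConmuteWithFrobenius}. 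Note that since $\theta\colon R\to S$ is flat over $U_0$, each $\theta_p$ is flat, hence Cohen--Seidenberg, so all the ``equality'' refinements in \autoref{defprop.QuasiFibered} and \autoref{cor.MainResult} are available.

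Next I would invoke \autoref{prop.TameRamificationParticularCases}(b): if $\Char\kappa(\mathfrak p)>[L:K]$ then $S_p/R_p$ is tamely ramified over $\mathfrak p$. The degree $n\coloneqq[L:K]=\operatorname{rk}_R S$ is fixed, so for $p$ in the dense open $U_1\coloneqq U_0\cap\Spec\bZ[1/n!]$ the extension $S_p/R_p$ is tamely ramified over \emph{every} prime of $R_p$ whose residue characteristic is $p$ — in particular over every $\mathfrak p\in\Schpec(R_p,\phi)$, vacuously including the generic point by \autoref{cor.FirstCriterionSeparability}. By \autoref{cor.MainTheorem} (the implication $(a)\Rightarrow(b)$, and they are equivalent here since $(R_p,\phi)$ is $F$-pure: any surjective Frobenius trace makes it so) this forces $(\theta_p,\Tr)\colon(R_p,\phi)\to(S_p,\psi)$ to be fibered, for any homomorphism of $F$-pure Cartier pairs with $\psi$ extending $\phi$. (Here one uses that an extension $\psi$ of $\phi$ is automatically a $\Tr$-transpose of $\phi$: this is \autoref{equiv transposition and extension}, whose hypotheses hold since $R_p,S_p$ are integral and $\Tr$ is strongly non-degenerate and commutes with $F^e$ by \autoref{ex.TracaConmuteWithFrobenius}.) Taking $U\coloneqq U_1$ completes the argument.

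The one subtlety I would be careful about is that the statement quantifies over \emph{all} pairs $(\phi,\psi)$ simultaneously, so $U$ must be chosen uniformly, not per-splitting. But the openness inputs in the first paragraph and the numerical bound $p>n$ in the second do not depend on the choice of $\phi$ at all: tame ramification of $S_p/R_p$ over a prime $\mathfrak p$ is a property of the ring extension, and once it holds over every $\mathfrak p\in\Spec R_p$ it holds over $\Schpec(R_p,\phi)$ for every $\phi$ at once. So the main (and essentially only) obstacle is bookkeeping the finitely many primes one must invert to get normality, integrality, flatness, and geometric normality of the fibers — all of which is exactly the content already assembled in \autoref{cor.TameRamificationInArithmeticFAMILIES}, so in fact I would simply cite that corollary for $U_0$ and then intersect with $\Spec\bZ[1/n!]$. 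I expect no genuine difficulty beyond this.
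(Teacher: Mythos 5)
Your proposal has a genuine gap at its very first step, and it is precisely the gap the paper itself flags. You write that ``by generic flatness I may further shrink $\Spec\bZ$ so that \dots $R$ is normal with geometrically normal (indeed geometrically integral) fibers.'' This is not a harmless shrinking: the statement of the theorem only assumes $\theta\: R \to S$ is a finite cover between $\bZ$-algebras of finite type, with no normality or irreducibility hypothesis. If the generic fiber $R_\eta$ is not geometrically normal (take $R=\bZ[x,y]/\langle xy\rangle$, or a non-normal domain such as $\bZ[x,y]/\langle y^2-x^3\rangle$), then no localization of $\bZ$ makes the fibers $R_p$ normal; yet such $R_p$ can perfectly well carry $F$-pure Cartier pairs ($F$-purity forces reducedness, not normality), so these cases are genuinely in the scope of the theorem. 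The entire subsequent chain --- separability of $L_p/K_p$, the trace map $\Tr\:S_p\to R_p$ of \autoref{ex.TracaConmuteWithFrobenius}, \autoref{prop.TameRamificationParticularCases}(b), \autoref{equiv transposition and extension}, and \autoref{cor.MainTheorem} --- lives in \autoref{setup.ClassicAlgebraicNumberTheorySetup} and needs $R_p$ normal integral and $S_p$ its normalization in a separable extension, so none of it applies without that reduction.

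What your argument actually proves is \autoref{cor.SpeyerResultNormalVersion}, the ``normal version'' of Speyer's result, which the paper deduces from \autoref{cor.TameRamificationInArithmeticFAMILIES} exactly as you do \emph{under the additional hypothesis that the geometric generic fibers are normal integral domains}. The paper does not prove the theorem as stated at all; it cites Speyer, and immediately afterwards remarks that ``the difficulty \dots is that $R$ and $S$ are not assumed to be geometrically normal.'' Speyer's own proof proceeds differently (it does not pass through tame ramification of a normalization) and handles the non-normal case. So either restrict the statement to the geometrically normal setting and present your argument as a proof of \autoref{cor.SpeyerResultNormalVersion}, or find an argument that does not presuppose normality of the fibers --- the latter is not supplied by the machinery of \autoref{subsection.TameRamificationSeparableCase}.
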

    
    It is unclear to the authors whether an analogous statement for tame ramification in arithmetic families hold and whether $\bZ$ can be replaced by any $\bZ$-algebra of finite type (other than rings of integers in number fields, see \cite[Remark 4]{SpeyerFrobeniusSplit}). The difficulty in either case is that $R$ and $S$ are not assumed to be geometrically normal. Under this assumption, we obtain the following version of Speyer's result using \autoref{cor.TameRamificationInArithmeticFAMILIES}.

\begin{corollary} \label{cor.SpeyerResultNormalVersion}
    With notation and assumptions as in \autoref{cor.TameRamificationInArithmeticFAMILIES}, every homomorphism of Cartier pairs $(R_{\mu},\phi) \to (S_{\mu},\psi)$ is fibered for all closed points $\mu \in U$.
\end{corollary}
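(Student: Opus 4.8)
The plan is to deduce \autoref{cor.SpeyerResultNormalVersion} from \autoref{cor.TameRamificationInArithmeticFAMILIES} together with \autoref{cor.MainResult} (equivalently the Main Theorem \autoref{cor.MainTheorem}). First I would fix the dense open $U \subset \Spec A$ provided by \autoref{cor.TameRamificationInArithmeticFAMILIES}, so that for every closed point $\mu \in U$ the fiber $R_{\mu} \subset S_{\mu}$ is an \emph{everywhere} tamely ramified finite extension of normal integral domains. Everywhere tame ramification means $P^{\Tr}=\sqrt{P S_\mu}$ for \emph{all} primes $P \subset R_\mu$, in particular for all $P \in \Schpec(R_\mu,\phi)$; that is, $\theta_\mu$ is tamely $\Tr$-ramified over every center of $F$-purity of $(R_\mu,\phi)$, which is condition (a) of \autoref{cor.MainTheorem}.

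Next I would invoke the implication $(a)\Longrightarrow(b)$ of \autoref{cor.MainTheorem} to conclude that the transposition of Cartier pairs $(\theta_\mu,\Tr)\:(R_\mu,\phi)\to(S_\mu,\phi^{\Tr})$ is fibered. There is one small compatibility check here: the corollary is stated for transpositions of Cartier pairs, while the statement to be proved speaks of homomorphisms of Cartier pairs $(R_\mu,\phi)\to(S_\mu,\psi)$. By \autoref{ex.TracaConmuteWithFrobenius} (which applies since $R_\mu$ is normal and $L_\mu/K_\mu$ is separable — this separability is exactly what tame ramification forces, \cf \autoref{prop.TameRamificationParticularCases}(c)) one has the trace map $\Tr=\Tr_{S_\mu/R_\mu}$, and by \autoref{equiv transposition and extension} (together with \autoref{trace commutes with frobenius}) the homomorphism structure $\psi$ lifting $\phi$ coincides with $\phi^{\Tr}$ up to the boundary-equivalence of \autoref{defprop boundary equivalence and F singularities}; in particular $\psi$ and $\phi^{\Tr}$ have the same $\upbeta$, the same $\CSpec$, $\sU$, $\Schpec$, so fiberedness of $(\theta_\mu,\Tr)$ is the same as fiberedness of $\theta_\mu\:(R_\mu,\phi)\to(S_\mu,\psi)$. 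Since any such $\psi$ extending $\phi$ is unique once it exists (and for a separable extension at least one exists by \autoref{cor.FirstCriterionSeparability}), this covers every homomorphism of Cartier pairs over $\mu$.

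The main obstacle I anticipate is not any deep new argument but rather the bookkeeping in the previous paragraph: namely making fully precise that \autoref{cor.TameRamificationInArithmeticFAMILIES} gives tame ramification over \emph{all} primes (so a fortiori over the centers of $F$-purity, without needing to know where those centers are), and that "tamely ramified over $\Schpec(R_\mu,\phi)$" in the hypothesis of \autoref{cor.MainTheorem} is implied by "everywhere tamely ramified". This is immediate from \autoref{definition tamely T ramified}, since $\Schpec(R_\mu,\phi)\subset\Spec R_\mu$ and the condition $\p^{\Tr}=\sqrt{\p S_\mu}$ is imposed prime-by-prime. A secondary point worth a sentence is that \autoref{cor.MainTheorem} also requires, for the implication to $(b)$, no $F$-purity hypothesis — only $(a)\Rightarrow(b)\Rightarrow(c)$ is needed, and $(a)\Rightarrow(b)$ holds unconditionally — so we do not need to assume $(R_\mu,\phi)$ is $F$-pure. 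Thus the proof is: shrink to $U$ as in \autoref{cor.TameRamificationInArithmeticFAMILIES}; over each closed point $\mu\in U$ the extension is everywhere tamely ramified, hence tamely $\Tr$-ramified over $\Schpec(R_\mu,\phi)$; apply \autoref{cor.MainTheorem} $(a)\Rightarrow(b)$ and the identification of the transposition- and homomorphism-notions of fibered (\autoref{equiv transposition and extension}) to conclude.

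\begin{proof}
By \autoref{cor.TameRamificationInArithmeticFAMILIES}, there is a dense open $U\subset \Spec A$ such that for every closed point $\mu\in U$ the fiber $R_{\mu}\subset S_{\mu}$ is an everywhere tamely ramified finite extension of normal integral domains; in particular the extension $L_\mu/K_\mu$ of fraction fields is separable, so $\Tr=\Tr_{S_\mu/R_\mu}\:S_\mu\to R_\mu$ is defined (\autoref{ex.TracaConmuteWithFrobenius}). Fix such a $\mu$ and a homomorphism of Cartier pairs $\theta_\mu\:(R_\mu,\phi)\to(S_\mu,\psi)$. By \autoref{trace commutes with frobenius}, $\Tr$ commutes with $F^e$, and by \autoref{equiv transposition and extension} the map $\psi$ agrees with the $\Tr$-transpose $\phi^{\Tr}$ of $\phi$; hence $(\theta_\mu,\Tr)\:(R_\mu,\phi)\to(S_\mu,\phi^{\Tr})$ is a transposition of Cartier pairs which is also a homomorphism of Cartier pairs, and being fibered as a transposition is equivalent to being fibered as a homomorphism (\autoref{cor.FibereJustification}, \autoref{pro.SeparableVSpurelyInseparable}). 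Since $R_\mu\subset S_\mu$ is everywhere tamely ramified, $\p^{\Tr}=\sqrt{\p S_\mu}$ for all $\p\in\Spec R_\mu$, and in particular for all $\p\in\Schpec(R_\mu,\phi)$; that is, $\theta_\mu$ is tamely $\Tr$-ramified over $\Schpec(R_\mu,\phi)$, which is condition (c) of \autoref{cor.MainTheorem}. As $R_\mu$ is normal, $\theta_\mu\:R_\mu\to S_\mu$ is Cohen--Seidenberg (going-down holds), so we may apply \autoref{cor.MainResult}: tame ramification over $\Schpec(R_\mu,\phi)$ implies that $\theta_\mu$ is fibered. As this holds for every closed point $\mu\in U$, the claim follows.
\end{proof}
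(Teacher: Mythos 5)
Your proposal is correct and follows the same route the paper intends: shrink to the open set $U$ of \autoref{cor.TameRamificationInArithmeticFAMILIES}, observe that everywhere tame ramification gives tame $\Tr$-ramification over the relevant centers, identify the homomorphism with the $\Tr$-transposition via \autoref{equiv transposition and extension}, and conclude fiberedness from the Main Theorem. The only slip is in the final write-up: you route the conclusion through condition (c) and \autoref{cor.MainResult}, which are stated for \emph{$F$-pure} Cartier pairs, whereas the corollary being proved assumes no $F$-purity; but since the extension is everywhere tamely ramified you in fact have condition (a) of \autoref{cor.MainTheorem} (tame ramification over all of $\CSpec(R_\mu,\phi)$), and $(a)\Rightarrow(b)$ holds unconditionally --- exactly the fix your plan paragraph already records, so the argument stands once the citation is adjusted.
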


\begin{remark}
One might be tempted to use this to conclude an analogous statement of our Main Result for log canonical centers in characteristic zero via reductions to positive characteristic. However, the issue is that the converse of \cite[Theorem 6.8]{SchwedeCentersOfFPurity} is not known beyond the divisorial case; see \cite[Theorem 2.23]{CarvajalRojasStaeblerTameFundamentalGroupsPurePairs}. The difficulties are discussed in \cite[Theorem 6.2, Question 7.1]{DasSchwedeFdifferencanoncialbundleformula}.
\end{remark}

\subsection{Diagonalizable quotients, revisited}\label{section contracting compatible stuff}

We resume where we left in \autoref{setting workout example}. Since $\frT$ is non-singular, every map $\phi \:F^{e}_*R\to R$ admits a unique $\frT$-transpose $\phi^{\frT}\:F^e_*S \to S$ \cite[Theorem 3.13]{CarvajalStablerFsignaturefinitemorphisms}. We want to analyze the behavior of centers of $F$-purity in this case under the assumption that $(R,\fram,\kay)$ is local and $\Gamma \to \Cl R$ is injective, which we refer to as the Veronese local case. Then, $(S,\fram S,\kay)$ is an $\mathbf{S}_2$ local integral domain. Moreover, $(R,\phi)$ is $F$-pure if and only if so is $(S,\phit)$ by \cite[Scholium 4.9]{CarvajalFiniteTorsors}. In that case:

\begin{theorem} \label{thm.MainTheoremPurelyInseparableStuff}
    With notation as above, $\theta$ is tamely $\frT$-ramified over $\Schpec\rphi$ and so fibered. Furthermore, the following equivalences hold for all $\p \in \Spec R$:
    \[
    \p \in \Schpec\rphi \Longleftrightarrow Y_\p \subset \Schpec\sphit \Longleftrightarrow Y_{\p} \cap \Schpec\sphit \neq \emptyset
    \]
    Thus, $\Spec S \to \Spec R$ restricts to a homeomorphism $ \Schpec\sphit\to \Schpec\rphi$ if $\Gamma$ is a $p$-group. 
\end{theorem}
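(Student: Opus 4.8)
The plan is to isolate the one feature special to this situation — that the transposed Cartier operator $\phit$ is $D(\Gamma)$-equivariant — and then feed it into the general machinery of Sections~3--5. First, since $\Gamma\hookrightarrow\Cl R$ we have $\Gamma_{\fram}=\ker(\Gamma\to\Cl R)=\{0\}$, so \autoref{transpose in divisorial stuff} already tells us that $S=S_{\fram}$ is local and that $\theta$ is tamely $\frT$-ramified over $\fram$; the same applies to any intermediate Veronese-local layer. The equivariance itself I would prove as follows: because $\frT=\pi_0$ is non-singular (\autoref{pro.QuasitorsoNonSingularity}), $\phit$ is the \emph{unique} $S$-linear $\psi$ with $\frT\circ\psi=\phi\circ F^e_*\frT$; for $g\in D(\Gamma)$ one forms $(g\cdot\phit)(F^e_*s)\coloneqq g\big(\phit(F^e_*g^{-1}s)\big)$, which is $S$-linear, and since $g$ preserves the grading ($\pi_0\circ g=\pi_0\circ g^{-1}=\pi_0$) one checks $\frT\circ(g\cdot\phit)=\phi\circ F^e_*\frT$ as well, so $g\cdot\phit=\phit$ by uniqueness. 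Consequently, for any $\frab\in I(S,\phit)$ the homogeneous part $\frab_{\mathrm h}$ is again a $\phit$-ideal (it is a $D(\Gamma)$-stable ideal, and $\phit(F^e_*\frab_{\mathrm h})$ is then $D(\Gamma)$-stable and contained in $\frab$, hence in $\frab_{\mathrm h}$), and therefore $\frab\cap R=\frab_{\mathrm h}\cap R=\frT(\frab_{\mathrm h})\in I(R,\phi)$ after applying $\frT$ to $\phit(F^e_*\frab_{\mathrm h})\subset\frab_{\mathrm h}$. That is: the contraction of a $\phit$-ideal is a $\phi$-ideal, which upgrades $\q\mapsto\q\cap R$ to an honest map $\CSpec(S,\phit)\to\CSpec(R,\phi)$.

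Next I would prove $\theta$ is tamely $\frT$-ramified over every $\p\in\Schpec(R,\phi)$; this is where I expect the real work. The plan is to factor $\theta$ through $R\subset T\coloneqq S^{\Gamma_p}\subset S$, where $\Gamma=\Gamma_{p'}\times\Gamma_p$ splits into its prime-to-$p$ and $p$-primary parts, noting $\frT_{S/R}=\frT_{T/R}\circ\frT_{S/T}$, that both layers are again Veronese-local, and that $(T,\phi^{\frT_{T/R}})$ is $F$-pure (apply \cite[Scholium~4.9]{CarvajalFiniteTorsors} to the non-singular $\frT_{T/R}$). For the prime-to-$p$ layer, $\frT_{T/R}=\pi_0$ commutes with Frobenius (multiplication by $p$ being invertible on $\Gamma_{p'}$), so transposes of radical ideals stay radical (\autoref{pro.SeparableVSpurelyInseparable}), and a localisation-plus-transitivity reduction to the étale-torsor case via \autoref{pro.TransitivityTameRamificationGeneralCase}, \autoref{transpose in divisorial stuff}(1)(a) and \autoref{tame t ramification in cyclic covers} shows $T/R$ is tamely $\frT_{T/R}$-ramified over \emph{every} prime, so $Y_{\p}^{T/R}\subset\Schpec(T,\phi^{\frT_{T/R}})$ by \autoref{Rem.CommDiagNotTransposition}. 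For the $p$-primary layer $S/T$ and any $\q_T\in Y_{\p}^{T/R}$, one has $\q_T^{\frT_{S/T}}\in I(S,\phit)$ by \autoref{pro.betaCommutesWithTransposeAndApplications}, which is radical as $(S,\phit)$ is $F$-pure; combining \autoref{transpose is equivariant} with \autoref{pro.SeparableVSpurelyInseparable}(b) (using $S^q\subset T$ and that $\frT$ is non-degenerate along every ideal, since $\mathbf{1}_{\frT}=R$) sandwiches $(\sqrt{\q_T S})_{\mathrm h}\subset\q_T^{\frT_{S/T}}\subset\sqrt{\q_T S}$, and passing to radicals forces equality. Transitivity (\autoref{pro.TransitivityTameRamificationGeneralCase}) then yields tame $\frT$-ramification of $\theta$ over $\p$, and \autoref{cor.MainTheorem} (available since $(R,\phi)$ is $F$-pure) upgrades this to: $(\theta,\frT)$ is fibered. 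The main obstacle is the prime-to-$p$ layer: peeling off étale-torsor layers after localising — where the base becomes only semilocal and the divisor class group may shrink — and keeping track of the $F$-purity hypothesis along the factorisation; the $p$-primary part is comparatively painless once $(S,\phit)$ is known to be $F$-pure.

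Finally I would deduce the equivalences and the homeomorphism. Fiberedness gives $f^{-1}\Schpec(R,\phi)\subset\Schpec(S,\phit)$; the reverse inclusion is immediate from the first paragraph, since $\q\in\Schpec(S,\phit)$ forces $\q\cap R\in I(R,\phi)$, hence $\q\cap R\in\CSpec(R,\phi)=\Schpec(R,\phi)$ by $F$-purity. Thus $f^{-1}\Schpec(R,\phi)=\Schpec(S,\phit)$, and as $Y_{\p}=f^{-1}\{\p\}$ is always non-empty, this equality unwinds exactly to $\p\in\Schpec(R,\phi)\iff Y_{\p}\subset\Schpec(S,\phit)\iff Y_{\p}\cap\Schpec(S,\phit)\neq\emptyset$. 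If moreover $\Gamma$ is a $p$-group, then $S^q\subset R\subset S$, so $f\colon\Spec S\to\Spec R$ is a homeomorphism and each $Y_{\p}$ is a singleton; the displayed equivalences then say that $f$ and $f^{-1}$ restrict to mutually inverse bijections between $\Schpec(S,\phit)$ and $\Schpec(R,\phi)$, hence to a homeomorphism.
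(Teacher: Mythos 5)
Your overall strategy is the paper's: split $\Gamma$ into its prime-to-$p$ and $p$-primary parts, factor $R\subset T\subset S$ accordingly, settle each layer with the computations of \autoref{setting workout example} (notably \autoref{transpose in divisorial stuff}), and glue with \autoref{pro.TransitivityTameRamificationGeneralCase}. The $p$-primary sandwich $(\sqrt{\q_T S})_{\mathrm h}\subset \q_T^{\frT_{S/T}}\subset\sqrt{\q_T S}$, the use of \autoref{cor.MainTheorem} to pass from tame ramification to fiberedness, and the final derivation of the equivalences and of the homeomorphism all agree with what the paper does via \autoref{rem tame ram in purely insep} and \autoref{small case contract keeps compatibility}.

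There is, however, a genuine gap in your first paragraph, exactly at the step you later invoke for the implication $Y_{\p}\cap\Schpec\sphit\neq\emptyset\Rightarrow\p\in\Schpec\rphi$. When $\Gamma$ has $p$-torsion, $D(\Gamma)$ is an infinitesimal group scheme: there are essentially no points $g$ to quantify over, and for non-smooth group schemes ``stable under every point'' is strictly weaker than ``stable under the coaction,'' i.e.\ homogeneous. Worse, $\phit$ provably does \emph{not} respect the grading in the $p$-primary case: when $\Gamma$ is a $p$-group and $q\geq|\Gamma|$ one has $s'^q\in R$ for every homogeneous $s'$, so the defining identity $\frT\big(s'\,\phit(F^e_*r)\big)=\phi\big(F^e_*\frT(s'^q r)\big)=\phi\big(F^e_*s'^q r\big)$ shows that every graded component of $\phit(F^e_*r)$, $r\in R$, is generically nonzero; hence $\phit(F^e_*R)\not\subset R$ and there is no reason for $\phit(F^e_*\frab_{\mathrm h})$ to be homogeneous. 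So your claim that the homogeneous part of an \emph{arbitrary} $\phit$-ideal is again a $\phit$-ideal is unsupported. What is true, and what the paper proves (\autoref{lemma stuff goes to 0} together with \autoref{small case contract keeps compatibility}), is weaker but sufficient: after first replacing $\phi$ by a power so that $q\geq|\Gamma|$ (a reduction you omit, and which is needed for the vanishing $\phit(F^e_*s)=0$ whenever $\frT(s)=0$), the Cartier cores $\upkappa_{\q}$ are homogeneous, because they are cut out by conditions of the form $(\phit)^n(F^{en}_*sS)\subset\q$ and the vanishing lemma shows that if $s$ satisfies such a condition then so does each $s_\gamma$. Since a prime $\q\in\Schpec\sphit$ equals its own Cartier core, $\q$ is homogeneous and $\q\cap R=\frT(\q)$ is a $\phi$-ideal. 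You should route the contraction step through this argument (applied on the $p$-primary layer, with the prime-to-$p$ layer handled by \autoref{defprop compatible ideals contract in homomorphisms} since there the transposition is also a homomorphism of Cartier pairs) rather than through equivariance.
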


\begin{remark}
This generalizes \cite[Lemmas 3.7, 3.8]{PolstraSchwedeCompatibleIdealsGorensteinRings} (in the normal integral case) as we do not need the uniformly compatible hypothesis nor the cyclicity of the cover. This also answers positively the question they asked in \cite[Remark 3.6]{PolstraSchwedeCompatibleIdealsGorensteinRings}. Furthermore, one readily sees that if every $I(S,\phi^{\frT})$ is a trace ideal (in the terminology of \cite{PolstraSchwedeCompatibleIdealsGorensteinRings}) then so is every ideal in $I(R,\phi)$. In particular, we obtain another way to see that their main result in the $\bQ$-Gorenstein case follows from the Gorenstein case even if $p$ divides the Gorenstein index.
\end{remark}

To show \autoref{thm.MainTheoremPurelyInseparableStuff}, we need some remarks in the not necessarily Veronese local case.

\begin{lemma}\label{lemma stuff goes to 0}
Let $\thetat\colon\rphi\to\sphit$ be a transposition, $s\in S$, and assume that $q\geq |\Gamma|=p^n$. Then, $\phi^{\frT}(F^e_*s)=0$ if $\frT(s)=0$.
\end{lemma}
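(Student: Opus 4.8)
The statement is a purely inseparable/"$p$-group" phenomenon: we work with a diagonalizable quotient $R = S^G = S_0$ where $G = D(\Gamma)$, $\Gamma$ abelian of order $p^n$, $\frT = \pi_0$ the canonical projection, and $\phi^{\frT}$ the (unique, since $\frT$ is non-singular in the relevant setups) $\frT$-transpose of $\phi$. The key structural fact I want to exploit is that $\Gamma$ being a $p$-group of order $p^n$ forces $S^{p^n} \subset R \subset S$, hence $S^q \subset R$ as soon as $q = p^e \geq p^n = |\Gamma|$; concretely, for any homogeneous $s \in S_\gamma$ we have $s^{p^n} \in S_{p^n \gamma} = S_0 = R$ because $p^n \gamma = 0$ in $\Gamma$, and then for $q \geq p^n$ also $s^q = (s^{p^n})^{q/p^n} \in R$.

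\textbf{Main steps.} First I would reduce to the case $s = s_\gamma$ homogeneous. This uses \autoref{transpose is equivariant}: $\p^{\frT}$ is homogeneous, and more relevantly the transpose map interacts well with the grading — but actually the cleanest route is to note $\frT = \pi_0$ kills everything not in degree $0$, so $\frT(s) = 0$ iff $\pi_0(s) = 0$ iff the degree-$0$ component $s_0$ vanishes. Writing $s = \sum_{\gamma} s_\gamma$ with $s_0 = 0$, it suffices to show $\phi^{\frT}(F^e_* s_\gamma) = 0$ for each $\gamma \neq 0$ (using $R$-linearity and additivity of $\phi^{\frT}$ on $F^e_*$). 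Second, for $\gamma \neq 0$ and $q \geq p^n$, I claim $\phi^{\frT}(F^e_* s_\gamma) = 0$. The defining commutativity of a transposition (\autoref{def.Transposition}) gives $\frT \circ \phi^{\frT} = \phi \circ F^e_* \frT$, i.e. $\frT(\phi^{\frT}(F^e_* t)) = \phi(F^e_* \frT(t))$ for all $t \in S$; but this only controls the degree-$0$ component of $\phi^{\frT}(F^e_* s_\gamma)$. To control all components I would test against homogeneous elements: for $\delta \in \Gamma$, $\pi_\delta(\phi^{\frT}(F^e_* s_\gamma)) = \frT\big(t_{-\delta} \cdot \phi^{\frT}(F^e_* s_\gamma)\big)$ for a suitable homogeneous $t_{-\delta} \in S_{-\delta}$ of the form... rather, use that $\phi^{\frT}$ is $R$-linear on the left ($\phi^{\frT}(r \cdot -) = r\phi^{\frT}(-)$) and satisfies $\phi^{\frT}(F^e_* a^q b) = a \phi^{\frT}(F^e_* b)$. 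The trick: for homogeneous $x \in S_{-q^{-1}\cdot(\text{something})}$... this is getting delicate, so let me instead use the direct computation. Since $q \geq p^n = |\Gamma|$, for any homogeneous $x \in S_\mu$ we have $x^q \in R$ (as $q\mu = 0$), hence $\phi^{\frT}(F^e_* x^q s_\gamma) = x \cdot \phi^{\frT}(F^e_* s_\gamma)$ — wait, that's backwards in degree. The genuinely clean argument: $s_\gamma \in S_\gamma$ with $\gamma \neq 0$; choose $x \in S_{-\gamma}$, not necessarily — instead observe $s_\gamma = s_\gamma \cdot 1$ and try to write $s_\gamma$ or a power of it times something as landing in $R$. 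Since $s_\gamma^{p^n} \in R$, we have $F^e_* s_\gamma \cdot$ — hmm. The cleanest: write $\phi^{\frT}(F^e_* s_\gamma)$ and multiply by $s_\gamma^{q - 1}$ won't homogenize.

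\textbf{The real argument.} Here is the approach I would actually carry out. Using \autoref{pro.betaCommutesWithTransposeAndApplications} or more directly the grading: $\phi^{\frT}$ maps $F^e_* S_\gamma$ into $S_{\gamma'}$ where $q\gamma' = \gamma$ in $\Gamma$ — because $\phi^{\frT}$ is "$p^{-e}$-linear" in a graded sense, matching $\phi$'s behaviour in \autoref{Section CFPs and Cartier Core map}. Since $\cdot q \colon \Gamma \to \Gamma$ is the zero map when $q \geq p^n$ (as $\Gamma$ has exponent dividing $p^n \mid q$), the equation $q\gamma' = \gamma$ has a solution only when $\gamma = 0$, and then $\phi^{\frT}(F^e_* S_\gamma) \subset S_0$ forces $\phi^{\frT}(F^e_* s_\gamma) = 0$ for all $\gamma \neq 0$. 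Combined with Step one (decompose $s$, and $s_0 = 0$), this gives $\phi^{\frT}(F^e_* s) = 0$. The one point needing care — and the main obstacle — is justifying that $\phi^{\frT}$ respects the $\Gamma$-grading in the stated way, i.e. $\phi^{\frT}(F^e_* S_\gamma) \subset S_{\gamma'}$ with $q\gamma' = \gamma$. I would establish this either by uniqueness of the transpose (the graded-component maps assemble to a transpose, so equal $\phi^{\frT}$) or by a direct check using that $\frT = \pi_0$ and the transposition identity, testing $\pi_\delta(\phi^{\frT}(F^e_* s_\gamma)) = \frT(y \cdot \phi^{\frT}(F^e_* s_\gamma))$ for $y \in S_{-\delta}$ and rewriting via $\frT \circ \phi^{\frT} = \phi \circ F^e_*\frT$ to get $\phi(F^e_*\frT(y s_\gamma))$, which vanishes unless $-\delta + \gamma = 0$ AND the degree bookkeeping on the $\phi$ side closes up — pushing $\delta$ through the $q$-th power. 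I expect the grading-compatibility of $\phi^{\frT}$ to be the crux; everything else is formal.
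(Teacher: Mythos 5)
You have the right ingredients (that $q\ge|\Gamma|$ forces $q$-th powers of arbitrary elements of $S$ into $S_0=R$, and that one should test $\phi^{\frT}(F^e_*s)$ against elements of $S$ via $\frT$ and then invoke the transposition identity), but the step you yourself flag as ``the crux'' is left unestablished, and the computation you sketch for it is wrong as written. To rewrite $\frT\big(y\cdot\phi^{\frT}(F^e_*s_\gamma)\big)$ using $\frT\circ\phi^{\frT}=\phi\circ F^e_*\frT$ you must first absorb $y$ into the argument of $\phi^{\frT}$, and the $F^e_*S$-module structure gives $y\cdot\phi^{\frT}(F^e_*s_\gamma)=\phi^{\frT}(F^e_*\,y^q s_\gamma)$, so what you obtain is $\phi\big(F^e_*\frT(y^q s_\gamma)\big)$, \emph{not} $\phi\big(F^e_*\frT(y s_\gamma)\big)$. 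This is not a cosmetic difference: with your version, $\frT(ys_\gamma)=\pi_0(ys_\gamma)$ vanishes only for $\delta\neq\gamma$, so you would conclude merely that $\phi^{\frT}(F^e_*s_\gamma)\in S_\gamma$, leaving the degree-$\gamma$ component uncontrolled and the proof incomplete. With the correct version, $y^q\in S_{-q\delta}=S_0=R$ (this is exactly where $q\ge|\Gamma|=p^n$ enters), hence $\frT(y^q s_\gamma)=y^q\,\frT(s_\gamma)=0$ for \emph{every} $\delta$ whenever $\frT(s_\gamma)=0$ --- which is the vanishing you need. You also never name the fact that lets you pass from ``$\frT(yt)=0$ for all $y$'' to ``$t=0$'': this is the non-singularity of $\frT$ (\autoref{pro.QuasitorsoNonSingularity}), not just the grading.

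Once these two points are repaired, your homogeneous decomposition of $s$ becomes unnecessary, and you land exactly on the paper's proof: since $\frT$ is non-singular it suffices to show $\frT\big(s'\phi^{\frT}(F^e_*s)\big)=0$ for all $s'\in S$, and
\[
\frT\big(s'\phi^{\frT}(F^e_*s)\big)=\frT\big(\phi^{\frT}(F^e_*s'^q s)\big)=\phi\big(F^e_*\frT(s'^q s)\big)=\phi\big(F^e_*s'^q\,\frT(s)\big)=0,
\]
because $s'^q\in S_0=R$. Your alternative suggestion --- define $\psi(F^e_*s)\coloneqq\phi^{\frT}(F^e_*s_0)$, check it is an $S$-linear transpose of $\phi$, and invoke uniqueness of the transpose --- can be made to work and is a genuinely different route, but it too rests on non-singularity (for uniqueness) and on the same observation that $t^q\in R$ for all $t\in S$, and you did not carry it out.
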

\begin{proof}
Since $\frT$ is non-singular (\autoref{pro.QuasitorsoNonSingularity}), it suffices to show that $\frT(s'\phit(\fstare s))=0$ for all $s'\in S$. Note that $\frT(s'\phit(\fstare s)) = \frT(\phit(\fstare s'^q s)) = \phi(\fstare \frT(s'^q s)) = \phi(\fstare s'^q \frT(s)) = 0$ as $s'^q\in S_0=R$ (which uses that $q\geq |\Gamma|$).
\end{proof}

\begin{proposition}\label{small case contract keeps compatibility}
Let $\rphi\to\sphit$ be a transposition and $\q\in \Schpec\sphit$ with $\Gamma$ a $p$-group. Then, $\q$ is homogeneous and $\q\cap R\in \Schpec\rphi$.
\end{proposition}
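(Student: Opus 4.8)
The plan is to exploit the homogeneity results already established for transpositions of diagonalizable quotients, namely \autoref{transpose is equivariant}, together with the behavior of centers of $F$-purity under localization and modding out. The key observation is that in the setting where $\Gamma$ is a $p$-group, there is a unique prime of $S$ lying over each prime of $R$, and taking $\frT$-transposes interacts well with the homogeneous part of ideals. First I would show $\q$ is homogeneous. Since $\Gamma$ is a $p$-group, $\q$ is the unique prime lying over $\p \coloneqq \q \cap R$ (as noted in the remark following \autoref{setting workout example}, $\q = \sqrt{\p S}$). So it suffices to prove $\sqrt{\p S}$ is homogeneous; equivalently, that $\p S$ has homogeneous radical. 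But $\pt = \p^{\frT}$ is homogeneous by \autoref{transpose is equivariant}, and when $\Gamma$ is a $p$-group that same proposition gives $\pt = (\sqrt{\p S})_{\mathrm h}$; since $\q \in \Schpec\sphit$ forces $\pt$ to be radical (a center of $F$-purity has $\bar\phi_\p\transp \ne 0$, so by \autoref{pro.betaCommutesWithTransposeAndApplications} combined with \autoref{Pro.NonDegeneracyPhiIdeals}, or directly by \autoref{cor.DifferentWaysToThinkOfTameRamification}, $\pt$ is radical), we get $\pt = \sqrt{\p S} = \q$, which is therefore homogeneous.

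Next I would establish $\p = \q \cap R \in \Schpec\rphi$. Here I would use \autoref{pro.minimalPrimesOfptInCSpec}: since $\q$ is the unique prime lying over $\p$, it is in particular a minimal prime of $\pt$ (indeed $\pt \subset \q$ and $\pt$ contracts to $\p$), so $\q \in Y_\p \cap \Schpec\sphit$ tells us, by the converse direction of that proposition or by \autoref{Cor.fiberedAtAPoint}, that $\theta$ is tamely $\frT$-ramified over $\p$ \emph{provided} we already knew $\p \in \Schpec\rphi$. That is circular, so instead I would argue directly: $\q \in \CSpec\sphit$ means $\q^{\phi^{\frT}} \supset \q$, i.e. $\q$ is a $\phi^{\frT}$-ideal; contracting, $\q \cap R$ is a $\phi$-ideal because $\thetat$ restricts $\phi^{\frT}$ to $\phi$ on $R$ up to the compatibility built into a transposition—more precisely one checks $\phi\big(F^e_*(\q\cap R)\big) = \frT\big(\phi^{\frT}(F^e_* (\q\cap R) S)\big) \subset \frT(\q^{\phi^\frT}) \subset \frT(\q) \subset \frT(\q \cap R) \cdot \mathbf 1 $... this needs care, so more cleanly: use that $\phi \circ F^e_*\frT = \frT \circ \phi^\frT$ from \autoref{def.Transposition}, so $\phi(F^e_* \frT(\q)) = \frT(\phi^\frT(F^e_* \q)) \subset \frT(\q) \subset \p$ since $\q$ is homogeneous of mixed degrees and $\frT = \pi_0$ kills all but the degree-zero part, giving $\frT(\q) = \q \cap S_0 = \q \cap R = \p$. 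Thus $\phi(F^e_* \frT(\q)) = \phi(F^e_* \p) \subset \p$, so $\p$ is a $\phi$-ideal. For surjectivity of $\bar\phi_\p$: localize at $\p$ and pass to residue fields; $\bar\phi_\q^{\frT} \ne 0$ since $\q \in \Schpec\sphit$, and applying $\bar\frT_\q$ (which is nonzero because $\pt = \q$ is radical, by \autoref{cor.DifferentWaysToThinkOfTameRamification}(c)$\Rightarrow$(a) type reasoning, i.e. tame $\frT$-ramification holds) to the transposition square $\bar\phi_\p \circ F^e_* \bar\frT_\q = \bar\frT_\q \circ \bar\phi_\q^\frT$ forces $\bar\phi_\p \ne 0$.

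The main obstacle I anticipate is the circularity: establishing that $\pt$ is radical / that $\bar\frT_\q \ne 0$ without first knowing $\p \in \Schpec\rphi$. The clean route is to observe that $\q \in \Schpec\sphit \subset \CSpec\sphit$ means $\q = \upbeta_\q(S,\phi^\frT)$ by \autoref{proposition beta as an object}, and then \autoref{pro.betaCommutesWithTransposeAndApplications} gives $\upbeta_\fra(R,\phi)^\frT = \upbeta_{\fra^\frT}(S,\phi^\frT)$; one wants to run this backwards to recognize $\q$ as the transpose of a center of $F$-purity downstairs. Concretely, set $\p \coloneqq \q \cap R$; since $\q$ is homogeneous and, for $p$-groups, $\pt = (\sqrt{\p S})_{\mathrm h} = \sqrt{\p S} = \q$ (using homogeneity of $\q = \sqrt{\p S}$ just proved), we have $\q = \pt$. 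Now $\q$ radical means $\pt$ radical, which by \autoref{pro.SeparableVSpurelyInseparable}(b) (applicable since $S^q \subset R$ when $\Gamma$ is a $p$-group of order dividing $q$—reduce to that case by replacing $e$ with a multiple, harmless for centers of $F$-purity by \autoref{defprop boundary equivalence and F singularities}) is equivalent to $\frT$ being non-degenerate along $\p$, hence $\frT$ is non-degenerate along $\p$; then \autoref{cor.DifferentWaysToThinkOfTameRamification} gives $S/R$ tamely $\frT$-ramified over $\p$ and $\bar\frT_\q \ne 0$, and finally \autoref{Cor.fiberedAtAPoint} (or the residual-transposition diagram therein) yields $\p \in \Schpec\rphi$. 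I would double-check that the passage $e \rightsquigarrow$ multiple of $e$ is compatible with the transposition structure, which it is since $\phi^{\frT, n} = (\phi^\frT)^n$ as recorded in the proof of \autoref{pro.betaCommutesWithTransposeAndApplications}.
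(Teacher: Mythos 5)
Your plan has the right shape (homogeneity of $\q$ first, then compatibility of $\q\cap R$ via the transposition identity, then nonvanishing of $\bar{\phi}_{\p}$), and the second and third steps would indeed go through once homogeneity is known. But the first step --- that $\q$ is homogeneous, equivalently that $\p^{\frT}$ is radical for $\p\coloneqq\q\cap R$ --- is exactly where the content of the proposition lies, and your argument for it is circular. You assert that ``$\q\in\Schpec\sphit$ forces $\p^{\frT}$ to be radical,'' citing \autoref{cor.DifferentWaysToThinkOfTameRamification}, \autoref{pro.betaCommutesWithTransposeAndApplications}, and \autoref{Pro.NonDegeneracyPhiIdeals}, but none of these yields that implication. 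In the $p$-group case, radicality of $\p^{\frT}$ is \emph{equivalent} to tame $\frT$-ramification over $\p$ (\autoref{transpose is equivariant}), which is one of the conclusions of \autoref{thm.MainTheoremPurelyInseparableStuff} rather than something available for free; it genuinely fails for general primes (see \autoref{does not make sense to require tame ram everywhere} and \autoref{tame t ramification in cyclic covers}, where the fiber $\kappa(\p)[t]/\langle t^{q''}-u\rangle$ need not be a field). Concretely, the inclusion $\q\subset\p^{\frT}$ you need amounts to $\frT(\q)\subset\q$, which is itself a homogeneity statement about $\q$, so it cannot be extracted from $\q\in\Schpec\sphit$ by the containments you list. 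Note also that $\bar{\frT}_{\q}$ is only defined once $\frT(\sqrt{\pS})\subset\p$, which here already presupposes radicality of $\p^{\frT}$, so the residual square cannot be used to bootstrap the first step.

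The missing ingredient is the paper's key computation: after replacing $\phi$ by a power so that $q\geq|\Gamma|$ (as you correctly propose, harmless by \autoref{defprop boundary equivalence and F singularities}), one shows that $\upkappa_{\q'}(S,\phi^{\frT})$ is homogeneous for \emph{every} prime $\q'$ of $S$. This rests on \autoref{lemma stuff goes to 0} (if $\frT(s)=0$ then $\phi^{\frT}(F^e_*s)=0$, which uses the non-singularity of $\frT$ from \autoref{pro.QuasitorsoNonSingularity}): it gives that $\phi^{\frT,n}(F^{en}_*s_{\gamma}S)\subset\q'$ if and only if $\phi^{\frT,n}(F^{en}_*sS_{-\gamma})\subset\q'$, so each graded component of an element of $\upkappa_{\q'}$ again lies in $\upkappa_{\q'}$. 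Since $\q\in\Schpec\sphit$ gives $\q=\upkappa_{\q}(S,\phi^{\frT})$, homogeneity of $\q$ follows, and your remaining steps can then be run. (For the last step, the paper instead argues by contradiction that $\upsigma(R,\phi)\not\subset\q\cap R$, using $(\q\cap R)^{\frT}=\q$ and $\frT\big(\upsigma(S,\phi^{\frT})\big)\subset\upsigma(R,\phi)$; your residual-square argument is an acceptable alternative at that point.)
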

\begin{proof} 
We may replace $\phi$ by any sufficiently large power $\phi^n$ and assume that $q \geq |\Gamma|$ (\autoref{defprop boundary equivalence and F singularities}). We claim that $\upkappa_{\q'}$ is homogeneous for all $\q' \in \Spec S$. Let $s \in S$ and $s_{\gamma} \coloneqq \pi_{\gamma}(s)$. \autoref{lemma stuff goes to 0} implies:
\[
\phi^n(F^e_*s_{\gamma}S)\subset \q' \Longleftrightarrow \phi^n(F^e_*s_{\gamma}S_{-\gamma})\subset \q' \Longleftrightarrow \phi^n(F^e_*sS_{-\gamma})\subset \q'
\]
Then $\upkappa_{\q'}$ is homogeneous.
In particular, if $\q\in \Schpec\sphit$, $\q\cap R=\frT(\q)$ is compatible.

It remains to show that $\upsigma(R,\phi)\not\subset \q\cap R$. Suppose the contrary.
By \autoref{transpose is equivariant}, $(\q\cap R)\transp=\q$ as $\q$ is homogeneous. Since $\frT(\upsigma(S,\phit))\subset\upsigma(R,\phi)$, then
$\upsigma(S,\phit) \subset \upsigma(R,\phi)\transp \subset (\q\cap R)\transp = \q$, which contradicts $\q\in \Schpec\sphit$.
\end{proof}

\begin{proof}[Proof of \autoref{thm.MainTheoremPurelyInseparableStuff}]
Write $\Gamma = \Lambda \oplus \Upsilon$ where $\Upsilon$ is a $p$-group and $p\nmid |\Lambda|$. Then, we have a factorization $R\subset T \subset S$ where $T=\bigoplus_{\lambda \in \Lambda } S_{\lambda}$ and $T/R$ is quasi-\'etale. In particular, $T$ is a normal integral domain. Then, $S=\bigoplus_{\upsilon \in \Upsilon} S'_{\upsilon}$ where $S'_{\upsilon}\coloneqq \bigoplus_{\lambda \in \Lambda } S_{\lambda+\upsilon} = T(f^*D_{\upsilon})$ and $f$ is the induced finite cover $\Spec T \to \Spec R$ between normal integral domains. Therefore, it suffices to show the proposition when $p\nmid |\Gamma|$ and $\Gamma$ is a $p$-group separately. If $p\nmid |\Gamma|$, this follows from \autoref{transpose in divisorial stuff}. If $\Gamma=q'$, this is \autoref{rem tame ram in purely insep} and \autoref{small case contract keeps compatibility}.
\end{proof}

\begin{question}
    In the same vein of \autoref{que.Computep^TForGneralQuasiTorsors}, how can we generalize \autoref{thm.MainTheoremPurelyInseparableStuff} to more general finite $G$-quasitorsors by (infinitesimal) group-schemes $G$?
\end{question}

\begin{remark}
    Let $X$ be a normal variety over a perfect field $\kay$ of characteristic $p$. Then, to a \emph{foliation} $\sF \subset \sT_{X/\kay}$ there corresponds a purely inseparable cover of height-$1$ $f\:X \to Y$ to a normal variety; see \cite[\S2.4]{PatakfalviWaldronSingularitiesGeneralFibersLMMP}, \cite{BrantnerWaldronPurelyInsepGaloisTheoryFundamentalThm} and the references therein. Under this correspondence, $\omega_f \cong \sO_X((1-p)K_{\sF})$ where $-K_{\sF}$ is the determinant of $\sF$. For example, if $\sF=\sT_X$ then $f=F_X$ and $K_{\sF}=K_X$. In particular, to a section of $\omega_f$ we may attach an effective $\bZ_{(p)}$-divisor $\Delta$ such that $K_{\sF}+\Delta \sim_{\bZ_{(p)}}0$. 
    It would be interesting to understand how our work generalizes to this setup.
\end{remark}

\bibliographystyle{skalpha}
\bibliography{MainBib}
\end{document}